\documentclass[a4paper]{amsart}
\usepackage[margin=1.4 in]{geometry}
\usepackage{hyperref,color}

\usepackage{amssymb,amsmath}
\usepackage{stmaryrd,paralist}
\usepackage{bbm}
\usepackage{graphics,graphicx}

\usepackage[latin1]{inputenc}
\usepackage{subfigure}

\graphicspath{{./Figures/}}

\newtheorem{theo}{Theorem}
\newtheorem{thm}[theo]{Theorem}
\newtheorem{lem}[theo]{Lemma}
\newtheorem{lemma}[theo]{Lemma}
\newtheorem{prop}[theo]{Proposition}
\newtheorem{claim}[theo]{Claim}
\newtheorem{Def}[theo]{Definition}
\newtheorem{cor}[theo]{Corollary}
\theoremstyle{remark}
\newtheorem{remark}[theo]{Remark}

{\medbreak}

\newcommand{\ds}{\displaystyle}
\def\ni{\noindent}

\newcommand{\cacher}[1]{}

\def\cO{\mathcal{O}}

\def\cOp{\cO_+}
\def\cOm{\cO_-}
\def\cH{\mathcal{H}}
\def\cHz{\cH_0}
\def\cHp{\cH_+}
\def\cHm{\cH_-}
\def\cJ{\mathcal{J}}
\def\cJz{\cJ_0}
\def\cJp{\cJ_+}
\def\cJm{\cJ_-}
\def\cT{\mathcal{T}}
\def\cTz{\cT_0}
\def\cTp{\cT_+}
\def\cTm{\cT_-}

\def\bz{\mathbb{Z}}

\def\br{\mathbb{R}}
\def\rp{\mathbb{R}^+}

\def\cB{\mathcal{B}}

 \def\cRad{\vec{\cB}_{d,e}}
 \def\cRadb{\vec{\cB}^{\blacklozenge k}_{d,e}}
 \def\cRadw{\vec{\cB}^{\lozenge k}_{d,e}}
 
 \def\Radbk{B^{\blacklozenge k}_{d,e}}
 \def\Radwk{B^{\lozenge k}_{d,e}}
 
 \def\cC{\mathcal{C}}

\def\cSad{\vec{\cC}_{d,e}}
 \def\cSadb{\vec{\cC}^{\blacklozenge k}_{d,e}}
 \def\cSadw{\vec{\cC}^{\lozenge k}_{d,e}}
 
 \def\Sadbk{C^{\blacklozenge k}_{d,e}}
 \def\Sadwk{C^{\lozenge k}_{d,e}}
 
\def\cA{\mathcal{A}} 
\def\vcA{\vec{\cA}} 
\def\cAad{\vcA_{d,e}}
\def\cAadb{\vcA_{d,e}^\bullet}
\def\cAbw{\vcA^{\blacklozenge k,\lozenge\ell}_{d,e}}
\def\cAbb{\vcA^{\blacklozenge k,\blacklozenge\ell}_{d,e}}
\def\cAwb{\vcA^{\lozenge k,\blacklozenge\ell}_{d,e}}
\def\cAww{\vcA^{\lozenge k,\lozenge\ell}_{d,e}}
\def\Abw{A^{\blacklozenge k,\lozenge\ell}_{d,e}}
\def\Abb{A^{\blacklozenge k,\blacklozenge\ell}_{d,e}}
\def\Awb{A^{\lozenge k,\blacklozenge\ell}_{d,e}}
\def\Aww{A^{\lozenge k,\lozenge\ell}_{d,e}}

\def\cA{\mathcal{A}}

\def\cF{\mathcal{F}}
\def\cG{\mathcal{G}}
\def\cW{\mathcal{W}}
\def\cL{\mathcal{L}}

\newcommand{\Hk}{H^{(k)}}
\newcommand{\sik}{\sigma^{(k)}}
\newcommand{\Omk}{\Omega^{(k)}}

\newcommand{\bw}{\overline{w}}
\newcommand{\bOm}{\overline{\Om}}
\newcommand{\wOm}{\widetilde{\Om}}

\def\igirth{ingirth}

\newcommand{\tot}{\textrm{total}}

\newcommand{\al}{\alpha}
\newcommand{\be}{\beta}

\newcommand{\de}{\delta}

\newcommand{\si}{\sigma}
\newcommand{\eps}{\epsilon}
\renewcommand{\th}{\theta}

\newcommand{\Om}{\Omega}
\newcommand{\Ga}{\Gamma}

\newcommand{\ov}[1]{\overline{#1}}
\newcommand{\wt}[1]{\widetilde{#1}}

\newcommand{\hT}{\widehat{T}}
\newcommand{\bG}{G'}
\newcommand{\ba}{{\bar{a}}}

\newcommand{\Id}{\textrm{Id}}

\newcommand{\fig}[3]{\begin{figure}[h!]\begin{center}\includegraphics[#1]{#2.pdf}\end{center}\caption{#3}\label{fig:#2}\end{figure}}

\newcommand{\xx}{\textbf{x}}
\newcommand{\yy}{\textbf{y}}

\newcommand{\R}{\partial R}

\newcommand{\bR}{\overline{R}}

\graphicspath{{Figures/}}

\newcommand{\W}{W}

\author[O. Bernardi and \'E. Fusy]{Olivier Bernardi$^{*}$ \and \'{E}ric Fusy$^{\dagger}$}
\thanks{$^{*}$Department of Mathematics, Brandeis University, Waltham MA, USA,
bernardi@brandeis.edu. Supported by NSF grant DMS-1308441 and DMS-1800681.\\
$^{\dagger}$LIX, \'Ecole Polytechnique, Palaiseau, France, fusy@lix.polytechnique.fr.
Supported by the European project
ExploreMaps (ERC StG 208471), the ANR grant 
``Cartaplus'' 12-JS02-001-01, and the ANR grant ``EGOS'' 12-JS02-002-01.}

\title[Unified bijections for planar hypermaps]{Unified bijections for planar hypermaps with general cycle-length constraints}
\date{\today}

\begin{document}
\begin{abstract}
We present a general bijective approach to planar hypermaps with two main results. 
First we obtain unified bijections for classes of maps or hypermaps defined by face-degree constraints and girth constraints. 
To any such class we associate bijectively a class of plane trees characterized by local constraints. This unifies and greatly generalizes several bijections for maps and hypermaps. 
Second, we present yet another level of generalization of the bijective approach by considering classes of maps with non-uniform girth constraints. More precisely, we consider \emph{well-charged maps}, which are maps with an assignment of \emph{charges} (real numbers) to vertices and faces, with the constraints that the length of any cycle of the map is at least equal to the sum of the charges of the vertices and faces enclosed by the cycle. 
We obtain a bijection between charged hypermaps and a class of plane trees characterized by local constraints.
\end{abstract}

\maketitle

\section{Introduction}
A \emph{planar map} is an embedding of a connected planar graph in the sphere, considered up to orientation-preserving homeomorphism. A rich literature has been devoted to the enumerative combinatorics of planar maps by various approaches, such as Tutte's method~\cite{Tu63} based on generating function equations, the matrix integral method initiated by Br\'ezin et al. in~\cite{Bre}, and 
the bijective approach initiated by Cori and Vauquelin~\cite{CoriVa} and popularized by Schaeffer~\cite{Schaeffer:these}. 

Planar hypermaps are a natural generalization of planar maps. Precisely, a \emph{planar hypermap} is a planar map in which faces are colored in two colors, say that there are dark faces and light faces, in such a way that every edge separates a light face from a dark face. 
The dark faces of the hypermap play the role of embedded hyperedges, and as such, hypermaps 
 can be seen as embedded hypergraphs~\cite{CoMa92}, and classical
 maps (embedded graphs) identify to hypermaps in which every edge has been replaced
by a dark face of degree 2; see Figure~\ref{fig:maps-are-hypermaps}(a). 

Hypermaps have played a prominent role to tackle various problems: for instance an exact solution 
of the Ising model on random planar lattices has been obtained by a reduction to the enumeration 
of planar hypermaps with control on the face-degrees~\cite{BMSc02,BoKa}; and in a 
similar spirit different models of hard particles on random planar lattices
have been exactly solved~\cite{BMSc02,BDG07}. Hypermaps also encompass the notion
of constellations, which are a convenient visual encoding of factorizations
in the symmetric group~\cite{BMSc00,ZvLa97}. In particular, the famous Hurwitz numbers~\cite{ZvLa97,GoJa97,Ok03,DuPoSc} (which count factorizations into transpositions, or equivalently certain branched coverings of the sphere) are naturally encoded by certain constellations. 
Bijective methods have played a crucial role in all these enumerative problems related to hypermaps. 

\fig{width=12cm}{maps-are-hypermaps}{(a) A map and the corresponding hypermap (obtained by replacing every edge by a dark face of degree 2). (b) A general hypermap (with dark faces of arbitrary degrees), of ingirth $4$ (due to the cycle indicated by bold lines).}

In this article, we present a unified bijective approach for planar hypermaps. Our results generalize the bijective approach for maps presented in \cite{BFbij,BFgir} in two ways: first we deal with the more general case of hypermaps, and second we consider more general cycle-length conditions via the new concept of \emph{charged maps}. This approach also unifies and greatly generalizes several known bijections for hypermaps together with several known bijections for maps. 
We will discuss in details the relation between our approach and previously known bijections below (see Figure \ref{fig:diagram}) and in Section \ref{section:recovering-bijections}. However, let us point out already that the bijections in \cite{BMSc02,BDG04,BDG07} are recovered as special cases of our framework. These have applications to solving several statistical mechanics models on maps: Ising model, hard particle model, forest model, and blocked edge model. It is our hope that the toolbox we establish in the present article will find many more applications in the realm of statistical mechanics.


Our strategy is similar to the one developed in \cite{BFbij}. Namely, we first establish a ``master bijection'' between a class of oriented hypermaps and a class of plane trees, that we call \emph{hypermobiles} (see Figure~\ref{fig:intro}(a) for an example). Then we specialize this master bijection to obtain our bijective results about classes of hypermaps defined in terms of face-degree conditions and girth conditions. This requires to exhibit canonical orientations characterizing these classes of maps, and then identifying the hypermobiles associated through the master bijection. To be precise, our canonical orientations and hypermobiles are actually \emph{weighted}, that is, each edge carries a weight in $\mathbb{R}$; see Figure~\ref{fig:intro}(a).
In \cite{BFbij} we relied on the concept of \emph{minimal $\al$-orientations}, that is, orientations such that the indegree at each vertex is fixed by a function $\alpha$, and containing no counterclockwise oriented cycle. 
We mention that Section \ref{sec:hyperflow} contains a generalizations of this framework to hypermaps which could be of independent interest.


In the first part of this article (Sections~\ref{sec:bij_plane}-\ref{section:recovering-bijections}) we establish the master bijection and we use it to obtain bijections for classes of hypermaps defined by \emph{ingirth} constraints. 
The ingirth for hypermaps is a generalization of the notion of girth for maps: it is defined as the smallest length of a cycle $C$ such that all faces adjacent to $C$ in the interior of $C$ are light (with the ``interior'' being defined with respect to a distinguished ``outer face'').  
Similarly as in~\cite{BFgir} (which deals with maps), we exhibit canonical orientations for hypermaps characterizing the ingirth constraints. Then, by applying the master bijection to canonically oriented (and weighted) hypermaps we obtain bijections between any class of hypermaps defined by face-degree constraints and ingirth constraints (with the sole restriction that the ingirth equals the degree  of the outer face, which is dark), and a class of weighted hypermobiles (characterized by local degree and weight conditions). 
We show that the bijections for hypermaps in \cite{BMSc00,BMSc02,BDG04,BDG07} are special cases of our construction.
In terms of counting, we obtain for any $d\geq 1$ an expression for the generating function of 
rooted hypermaps of ingirth $d$ and dark outer face of degree $d$, with control on the dark and light
face degrees.


\begin{figure}
\begin{center}
\includegraphics[width=\linewidth]{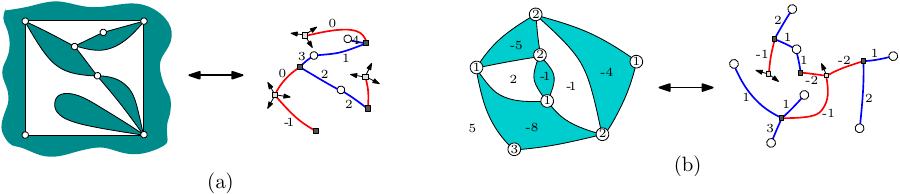}
\end{center}
\caption{Left: example of correspondence between a planar hypermap of ingirth $4$ and a (weighted) hypermobile. Right: example of correspondence between a charged hypermap and a (weighted) hypermobile.} 
\label{fig:intro}
\end{figure}

In a second part of the article (Sections~\ref{sec:bij-charged-maps}-\ref{sec:counting-annular}), we consider \emph{charged hypermaps}, which are a generalization of hypermaps well suited to study non-uniform cycle-length constraints. Roughly speaking, a \emph{fittingly charged hypermap} is a hypermap together with an assignment of a real number, called \emph{charge}, to each vertex and face such that
\begin{compactitem}
\item for any cycle $C$ enclosing a set $R$ of faces (possibly $R$ contains the outer face) such that $C$ is only incident to light faces of $R$, the sum of the charges of the vertices and faces enclosed by $R$ is at most the length of $C$,
\item the charges of vertices are positive, and the sum of all charges is 0.
\end{compactitem} 
See Figure~\ref{fig:intro}(b) for an example of a fittingly charged hypermap, and Section~\ref{sec:bij-charged-maps} for more precise definitions.
We show (again using the master bijection together with canonical orientations) that there is a bijection between the class of fittingly charged hypermaps and a class of weighted hypermobiles. This bijection keeps track of the face-degrees and of all the charges. An example is shown in the right-part of Figure~\ref{fig:intro}. The bijections in the first part of the article are special cases of the bijection for charged hypermaps. We also show in Section~\ref{sec:application-charged-maps} that the machinery of charged hypermaps can be used to get bijections for classes of \emph{annular hypermaps} defined by face-degree conditions and two types of ingirth conditions (and we count these hypermaps in Section~\ref{sec:counting-annular}). 

Let us mention that our master bijection comes in three ``flavors'' $\Phi_+,\Phi_-$ and $\Phi_0$ (see Theorem~\ref{theo:master_bij1}). The flavor depends on the type of rooting of the hypermap: the hypermap has either a marked light face, a marked dark face or a marked vertex. Accordingly, our results for charged hypermaps come in three flavors (see Theorems~\ref{theo:bij-shifted-dark}, \ref{theo:bij-shifted-light} and~\ref{theo:bij-shifted-0}).\\

\noindent\textbf{Charged hypermaps: a preview.}
The machinery of charged hypermaps proves well suited to  establish unified bijections for hypermaps. 
We hope that this machinery will be useful to tackle new problems in the future, and in particular  to prove isoperimetric inequalities for random maps in the spirit of \cite{Miermont:sphericity,LeGall:isoperimetric-maps}.
In order to give a preview of the notion of charged hypermaps, and illustrate its potential use, we now state a special case of our results about charged hypermaps. For simplicity, we will also restrict to the case of charged \emph{maps}.  
Given a map $M$ with a distinguished \emph{root-vertex} $v_0$, we call \emph{partial charge function} a function $\si$ from the vertex set $V$ to $\mathbb{R}$. We say that $\si$ \emph{fits} $M$ if the following conditions hold:
\begin{itemize}
\item[(a)] for any subset $R$ of faces of $M$ defining a simply connected region of the sphere (after adding the edges and vertices incident only to faces in~$R$), the set of edges $\R$ separating a face in $R$ and a face not in $R$ satisfies    
$|\R|\geq 2+ \sum_{v\textrm{ inside }R}(\si(v)-2),$
 with strict inequality if $v_0$ is inside $R$ (a vertex is said to be inside $R$ if all the incident faces are in $R$),
\item[(b)] $\si(v_0)=0$, $\si(v)>0$ for all $v\neq v_0$, and $\sum_{v\in V}\si(v)=2|V|-4$.
\end{itemize}
We call \emph{mobile} a plane tree with two types of vertices -- \emph{round} and \emph{square} -- and with dangling half-edges -- called \emph{buds} -- incident to square vertices. The \emph{excess} of a mobile is the number of half-edges incident to round vertices minus the number of buds.
We call \emph{suitably weighted} a mobile with no edge joining two round vertices, where each edge joining a square vertex to a round vertex carries a positive weight, 
such that the sum of weights of edges incident to a square vertex 
$v$ is $\deg(v)-2$ (the weight is 0 for edges joining two square vertices). 
\begin{thm}[Special case of Theorem~\ref{theo:bij-shifted-0}]\label{thm:special-charged-maps}
There is a bijection between the set of pairs $(M,\si)$ where $M$ is a map with a distinguished root-vertex, and $\si$ is a partial charge function fitting $M$, and the set of suitably weighted mobiles of excess 0. Moreover,
faces of degree $k$ of the map correspond bijectively to square vertices of degree $k$ in the mobile, and
vertices of charge $w$ correspond bijectively to round vertices of weight $w$ (i.e., the incident edge weights sum to~$w$). 
\end{thm}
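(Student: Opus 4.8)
The plan is to obtain Theorem~\ref{thm:special-charged-maps} as the specialization of the general bijection of Theorem~\ref{theo:bij-shifted-0} to the case where the charged hypermap is a map. The first step is to promote the pair $(M,\si)$ to a fully charged map by keeping the charge $\si(v)$ on each vertex $v$ and declaring the charge of each face $f$ to be $2-\deg(f)$. With this choice, condition~(b) is exactly the global neutrality of the charges: indeed, using $\sum_f\deg(f)=2|E|$ and Euler's relation $|V|-|E|+|F|=2$,
$$\sum_{v}\si(v)+\sum_{f}\big(2-\deg(f)\big)=\big(2|V|-4\big)+\big(2|F|-2|E|\big)=2\big(|V|-|E|+|F|\big)-4=0,$$
so $\sum_v\si(v)=2|V|-4$ holds if and only if the total charge vanishes. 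The positivity $\si(v)>0$ for $v\neq v_0$ together with the degenerate value $\si(v_0)=0$ is precisely the \emph{shifted} normalization (root charge $0$) handled by Theorem~\ref{theo:bij-shifted-0}.

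Next I would check that condition~(a) is the fittingly-charged inequality for this charge assignment. For a simply connected region $R$ of faces with boundary $\R$, the faces enclosed are the faces of $R$ and the vertices enclosed are those incident only to faces of $R$; a computation using Euler's relation on $R$ relates $|\R|$, the number of interior vertices, and the enclosed face charge $\sum_{f\in R}(2-\deg(f))$, turning the general constraint (enclosed total charge at most the boundary length) into the inequality of~(a). The strict inequality required when $v_0$ is inside $R$ corresponds to the distinguished-root condition built into the shifted theorem, matching the treatment of the charge-$0$ vertex.

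With the fitting conditions identified, I would apply Theorem~\ref{theo:bij-shifted-0} to the charged map $(M,\si)$, which returns a weighted hypermobile whose statistics record the face degrees and the charges. It remains to specialize the hypermobile to the announced mobile. Since $M$ is a map, viewed as a hypermap all of its dark faces are $2$-gons, so the associated dark vertices of the hypermobile have degree~$2$; contracting these degree-$2$ vertices turns the weighted hypermobile into a weighted tree with round vertices (the vertices of $M$, each carrying weight equal to its charge) and square vertices (the faces of $M$, a face of degree $k$ giving a square vertex of degree $k$). One then checks that no edge joins two round vertices, that each square--round edge carries a positive weight, and that the weights around a square vertex sum to $\deg-2$ --- the negative of the face charge $2-\deg$ --- so that the output is exactly a suitably weighted mobile; the excess-$0$ condition is the image of the shifted normalization $\sum_v\si(v)=2|V|-4$ under the bijection.

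The main obstacle is the bookkeeping in these two specializations. On the charge side, one must verify that the Euler-relation rewriting matches the constants in~(a) exactly (including the behaviour at the root) and that every admissible region of Theorem~\ref{theo:bij-shifted-0} arises this way; on the mobile side, one must track carefully how the degree-$2$ dark faces are contracted in the hypermobile, checking that the weights combine to give the stated square-vertex condition, that no round--round edge is created, and that the excess equals $0$. Once these verifications are in place, the statistic preservation (degree $\leftrightarrow$ degree, charge $\leftrightarrow$ weight) is immediate from the general theorem.
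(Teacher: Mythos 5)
Your overall strategy --- extend $\si$ to a full charge function on the hypermap associated with $M$, verify that fitting reduces to conditions (a)--(b), apply Theorem~\ref{theo:bij-shifted-0}, and contract the degree-$2$ dark square vertices of the resulting hypermobiles --- is exactly the paper's. However, your choice of charge function is wrong, and this is not mere bookkeeping. The paper assigns charge $+2$ to every face of $M$ (light face of the hypermap) and charge $-2$ to every edge of $M$ (dark $2$-gon); you assign $2-\deg(f)$ to each face and, implicitly, $0$ to the dark $2$-gons. Your global sum still vanishes, but the $\si$-girth condition no longer reduces to condition (a). For a simply connected light region $\bR=R\cup E(R)$ one has $\sum_{f\in R}\deg(f)=2|E(R)|+|\R|$ and, by the Euler relation, $|R|-|E(R)|=1-|V_{\mathrm{in}}|$ where $V_{\mathrm{in}}$ is the set of vertices inside $R$; with your charges this yields $\si(\bR)=2+\sum_{v\in V_{\mathrm{in}}}(\si(v)-2)-|\R|$, so the constraint $|\R|\geq\si(\bR)$ becomes $2|\R|\geq 2+\sum_{v\in V_{\mathrm{in}}}(\si(v)-2)$, which is strictly weaker than (a). With the paper's charges the extra $-|\R|$ term disappears and one recovers (a) exactly.

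The same error propagates to the mobile side. In Theorem~\ref{theo:bij-shifted-0} a light face of charge $x$ and degree $k$ corresponds to a light square vertex whose incident edge weights sum to $x-k$: the degree shift is already built into the definition of the charge of a square vertex of a hypermobile. To land on the ``suitably weighted'' condition of Theorem~\ref{thm:special-charged-maps} (round-side weights at a square vertex summing to $\deg(v)-2$) you need $x=2$; with $x=2-\deg(f)$ you would obtain mobiles whose weights at a square vertex of degree $k$ sum to $2k-2$. In short, you have put the degree dependence into the charge and thereby double-counted it. Apart from this, the verifications you flag as remaining work (that every light region is dominated by one of the form $R\cup E(R)$, the treatment of the root vertex, and the contraction of the dark degree-$2$ vertices) are indeed the right ones and are exactly what the paper carries out in its Claims 1 and 2.
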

We hope that this type of bijections can be used to study cycle lengths in large random maps, and their scaling limit, the so-called \emph{Brownian map} \cite{LeGall:limitmaps,LeGall:Brownian-map-is-limit,Miermont:Brownian-map-is-limit}. In particular, since typical distances in random maps with $n$ edges scale like $n^{1/4}$, it would be interesting to look at a partial charge function $\si$ such that $\si(v)=2\pm\frac{\al}{n^{1/4}}$ for all $v$ (for some constant $\al$, and with the signs being independent and uniformly random). In this case, Theorem~\ref{thm:special-charged-maps} gives a way of counting maps such that the boundary of any simply connected set of faces $R$ satisfies $|\R|\geq 2+ \sum_{v\textrm{ inside }R}\si(v)-2$, which is asymptotically Gaussian of amplitude $\al\sqrt{\be}n^{1/4}$ if $R$ contains $\be n$ vertices. This may give a bijective method for proving isoperimetric inequalities in the spirit of \cite{LeGall:isoperimetric-maps}.\\

\noindent{\bf Relation with other bijections for maps and hypermaps.} 
As already said, the present article generalizes our previous work on maps (again 
this relies on the fact that maps are merely hypermaps with all dark faces of degree $2$). The diagram in Figure~\ref{fig:diagram} summarizes the relations between the bijections in the present article and previous ones. 
Precisely, the master bijection for hypermaps given in Section~\ref{sec:master} generalizes the master 
bijection for maps given in~\cite{BFbij}, and the bijection for hypermaps of \igirth\ $d$, dark outer face of degree $d\geq 1$ and control on the face-degrees, generalizes the bijection for plane maps
of outer degree $d$ and girth $d$ obtained
in~\cite{BFgir}. The case $d=1$ for hypermaps identifies to the bijection of Bousquet-M\'elou and Schaeffer~\cite{BMSc02} 
(stated in terms of bipartite maps in~\cite{BMSc02}) with applications to the Ising model and the hard particle model. 
The case $d\geq 2$ admits a natural specialization to $d$-constellations, which coincides with the bijection of Bousquet-M\'elou and 
Schaeffer~\cite{BMSc00}. And we also provide a special formulation for the case $d=0$, 
from which we recover the bijection by Bouttier, Di Francesco and Guitter for vertex-rooted hypermaps~\cite{BDG04} and for vertex-rooted hypermaps with blocked edges~\cite{BDG07}
(with applications to hard particle models, the Ising model, and forested maps enumeration). 

Moreover, since we generalize the results for maps in~\cite{BFgir}, 
we also recover the various known bijections for maps obtained as specializations in~\cite{BFgir}: in particular the case $d=1$ in~\cite{BFgir} identifies to the bijection of Bouttier et al. in~\cite{Boutt},
 the case $d=2$ includes the bijections of~\cite{Sc97} for bipartite maps and of~\cite{PS03a} 
for loopless triangulations, the case $d=3$ includes the bijection of~\cite{FuPoScL} for 
simple triangulations, and the case $d=4$ includes the bijection of~\cite[Sect.~2.3.3]{Schaeffer:these} for simple quadrangulations. 
Similarly the bijection for annular hypermaps (two marked faces) in Section~\ref{sec:application-charged-maps} generalizes the bijection for annular maps obtained in~\cite[Sect.~5]{BFgir}. 

In contrast, the results in the second part of the article (bijection between hypermobiles
and charged hypermaps, allowing to formulate non-uniform girth constraints) are totally new
(the subcase of charged maps is not covered in~\cite{BFgir}, and in fact dealing directly with the 
more general case of charged hypermaps somehow simplifies the proofs). 

\begin{figure}[h]
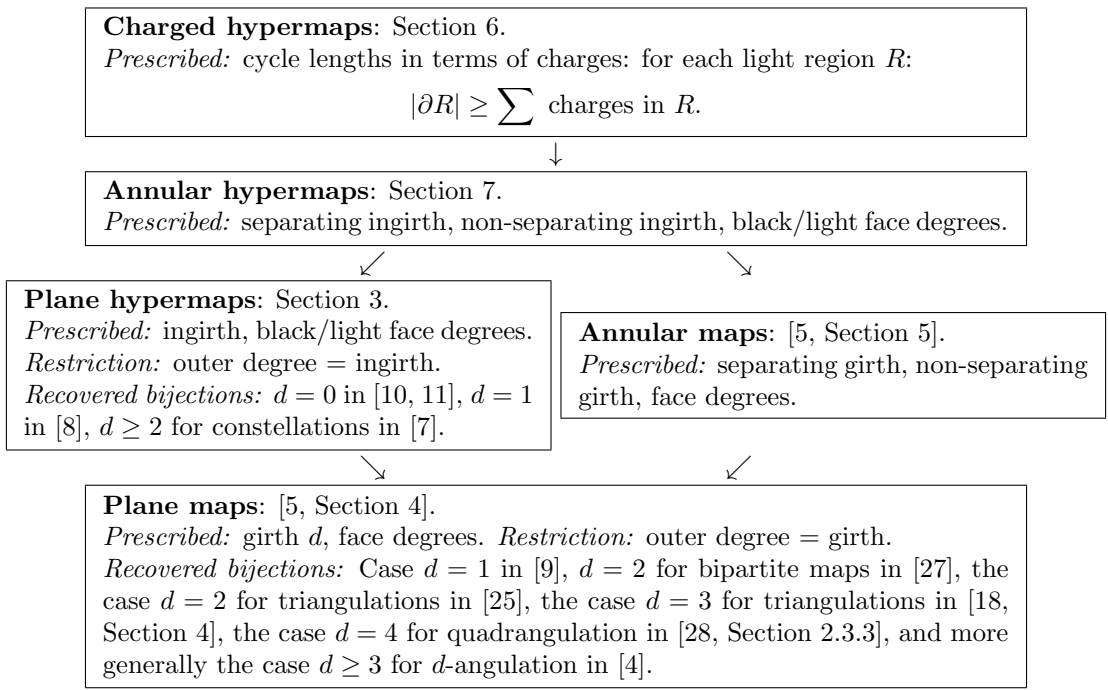

\begin{center}
\framebox{
\begin{minipage}{.8\linewidth}
\textbf{Charged hypermaps}: Section~\ref{sec:bij-charged-maps}.\\
\emph{Prescribed:} cycle lengths in terms of charges: for each light region $R$:
$$|\partial R|\geq \sum \textrm{ charges in }R.$$
\end{minipage}
}
\\[1pt]$\downarrow$\\[1pt]
\framebox{
\begin{minipage}{.8\linewidth}
\textbf{Annular hypermaps}: Section~\ref{sec:application-charged-maps}.\\
\emph{Prescribed:} separating ingirth, non-separating ingirth, dark/light face degrees.
\end{minipage}
}
\\[1pt]$\swarrow$\hspace{.3\linewidth}$\searrow$ \\[1pt]
\framebox{
\begin{minipage}{.45\linewidth}
\textbf{Plane hypermaps}: Section~\ref{sec:bij_plane}.\\
\emph{Prescribed:} ingirth, dark/light face degrees. \\
\emph{Restriction:} outer degree = ingirth.\\
\emph{Recovered bijections:} $d=0$ in \cite{BDG04,BDG07}, $d=1$ in \cite{BMSc02}, $d\geq 2$ for constellations in \cite{BMSc00}.
\end{minipage}
}
\framebox{
\begin{minipage}{.45\linewidth}
\textbf{Annular maps}: \cite[Section 5]{BFgir}.\\
\emph{Prescribed:} separating girth, non-separating girth, face degrees.
\end{minipage}
}\hspace{.01\linewidth}
\\[1pt]$\searrow$\hspace{.3\linewidth}$\swarrow$ \\[1pt]
\framebox{
\begin{minipage}{.8\linewidth}
\textbf{Plane maps}: \cite[Section 4]{BFgir}.\\
\emph{Prescribed:} girth $d$, face degrees. \emph{Restriction:} outer degree = girth.\\
\emph{Recovered bijections:} Case $d=1$ in \cite{Boutt}, $d=2$ for bipartite maps in \cite{Sc97}, the case $d=2$ for triangulations in \cite{PS03a}, the case $d=3$ for triangulations in \cite[Section~4]{FuPoScL}, the case $d=4$ for quadrangulation in \cite[Section~2.3.3]{Schaeffer:these}, and more generally the case $d\geq 3$ for $d$-angulation in \cite{BFbij}.
\end{minipage}
}
\end{center}
\caption{Relation between the bijections in this article and previous ones; arrows indicate specializations.} \label{fig:diagram}
\end{figure}

\vspace{.2cm}

We would like to mention two other general combinatorial methods for counting maps. 
Recall that our master bijection for hypermaps generalizes the master bijection for maps given in~\cite{BFbij}.
In the recent article \cite{AlPo13}, Albenque and Poulalhon have presented another general bijective approach to maps.
The two approaches are closely related and use essentially the same canonical orientations (exhibited in \cite{BFbij}). 
The main difference between the approach in \cite{BFbij} and in \cite{AlPo13} is that the master bijections between oriented maps and trees are different (one tree is a spanning tree of the map, while the other is a spanning tree of the quadrangulation of the map).
Both master bijections are actually based on the two types of trees shown to be associated with ``minimal accessible orientations'' in the article \cite{OB:boisees} (which has been reformulated and extended to higher genus in~\cite{OB:covered-maps}). The existence of these two ``master bijections'' explains why \emph{two} different bijections have been found for several classes of maps, one being generalized in  \cite{BFbij} and the other in \cite{AlPo13}.
For instance,  \cite{BFbij} and \cite{AlPo13} respectively generalize the bijections originally found in~\cite{FuPoScL} and~\cite{PS03b} for simple triangulations (i.e. triangulations of girth 3). It seems however that the master bijection in~\cite{BFbij} is better suited to deal with classes of maps where several face degrees are allowed. 
%

Another unified combinatorial approach to maps was developed by Bouttier and Guitter in~\cite{BG13irr} (building on \cite{BG:countinued-fractions}). 
They show that one of the desirable feature of trees, namely that they are easy to enumerate thanks to their natural recursive structure, could be directly achieved at the level of the maps themselves via so-called \emph{slice decomposition} of maps. With this method, they obtain the generating function of maps of pseudo girth $d$ (maps in which cycles have length at least $d$, except for the contours of faces, which can be of length $d-1$) with control on the face-degrees, thereby generalizing the counting results of~\cite{BFbij} (in which faces of degree $d-1$ were forbidden). 

It is unclear if the methods used in~\cite{AlPo13,BG13irr} can be generalized  to hypermaps, and/or to charged maps.

\vspace{.2cm}


\noindent{\bf Outline.} The outline of the paper is as follows. 
In Section~\ref{sec:master}, we define hypermaps and hypermobiles, and we present the master bijection between a class of oriented hypermaps and a class of hypermobiles.
In Section~\ref{sec:bij_plane}, we consider for each $d\geq 1$ the class $\cC_d$ of hypermaps of ingirth $d$ with a dark outer face of degree $d$. 
By applying the master bijection to canonically oriented maps in $\cC_d$ we obtain a bijection between $\cC_d$ and a class of hypermobiles.
In Section~\ref{sec:count_plane_hypermaps}, we obtain the generating function of the class $\cC_d$ of hypermaps counted according to the degree distribution of their faces (by recursively decomposing the associated hypermobiles).
In Section~\ref{section:recovering-bijections}, we show that the bijections described in \cite{BMSc00,BMSc02,BDG04,BDG07} are special cases of the bijections obtained in Section~\ref{sec:bij_plane}.
In Section~\ref{sec:bij-charged-maps}, we obtain a general bijection for fittingly charged hypermaps. As before, this bijection is obtained by first characterizing fittingly charged hypermaps by suitable canonical orientations and then applying the master bijection.
In Section~\ref{sec:application-charged-maps}, we use the framework of charged hypermaps to obtain bijections for classes of annular hypermaps characterized by separating and non separating girth constraint. 
In Section~\ref{sec:counting-annular} we obtain the generating function of those classes.
In Section~\ref{sec:proof_master_bij}, we gather some proofs about the master bijection.
In Section~\ref{sec:proofs}, we gather our proofs about canonical orientations.\\

\section{Master bijection for hypermaps}\label{sec:master}
\subsection{Hypermaps and hyperorientations}
A \emph{map} is a connected graph embedded on the sphere, considered up to continuous deformation. 
An \emph{Eulerian map} is a map such that all vertices have even degree. Such maps are also those
whose faces can be bicolored -- say there are \emph{dark faces} and \emph{light faces} -- 
in such a way that every edge separates a dark face from a light face. Note that this bicoloration is unique up to the choice of the color of a given face. 
An \emph{hypermap} is a face-bicolored Eulerian map; dark faces are also called \emph{hyperedges}. The \emph{underlying map} is the (Eulerian) map obtained
from the hypermap by forgetting the face types. A \emph{corner} of a map is the an angular section between two consecutive half-edges around a vertex. 
The \emph{degree} of a vertex or face $a$, denoted by $\deg(a)$, is the number of incident corners.

 A \emph{face-rooted hypermap} is a hypermap with a marked face (either dark or light) called the \emph{outer face}. The other faces are called \emph{inner faces}. The vertices and edges are called \emph{outer} if they are incident to the outer face and inner otherwise. The \emph{outer degree} of a face-rooted hypermap is the degree of the outer face.
Observe that face-rooted hypermaps, can also be thought of as hypermaps embedded in the plane (with the outer face being infinite), and for this reason they are sometimes called \emph{plane hypermaps}.
A \emph{dark-rooted} (resp. \emph{light-rooted}) hypermap is a face-rooted hypermap such that the outer face is dark (resp. light). 
A \emph{vertex-rooted hypermap} is a hypermap with a marked vertex called the \emph{root-vertex}. A \emph{corner-rooted} hypermap 
is a hypermap with a marked corner called the \emph{root-corner}.
\begin{figure}[h!]
\begin{center}
\includegraphics[width=8cm]{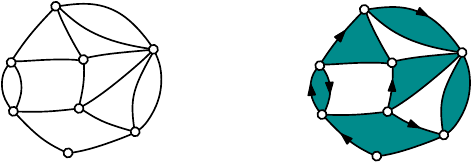}
\end{center}
\caption{Left: an Eulerian map $M$ (all vertices of $M$ have even degree).
Right: a hypermap (having $M$ as underlying Eulerian map) endowed with a hyperorientation.} 
\label{fig:hypermap}
\end{figure} 

A \emph{hyperorientation} $O$ of a hypermap $H$ is a partial orientation (edges are either oriented or unoriented) of the edges of $H$
such that each oriented edge has a dark face on its right. Oriented edges are called \emph{1-way},
unoriented edges are called \emph{0-way}. 
Directed outer edges are called \emph{cw-outer} or \emph{ccw-outer} respectively, depending on whether they have the 
outer face on their left or on their right. 
A \emph{directed path} from $u$ to $v$ is a sequence of 1-way edges $e_1,\ldots,e_k$ such that the origin of $e_1$ is $u$, the end of $e_k$ is $v$, and for all $i\in\{1,\ldots,k-1\}$ the end of $e_i$ is the origin of $e_{i+1}$. This directed path is a \emph{circuit} if $u=v$. A circuit is called \emph{simple} if the origins of $e_1,\ldots,e_k$ are all distinct. 
If $H$ is an hyperoriented face-rooted hypermap, a simple circuit $C$ is called \emph{clockwise} if the outer face is in the region delimited by $C$ on the left of $C$, and \emph{counterclockwise} otherwise. Similarly, if $H$ is a vertex-rooted hypermap, a simple circuit $C$ is said to be \emph{clockwise} if 
the root-vertex is either on $C$ or in the region delimited by $C$ on the left of $C$; 
and $C$ is said to be \emph{counterclockwise} if the root-vertex is either 
on $C$ or in the region delimited by $C$ on the right of $C$ (note that a circuit passing
by the root-vertex is clockwise and counterclockwise at the same time). 
The hyperorientation is called \emph{minimal} if it has no counterclockwise circuit, 
and is called \emph{accessible} from a vertex $v$ if every vertex $u$ can be reached from $v$ by a directed path. 
By a slight abuse of terminology, we will often refer to a hyperoriented hypermap as a hyperorientation.  

We now define three families of hyperorientions that will play a central role in the master bijections (see Figure~\ref{fig:familieshypermap}). 
We call \emph{face-rooted hyperorientation} a face-rooted hypermap endowed with a hyperorientation. Light-rooted, dark-rooted and vertex-rooted hyperorientations are defined similarly.
\begin{itemize}
\item We define $\cHp$ as the family of light-rooted hyperorientations that are accessible from every outer vertex, minimal, and such that every outer edge is 1-way (the outer face contour is a clockwise circuit, not necessarily a 
simple circuit).  
\item We define $\cHm$ as the family of dark-rooted hyperorientations that are accessible from every outer vertex, such that the outer face contour is a simple counterclockwise circuit, and it is the unique counterclockwise circuit in the hyperorientation. 
\item We define $\cHz$ as the family of vertex-rooted hyperorientations that are accessible from the root vertex $v_0$, and minimal.
\end{itemize}

\begin{figure}[h!]
\begin{center}
\includegraphics[width=12.8cm]{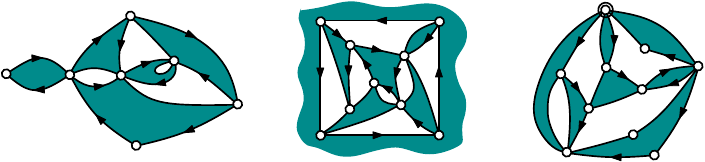}
\end{center}
\caption{Left: a (light-rooted) hyperorientation in $\cHp$. Middle: a (dark-rooted) hyperorientation in $\cHm$. Right: a (vertex-rooted) hyperorientation in $\cHz$.} 
\label{fig:familieshypermap}
\end{figure}

\begin{remark}\label{rk:edges-inward} 
We point out that if a hyperorientation $H$ is in $\cHm$, then there is no inner edge of $H$ incident to an outer-vertex and oriented 1-way toward that outer vertex. Indeed, if we suppose by contradiction that such an inner edge $e$ exists, then because $H$ is accessible, there is a path $P$ of inner edges starting at an outer vertex and ending with the edge $e$. However, this path $P$ together with the contour of the outer face creates a counterclockwise cycle; see Figure~\ref{fig:edges-inward}. This gives a contradiction. 
Similarly, if a hyperorientation is in $\cHz$, then every incidence of an edge $e$ with the root-vertex
$v_0$ is such that $e$ is 0-way or 1-way out of $v_0$. 
\end{remark}

\begin{figure}[h!]
\begin{center}
\includegraphics[width=3cm]{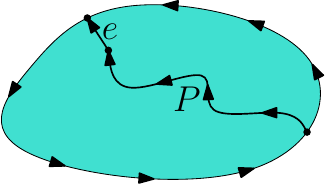}
\end{center}
\caption{The directed path $P$ of inner edges starting at an outer vertex and ending with the edge $e$.} 
\label{fig:edges-inward}
\end{figure}

\subsection{Master bijection $\Phi_*$}
We now define the classes of planes trees which are in bijection with the classes of hyperorientations in $\cHp$, $\cHm$ and $\cHz$.
We consider plane trees with dangling half-edges called \emph{buds}. 
An \emph{hypermobile} is a plane tree with buds having 3 types of vertices -- \emph{round}, \emph{dark square}, and \emph{light square} -- and such that 
\begin{compactitem}
\item buds are incident to light square vertices,
\item every edge is incident to exactly one dark square vertex (hence the edge joins a dark square vertex to either a light square vertex or a round vertex).
\end{compactitem}
The \emph{degree} of a vertex in the hypermobile is the number of incident half-edges (including buds, for light square vertices). 
The \emph{excess} of the hypermobile is the number of edges with a round extremity, minus the number of buds. 
We denote respectively by $\cTp$, $\cTm$, and $\cTz$ the families of hypermobiles of positive excess, negative excess, and zero excess. 

We now describe the master bijection for hypermaps. Actually, there are 3 bijections denoted by $\Phi_+$, $\Phi_-$ and $\Phi_0$, and mapping the classes of hyperorientations $\cHp$, $\cHm$, $\cHz$ respectively to the classes of hypermobiles having positive, negative, and zero excess.

Let $X$ be an hyperorientation in $\cH_*$ with $*\in\{+,-,0\}$. The hypermobile $\Phi_*(X)$ is obtained as follows:
\begin{itemize}
\item Place a dark (resp. light) square vertex of $\Phi_*(X)$ in each dark (resp. light) face of $X$; the vertices of $X$ will become the \emph{round vertices} of $\Phi_*(X)$.
\item Create the edges of $\Phi_*(X)$ by applying to each edge of $X$ the local rule indicated in Figure~\ref{fig:local-rule-hyperori} (ignore the weights $w$ for the time being). Then erase all the edges of $X$.
\item To complete the construction in the case $*=+$ delete the light square vertex in the outer face of $X$ (together with the incident buds). To complete the construction in the case $*=-$, delete the dark square vertex in the outer face of $X$, all the outer vertices of $X$ and the edges linking them. To complete the construction in the case $*=0$, simply delete the root-vertex of $X$.
\end{itemize}
The mappings $\Phi_*$, are illustrated in Figure~\ref{fig:master-bij-hyperori}.

\begin{figure}[h!]
\begin{center}
\includegraphics[width=.6\linewidth]{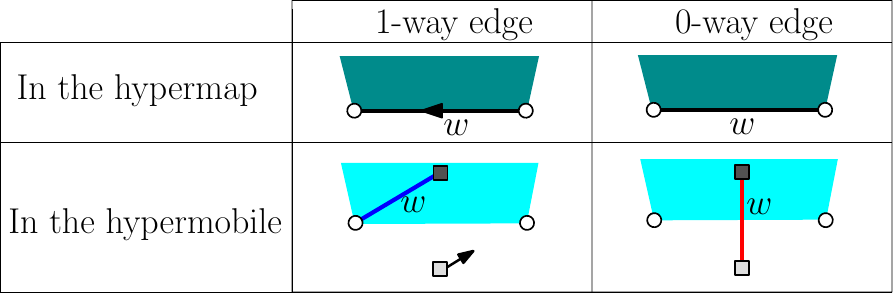}
\end{center}
\caption{Local rules applied in the bijections $\Phi_+$, $\Phi_-$, $\Phi_0$ to every edge of a hyperorientation. 
The rule for the transfer of a weight $w$ is also indicated (for the edge-weighted version of the bijections).}
\label{fig:local-rule-hyperori}
\end{figure} 

\begin{figure}[h!]
\begin{center}
\includegraphics[width=\linewidth]{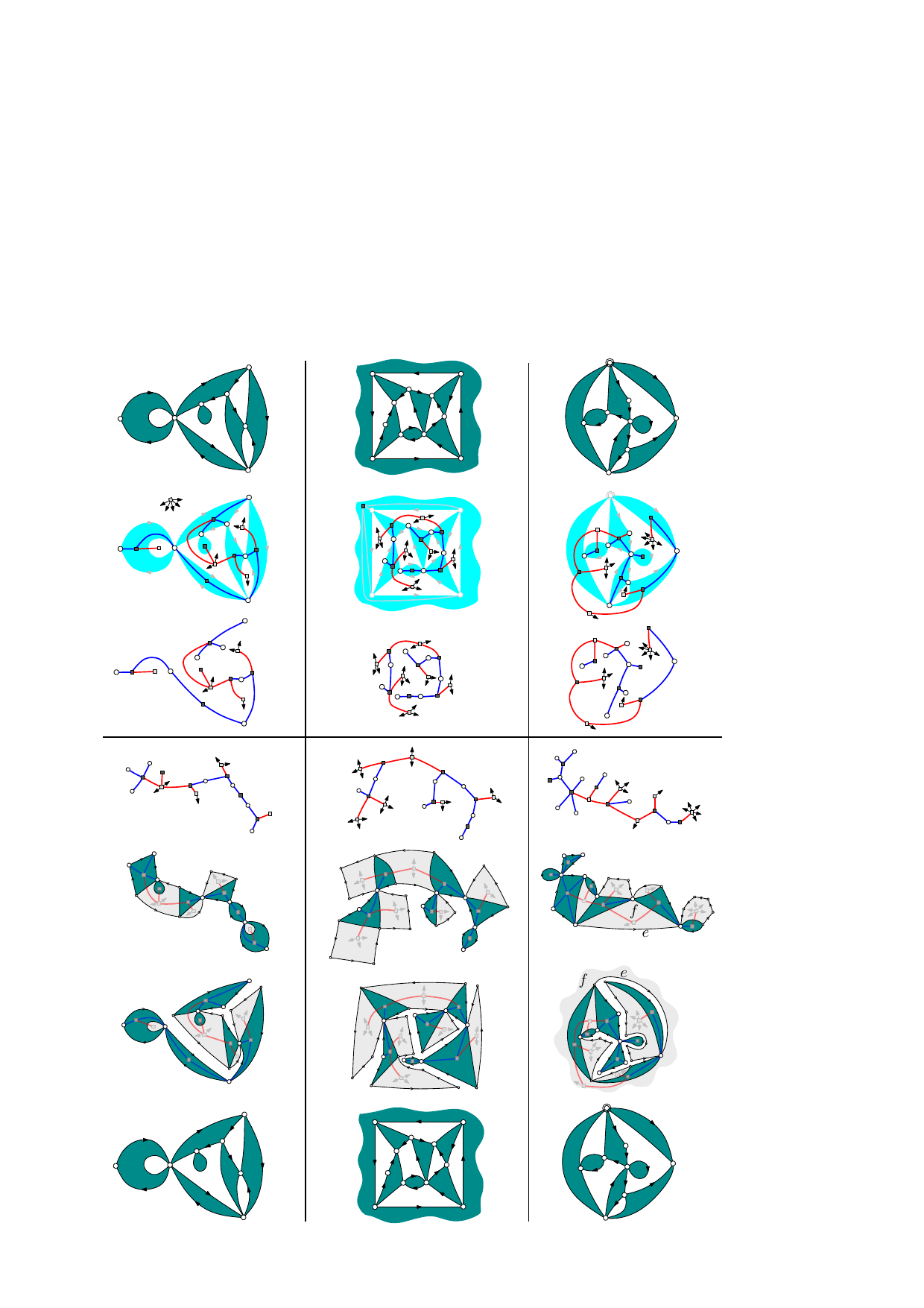}
\end{center}
\caption{The master bijection $\Phi_*$ from hyperorientations to hypermobiles (upper part: 
left $\Phi_+$, middle $\Phi_-$, right $\Phi_0$) and its inverse $\Psi_*$ (lower part: left $\Psi_+$, middle $\Psi_-$, right $\Psi_0$). Hypermobile edges are blue or red whether they have a round extremity or not.}
\label{fig:master-bij-hyperori}
\end{figure} 

\begin{remark} For $X\in\cHp$, all the outer edges are oriented 1-way with the root-face on their left, hence the local rules of Figure~\ref{fig:local-rule-hyperori} do not create any edge incident to the light square vertex in the outer face of $X$ (only buds). Thus, the last step to complete $\Phi_+(X)$ only deletes an isolated vertex. 
Similarly, for $X\in\cH_0$, the last step to complete $\Phi_0(X)$ only deletes an isolated vertex. Lastly, for $X\in\cHm$ the local rules of Figure~\ref{fig:local-rule-hyperori} do not create any edge incident to the outer vertices of $X$, except for the edges joining them to the dark square vertex in the outer face of $X$ (because by Remark~\ref{rk:edges-inward} no inner edge is 1-way toward an outer vertex). Hence the last step to complete $\Phi_-(X)$ only deletes an isolated ``star graph'' made of these vertices and edges. 
\end{remark}

\begin{theo}\label{theo:master_bij1}
For $*\in\{+,-,0\}$ the mapping $\Phi_*$ is a bijection between $\cH_*$ and $\cT_*$. 
For $\Phi_+$ the outer degree of $\gamma\in\cHp$ is equal to the excess of $\tau=\Phi_+(\gamma)$, 
for $\Phi_-$ the outer degree of $\gamma\in\cHm$ is equal to minus the excess of $\tau=\Phi_-(\gamma)$. 
\end{theo}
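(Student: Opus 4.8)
The plan is to establish that $\Phi_*$ is a bijection by exhibiting an explicit inverse map $\Psi_*$ (the lower part of Figure~\ref{fig:master-bij-hyperori}) and showing $\Psi_*\circ\Phi_*=\mathrm{Id}$ and $\Phi_*\circ\Psi_*=\mathrm{Id}$. The overall architecture follows the master-bijection proof of \cite{BFbij}, adapted to the face-bicolored (hypermap) setting. I would organize the argument in three stages: first, show that $\Phi_*(X)$ is always a well-formed hypermobile of the correct excess sign; second, describe the inverse construction $\Psi_*$ on hypermobiles; third, verify the two compositions are the identity.

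First I would verify the well-definedness and the degree/excess correspondence, which also disposes of the quantitative part of the statement. Applying the local rule of Figure~\ref{fig:local-rule-hyperori} to each edge produces a graph whose vertices are the faces and vertices of $X$; one checks that each created edge is incident to exactly one dark square vertex and that buds attach only to light square vertices, so the object is a hypermobile provided it is a tree. Connectedness and acyclicity (the tree property) is where I expect the main work to lie: this is precisely the place where the hypotheses defining $\cH_*$ (accessibility, minimality, the counterclockwise-circuit conditions) must be used. The strategy is a Euler-characteristic / counting argument: the local rule creates, per edge of $X$, a fixed contribution to edges and buds of the image, so that one can compute that the number of edges of $\Phi_*(X)$ equals its number of vertices minus one exactly when $X$ has no ``bad'' circuit. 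Concretely, the \emph{excess} (round-incident edges minus buds) should equal the algebraic winding contribution of the outer face, giving the outer-degree $=\pm$ excess identities; for $*=+$ every outer edge is $1$-way into the outer (root) light face, each contributing $+1$ to the excess, yielding excess $=\deg(\text{outer face})>0$, and dually for $*=-$.

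For the inverse, I would describe $\Psi_*$ as the standard ``closure'' operation on mobiles: walk around the contour of the hypermobile and match each bud to the next available ``opening'' (a corner at a round vertex, in the cyclic contour order), creating an oriented edge for each matched bud and recovering $0$-way edges from the dark-square-to-light-square edges, then reinserting the deleted outer vertex/face appropriate to each case $*\in\{+,-,0\}$. The key lemma here is that the bud-matching is well-defined and produces an embedding on the sphere; for this the excess sign is exactly what guarantees the correct surplus/deficit of buds versus openings, so that in the positive-excess case the unmatched openings form the outer light face, and in the negative-excess case the unmatched buds close up around the reinserted outer dark face.

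The hardest part will be showing that the orientation produced by $\Psi_*$ lands back in $\cH_*$ — that is, that the closure of an arbitrary hypermobile is accessible and satisfies the relevant minimality or unique-counterclockwise-circuit condition. I would prove this by analyzing the local structure of the closure: accessibility follows because every vertex is reached along the contour walk from the matching procedure, and minimality follows because a counterclockwise circuit in the closure would force, after applying $\Phi_*$, a cycle in the hypermobile, contradicting that it is a tree. Once both directions of $\Psi_*$ are shown to respect the classes and to invert the local rule edge-by-edge, the equalities $\Psi_*\circ\Phi_*=\mathrm{Id}$ and $\Phi_*\circ\Psi_*=\mathrm{Id}$ follow from the fact that the local rule and the matching rule are mutually inverse on each edge together with the uniqueness of the closure matching. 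Since most of these verifications are local and combinatorial, I would defer the detailed circuit-analysis lemmas (as the excerpt indicates is done in Section~\ref{sec:proof_master_bij}) and present here only the construction of $\Psi_*$ and the excess computation.
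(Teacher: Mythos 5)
Your overall architecture (exhibit $\Psi_*$, check well-formedness and excess, verify both compositions) is the same as the paper's, but two of the steps you wave at are precisely where the real work lies, and your sketches of them would not go through as stated. First, the Euler-characteristic count does not by itself detect bad circuits: for any $H$ in the larger class $\cJ_*$ (all outer edges $1$-way, etc.) the count gives $E=V-1$ for $\Phi_*(H)$ unconditionally, so $\Phi_*(H)$ is a tree if and only if it is acyclic, and one still must prove acyclicity. The paper splits this into two genuinely different arguments: if $H\notin\cH_*$, an Euler count \emph{restricted to the region outside a counterclockwise circuit or outward cocycle} produces a subgraph with $E=V$, hence a cycle; but the converse direction ($H\in\cH_*$ implies $\Phi_*(H)$ acyclic) is not a counting argument at all --- it is the iterative ``escape'' argument of Lemma~\ref{lem:iffp}, which uses accessibility to build directed paths into a hypothetical cycle $C$ of $\Phi_*(H)$ and uses minimality to force each successive entry point to avoid a strictly growing arc of $C$, yielding a contradiction. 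Your proposal contains no substitute for this step.

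Second, your justification of $\Psi_*\circ\Phi_*=\Id$ (``the local rule and the matching rule are mutually inverse on each edge together with the uniqueness of the closure matching'') misses the actual difficulty. Cutting $H$ open along its $1$-way edges does produce the outerplanar map $\hT$ together with \emph{some} planar matching of its outer edges whose gluing recovers $H$; but $\Psi_*$ uses one specific matching, the cw-matching defined by the parenthesis-word condition, and a planar matching of a cyclic $\{a,\ba\}$-word is in general not unique. One must prove that the matching induced by $H$ is the cw-matching, i.e.\ that between the two half-edges of any $1$-way edge $e$ the intervening outer edges of $\hT$ form a balanced parenthesis word. The paper does this via the nesting property $(\spadesuit)$ of Lemma~\ref{lem:iffp} (every $1$-way edge is oriented into the cycle it closes in $T\cup\{e^*\}$, and the cycles of matched pairs are nested), which is itself proved by the same escape argument. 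Without $(\spadesuit)$ or an equivalent nesting statement, your injectivity claim is unsupported. The remaining ingredients of your plan (the excess computation, the ``floating vertex'' style invariant for $\Phi_*\circ\Psi_*=\Id$, and deducing $\Psi_*(T)\in\cH_*$ from the tree property via the ``only if'' direction of Lemma~\ref{lem:iffp}) do match the paper and are sound.
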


The proof of Theorem~\ref{theo:master_bij1} is postponed to Section~\ref{sec:proof_master_bij}. 
We will now formulate a version of the bijections $\Phi_*$ for \emph{edge-weighted} hyperorientations, and explain the parameter correspondences.

A hyperorientation is \emph{weighted} by assigning a weight in $\br$ to each
edge. In that case, the \emph{weight} of a vertex is the total weight of its
incident ingoing edges, the \emph{weight} of a light face is the total weight of
its incident 0-way edges, and the \emph{weight} of a dark face is the total weight
of its incident edges. For hyperorientations is in $\cHm$, we take the convention that all outer edges
(which are 1-way) have weight $1$. 
Similarly a hypermobile is \emph{weighted} by assigning a weight in $\br$ 
to each of its (non-bud) edges. The \emph{weight} of a vertex of a hypermobile $M$
is the total weight of its incident (non-bud) edges, and the degree
of a vertex of $M$ is the number of incident half-edges (including buds, for light square vertices). 
The local rule of Figure~\ref{fig:local-rule-hyperori} 
can directly be adapted so as to transfer
the weight of an edge of the hypermap to the corresponding
edge in the associated hypermobile, see Figure~\ref{fig:local-rule-hyperori}.
Hence, Theorem \ref{theo:master_bij1} has the following corollary.
\begin{cor}
The mapping $\Phi_+$ (resp. $\Phi_-$, $\Phi_0$) is a bijection between weighted hyperorientations 
from $\cHp$ (resp. $\cHm$, $\cHz$) and weighted hypermobiles of positive excess (resp. negative excess, zero excess).
\end{cor}


We now formulate the parameter correspondences between hypermaps and hypermobiles. In order to make a formulation valid simultaneously for $\Phi_+$, $\Phi_-$ and $\Phi_0$, we first define the \emph{frozen} vertices, edges and faces of a hyperorientation $H$ in $\cH_+$, $\cH_-$ and $\cH_0$.
For $H\in\cHp$, only the outer face is frozen. For $H\in\cH_0$, only the root-vertex is frozen. For $H\in\cHm$, the outer face, all the outer edges and all the outer vertices are frozen. With this terminology, for $*\in\{+,-,0\}$, for $H\in\cH_*$ and $T=\Phi_*(X)$, we have
\begin{compactitem}
\item each non-frozen light (resp. dark) face of $H$ corresponds to a light (resp. dark) square vertex of the same degree and same weight in $T$;
\item each non-frozen edge of $H$ corresponds to a (non-bud) edge of the same weight in $T$;
\item each non-frozen vertex of $H$ of weight $w$ and indegree $\delta$ corresponds to a round vertex of $T$ of weight $w$ and degree $\delta$.
\end{compactitem}

\subsection{Inverse bijections $\Psi_*$}
We will now describe the inverses $\Psi_+$, $\Psi_-$, and $\Psi_0$ of the bijections $\Phi_+$, $\Phi_-$, and $\Phi_0$. 
Let $T$ be a hypermobile. We associate with $T$ an \emph{outerplanar map} $\hT$ (a plane map such that every vertex is incident to the outer face) 
as follows:
\begin{compactitem}
\item for each dark (resp. light) vertex of degree $d$ in $T$ we create a dark (resp. light) polygon of degree $d$ following the rules illustrated in Figure~\ref{fig:grow};
\item for each edge $e$ of $T$ between a dark square and a light square vertex, we glue together the two face sides of the corresponding polygons at $e$;
\item for each round vertex $v$ of $T$ of degree $d$ we merge the $d$ neighboring polygon corners with the vertex $v$.
\end{compactitem}
\begin{figure}[h!]
\begin{center}
\includegraphics[width=\linewidth]{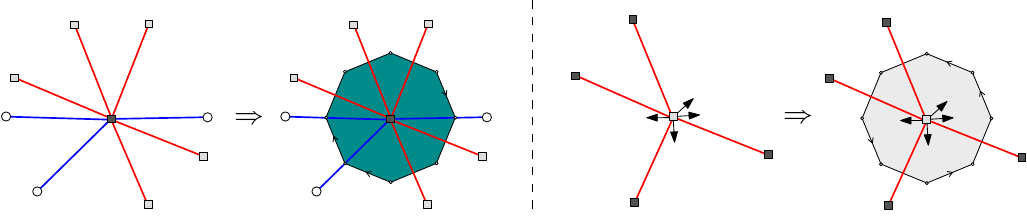}
\end{center}
\caption{Left: growing a dark polygon at a dark square vertex of a hypermobile. Right: growing a light polygon at a light square vertex of a hypermobile.}
\label{fig:grow}
\end{figure}
See Figure~\ref{fig:outerplanarmap} for an example. 
Note that the inner edges of the outerplanar map $\hT$ are 0-way and the outer edges are 1-way: cw-outer edges of $\hT$ correspond to edges between a round and a dark square vertex in $T$, ccw-outer edges of $\hT$ correspond to buds of $T$.

\begin{figure}[h!]
\begin{center}
\includegraphics[width=0.7\linewidth]{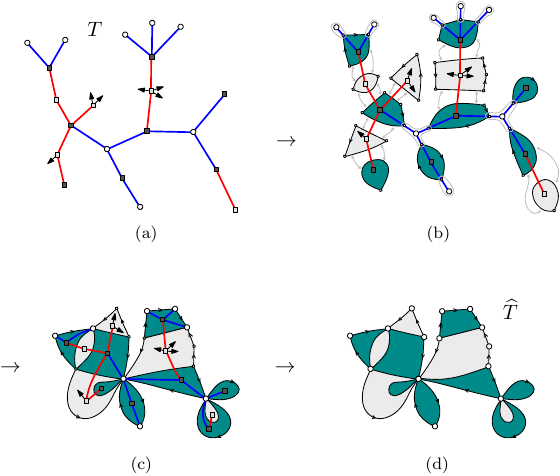}
\end{center}
\caption{From a hypermobile $T$ to the associated outerplanar map $\hT$. 
(a) The hypermobile $T$. 
(b) Creating the polygons around square vertices. 
(c) Gluing polygon-sides corresponding to each edge of $T$ between a dark square and a light square vertex, and merging polygon-corners neighboring each round vertex of $T$. 
(d) The outerplanar map $\hT$.}
\label{fig:outerplanarmap}
\end{figure} 


The mappings $\Psi_+$, $\Psi_-$, $\Psi_0$ (which will be proved to be the inverse bijections of $\Phi_+$, $\Phi_-$, $\Phi_0$ respectively) are defined as follows. Let $T$ be a hypermobile, and let $\hT$ be the associated outerplanar map. We will now define a canonical way of gluing together the cw-outer and ccw-outer edges of $\hT$; see Figure~\ref{fig:outerplanarmap_closed}.

A word $w$ (i.e. sequence of letters) on the alphabet $\{a,\ba\}$ is a \emph{parenthesis word} if $w$ has as many letters $a$ as letters $\ba$, and for any prefix of $w$ the number of letters $a$ is at least equal to the number of letters $\ba$. 
A \emph{cyclic word} is a word considered up to cyclic shift of the letters. Given a cyclic word $w$ on the alphabet $\{a,\ba\}$, we say that a letter $a$ and a letter $\ba$ are \emph{cw-matching}, if the subword of $w$ starting after the letter $a$ and ending before the letter $\ba$ is a (possibly empty) parenthesis word. An example is given in Figure~\ref{fig:outerplanarmap_closed}(a). 
It is easy to see that for any letter $a$ there is at most one cw-matching letter $\ba$ and vice-versa. Moreover if a cyclic word $w$ has $n_a$ letters and $n_\ba$ letters $\ba$ with $n_a\geq n_\ba$ (resp. $n_a\leq n_\ba$), then all the letters are cw-matching except for $n_a-n_\ba$ letters $a$ (resp. $n_\ba-n_a$ letters $\ba$).

We are now ready to define a canonical way of gluing the cw-outer and ccw-outer edges of $\hT$. We associate a cyclic word $w_T$ with the sequence of outer edges appearing in clockwise order around the outer face of $\hT$ by encoding the cw-outer and ccw-outer edges by the letters $a$ and $\ba$ respectively.
We say that a cw-outer edge and a ccw-outer edge of $\hT$ are \emph{cw-matching} if the corresponding letters $a$ and $\ba$ are cw-matching in $w_T$. It is easy to see that all the pairs of cw-matching edges can be glued together into 1-way edges (that is, there is no breach of planarity in doing so for every pair of cw-matching edges). An example is given in Figure~\ref{fig:outerplanarmap_closed}(b). 
If the excess $\eps$ of $T$ is positive, then $\hT$ has $\epsilon$ more cw-outer edges than ccw-outer edges. Thus the map obtained after gluing the cw-matching edges of $\hT$ has an outer face of degree $\epsilon$ which is incident only to cw-outer edges. Hence coloring the outer face as light gives an oriented light-rooted hypermap, that we denote by $\Psi_+(T)$. Similarly, if the excess $\epsilon$ of $T$ is negative, then the map obtained after gluing the cw-matching edges of $\hT$ has an outer face of degree $-\epsilon$ which is incident only to ccw-outer edges. Hence coloring the outer face as dark gives an oriented dark-rooted hypermap, that we denote $\Psi_-(T)$. Lastly, if the excess of $T$ is 0, then all the outer edges of $\hT$ are glued. Moreover, there is a unique vertex $v_0$ of the glued map without incident ingoing edges. In this case, taking $v_0$ as the root-vertex gives an oriented vertex-rooted hypermap that we denote by $\Psi_0(T)$. Examples for $\Phi_+$, $\Phi_-$, $\Phi_0$ are given in Figure~\ref{fig:master-bij-hyperori}.

\begin{figure}[h!]
\begin{center}
\includegraphics[width=\linewidth]{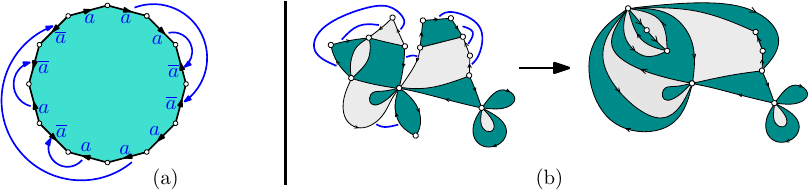}
\end{center}
\caption{
(a) The cyclic word $aaa\ba\ba aaa\ba a\ba\ba$ (represented in clockwise order around a polygon), and the cw-matching pairs of $a$'s and $\ba$'s (indicated by the arrows).
(b) Gluing of the cw-matching pairs of edges of the outerplanar maps $\hT$.}
\label{fig:outerplanarmap_closed}
\end{figure}

\begin{theo}\label{theo:master_bij2}
The mappings $\Psi_+$, $\Psi_-$, and $\Psi_0$ are the inverses of the bijections $\Phi_+$, $\Phi_-$, and $\Phi_0$ respectively.
\end{theo}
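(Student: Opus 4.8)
The plan is to exhibit $\Psi_*$ and $\Phi_*$ as a ``closing''/``opening'' pair and to verify that their composition is the identity. Since Theorem~\ref{theo:master_bij1} already guarantees that $\Phi_*$ is a bijection from $\cH_*$ onto $\cT_*$, it suffices to check the single identity $\Psi_*\circ\Phi_*=\Id$ on $\cH_*$: surjectivity of $\Phi_*$ then forces $\Psi_*$ to take values in $\cH_*$ and to be the two-sided inverse of $\Phi_*$. (If one prefers a self-contained argument not relying on Theorem~\ref{theo:master_bij1}, the same ingredients below also give $\Phi_*\circ\Psi_*=\Id$ and simultaneously reprove bijectivity; I describe the verification of $\Psi_*\circ\Phi_*=\Id$, the reverse direction being symmetric.)

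First I would establish the local reversal. Fix $X\in\cH_*$ and set $T=\Phi_*(X)$. Comparing the splitting rule of Figure~\ref{fig:local-rule-hyperori} with the polygon-growing rule of Figure~\ref{fig:grow}, one sees that the two are inverse to each other edge by edge: growing a dark (resp.\ light) polygon of degree $d$ around a dark (resp.\ light) square vertex of $T$ exactly reconstitutes the boundary of the corresponding face of $X$, where each incident $0$-way edge of $X$ reappears as an inner (hence $0$-way) edge of $\hT$, and each incident $1$-way edge reappears as a pair consisting of a cw-outer edge (on its head/round side, coming from a round--dark edge of $T$) and a ccw-outer edge (on its light side, coming from a bud of $T$). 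Consequently the outerplanar map $\hT$ associated with $T$ is canonically identified with $X$ in which every $1$-way edge has been cut into two dangling half-edges, the vertex-, face- and $0$-way-edge incidences being preserved. This is consistent with the parameter dictionary recorded after Theorem~\ref{theo:master_bij1}.

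The crux is then to show that the canonical cw-matching of $\hT$ glues these dangling half-edges back exactly as in $X$, with the correct orientation. Reading the outer face of $\hT$ clockwise and encoding cw-outer edges by $a$ and ccw-outer edges (buds) by $\ba$ produces the cyclic word $w_T$; each $1$-way edge $e$ of $X$ contributes one letter $a$ (its head side) and one letter $\ba$ (its light side). I would prove that the pairing of letters induced by the original edges $e$ of $X$ is precisely the cw-matching of $w_T$, so that $\Psi_*$ reconnects each $e$ to itself. The content here lies entirely in the defining properties of $\cH_*$: accessibility together with minimality (absence of counterclockwise circuits, and for $\cHm$ the fact that the outer contour is the only counterclockwise circuit) forces the family of cut $1$-way edges to be realised by a non-crossing, parenthesis-balanced system of chords along the outer boundary of $\hT$. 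Concretely, for a fixed edge $e$ the subword of $w_T$ strictly between its two half-edges encodes the $1$-way edges separated from the outer face by $e$, and a minimality argument shows this subword is balanced, so that the two half-edges of $e$ are cw-matching; uniqueness of the cw-matching (noted after the definition) then identifies the canonical matching with the original one. I expect this step to be the main obstacle, and I would carry it out by induction on the number of $1$-way edges, peeling off an innermost cw-matched pair, checking that uncutting it preserves membership in $\cH_*$, and invoking the induction hypothesis.

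It remains to reconcile the treatment of the frozen part across the three cases, which is pure bookkeeping once the matching is understood. By the excess/outer-degree correspondence of Theorem~\ref{theo:master_bij1}, a positive excess $\eps$ leaves exactly $\eps$ unmatched cw-outer edges after closure; declaring their common incident face to be light recreates the light outer face of degree $\eps$ of $X$ together with the deleted outer light square vertex, so $\Psi_+(\Phi_+(X))=X$. Dually, a negative excess leaves $-\eps$ unmatched ccw-outer edges; colouring their common face dark, its boundary walk restores the dark outer face of $X$ together with the outer vertices and outer contour edges that were frozen, giving $\Psi_-(\Phi_-(X))=X$. For zero excess every outer edge of $\hT$ is glued, and accessibility from $v_0$ together with minimality guarantees that the reconstructed map has a unique vertex with no incident ingoing edge, namely the deleted root-vertex $v_0$; taking it as the root yields $\Psi_0(\Phi_0(X))=X$. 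This completes the verification in all three cases.
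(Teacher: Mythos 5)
Your overall architecture coincides with the paper's proof of the direction $\Psi_*\circ\Phi_*=\Id$ (Lemma~\ref{lem:openp}): identify the outerplanar map $\hT$ with $X$ in which every $1$-way edge has been cut into a cw-outer/ccw-outer pair, then show that the cw-matching reglues exactly the original pairs, and finally do the bookkeeping on the frozen part. One organizational caveat: deducing $\Phi_*\circ\Psi_*=\Id$ from Theorem~\ref{theo:master_bij1} is formally legitimate, but in the paper Theorems~\ref{theo:master_bij1} and~\ref{theo:master_bij2} are proved \emph{together} --- surjectivity of $\Phi_*$ onto $\cT_*$ is itself established by proving $\Psi_*(T)\in\cH_*$ and $\Phi_*(\Psi_*(T))=T$ for every $T\in\cT_*$ (Corollary~\ref{cor:closurep}), via a step-by-step local-closure argument with an invariant on floating vertices (Lemma~\ref{lem:remains-tree}). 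So the two compositions are not ``symmetric'': the reverse one rests on a genuinely different invariant, and you cannot both lean on Theorem~\ref{theo:master_bij1} and claim to reprove it by symmetry.

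The more substantive issue is that the crux you correctly isolate --- for each $1$-way edge $e$ of $X$, the subword of $w_T$ strictly between its two half-edges is a parenthesis word --- is asserted rather than proved. The paper derives it from property $(\spadesuit)$ of Lemma~\ref{lem:iffp}: for every inner $1$-way edge $e$, the unique cycle $C$ contained in $T\cup\{e^*\}$ has $e$ pointing from its outside to its inside; this is established by the ``prisoner cycle'' argument, in which accessibility and minimality force a sequence of vertices on $C$ each to avoid a strictly growing arc of $C$, a contradiction. Granting $(\spadesuit)$, every outer edge of $\hT$ lying between the two half-edges of $e$ closes up with a partner inside $C$, and nested edges nest consistently, which is exactly the balance condition. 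Your proposed substitute --- induction peeling off an innermost cw-matched pair --- is not carried out and hides two real difficulties: (i) an adjacent pair $a\ba$ in $w_T$ is cw-matched by definition, but you must still show it arises from a \emph{single} edge of $X$ rather than from the head side of one edge abutting the light side of another, which already requires a minimality/accessibility argument; and (ii) after regluing such a pair the intermediate object is neither $\hT'$ for some hypermobile $T'$ nor $X'$ cut open for some $X'\in\cH_*$, so it is unclear to what the induction hypothesis would apply. Some version of $(\spadesuit)$ must be supplied for the proof to close.
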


We shall prove Theorem~\ref{theo:master_bij2} in Section~\ref{sec:proof_master_bij}.

\subsection{Alternative formulation of the inverse bijections $\Psi_*$}\label{sec:alter}
For the sake of completeness, we now give an alternative description of the mappings $\Psi_*$, which is closer to the description of many known bijections (in particular, the bijections in~\cite{BMSc00,BMSc02,BDG07} discussed in Section~\ref{section:recovering-bijections}).
Let $T$ be a hypermobile, where buds are interpreted as \emph{outgoing sprouts}. 
Add ingoing sprouts as follows: for each edge $e=\{u,v\}\in T$ connecting a round vertex $u$ to a dark square vertex $v$, 
 insert an ingoing sprout in the corner following $e$ in counterclockwise order around $v$. See Figure~\ref{fig:closure_other_way} for an example. 
 We associate a cyclic word $w_T$ with the sequence of sprouts appearing in clockwise order around the outer face of $T$ (with the outer face on the left of the walker) by encoding the ingoing and outgoing sprouts by the letters $a$ and $\ba$ respectively. We join the cw-matching ingoing and outgoing sprouts to form oriented edges, 
and then remove from $T$ the round vertices and their incident edges.
The embedded partially oriented graph $G$ thus obtained is called the \emph{partial closure} of $T$. 
If $T$ has nonzero excess $\eps$, then there remain $|\eps|$ unmatched sprouts in $G$ (which are ingoing if $\eps>0$, outgoing if $\eps<0$). 
The \emph{complete closure} of $T$, denoted by $\bG$ is defined as follows: if $\eps=0$ then $\bG=G$, while if $\eps>0$ (resp. $\eps<0$) $\bG$ is obtained from $G$ by adding a new light (resp. dark) square vertex $v_0$ in the face containing all the sprouts and connecting these sprouts to $v_0$ 
by new edges directed away from $v_0$ (resp. toward $v_0$). Observe that $\bG$ is a bipartite map since every edge is incident to one dark square vertex.
Finally, we call $\Psi_*(T)$ (for $*\in\{+,-,0\}$ depending on whether the excess $\eps$ is positive, negative, or zero) the dual of $\bG$ which is a hypermap (the dual of dark squares are taken to be dark faces). The edges of $\Psi_*(T)$ are oriented as follows: an edge $e'$ of $\Psi_*(T)$ which is dual to an oriented edge $e$ of $\bG$ (made by connecting two sprouts) is oriented 1-way from the right-side of $e$ to the left-side of $e$, while an edge of $\Psi_*(T)$ which is dual to an original edge of $T$ is left unoriented. Lastly, if the excess $\eps$ is non-zero we take the dual of the vertex $v_0$ of $\bG$ to be the root-face of $\Psi_*(T)$, while if $\eps=0$ we take the dual of the root-face of $\bG$ to be the root-vertex of $\Psi_0(T)$. See Figure~\ref{fig:closure_other_way} for an example. 

It is easy to see that the formulation with sprouts given here is equivalent to 
the formulation with outerplanar maps given above. Indeed, if we 
superimpose the hypermobile with the associated outerplanar map,
then each cw-matching operation in one formulation is equivalent to a cw-matching operation in the other formulation. 
\begin{figure}
\begin{center}
\includegraphics[width=0.7\linewidth]{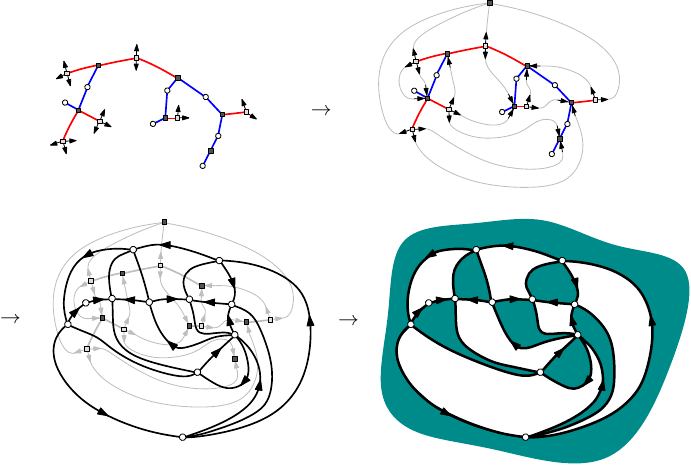}
\end{center}
\caption{The closure mapping $\Psi_-$ performed by joining pairs of cw-matching outgoing and ingoing sprouts, and then taking the dual.}
\label{fig:closure_other_way}
\end{figure}

\subsection{Relation with the master bijection for maps defined in \cite{BFgir}}
In a preceding article~\cite{BFbij} we gave master bijections for planar maps.
More precisely, we considered \emph{bi-oriented} planar maps. A \emph{bi-orientation} of a map is a choice of a direction for each half-edge of the map (thus there are 4 ways of bi-orienting any edge). Seeing maps as a special case of hypermaps, we can describe a bi-orientation as a hyperorientation of the associated hypermap in the way indicated in Figure~\ref{fig:equivalence_local_rule}.

 \begin{figure}[h!]
\begin{center}
\includegraphics[width=.7\linewidth]{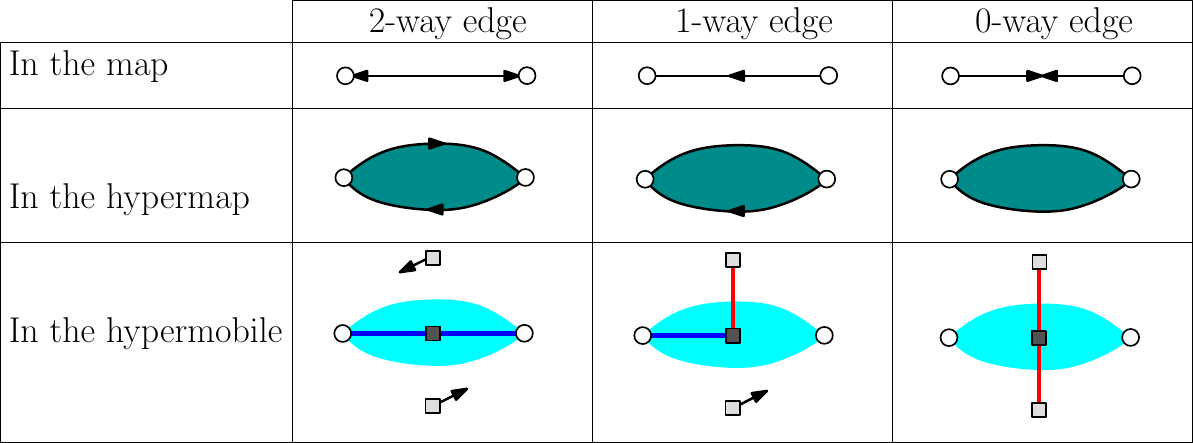}
\end{center}
\caption{
Maps identify to hypermaps by blowing each edge $e$ into a dark face $f$ of degree $2$,
the middle-line also shows how to naturally transfer the orientation information so that indegrees are preserved. 
The bottom line shows that applying the local rules of Figure~\ref{fig:local-rule-hyperori} to (the two edges $\epsilon_1,\epsilon_2$ 
of) $f$ is equivalent to applying the local rules given in~\cite{BFbij} to the underlying edge $e$.}
\label{fig:equivalence_local_rule}
\end{figure}

The master bijection in~\cite{BFbij} consists of 3 constructions denoted by $\Phi_+$, $\Phi_-$, $\Phi_0$
operating on 3 families $\cOp$, $\cOm$, $\cO_0$ of bi-orientations. It is easy to check that,
 under the classical identification of blowing each edge of a map into a dark face of degree $2$,
the families $\cOp$, $\cOm$, $\cO_0$ of bi-orientations considered in~\cite{BFbij} identify respectively to the subfamilies of $\cHp$, $\cHm$, $\cHz$
where all inner dark faces are of degree $2$. Moreover the local rules to carry out the bijections are equivalent
under this identification, see Figure~\ref{fig:equivalence_local_rule}. 
Hence Theorems~\ref{theo:master_bij1} and~\ref{theo:master_bij2} extend the results given in~\cite{BFbij} about the master bijections for maps.

The proof of Theorems~\ref{theo:master_bij1} and~\ref{theo:master_bij2} could actually be obtained using a reduction to the results about the master bijection for maps established in \cite{BFbij}\footnote{In this reduction we would apply
the master bijection of \cite{BFbij} to partially oriented maps, and observe that one can characterize the mobiles which are the image of (hyperoriented) hypermaps (because bicolorability of the faces can be detected on the associated mobiles).}. These in turn, were obtained using results established in \cite{OB:boisees}. Instead we chose to give a simplified -- self-contained -- proof in Section~\ref{sec:proof_master_bij}.


\section{Bijections for plane hypermaps according to the \igirth}\label{sec:bij_plane}
We first define the \emph{ingirth} of a plane hypermap $H$. A simple cycle $C$ of $H$ is called \emph{inward} if all the faces incident to $C$ and inside $C$ (on the side of $C$ not containing the outer face) are light. The \emph{ingirth} of $H$ is then defined as the minimal length of inward cycles. 
Note that the \igirth\ of a plane hypermap whose dark faces have degree $2$ is equal to the girth of the corresponding map. 
In this section, we present bijections for dark-rooted hypermaps with control on the face-degrees and on the \igirth. 

\begin{figure}
\begin{center}
\includegraphics[width=\linewidth]{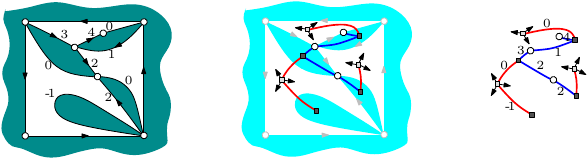}
\end{center}
\caption{The bijection of Theorem~\ref{theo:bij_d_plane} on an example (case $d=4$). 
Left: a dark-rooted hypermap endowed with its unique $4$-weighted 
hyperorientation in $\cHm$. Right: the associated $4$-weighted hypermobile.} 
\label{fig:bij_ingirth_d}
\end{figure}

For $d\geq 1$ and $H$ a dark-rooted hypermap of outer degree $d$, 
a \emph{$d$-weighted hyperorientation} of $H$ is a weighted hyperorientation 
of $H$ such that:
\begin{itemize}
\item The $1$-way edges have positive weight, the $0$-way edges have non-positive weight.
\item Inner vertices have weight $d$.
\item Each light face $f$ has weight $d-\deg(f)$.
\item Each dark inner face $f$ has weight $d\cdot\deg(f)-d-\deg(f)$.
\item Outer vertices and outer edges have weight $1$.
\end{itemize}

\begin{theo}\label{theo:plane_dori}
Let $d$ be a positive integer. A dark-rooted hypermap of outer degree $d$ can be endowed 
with a $d$-weighted hyperorientation if and only if it has \igirth\ $d$. In this case, it has a unique $d$-weighted hyperorientation in $\cHm$. 
\end{theo}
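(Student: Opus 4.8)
The plan is to prove the two implications of the equivalence separately, and to obtain the uniqueness statement together with the backward implication by exhibiting a canonical (minimal) orientation. Throughout I use that the outer face is dark of degree $d$, so its contour is an inward closed walk of length $d$ and hence $\igirth(H)\le d$; the substance of the statement is the matching lower bound together with existence and uniqueness. A first useful remark is a local consequence of the weight conditions: since $1$-way edges carry positive weight and $0$-way edges non-positive weight, the condition ``weight of a light face $f$ equals $d-\deg(f)$'' (a sum of non-positive terms) forces $d-\deg(f)\le 0$, i.e. every light face has degree at least $d$.

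For the direction $\igirth(H)\ge d$, assume $H$ carries a $d$-weighted hyperorientation and let $C$ be any inward cycle of length $\ell$ enclosing a region $R$. Write $\Sigma_V,\Sigma_L,\Sigma_D$ for the total prescribed weights of, respectively, the vertices strictly inside $R$, the light faces inside $R$, and the dark faces inside $R$. The key step is to track how each edge-weight is counted in these three sums: an interior edge of $R$ is incident to exactly one inside dark face (always counted in $\Sigma_D$) and feeds either its head vertex (if $1$-way) or its inside light face (if $0$-way); an edge of $C$ has its dark face \emph{outside} $R$ and its light face inside. Comparing contributions and invoking the sign conditions yields the inequality $\Sigma_D\ge \Sigma_V+\Sigma_L$, the slack being exactly the sum of the (positive) weights of interior $1$-way edges pointing to a vertex of $C$ together with the negated (non-positive) weights of the $0$-way edges of $C$. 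Substituting the prescribed values $\Sigma_V=d\,v$, $\Sigma_L=dL-S_L$, $\Sigma_D=(d-1)S_D-dD$ (with $v,L,D$ the numbers of enclosed vertices, light faces, dark faces, and $S_L,S_D$ the enclosed light/dark face-degree sums) and using the combinatorial identities $S_D=E_{\mathrm{int}}$ and $S_L=E_{\mathrm{int}}+\ell$ (each interior edge is incident to exactly one inside dark face and one inside light face, and each edge of $C$ to exactly one inside light face), together with Euler's relation $v-E_{\mathrm{int}}+L+D=1$ for the enclosed disk, all the terms cancel and leave precisely $\ell\ge d$. Thus every inward cycle has length at least $d$, so $\igirth(H)=d$.

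For the converse and the uniqueness in $\cHm$, assume $\igirth(H)=d$. I would phrase the search for a $d$-weighted hyperorientation as a feasibility problem for the edge-weights, subject to the linear weight constraints at inner vertices and inner faces, the boundary normalization (outer vertices and edges of weight $1$), and the sign constraints coupling each weight to the partial orientation. The computation of the previous paragraph shows that the inward-cycle inequalities are exactly the cut conditions for this transportation/flow system; since $\igirth(H)=d$ makes all of them hold, the system is feasible, which yields existence. For uniqueness within $\cHm$, I would use that the set of $d$-weighted hyperorientations carries the usual lattice structure of orientations with prescribed in-weights: the ``difference'' of two such hyperorientations decomposes into circuits, so there is a unique \emph{minimal} one (no counterclockwise circuit apart from the outer contour). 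It then remains to check that this minimal element is exactly the one in $\cHm$: minimality together with the condition that outer vertices and edges have weight $1$ forces the outer contour to be a simple counterclockwise circuit carrying all the outer weight; accessibility from every outer vertex follows from minimality as in Remark~\ref{rk:edges-inward}; and minimality gives that this contour is the unique counterclockwise circuit.

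The main obstacle is the converse direction: converting the cut conditions into genuine existence, and establishing that the minimal feasible hyperorientation is both unique and a member of $\cHm$. Since the weights are real numbers and the orientation is only partial ($0$-way/$1$-way), the standard integer $\alpha$-orientation machinery does not apply verbatim, and one must set up the flow and lattice arguments directly in this weighted setting (parallel to the map case in~\cite{BFgir}). By contrast, the lower bound $\igirth(H)\ge d$ is a direct calculation once the identities $S_D=E_{\mathrm{int}}$ and $S_L=E_{\mathrm{int}}+\ell$ are in place, and it is the part of the proof I would write out in full first.
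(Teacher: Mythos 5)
Your forward direction is correct and complete: the bookkeeping of how each edge weight is counted in $\Sigma_V$, $\Sigma_L$, $\Sigma_D$, the identities $S_D=E_{\mathrm{int}}$ and $S_L=E_{\mathrm{int}}+\ell$, and the Euler relation do combine to give exactly $\ell\ge d$, with the slack you identify. This is essentially the computation the paper performs in Lemma~\ref{lem:girth-necessary} (specialized to light regions not containing the outer face, where accessibility is not needed), so that half of the statement is fine.

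The converse and the uniqueness, however, are only a plan, and two of its load-bearing claims are not right as stated. First, the cut conditions of the feasibility system are \emph{not} ``exactly the inward-cycle inequalities'': in the paper's hyperflow formulation (Lemma~\ref{lem:existence-hyperflow} applied to the star graph) the conditions $\al(A)\ge 0$ range over \emph{all} subsets $A$ of vertices, i.e.\ over all light regions, whose boundaries need not be simple cycles; the paper even remarks after Lemma~\ref{lem:simply-connected-sufficient} that verifying the inequalities only on regions bounded by simple cycles does not suffice in general. Passing from ``every inward cycle has length $\ge d$'' to the full family of cut inequalities is a genuine step (Lemma~\ref{lem:sigma-d-condition}, which decomposes the boundary of a general light region into inward cycles and treats regions containing $f_0$ separately), and the boundary normalization forcing outer edges and vertices to have weight $1$ requires its own reduction (the digon construction of Section~\ref{sec:endproof-outer-dark}). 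Second, accessibility does \emph{not} follow from minimality: Remark~\ref{rk:edges-inward} runs in the opposite direction, using accessibility as a hypothesis to derive a structural consequence. Accessibility of the canonical minimal orientation is one of the substantive points of the proof; it needs the \emph{strict} inequalities (the condition $\al(A)>0$ for $A$ avoiding the source in Lemma~\ref{lem:existence-hyperflow}, which comes from the strict part of the girth condition) together with the path-tracing argument of Lemma~\ref{lem:almost-cop} converting $\varphi$-positive paths in the star graph into directed paths in $H$. Your uniqueness-via-lattice idea is the right instinct (it is Lemma~\ref{lem:unique-minimal} in the hyperflow language), but as written the existence, the membership in $\cHm$, and in particular the accessibility all remain unproved.
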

The proof of this theorem is postponed to Section~\ref{sec:proofs}. 
We now define the corresponding hypermobiles. For $d\geq 1$, a \emph{$d$-weighted hypermobile} is a weighted hypermobile such that:
\begin{itemize}
\item Edges incident to a round vertex have positive weight, while edges incident to a light square vertex have non-positive weight.
\item Round vertices have weight $d$.
\item Each light square vertex $v$ has weight $d-\deg(v)$.
\item Each dark square vertex $v$ has weight $d\cdot\deg(v)-d-\deg(v)$.
\end{itemize}

\begin{claim}\label{claim:excess-d-weighted-hypermobile}
Every $d$-weighted hypermobile has excess $-d$. 
\end{claim}
\begin{proof}
Let $n_R, n_L, n_D$ be respectively the numbers of round vertices, light square vertices, and dark square vertices,
let $e_R$ (resp. $e_L$) be the number of edges with a round (resp. light square) extremity, and denote by 
$e=e_R+e_L$ the total number of edges (excluding buds), and by $b$ the number of buds (the excess is $e_R-b$).  
The total weight at round vertices is $dn_R$, the total weight at light square vertices is $dn_L-e_L-b$,
and the total weight at dark square vertices is $de-dn_D-e$. Hence we have 
$de-dn_D-e=dn_R+(dn_L-e_L-b)$. Together with $n_R+n_L+n_D=e+1$,
this gives $e-e_L-b=-d$, hence $e_R-b=-d$.
\end{proof}

\begin{remark} \label{rk:weight-multiple}
The weights of edges in a $d$-weighted hypermobile are always integers. Indeed, since every vertex has integer weight, no vertex can be incident to exactly 1 edge with a non-integer weight. Hence there cannot exist a non-empty subset of edges with non-integer weights (because any such subset has a vertex of degree 1). 
Note also that the same argument shows that if the weights of the vertices of a hypermobile are all multiples of a number $k$, then the edge weights are also multiples of $k$.
\end{remark}

Given Theorem~\ref{theo:plane_dori} we can apply the master bijection $\Phi_-$ for hypermaps. Given the parameter correspondence for $\Phi_-$ we obtain the following result; see Figure~\ref{fig:bij_ingirth_d} for an example.

\begin{theo}\label{theo:bij_d_plane}
Let $d$ be a positive integer. Dark-rooted hypermaps of outer degree $d$ and \igirth\ $d$ are in bijection with $d$-weighted mobiles. 
Each light (resp. dark) inner face in the hypermap corresponds to a light (resp. dark) square vertex of the same degree in the associated hypermobile. 
\end{theo}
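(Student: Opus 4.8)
The plan is to obtain the desired bijection as the composition of the canonical-orientation correspondence of Theorem~\ref{theo:plane_dori} with the weighted master bijection $\Phi_-$. First I would note that Theorem~\ref{theo:plane_dori} already yields a bijection between dark-rooted hypermaps of outer degree $d$ and \igirth\ $d$ and the set of $d$-weighted hyperorientations lying in $\cHm$: each such hypermap carries a \emph{unique} $d$-weighted hyperorientation in $\cHm$, and conversely the underlying hypermap of any $d$-weighted hyperorientation in $\cHm$ has outer degree $d$ by definition and, since it admits a $d$-weighted hyperorientation, has \igirth\ $d$. It therefore suffices to prove that $\Phi_-$ (in its weighted form) restricts to a bijection between $d$-weighted hyperorientations in $\cHm$ and $d$-weighted hypermobiles; the degree statement will then be read off directly from the parameter correspondence, since for $H\in\cHm$ the non-frozen faces are exactly the light faces and the inner dark faces, which correspond to the light and dark square vertices of $\Phi_-(H)$ of equal degree.

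For the direct implication I would feed the parameter correspondence of $\Phi_-$. Recall that for $H\in\cHm$ the frozen elements are the outer face, the outer edges and the outer vertices, so the non-frozen vertices, faces and edges are respectively the inner vertices, the (light and inner dark) inner faces, and the inner edges; these map, preserving degree and weight, to the round vertices, square vertices and non-bud edges of $T=\Phi_-(H)$. Consequently the weight conditions imposed on inner vertices, light faces and dark inner faces translate verbatim into the weight conditions on round, light square and dark square vertices. For the sign condition I would use that the local rule of Figure~\ref{fig:local-rule-hyperori} turns each $1$-way inner edge into an edge incident to a round vertex and each $0$-way inner edge into an edge incident to a light square vertex (this is also forced by the weight definitions, since a vertex weight sums the ingoing, i.e. $1$-way, edges while a light-face weight sums the $0$-way edges); as weights are preserved, positivity of $1$-way weights and non-positivity of $0$-way weights become exactly the sign condition of a $d$-weighted hypermobile. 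Hence $\Phi_-(H)$ is a $d$-weighted hypermobile.

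The step I expect to be the main obstacle is the reverse implication, where I must first show that \emph{every} $d$-weighted hypermobile $T$ has excess $-d$, so that $\Psi_-$ is applicable and $\Psi_-(T)\in\cHm$ by Theorems~\ref{theo:master_bij1}--\ref{theo:master_bij2}. I would establish this by weight counting. Let $R,B,L$ be the numbers of round, dark square and light square vertices, let $E$ be the number of edges, let $E_R$ (resp. $E_L$) be the number of edges incident to a round (resp. light square) vertex, and let $\be$ be the number of buds, so that $R+B+L=E+1$, $E=E_R+E_L$, the dark square vertices have degrees summing to $E$, the light square vertices have degrees summing to $E_L+\be$, and the excess is $\eps=E_R-\be$. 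Summing vertex weights over the dark square vertices counts each edge once and equals the total edge weight $W$, whereas summing over the round (resp. light square) vertices counts each round-dark (resp. light-dark) edge once, the two partial sums adding up to $W$. Evaluating the three sums with the defining weight conditions gives $dR+(dL-E_L-\be)=(d-1)E-dB$, which together with $R+B+L=E+1$ collapses to $\be=E_R+d$, i.e. $\eps=-d$. Since $d\geq1$ this is negative, as required.

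It then remains to check that $\Psi_-(T)$ is a genuine $d$-weighted hyperorientation. The parameter correspondence, read backwards, turns the weight and sign conditions of $T$ into the weight and sign conditions on the inner vertices, inner faces and inner edges of $\Psi_-(T)$. To recover the condition on the frozen outer elements I would assign weight $1$ to every outer edge (this is forced by the definition), and observe that by Remark~\ref{rk:edges-inward} no inner edge is $1$-way toward an outer vertex, so the only ingoing edge at each outer vertex is its incoming contour edge; hence each outer vertex automatically has weight~$1$. Thus $\Psi_-(T)$ is a $d$-weighted hyperorientation in $\cHm$, and since $\Psi_-=\Phi_-^{-1}$ the restriction of $\Phi_-$ is the announced bijection, with the face/square-vertex degree correspondence given by the parameter correspondence.
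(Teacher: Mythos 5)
Your proposal is correct and follows essentially the same route as the paper, which obtains Theorem~\ref{theo:bij_d_plane} by composing the canonical-orientation characterization of Theorem~\ref{theo:plane_dori} with the weighted master bijection $\Phi_-$ and its parameter correspondence. The extra details you supply (the weight-counting argument showing every $d$-weighted hypermobile has excess $-d$, and the use of Remark~\ref{rk:edges-inward} to recover weight $1$ on outer vertices) are exactly the verifications the paper leaves implicit, and they check out.
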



\section{Counting plane hypermaps of \igirth\ $d$}\label{sec:count_plane_hypermaps}
In this section we determine the generating function $F_d$ of corner-rooted hypermaps of ingirth~$d$ with a dark outer face of degree $d$. Via the master bijection established in Section~\ref{sec:bij_plane} and Lemma~\ref{lem:count} below, this is reduced to counting \emph{rooted} $d$-weighted hypermobiles (whereas counting dark-rooted hypermaps of \igirth\ $d$ amounts to counting \emph{unrooted} $d$-weighted hypermobile which is harder). Then using the classical recursive decomposition of trees at their root we determine~$F_d$.

Recall that a \emph{corner-rooted hypermap} is a hypermap with a marked corner. 
For a corner-rooted hypermap, we define the \emph{root-face} as the face containing the marked corner, and the \igirth\ is defined with respect to this face. 
We now want to use the bijection of Theorem~\ref{theo:bij_d_plane} about dark-rooted hypermaps of \igirth\ $d$ in order to count corner-rooted hypermaps of \igirth\ $d$.
Note that a given face-rooted hypermap with outer degree $d$ can correspond to less than $d$ corner-rooted hypermaps if the face-rooted hypermap has some symmetries. However the master bijection $\Phi_-$ behaves nicely with respect to symmetries and we get the following lemma.
\begin{lem}\label{lem:count}
Let $H$ be a dark-rooted hyperorientation in $\cHm$ and let $T=\Phi_-(H)$ be the corresponding hypermobile. Let $a$ and $b$ be respectively the number of distinct marked hypermobiles obtained by marking a bud of $T$ and by marking an edge of $T$ having a round extremity. Then the number of distinct corner-rooted maps obtained from $H$ by choosing a root-corner in the root face is $c=a-b$.
\end{lem}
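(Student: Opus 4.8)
The plan is to establish the identity $c=a-b$ by an orbit-counting argument built on the fact that the master bijection $\Phi_-$ is canonical, hence equivariant under isomorphisms. I would first set $G=\mathrm{Aut}(H)$, the automorphism group of the dark-rooted hyperorientation $H$, and $m=|G|$. Because the construction of $\Phi_-$ --- placing square vertices in faces and round vertices at vertices, applying the local rules of Figure~\ref{fig:local-rule-hyperori}, and deleting the outer star --- uses no labelling, every automorphism of $H$ induces one of $T=\Phi_-(H)$, and conversely via $\Psi_-$; so $G\cong\mathrm{Aut}(T)$, also of order $m$. Both groups act by orientation-preserving symmetries, as $H$ lies on the oriented sphere and $\Phi_-$ preserves orientation.

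The heart of the argument will be that $G$ acts \emph{freely} on each of the three relevant families of cells, so that each count is the corresponding cardinality divided by $m$. First, $G$ fixes the root face and permutes its $d$ corners; an orientation-preserving automorphism fixing a corner fixes the whole flag and is thus the identity, so this action is free. Since two root-corners of the root face give isomorphic corner-rooted hypermaps precisely when they lie in one $G$-orbit, this yields $c=d/m$. Next, a bud is a half-edge, and an orientation-preserving automorphism of a plane tree is pinned down by the image of a single half-edge; hence $G$ acts freely on the buds and $a=B/m$, where $B$ is the number of buds. Finally, a non-trivial automorphism could fix an edge with a round extremity only by interchanging its two endpoints --- impossible, since in a hypermobile the endpoints of any edge are of different types (one dark square, the other round or light square); hence $G$ acts freely on these edges and $b=E/m$, where $E$ is their number.

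To close, I would invoke Theorem~\ref{theo:master_bij1}: the outer degree $d$ of $H\in\cHm$ equals minus the excess of $T$, and the excess is by definition $E-B$, so $B-E=d$. Combining the three free-action counts,
\[
a-b=\frac{B}{m}-\frac{E}{m}=\frac{B-E}{m}=\frac{d}{m}=c.
\]

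The hard part will be the freeness of the action on the cells of $T$, and in particular ruling out an automorphism that flips an edge (the ``central-edge'' half-turn that a generic plane tree may admit); this is exactly where the bipartite vertex-type structure of hypermobiles is essential, since no edge can join two vertices of the same type. A secondary point to handle carefully is the equivariance $\mathrm{Aut}(H)\cong\mathrm{Aut}(T)$, which rests on the purely local, label-free descriptions of $\Phi_-$ and $\Psi_-$; since the hyperorientation of an element of $\cHm$ is canonical, this also lets one pass freely between automorphisms of the oriented hypermap and of the underlying hypermap in the corner count.
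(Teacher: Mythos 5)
Your proposal is correct and follows essentially the same route as the paper: the paper's proof likewise observes that $H$ and $T=\Phi_-(H)$ have symmetry groups of the same order $k$, acting freely so that $c=\de/k$, $a=\al/k$, $b=\be/k$, and then concludes from $\al-\be=\de$ (the excess relation of Theorem~\ref{theo:master_bij1}). Your write-up merely spells out the freeness of the actions and the equivariance of $\Phi_-$ in more detail than the paper does.
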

\begin{proof}
Let $\de$ be the outer-degree of $H$. By Theorem~\ref{theo:bij_d_plane}, $T$ has excess $-\de$, that is, its numbers $\al$ and $\be$ of buds and edges with a round extremity are related by $\al-\be=\de$. Moreover it is clear from the definition of $\Phi_-$ that $H$ has a symmetry of order $k$ (which has to be a rotational symmetry preserving the root face) if and only if $T$ has a symmetry of order $k$. In other words, $c=\de/k$ if and only if $a=\al/k$ and $b=\be/k$. Thus, $c=\de/k=\al/k-\be/k=a-b$.
\end{proof}


Let $d\geq 1$, and let $\cF_d$ be the family of corner-rooted hypermaps of \igirth\ $d$ with a dark outer face of degree $d$. 
Let $F_d\equiv F_d(x_1,y_1;x_2,y_2;\ldots)$ be the generating function of $\cF_d$
where $x_k$ marks the number of light faces of degree $k$, and $y_k$ marks
the number of dark inner faces of degree $k$. 
Let $A_d\equiv A_d(x_1,y_1;x_2,y_2;\ldots)$ (resp. $B_d\equiv B_d(x_1,y_1;x_2,y_2;\ldots)$)
be the generating function of $d$-weighted hypermobiles with a marked bud (resp. with 
a marked edge having a round extremity), where $x_k$ marks the number of light square vertices
of degree $k$, and $y_k$ marks the number of dark square vertices of degree $k$. 
The bijection of Theorem~\ref{theo:bij_d_plane} and Lemma~\ref{lem:count} ensure that
$$
F_d=A_d-B_d.
$$
We now calculate $A_d$ and $B_d$, with the help of auxiliary generating functions. 
A \emph{planted $d$-hypermobile} is a tree $T$ 
that can be obtained as one of the two 
components after cutting a $d$-weighted hypermobile at the middle of 
an edge $e$. The extremity of $e$ in the chosen component
is called the \emph{root-vertex} of $T$, the half-edge of $e$ in the chosen
component is called the \emph{root-leg} of $T$, and 
 the weight of $e$ is called
the \emph{root-weight} of $T$. 
For $i\in\bz$, we denote by $\cW_i$ (resp. $\cL_i$) the family of planted $d$-hypermobiles with root-weight
$i$ with a root-vertex which is dark-square (resp. not dark-square). 
Define $W_i\equiv W_i(x_1,y_1;x_2,y_2;\ldots)$ (resp. $L_i\equiv L_i(x_1,y_1;x_2,y_2;\ldots)$) as the generating function of $\cW_i$ (resp. $\cL_i$) where $x_k$
marks the number of light square vertices of degree $k$, and $y_k$ marks
the number of dark square vertices of degree $k$. Note that $W_i=L_i=0$ if $i>d$. We also define
\begin{equation}\label{eq:LM}
\W(u)=u+\sum_{k\leq 0}W_k u^{k+1},\ \ \ L(u)=u^{-d+1}\sum_{k\in\bz}L_k u^k.
\end{equation}

We now write equations specifying the series $W_i$ and $L_i$ using the classical recursive decomposition of trees at the root. 
As in~\cite{BFbij,BFgir}, we will need the following notation: for $k\geq 0$ and $s\geq 0$, 
define the multivariate polynomial $h_k(w_1,\ldots,w_s)$ as
$$
h_k(w_1,\ldots,w_s)=[t^k]\frac{1}{1-\sum_{m=1}^st^m w_m}.
$$
In other words, $h_k$ is the generating function of the compositions of $k$ with weight $w_i$ for each part of size $i$.

For $i\geq 1$ any mobile in $\cL_i$ has a root-vertex $v$ which is round. Hence the children of $v$ are dark square, and the edges incident to $v$ have positive weight. Moreover, the total weight at $v$ is $d$, with a contribution $i$ from the root leg.
Hence the total weight of the edges from $v$ to its children is $d-i$. This gives
\begin{equation}\label{eq:1}
L_i=h_{d-i}(W_1,\ldots,W_{d-1})\ \ \mathrm{for}\ i\geq 1.
\end{equation}

For $i\leq 0$, a mobile in $\cL_i$ has a root-vertex $v$ which is light square. 
Hence the children of $v$ are dark square, and the edges incident to $v$ have non-positive weight.
Moreover, the total weight at $v$ is $d-\deg(v)$, with a contribution $i$ from the root-leg. 
If $v$ has degree $\delta$, the $\delta-1$ other half-edges incident to $v$ are either buds or are on an edge (with non-positive weight) leading to a dark square vertex.
This gives
$$
L_i=\sum_{\delta\geq d-i}x_{\delta}[u^{d-\delta-i}]
\Big( 1+\sum_{k\leq 0}W_k u^k \Big)^{\delta-1}\ \ \mathrm{for}\ i\leq 0.
$$ 
In other words, 
\begin{equation}\label{eq:2}
L_i=[u^{d-i-1}]\sum_{\delta\geq d-i}x_{\delta}\W(u)^{\delta-1}\ \ \mathrm{for}\ i\leq 0.
\end{equation}
For $i\in\bz$, a mobile in $\cW_i$ has a root-vertex $v$ that is dark square.
If $v$ has degree $\delta$ then the weight of $v$ is $d\delta-d-\delta$,
with a contribution $i$ from the root-leg. Hence
$$
W_i=\sum_{\delta\geq 1}y_{\delta}[u^{d\delta-d-\delta-i}]\Big( \sum_{k\in\bz}L_k u^k \Big)^{\delta-1}.
$$
In other words, 
\begin{equation}\label{eq:3}
W_i=[u^{-i-1}]\sum_{\delta\geq 1}y_{\delta}\ \!L(u)^{\delta-1}\ \ \mathrm{for}\ i\in\bz.
\end{equation}

\begin{theo}\label{theo:count_plane_hypermaps}
For $d\geq 1$ the generating function $F_d(\xx,\yy)$ of the class $\cF_d$ of corner-rooted hypermaps of \igirth\ $d$ having a dark root-face of degree $d$ is given by
$$
F_d=L_0-\sum_{i=1}^d L_i W_i,
$$
where the series $L_i$ and $W_i$ are specified by \eqref{eq:2} and~\eqref{eq:3} (with $L(u)$ and $\W(u)$ defined in~\eqref{eq:LM}).
 
Moreover
$$
\frac{\partial F_d}{\partial x_k}=\frac{d}{k}[u^d]\W(u)^k,\ \ \ \ \ \frac{\partial F_d}{\partial y_k}=\frac{d}{k}[u^{-d}]L(u)^k.
$$ 
\end{theo}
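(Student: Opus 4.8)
The plan is to first establish the closed form $F_d=L_0-\sum_{i=1}^dL_iW_i$, and then to deduce the two derivative formulas by a corner-counting argument. I take for granted the identity $F_d=A_d-B_d$ coming from Theorem~\ref{theo:bij_d_plane} and Lemma~\ref{lem:count}, and the recursive specifications \eqref{eq:1}--\eqref{eq:3} of $L_i$ and $W_i$; the only bookkeeping convention I need is that in $M(u)$ and $L(u)$ each half-edge hanging at a square vertex contributes a $u$-exponent equal to its weight plus one.

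To get $F_d$ I would identify $A_d$ and $B_d$ with the planted series. For $A_d$: in a hypermobile with a marked bud, the bud sits at a light square vertex $v$ of degree $\delta$ and weight $d-\delta$; the bud carries weight $0$, so the $\delta-1$ other half-edges of $v$ — each a bud or an edge towards a dark-square-rooted planted submobile — carry total weight $d-\delta$, hence total $u$-exponent $d-1$. This is precisely $[u^{d-1}]\sum_\delta x_\delta M(u)^{\delta-1}$, i.e. the right-hand side of \eqref{eq:2} at $i=0$ (the restriction $\delta\ge d$ there being automatic, since the top $u$-degree of $M(u)$ is $1$), so $A_d=L_0$. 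For $B_d$: a marked edge with a round extremity joins a round vertex to a dark square vertex and has some positive weight $i$; cutting it yields an ordered pair made of a round-rooted planted submobile (an element of $\cL_i$, series $L_i$ by \eqref{eq:1}) and a dark-square-rooted planted submobile (an element of $\cW_i$, series $W_i$), and this is reversible. Summing over $i\ge 1$ and using $W_i=0$ for $i>d$ gives $B_d=\sum_{i=1}^dL_iW_i$, whence $F_d=A_d-B_d=L_0-\sum_{i=1}^dL_iW_i$.

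For the derivative formulas I would first reinterpret the coefficient extractions. The series $[u^d]M(u)^k$ is the generating function of $d$-weighted hypermobiles carrying a distinguished light square vertex $v$ of degree $k$ with a marked corner at $v$: the marked corner turns the cyclic list of the $k$ half-edges of $v$ into a sequence, each encoded by $M(u)$, and since $v$ has weight $d-k$ the total $u$-exponent is $(d-k)+k=d$; importantly $v$ itself is not weighted by $x_k$. Symmetrically $[u^{-d}]L(u)^k$ is the generating function of such mobiles with a corner-marked distinguished dark square vertex of degree $k$ (its $k$ half-edges being edges towards $\cL$-planted submobiles, encoded by $L(u)$, with total exponent $-d$ since $v$ has weight $dk-d-k$). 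Now consider the generating function $P$ of hypermaps of \igirth\ $d$ with a dark face of degree $d$ chosen as outer face and bearing a marked corner, together with a distinguished light inner face of degree $k$ also bearing a marked corner (the two distinguished faces contributing no variable). Marking the dark corner first realizes the object as a corner-rooted map of $\cF_d$, then distinguishing the light face of degree $k$ gives $\partial F_d/\partial x_k$, and finally marking one of its $k$ corners gives $P=k\,\partial F_d/\partial x_k$. Marking the light face and its corner first gives, through the bijection of Theorem~\ref{theo:bij_d_plane} between dark-rooted hypermaps of \igirth\ $d$ and $d$-weighted hypermobiles, exactly the series $[u^d]M(u)^k$; this object is already rigid (it bears a marked corner), so the outer dark face has $d$ distinct corners and marking one gives $P=d\,[u^d]M(u)^k$. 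Comparing the two counts of $P$ yields $\partial F_d/\partial x_k=\frac dk[u^d]M(u)^k$, and the same argument with a distinguished dark inner face of degree $k$ gives $\partial F_d/\partial y_k=\frac dk[u^{-d}]L(u)^k$.

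Step~1 is essentially a routine translation once the weight/exponent convention is fixed. The delicate part is the corner-counting of the last step. One must check carefully that the marked corners make both the hypermaps and the hypermobiles rigid, so that the two face-markings contribute exactly the multiplicative factors $k$ and $d$ with no residual automorphism correction of the kind handled by Lemma~\ref{lem:count}; and one must verify that a marked corner of a face of the hypermap corresponds under the master bijection to a marked corner at the associated square vertex of the mobile, which is what makes the \emph{ordered} product $M(u)^k$ (resp. $L(u)^k$) the correct generating function. Pinning down these corner correspondences and rigidity statements is the main obstacle.
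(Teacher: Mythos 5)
Your proposal is correct and follows essentially the same route as the paper: the identity $F_d=A_d-B_d$ with $A_d=L_0$ (marked bud read as a root-leg of weight $0$, matching \eqref{eq:2} at $i=0$) and $B_d=\sum_{i=1}^d L_iW_i$ (cutting the marked edge of weight $i$ into an $\cL_i$-planted and a $\cW_i$-planted mobile), and then the derivative formulas by double-counting hypermaps carrying a marked corner in the outer dark face and a marked corner in a distinguished inner face, transported through $\Phi_-$ to a corner-marked square vertex of the mobile whose generating function is $[u^d]M(u)^k$ (resp.\ $[u^{-d}]L(u)^k$). The rigidity and corner-correspondence points you flag at the end are exactly the ones the paper treats as immediate (once one corner is marked the objects have no automorphisms, and the degree-preserving correspondence of Theorem~\ref{theo:bij_d_plane} identifies corners of a face with corners of the associated square vertex), so no further work is needed.
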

\begin{proof}
About the expression of $F_d$, we have seen that $F_d=A_d-B_d$. Note that $A_d=L_0$ (because the marked bud can be turned into a leg of
weight~$0$) 
and $B_d=\sum_{i=1}^d L_i W_i$ (because the marked edge $e$ can have any weight
$i\in\{1..d\}$, and cutting $e$ in its middle yields 
two planted $d$-hypermobiles that are respectively
in $\cL_i$ and in $\cW_i$). 
For expressing the partial derivatives of $F_d$, we note that $x_k\frac{k}{d}\frac{\partial F_d}{\partial x_k}$ is the generating function of dark-rooted hypermaps with a dark root face of degree $d$, and an additional marked corner in an inner light face of degree $k$. By the bijection $\Phi_-$ this is also the generating function of $d$-weighted hypermobiles with a marked corner at a light square vertex of degree $k$, which is easily seen to be $x_k[u^d]\W(u)^k$.
A similar argument gives the expression for the partial derivative according to $y_k$. 
\end{proof}

\begin{remark}
The generating function $F_d$ of hypermaps can be specialized into a generating function of maps. More precisely, the class $\cG_d$ of corner-rooted maps of girth $d$ with a root-face of degree $d$, identifies with the set of hypermaps in $\cF_d$ such that every inner dark face has degree 2. Thus the generating $G_d$ of $\cG_d$ is obtained from $F_d$ by setting $y_2=1$ and $y_{\delta}=0$ for $\delta\neq 2$. Theorem~\ref{theo:count_plane_hypermaps} then gives the expressions of $G_d$ given in~\cite{BFgir} (upon observing that \eqref{eq:3} yields $W_i=L_{d-2-i}$, that is, $L_i=W_{d-2-i}$).
\end{remark}

For any sets $\Delta,\Delta'$, the generating function $F_{d,\Delta,\Delta'}$ of corner-rooted hypermaps in $\cF_d$ with inner light face having degree in $\Delta$ and inner dark face having degree in $\Delta'$ is obtained by setting $x_k=0\ \mathrm{for}\ k\notin \Delta,\ y_k=0\ \mathrm{for}\ k\notin \Delta'$. We point out that the generating function $F_{d,\Delta,\Delta'}$ is algebraic as soon as $\Delta,\Delta'$ are both finite (because only a finite number of auxiliary series $W_i,L_i$ are involved).
For instance, for $d=4$, $\Delta=\{4\}$ and $\Delta'=\{3\}$, we have
$$
F_{4,\{4\},\{3\}}=L_0-L_1W_1-L_2W_2-L_3W_3-L_4W_4,
$$ 
where the series $\{L_0,L_1,L_2,L_3,L_4,W_0,W_1,W_2,W_3,W_4\}$ are specified by
\begin{eqnarray*}
&&L_0=x_4(1+W_0)^3,\ L_1=W_1^3+2W_1W_2+W_3,\ L_2=W_1^2+W_2,\ L_3=W_1,\ L_4=1,\\
&&W_0=2y_3L_2L_3,\ W_1=y_3(2L_1L_3+L_2^2),\ W_2=2y_3L_1L_2,\ W_3=y_3L_1^2,\ W_4=2y_3L_1.
\end{eqnarray*}


\section{Recovering known bijections as specializations}\label{section:recovering-bijections}
In this section we show that the bijections described in \cite{BMSc00,BMSc02} can be recovered by specializing the bijections of Theorem~\ref{theo:bij_d_plane}, and the bijections described in \cite{BDG04,BDG07} can be recovered by specializing the master bijection $\Phi_0$ (in a way which can be thought of as the case $d=0$ of Theorem~\ref{theo:bij_d_plane}).

\subsection{The Bousquet-M\'elou Schaeffer bijection for bipartite maps}
Recall that a \emph{bipartite map} is a map whose vertices can be colored in black and white such that each edge connects a black vertex to a white vertex. A \emph{1-leg bipartite map} is a bipartite map with a marked vertex of degree $1$; this vertex is considered as black, hence it fixes the coloring of all vertices. Note that 1-leg bipartite maps are dual to dark-rooted hypermaps of outer degree $1$. In~\cite{BMSc02}, Bousquet-M\'elou and Schaeffer have given a bijection between 1-leg bipartite maps and so-called well-charged blossom trees. We show here that the bijection in~\cite{BMSc02} is equivalent (up to duality) to the case $d=1$ of the bijection of Theorem~\ref{theo:bij_d_plane}. 

A \emph{blossom tree} is a bipartite plane tree (with black and white vertices) with dangling half-edges. The dangling half edges at black and white vertices are called \emph{outbuds} and \emph{inbuds} respectively (the terminology in \cite{BMSc02} is actually \emph{buds} and \emph{leaves} but this is confusing in the present context).
A \emph{planted subtree} of a blossom tree $T$ is a subtree that can be obtained as one of the two components after cutting at the middle of an edge $e$ of $T$ (not at a bud). 
The extremity of $e$ in the chosen component is called the root-vertex of the planted subtree.
The \emph{charge} of a blossom tree or subtree is its number of inbuds minus its number of outbuds\footnote{This notion of charge is taken from~\cite{BMSc02} and is not related to the notion of charge (which constraints the cycle-lengths) to be introduced in Section~\ref{sec:bij-charged-maps}.}. A blossom tree is \emph{well-charged} if it has charge $1$ and every planted subtree has charge at most $1$ when its root-vertex is black, and at least $0$ when its root-vertex is white. A well-charged blossom tree is represented in Figure~\ref{fig:1leg}(b). 

We first show that well-charged blossom trees identify to $1$-weighted hypermobiles, see Figure~\ref{fig:1leg}(a)-(b). 
By definition the round vertices of $1$-weighted hypermobiles have weight 1 hence are leaves (i.e., vertices of degree 1).
Thus, forgetting the weights, a 1-weighted hypermobile identifies to a blossom tree by interpreting dark and light square vertices as black and white vertices, round vertices as outbuds, and buds as inbuds. Hence we define the \emph{charge} of a $1$-weighted hypermobile or of a planted $1$-hypermobile as its number of buds minus its number of round vertices.
An easy induction (using the same recursive decomposition as in Section~\ref{sec:count_plane_hypermaps}) ensures that a planted 1-hypermobile of root-weight $w$ such that the root-vertex is dark square (resp. light square) has charge $-w$ (resp. $w+1$). Thus the fact that the edge weights of 1-hypermobiles are positive for edges having a round endpoint and non-positive otherwise corresponds to the fact that the associated blossom tree is well-charged.
Thus well-charged blossom trees identify to $1$-weighted hypermobiles (if one starts from a well-charged blossom tree, the weights on the corresponding $1$-hypermobile are determined: each edge $e$ gets a weight $c-1$, where $c$ is the charge of the planted subtree rooted on the dark square endpoint of $e$).

\begin{figure}
\begin{center}
\includegraphics[width=\linewidth]{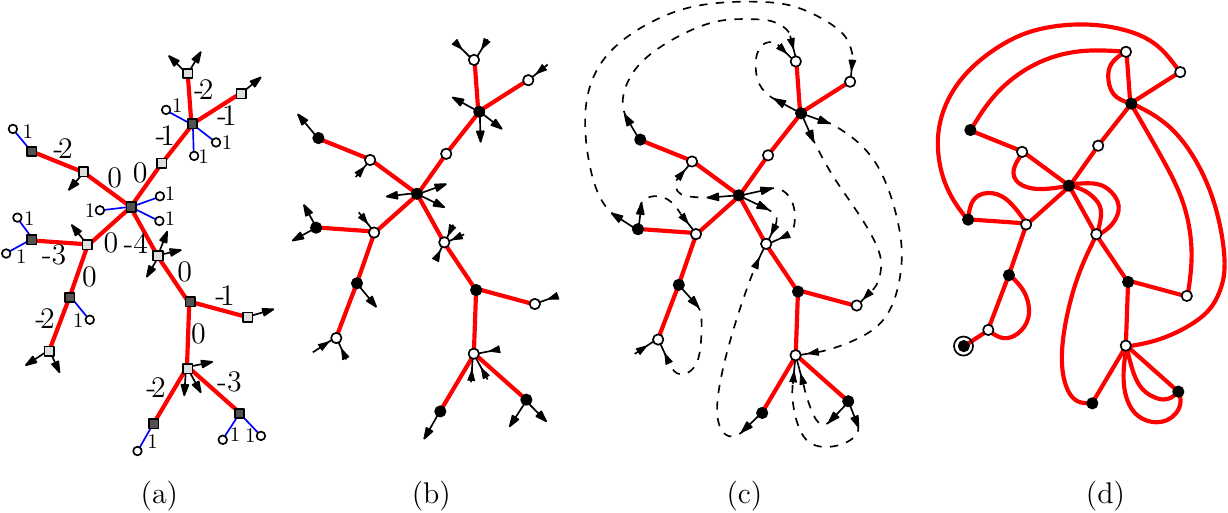}
\end{center}
\caption{(a) A 1-weighted hypermobile $T$. 
(b) The corresponding well-charged blossom tree $T'$ (inbuds and outbuds are represented by ingoing and outgoing arrows). (c)-(d) The closure of $T'$, which is the same as the closure of $T$.}
\label{fig:1leg}
\end{figure}

The bijection in~\cite{BMSc02} associates a 1-leg bipartite map to each well-charged blossom tree using a closure operation; see Figure~\ref{fig:1leg}(b)-(d). More precisely, for a well-charged blossom tree $T$ one considers the cyclic word $w_T$ obtained by walking clockwise around $T$ and encoding outbuds and inbuds by letters $a$ and $\ba$ respectively. Then, the cw-matching outbuds and inbuds of $T$ are joined into edges. Since the charge of $T$ is $1$, there remains 1 unmatched inbud. The result of the closure operation is therefore a 1-leg bipartite map, if one interprets the unmatched inbud as the leg leading to a black vertex of degree $1$. Moreover, it is clear that this closure operation of~\cite{BMSc02} applied to a well-charged blossom tree is equivalent to the closure operation of $\Psi_+$ (as formulated in Section~\ref{sec:alter}) applied to the corresponding $1$-weighted hypermobile. To summarize we obtain:

\begin{prop}
The blossom trees of~\cite{BMSc02} identify to $1$-weighted hypermobiles.
Under this identification the bijection of~\cite{BMSc02} is the same as the case $d=1$ of the bijection of Theorem~\ref{theo:bij_d_plane}. 
\end{prop}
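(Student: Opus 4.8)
The plan is to prove the two assertions of the proposition separately: first, that well-charged blossom trees are the same objects as $1$-weighted hypermobiles (as weighted plane trees), and second, that the closure map of~\cite{BMSc02} coincides with the inverse master bijection $\Psi_-$ restricted to this family. For the first assertion I would begin by matching the \emph{unweighted} structures. In a $1$-weighted hypermobile every round vertex has weight $d=1$, and by Remark~\ref{rk:weight-multiple} all edge weights are positive integers; since the edges incident to a round vertex are positive, each round vertex must be a leaf carrying a single edge of weight~$1$. Forgetting the weights therefore yields a bipartite plane tree with dangling half-edges, and the dictionary dark/light square $\leftrightarrow$ black/white vertex, round leaf $\leftrightarrow$ outbud, bud $\leftrightarrow$ inbud identifies it with a blossom tree. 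It then remains to recover the weights and to check that the defining inequalities match.

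The heart of the argument is the weight reconstruction, for which I would use the recursive decomposition at the root of Section~\ref{sec:count_plane_hypermaps} together with the additivity of charge. Define the charge of a (planted) hypermobile as its number of buds minus its number of round vertices, which under the dictionary is exactly the blossom-tree charge (inbuds minus outbuds). Cutting an edge $e$ splits the charge additively, and on a full $1$-weighted hypermobile the total charge equals $1$, because $\mathrm{charge}=-\mathrm{excess}$ (each round vertex being a leaf with a single edge, so the edges with a round extremity are exactly the round vertices) and the excess is $-d=-1$ by Theorem~\ref{theo:master_bij1}. Writing $c_D$ and $c_L$ for the charges of the dark-square side and the light-square side of $e$, so that $c_D+c_L=1$, I would show that the weight of $e$ equals $c_D-1=-c_L$. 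This pins the weights down uniquely, and conversely lets one reconstruct them on any well-charged blossom tree by the rule $w_e=c_D-1$; the recursive decomposition then verifies that these weights satisfy all the local conditions (round vertices of weight $1$, dark squares of weight $-1$, light squares of weight $1-\deg$). Finally the sign conditions translate into the well-charged inequalities: an edge joining a dark square to a light square (a genuine edge of the blossom tree) is non-positive iff $c_D\le 1$ iff $c_L\ge 0$, which is precisely ``charge at most $1$ at a black root, at least $0$ at a white root'', while the edges incident to round vertices are forced to weight $1>0$ and correspond to buds, at which one never cuts.

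For the second assertion I would use the sprout formulation of $\Psi_*$ from Section~\ref{sec:alter}. Under the dictionary above, the clockwise walk around a blossom tree that encodes outbuds and inbuds by $a$ and $\ba$ is literally the cyclic word $w_T$ used in $\Psi_-$, and the cw-matching of outbuds to inbuds in~\cite{BMSc02} is the same parenthesis matching; since the excess is $-1$ (so $*=-$) there is exactly one unmatched sprout, which in~\cite{BMSc02} becomes the leg leading to a black degree-$1$ vertex and in $\Psi_-$ becomes the root dark face of degree~$1$ after taking the dual — this is exactly the stated duality between $1$-leg bipartite maps and dark-rooted hypermaps of outer degree~$1$. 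Hence the two closures agree, and combined with the identification of the underlying weighted trees this gives that the bijection of~\cite{BMSc02} is the case $d=1$ of Theorem~\ref{theo:bij_d_plane}. I expect the main obstacle to be the sign bookkeeping in the induction: one must check that the buds are precisely the round-incident half-edges excluded from the cutting operation, so that the well-charged inequalities constrain exactly the dark-square--to--light-square edges and nothing else, and that the charge convention lines up with the excess so that the global ``charge $=1$'' condition is equivalent to landing in $\cHm$ through $\Phi_-$.
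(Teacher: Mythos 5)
Your proposal follows essentially the same route as the paper's proof: round vertices of a $1$-weighted hypermobile are forced to be weight-$1$ leaves, the dictionary (dark/light square $\leftrightarrow$ black/white vertex, round leaf $\leftrightarrow$ outbud, bud $\leftrightarrow$ inbud) matches the unweighted trees, the edge weight is recovered as $c_D-1=-c_L$ from the charges of the two planted subtrees (which agrees with the paper's reconstruction rule $w=c-1$ at the dark endpoint, and your version of the planted-subtree charge formula is in fact the correct one), and the two closure operations are matched through the sprout formulation. The argument is correct; the only nitpicks are that Remark~\ref{rk:weight-multiple} gives integrality rather than positivity of all edge weights (positivity at round vertices is part of the definition of $1$-weighted hypermobiles), and that you rightly identify the relevant closure as $\Psi_-$ since the excess is $-1$.
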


\subsection{The Bousquet-M\'elou Schaeffer bijection for constellations}
For any fixed $p\geq 2$, we call \emph{$p$-constellation} a (planar) hypermap where the degree of each dark face is $p$ and the degree of each light face is a multiple of $p$ (these maps encode certain factorizations in the symmetric group; see~\cite{ZvLa97}). 
In \cite{BMSc00}, Bousquet-M\'elou and Schaeffer have given a bijection 
between dark-rooted $p$-constellations and so-called $p$-Eulerian trees.
 We show here that the bijection in~\cite{BMSc00} is equivalent to the case $d=p$ of the bijection of Theorem~\ref{theo:bij_d_plane} applied to $p$-constellations. Before discussing the equivalence, we show that $p$-constellations have ingirth $p$.
\begin{lem}
A $p$-constellation has \igirth\ $p$.
\end{lem}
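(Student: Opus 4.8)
The plan is to establish two inequalities that together force the ingirth of a $p$-constellation $H$ to equal $p$. Recall that the ingirth is the minimal length of an inward cycle, where a cycle $C$ is inward if every face incident to $C$ on its inner side is light. First I would exhibit an inward cycle of length exactly $p$, showing the ingirth is at most $p$; then I would argue that every inward cycle has length at least $p$, showing the ingirth is at least $p$.

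For the upper bound, I would take any dark face $f$ of $H$: since $H$ is a $p$-constellation, $f$ has degree $p$, so its contour is a closed walk of length $p$ bounding a region whose only incident inner face is $f$ itself. However $f$ is dark, so the contour of $f$ is not literally inward. The natural fix is to look at the contour of a dark face and note that a short inward cycle can be produced near it, or more robustly to use a parity/counting argument directly. So rather than relying on a single face I would prefer to prove the lower bound first and then produce a matching example.

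For the lower bound, the key observation is a parity constraint coming from the bipartite-like structure of constellations. In a $p$-constellation every dark face has degree $p$ and every light face has degree divisible by $p$; dualizing or using the incidence structure, one sees that the vertices of the underlying Eulerian map can be $p$-colored (the standard fact that constellations encode factorizations in the symmetric group corresponds to a proper $\mathbb{Z}/p\mathbb{Z}$-labelling of corners). Concretely, I would assign to each vertex a residue in $\bz/p\bz$ so that traversing any edge changes the label by $+1$ when the dark face is on one side. Then for any inward cycle $C$, the faces enclosed and incident to $C$ are all light, each of degree a multiple of $p$; an Euler-characteristic or discharging computation on the region enclosed by $C$ (with $|C|$ boundary edges, some number of enclosed light faces of degrees multiples of $p$, enclosed dark faces of degree $p$, and the corner-labelling constraint) forces $|C|$ to be a multiple of $p$, hence $|C|\geq p$. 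The main obstacle will be setting up this labelling/discharging argument cleanly so that the inwardness hypothesis (only light faces incident from inside) is exactly what makes the boundary length a positive multiple of $p$ rather than merely nonnegative.

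Finally, to match the bounds, I would produce one explicit inward cycle of length $p$: take a light face $g$ of minimal degree $p$ (such a face exists, e.g. adjacent to the structure forced by the constellation, or one argues the smallest enclosed region), whose contour is a cycle of length $p$ all of whose inner-incident faces are light by construction. Combining $\mathrm{ingirth}(H)\geq p$ with the existence of this length-$p$ inward cycle gives $\mathrm{ingirth}(H)=p$. I expect the labelling argument to be the crux; once the $\bz/p\bz$-structure of constellations is invoked, the two bounds should follow by short counting.
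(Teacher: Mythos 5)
Your proposal has two problems, one of emphasis and one of substance. First, the ``upper bound'' half is both unnecessary and unjustified. In this paper ``has ingirth $p$'' is used to mean that every inward cycle has length at least $p$ (this is made explicit in Lemma~\ref{lem:sigma-d-condition}, and it is all the specialization to constellations requires), so no inward cycle of length exactly $p$ needs to be exhibited. Moreover your candidate for such a cycle need not exist: a $p$-constellation can have all its light faces of degree $2p$, say, so there is no ``light face $g$ of minimal degree $p$'', and a face contour need not be a simple cycle anyway. Second, for the lower bound --- the actual content of the lemma --- you stop exactly at the point you yourself call the crux: you never say how the $\bz/p\bz$-labelling, or the unspecified ``Euler-characteristic or discharging computation'', converts inwardness into the divisibility $p\mid |C|$. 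The gap is real: around a cycle the label increments are $\pm1$ and only their signed sum is forced to vanish mod $p$, which does not give $|C|\equiv 0\pmod p$. To close it one must observe that, by inwardness, every edge of $C$ has its light face inside and hence its dark face outside, so all edges of $C$ are coherently oriented and each step around $C$ increments the label by exactly $+1$. With that observation your route can be completed, but as written the proof is not there.

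The paper's own argument avoids all of this machinery with a two-line count: for an inward cycle $C$, each edge strictly inside $C$ is incident to exactly one light and one dark face enclosed by $C$, while each edge of $C$ is incident on its inner side only to a light face (this is precisely inwardness). Hence $|C|=A-B$, where $A$ and $B$ are the total degrees of the enclosed light and dark faces. Both are multiples of $p$ by the definition of a $p$-constellation, so $|C|$ is a positive multiple of $p$ and therefore at least $p$. I would recommend replacing the labelling approach by this direct count.
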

\begin{proof}
Let $K$ be a $p$-constellation, and let $C$ be an inward cycle of $K$.
Clearly the length of $C$ equals $A-B$, where $A$ is the total degree
of all light faces inside $C$ and $B$ is the total degree of all
dark faces inside $C$. Since all faces (dark or light) have degree a multiple of $p$,
the length of $C$ is a multiple of $p$, hence is at least $p$. 
\end{proof}

We now explicit the equivalence of the bijection in~\cite{BMSc00} with the the case $d=p$ of Theorem~\ref{theo:bij_d_plane} applied to $p$-constellations.
A \emph{$p$-Eulerian tree} is a bipartite plane tree (with black and white vertices) satisfying:
\begin{compactitem}
\item Each black \emph{inner node} (non-leaf vertex) has degree $p$ and has either $n=1$ or $n=2$ neighbors that are inner nodes. This black vertex is said to be of \emph{type $n\in\{1,2\}$}. 
\item Each white inner node has degree of the form $p\,i$ with $i\geq 1$, and it has $i-1$ neighbors that are black inner nodes of type $1$. 
\end{compactitem}

\begin{figure}
\begin{center}
\includegraphics[width=\linewidth]{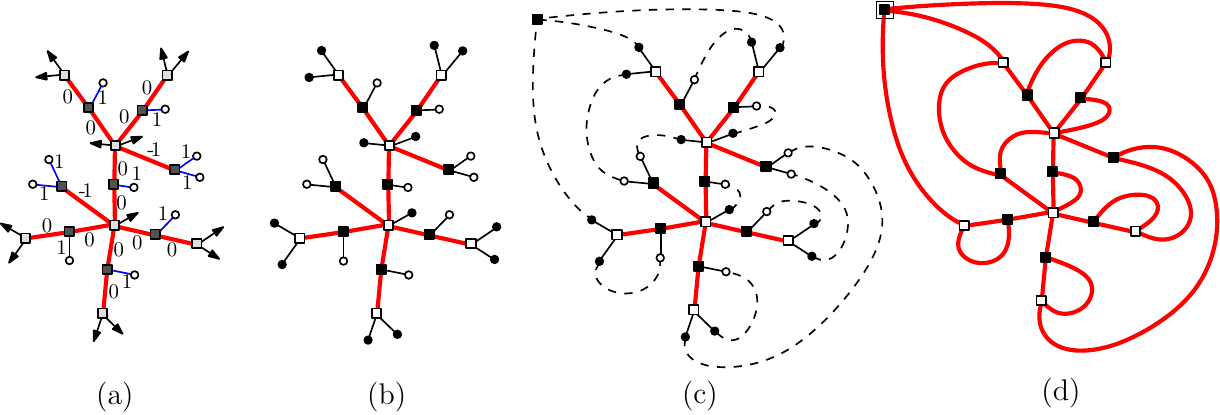}
\end{center}
\caption{(a) A $p$-weighted hypermobile $T$ associated with a dark-rooted $p$-constellation, after all weights have been divided by $p=3$. (b) The corresponding $p$-Eulerian tree $T'$. (c)-(d) The closure of $T'$, which is the same as the closure of $T$.}
\label{fig:eulerian_tree}
\end{figure} 

We first show that $p$-Eulerian trees identify with the $p$-weighted hypermobiles corresponding to $p$-constellations. 
A $p$-weighted hypermobile $T$ corresponds to a $p$-constellation if all dark square vertices have degree $p$ and all light square vertices have degree multiple of $p$. In this case, by Remark~\ref{rk:weight-multiple}, all the edge weights of $T$ are multiple of $p$, and we denote by $T'$ the weighted-hypermobile obtained by dividing every weight by $p$. 
In $T'$ the weight of each round vertex is $1$, the weight of each dark square vertex is $p-2$, and the weight of each light square vertex of degree $p\,i$ is $1-i$.
Since round vertices have weight 1 they are leaves. 
Since a dark square vertex has degree $p$ and weight $p-2$, it has either $p-1$ round neighbors and $n=1$ light square neighbor (and the edge to the
light square neighbor has weight $-1$) or it has $p-2$ round neighbors and $n=2$ light square neighbors (and the edges to the light square neighbors have weight $0$). This dark square vertex is said to be of \emph{type $n\in\{1,2\}$}. 
Since a light square vertex of degree $p\,i$ has weight $1-i$, it has $i-1$ dark square neighbors of type $1$. 
Thus $p$-weighted hypermobiles corresponding to $p$-constellations identify with $p$-Eulerian trees if one interprets buds as black leaves, 
round vertices as white leaves, 
dark square vertices as black inner nodes, 
and light square vertices as white inner nodes. Indeed, if one starts from a $p$-Eulerian tree, the corresponding hypermobile is obtained by giving a weight $w$ to each edge $e=(u,v)$ connecting a black inner node $u$ to a white inner node $v$, where $w=-1$ if $u$ has type $1$ and $w=0$ if $u$ has type $2$. 

The bijection in~\cite{BMSc00} associates a dark-rooted $p$-constellation 
with such a tree $T$ using a closure operation (see Figure~\ref{fig:eulerian_tree}(b)-(d)). More precisely, a counterclockwise walk around
the outer face of $T$ sees a succession of black leaves and white leaves, and we consider the cw-matching when black leaves are interpreted as 
 letters $a$, and white leaves as letters $\ba$. The pairs of cw-matching leaves are joined by edges. 
It can be shown that a $p$-Eulerian tree has an excess of $p$ black leaves over white leaves. Hence after the cw-matching, there remain $p$ unmatched black leaves (all in the outer face) and these are merged into a black vertex of degree $p$ taken as the root-vertex. This yields a vertex-rooted bipartite map where black vertices have degree $p$ and white vertices have degree multiple of $p$. Hence the dual of the obtained bipartite map is a dark-rooted $p$-constellation.

It is clear that the closure mapping (as formulated in Section~\ref{sec:alter}) applied to a $p$-weighted hypermobile of a $p$-constellation is equivalent
to the closing mapping of~\cite{BMSc00} applied to the corresponding $p$-Eulerian
tree. To summarize we obtain:
\begin{prop}
For $p\geq 2$, the $p$-Eulerian trees of~\cite{BMSc00} 
identify to $p$-weighted hypermobiles that are associated with dark-rooted $p$-constellations.
Under this identification the bijection of~\cite{BMSc00} is the same as 
the case $d=p$ of the bijection of Theorem~\ref{theo:bij_d_plane} applied to $p$-constellations.
\end{prop}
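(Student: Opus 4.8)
The plan is to prove the two assertions in turn, formalizing the correspondence sketched in the preceding paragraphs.

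First I would establish the identification between $p$-Eulerian trees and the $p$-weighted hypermobiles associated to $p$-constellations. Starting from such a hypermobile $T$ --- characterized by dark square vertices of degree $p$ and light square vertices of degree a multiple of $p$ --- Remark~\ref{rk:weight-multiple} guarantees that every edge-weight is a multiple of $p$, so dividing all weights by $p$ yields a weighted tree $T'$ in which round vertices have weight $1$, each dark square vertex has weight $p-2$, and each light square vertex of degree $p\,i$ has weight $1-i$. I would then read off the local structure from the degree and weight constraints: since round vertices have weight $1$ they are leaves and their incident edge has weight $1$; a dark square vertex of degree $p$ and weight $p-2$ must therefore have either $p-1$ round neighbors and one light-square neighbor joined by an edge of weight $-1$ (type $1$), or $p-2$ round neighbors and two light-square neighbors joined by edges of weight $0$ (type $2$), the non-positivity of light-incident edge weights ruling out every other case; and a light square vertex of weight $1-i$ has exactly $i-1$ type-$1$ dark-square neighbors. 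Interpreting buds, round vertices, dark squares and light squares as black leaves, white leaves, black inner nodes and white inner nodes respectively, these are precisely the defining conditions of a $p$-Eulerian tree. The converse assignment of weights is forced (an edge from a type-$n$ black inner node receives weight $-1$ or $0$ according as $n=1$ or $2$, and round-incident edges receive weight $1$, before multiplying back by $p$), so the two classes are in bijection.

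Second I would check that under this identification the closure map of~\cite{BMSc00} coincides with the mapping $\Psi_-$ in the sprout formulation of Section~\ref{sec:alter}. Since a $p$-Eulerian tree has $p$ more black leaves (buds) than white leaves (round vertices), the corresponding hypermobile has excess $-p$, so the relevant master map is indeed $\Psi_-$. Under the identification, the outgoing sprouts (buds) of $T$ are the black leaves and the ingoing sprouts produced by the round vertices are the white leaves; the clockwise cw-matching of ingoing and outgoing sprouts used by $\Psi_-$ is then the same pairing as the matching of~\cite{BMSc00}. Passing to the dual carries the vertex-rooted bipartite map produced in~\cite{BMSc00} to the dark-rooted $p$-constellation $\Psi_-(T)$, so the two bijections agree.

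The main obstacle --- and the only point requiring genuine care --- is the bookkeeping of the two closure conventions: one must verify that the clockwise cw-matching of $\Psi_-$ (with ingoing sprouts read as $a$ and outgoing sprouts as $\ba$) and the counterclockwise matching of~\cite{BMSc00} (with black leaves read as $a$ and white leaves as $\ba$) produce identical edge pairings. The letter assignments are swapped and the walk orientations are reversed, and these two changes compensate each other, leaving the matching invariant. Once this orientation-and-letter dictionary is fixed, the equality of the two bijections follows from the general equivalence already noted between the sprout formulation and closure operations of the type used in~\cite{BMSc00}.
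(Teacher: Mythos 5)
Your proposal is correct and follows essentially the same route as the paper: the identification of $p$-Eulerian trees with the relevant $p$-weighted hypermobiles via dividing weights by $p$ (using Remark~\ref{rk:weight-multiple}) and the local degree/weight analysis, followed by matching the closure of~\cite{BMSc00} with $\Psi_-$ in the sprout formulation of Section~\ref{sec:alter}. The only difference is that you make explicit the compensating swap of walk orientation and letter assignment between the two closure conventions, a point the paper dismisses as clear.
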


\begin{remark} Since the two bijections are the same, the inverse mappings from constellations to decorated trees also coincide. In both cases, the decorated tree is recovered as the complemented dual of a forest: in our case the forest $F$ is made of the directed edges of the canonical $p$-weighted orientation, while in~\cite{BMSc00} the forest $F'$ is the so-called \emph{rank-forest} (see~\cite{BMSc00}, in particular Section~5.2 and Proposition 6.2).
Our rules to obtain the $p$-weighted hypermobile from $F$ can be checked to coincide with the rules given in~\cite{BMSc00} to obtain the $p$-eulerian tree from $F'$. So $F$ is the same as $F'$.
\end{remark}

\subsection{The Bouttier Di Francesco Guitter bijections for Eulerian maps}
In~\cite{BDG04}, Bouttier, Di Francesco and Guitter have given a bijection for
vertex-rooted hypermaps.
In~\cite{BDG07} this bijection was generalized to vertex-rooted hypermaps with 
some ``blocked edges''. We show here that these bijections can be obtained
as specializations of the master bijection $\Phi_0$ (and can be thought of as ``the case $d=0$'' of Theorem~\ref{theo:bij_d_plane}).

\begin{figure}[h!]
\begin{center}
\includegraphics[width=.5\linewidth]{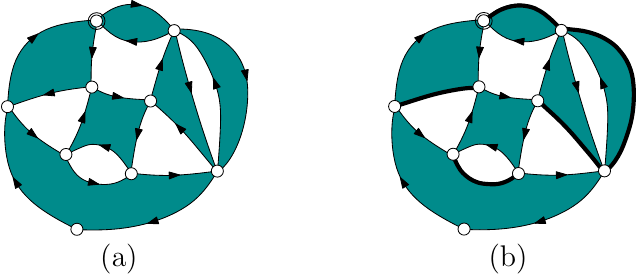}
\end{center}
\caption{(a) A vertex-rooted hypermap endowed with its dark-light orientation. (b)
The same hypermap with some blocked edges, endowed with its dark-light hyperorientation
(blocked edges are 0-way, other edges are 1-way).}
\label{fig:BDG1}
\end{figure}

\begin{figure}[h!]
\begin{center}
\includegraphics[width=.5\linewidth]{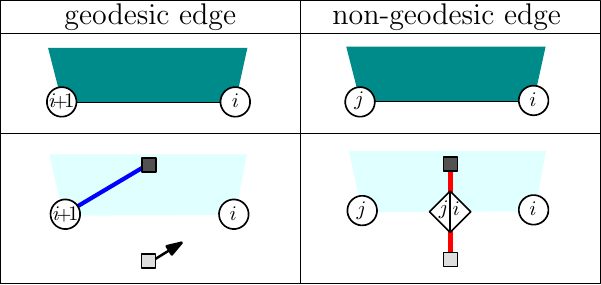}
\end{center}
\caption{Local rules applied to the edges of a vertex-rooted hypermap, according to the distance-labelling.}
\label{fig:local_rule_BDG}
\end{figure}

Let $M$ be a vertex-rooted hypermap, and let $v_0$ be its root-vertex. The hyperorientation $\Om$ of $M$ such that each edge has a dark face on its right is called the \emph{dark-light hyperorientation} of $M$; see Figure~\ref{fig:BDG1}(a). We give to each vertex $v$ of $M$ the \emph{label} $\ell(v)$ equal to the length of a shortest directed path of $\Om$ from $v_0$ to $v$. 
For each edge $e=(u,v)$ (oriented from $u$ to $v$ in $\Om$), the labels of $u$ and $v$ clearly satisfy $\ell(v)\leq\ell(u)+1$. 
We call $e$ \emph{geodesic} if $\ell(v)=\ell(u)+1$ and \emph{non-geodesic} otherwise. 
One associates with $M$ a hypermobile $T$ without buds, but with labels, by applying to each edge the rule indicated in Figure~\ref{fig:local_rule_BDG}. More precisely, $T$ has labels on the round vertices, called \emph{vertex labels}, and on each side of any edge incident to a light square vertex, called \emph{edge labels}.
Moreover, it is easy to see that $T$ satisfies the following properties:
\begin{compactitem}
\item Vertex labels are positive and edge labels are non-negative.
\item In clockwise order around a dark square vertex, any two consecutive labels $\ell,\ell'$ satisfy $\ell'\leq \ell$ if $\ell,\ell'$ are edge-labels on the same edge, $\ell'=\ell+1$ if $\ell'$ is a vertex-label, and $\ell'=\ell$ in the other cases.
\item In clockwise order around a light square vertex, any two consecutive edge-labels $\ell,\ell'$ satisfy $\ell'\geq \ell$ if $\ell,\ell'$ are on the same edge, and $\ell'\leq \ell$ otherwise.
\end{compactitem}
We call \emph{well-labeled mobile} a labeled hypermobile satisfying these conditions; see Figure~\ref{fig:BDG2} for an example.

\begin{figure}
\begin{center}
\includegraphics[width=\linewidth]{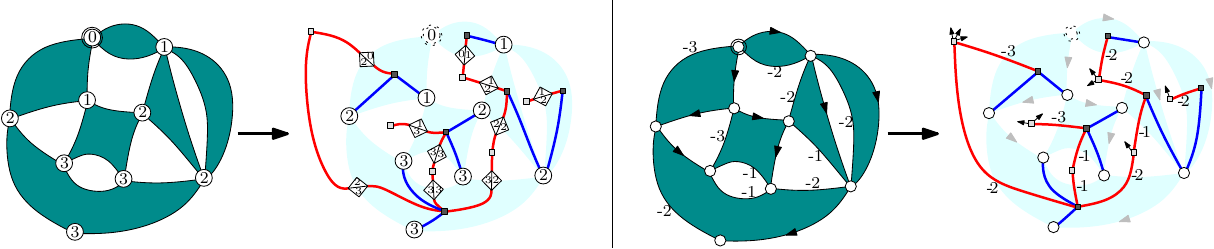}
\end{center}
\caption{Left: the bijection of~\cite{BDG04} between vertex-rooted hypermaps and well-labeled hypermobiles.
 Right: the bijection seen as a specialization of the master bijection $\Phi_0$.}
\label{fig:BDG2}
\end{figure}

Bouttier, Di Francesco and Guitter have shown in~\cite{BDG04} that applying the local rules of Figure~\ref{fig:local_rule_BDG} gives a bijection between vertex-rooted hypermaps and well-labeled mobiles. Now we explain how to reformulate the distance-labelling and the well-labeled mobiles, and the connection with the master bijection $\Phi_0$. 

First, we show that the distance-labelling can be encoded as a weighted hyperorientation; see Figure~\ref{fig:BDG2} right part. 
We call \emph{geodesic hyperorientation} of $M$ the weighted hyperorientation such that each geodesic edge is 1-way with weight 0, and 
 each non-geodesic edge $e=(u,v)$ (with the dark face on its right) is 0-way with weight $\ell(v)-\ell(u)-1$. 
The geodesic hyperorientation satisfies the following conditions:
\begin{compactitem}
\item The weight of an edge $e$ is $0$ if $e$ is 1-way, and is negative if $e$ is 0-way.
\item The weight of a vertex is $0$, and the weight of a face $f$ (light or dark) is $-\deg(f)$.
\end{compactitem}
We call \emph{$0$-weighted} a hyperorientation satisfying these conditions.
Note that the geodesic hyperorientation has two additional properties: it is accessible from $v_0$ and it is acyclic; hence it is in~$\cHz$. 
\begin{lem}\label{lem:0ori}
A vertex-rooted hypermap $M$ has a unique $0$-weighted hyperorientation in $\cHz$; it is its geodesic hyperorientation. 
\end{lem}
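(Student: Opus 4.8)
The plan is to prove existence and uniqueness separately, and to verify that the geodesic hyperorientation indeed witnesses existence. For existence, I would start with the geodesic hyperorientation $\Om_{\mathrm{geo}}$ defined in the text and check the three claimed properties directly. The weight of each edge is $0$ if $1$-way (by definition geodesic edges get weight $0$) and strictly negative if $0$-way, since a non-geodesic edge $e=(u,v)$ satisfies $\ell(v)\leq \ell(u)$ (as $\ell(v)\leq\ell(u)+1$ always, and $\ell(v)=\ell(u)+1$ is precisely the geodesic case), hence its weight $\ell(v)-\ell(u)-1$ is at most $-1<0$. For the vertex weight, recall that the weight of a vertex $v$ is the total weight of its incident ingoing edges. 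Every ingoing $1$-way edge has weight $0$, and $0$-way edges contribute nothing to the indegree-weight, so $v$ has weight $0$. The key computation is the face weight: I would show that for any face $f$ (dark or light), the sum of the relevant edge weights equals $-\deg(f)$, by summing the quantities $\ell(v)-\ell(u)-1$ telescopically around the face contour. Walking around $f$, the label differences $\ell(v)-\ell(u)$ telescope to $0$ around a closed contour, while each of the $\deg(f)$ edges contributes a $-1$; the $1$-way geodesic edges (weight $0$) and the $0$-way edges together account for exactly $\deg(f)$ edges, so the total is $-\deg(f)$. (For a light face one sums over $0$-way edges and for a dark face over all incident edges, matching the weight conventions stated earlier.)

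Next I would confirm $\Om_{\mathrm{geo}}\in\cHz$. Accessibility from $v_0$ holds because $\ell(v)$ is defined via shortest directed paths from $v_0$, which presupposes such paths exist; one must note that the dark-light orientation is accessible from every vertex (since every edge is oriented, and the underlying map is connected and Eulerian, every vertex lies on a directed closed walk), so $\ell(v)$ is finite for all $v$. Acyclicity of $\Om_{\mathrm{geo}}$ (hence minimality) follows because along any $1$-way (geodesic) edge the label strictly increases by $1$, so a directed circuit of geodesic edges would force $\ell$ to strictly increase around a cycle, which is impossible. Thus $\Om_{\mathrm{geo}}$ is accessible from $v_0$ and minimal, so it lies in $\cHz$.

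The main work is uniqueness, and I expect this to be the central obstacle. I would argue that any $0$-weighted hyperorientation $\Om\in\cHz$ must coincide with $\Om_{\mathrm{geo}}$. First, in a $0$-weighted hyperorientation every $1$-way edge has weight $0$ and every $0$-way edge has negative weight, so the partition into $1$-way and $0$-way edges is already determined by the sign of the weights; thus it suffices to recover the labelling. Given $\Om\in\cHz$, I would define a candidate label $\ell_\Om(v)$ and show that the weight conditions force $\ell_\Om$ to agree with the shortest-path distances. Concretely, reading the edge-weight condition ``weight of $0$-way edge $e=(u,v)$ equals $\ell(v)-\ell(u)-1$'' in reverse: the vertex-weight-$0$ and face-weight-$(-\deg f)$ conditions pin down, up to an additive constant, a function on vertices whose discrete ``gradient'' along edges is prescribed, and accessibility plus $\ell(v_0)=0$ fixes the constant. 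The crux is showing that this recovered function actually equals the graph distance, i.e. that minimality (no counterclockwise/no directed circuits) forces the $1$-way edges to form shortest-path trees. I would use accessibility to get, for each $v$, a directed path of $1$-way (weight-$0$) edges from $v_0$, giving $\ell_\Om(v)\le \ell(v)$, and then use the constraint $\ell(v)\le\ell(u)+1$ along every edge together with minimality/acyclicity to rule out shortcuts, forcing equality.

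I would organize uniqueness as follows: let $\Om$ be any $0$-weighted hyperorientation in $\cHz$; define for each vertex $v$ the value $L(v)$ as the minimal weighted length (counting each $1$-way edge as length $1$ and reconstructing from the negative weights) of a directed route from $v_0$ to $v$, and then verify that the $0$-weighted conditions force $L$ to satisfy $L(v)-L(u)=$ the prescribed per-edge increment, which by a telescoping/consistency argument identical to the existence computation shows $L$ is well-defined and equals the geodesic labelling $\ell$. Since the sign of each edge weight determines orientation status, and the edge weights are determined by $L=\ell$, the entire hyperorientation is forced to equal $\Om_{\mathrm{geo}}$. The delicate point — and the part I would spend the most care on — is justifying well-definedness of $L$ (independence of the chosen path), which is exactly where minimality of $\Om$ is used: two routes to the same vertex differ by a circuit, and acyclicity of the $1$-way edges combined with the weight conventions forces the two routes to assign the same value.
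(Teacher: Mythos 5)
Your existence argument is correct and is essentially the paper's: the telescoping of $\ell(v)-\ell(u)-1$ around face contours yields the face-weight conditions, the vertex condition is automatic because ingoing edges are $1$-way of weight $0$, strict increase of $\ell$ along geodesic edges gives acyclicity (hence minimality), and shortest directed paths of the dark-light orientation consist of geodesic edges, giving accessibility.

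The uniqueness half contains a genuine gap, located exactly at the point you single out as delicate. You propose to prove well-definedness of the recovered labelling $L$ (path-independence of the integrated increments) from minimality/acyclicity of the $1$-way edges. That cannot work: acyclicity only forbids directed circuits of $1$-way edges and says nothing about the sum of increments around an undirected cycle mixing $1$-way and $0$-way edges. What actually gives consistency is the pair of face-weight conditions: summing the increments around a dark (resp.\ light) face contour and using weight $=-\deg(f)$ shows the increments sum to zero around every face boundary, and since face boundaries generate the cycle space of a map on the sphere, the increment function is exact and $L$ is well defined once $L(v_0)=0$ is imposed. Your closing step is likewise misassembled: minimality plays no role in uniqueness. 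The two inequalities needed are (i) $L(v)\le\ell(v)$, which follows from the sign conditions on the weights (every edge satisfies $L(v)\le L(u)+1$, so induct along a shortest directed path of the dark-light orientation), and (ii) $L(v)\ge\ell(v)$, which follows from accessibility (a directed path of $1$-way edges from $v_0$ to $v$ increases $L$ by exactly $1$ per edge and is in particular a directed path of the dark-light orientation). You state the accessibility inequality in the reversed direction and then invoke ``minimality to rule out shortcuts,'' which is not a step that can be carried out. The paper's proof packages all of this by putting admissible labellings ($L(v_0)=0$ and $L(v)\le L(u)+1$ on every edge) in bijection with $0$-weighted hyperorientations and observing that any admissible labelling other than $\ell$ produces a non-accessible hyperorientation.
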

\begin{proof}
Let $M$ be a vertex-rooted hypermap, and let $v_0$ be its the root-vertex.
We call \emph{admissible labelling} of $M$ a labelling $L$ of its vertices (each vertex $v$ has a label $L(v)\in\mathbb{Z}$) 
such that $L(v_0)=0$ and for each edge $e=(u,v)$ (with the dark face on its right) $L(v)\leq L(u)+1$. One can associate to such a labelling
a $0$-weighted hyperorientation exactly in the same way as we have done
for the distance labelling. And this actually gives a bijection between
admissible labellings and $0$-weighted hyperorientations of $M$. 
We have already seen that the $0$-weighted hyperorientation associated
with the distance-labelling is in $\cHz$. 
Note that any admissible labelling $L$ satisfies $L(v)\leq \ell(v)$ for all vertices (because the labels increase
by at most $1$ along each edge of a geodesic path ending at $v$). If $L$ is not equal to $\ell$, 
consider a vertex $v$ such that $L(v)<\ell(v)$ (note that $v\neq v_0$) 
and $L(v)$ is the smallest possible. 
Assume there is a  neighbor $v'$ of $v$ such that $L(v')<L(v)$, that is, $L(v')=L(v)-1$. Since $\ell(v')\geq \ell(v)-1$
we reach the contradiction that $L(v')<\ell(v')$.  
 Hence $v$ is not accessible from $v_0$ in
 the $0$-weighted hyperorientation associated with $L$, so the hyperorientation
is not in $\cHz$. 
\end{proof}

Second, we show that the well-labeled mobiles can be encoded as weighted (unlabeled) hypermobiles; see Figure~\ref{fig:BDG2} bottom part. 
From a well-labeled mobile $T$ we construct a weighted hypermobile $\theta(T)$ as follows. 
Give weight $0$ to each edge incident to a round vertex, and give weight $\ell-r-1$ to each edge $e$ incident to a light square vertex $u$, where $\ell$ and $r$ are the edge labels on the left side and right side of $e$ looking from $u$.
In each corner $c$ of $T$ at a light square vertex $u$ between two consecutive edges $e,e'$ (in clockwise order), insert $r-\ell$ buds in the corner $c$ where $r$ is the edge label on the right side of $e$ (looking from $u$), and $\ell$ is the edge label on the left side of $e'$.
Then delete all the labels. 
The obtained hypermobile $\theta(T)$ satisfies the following conditions:
\begin{compactitem}
\item Edges incident to a round vertex have weight $0$ (hence round vertices have weight $0$), while edges incident to a light square vertex have negative weight. 
\item Each square vertex $v$ (light or dark) has weight $-\deg(v)$.
\end{compactitem}
We call \emph{$0$-weighted} a hypermobile satisfying these conditions. Clearly $\theta$ is a bijection between well-labeled mobiles and $0$-weighted hypermobiles. We can now show that the bijection of~\cite{BDG07} can be obtained as a specialization of $\Phi_0$; see Figure~\ref{fig:BDG2}.
\begin{prop}\label{prop:BDG1}
The master bijection $\Phi_0$ yields a bijection between vertex-rooted
hypermaps and $0$-weighted hypermobiles. This bijection coincides with the
Bouttier Di Francesco Guitter bijection, up to the identification of well-labeled
mobiles with $0$-weighted hypermobiles.
\end{prop}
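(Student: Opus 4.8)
The plan is to establish the statement in two steps: first that $\Phi_0$ restricts to a bijection between vertex-rooted hypermaps and $0$-weighted hypermobiles, and then that this restricted bijection coincides with the BDG bijection once well-labeled mobiles are identified with $0$-weighted hypermobiles through $\theta$.

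For the first step I would combine Lemma~\ref{lem:0ori} with the weighted version of the master bijection $\Phi_0$. By Lemma~\ref{lem:0ori}, sending $M$ to its geodesic hyperorientation is a bijection from vertex-rooted hypermaps onto the set of $0$-weighted hyperorientations lying in $\cHz$; and the weighted $\Phi_0$ is a bijection from $\cHz$ onto the hypermobiles of zero excess. Thus it suffices to check that, for $H\in\cHz$, $H$ is $0$-weighted if and only if $\Phi_0(H)$ is $0$-weighted, and that no $0$-weighted hypermobile is lost. The first part is a direct reading of the parameter correspondence for $\Phi_0$, noting that only the root-vertex is frozen: ``every vertex has weight $0$'' matches ``round vertices have weight $0$'', ``each face $f$ has weight $-\deg(f)$'' matches ``each square vertex $v$ has weight $-\deg(v)$'', and, since Figure~\ref{fig:local-rule-hyperori} makes a $1$-way (resp. $0$-way) edge of weight $w$ correspond to an edge of weight $w$ incident to a round (resp. light square) vertex, the edge-sign condition transfers verbatim. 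For the \emph{no loss} part I would check that every $0$-weighted hypermobile already has zero excess. Because edges at round vertices have weight $0$, the total edge-weight equals the sum of the weights of the light--dark edges; evaluating this total through the dark square vertices (where it equals minus the number of edges) and through the light square vertices (where it equals minus the number of buds minus the number of light--dark edges) forces the number of round--dark edges to equal the number of buds, that is, excess $0$.

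For the second step I would prove that the relation $\Phi_0=\theta\circ\beta$ holds, where $\beta$ denotes the BDG bijection to well-labeled mobiles. All three maps consist of a canonical choice of structure on $M$ followed by a purely local, edge-by-edge (and corner-by-corner) rule, so it is enough to match the local pieces. The canonical structures coincide by Lemma~\ref{lem:0ori}: the distance-labelling and the geodesic hyperorientation carry the same information, a geodesic edge being $1$-way of weight $0$ and a non-geodesic edge $e=(u,v)$ being $0$-way of weight $\ell(v)-\ell(u)-1$. I would then compare Figure~\ref{fig:local_rule_BDG} post-composed with $\theta$ against Figure~\ref{fig:local-rule-hyperori} applied to the geodesic hyperorientation: a geodesic edge produces in both a weight-$0$ edge at a round vertex; a non-geodesic edge produces in both a light--dark edge, whose weight $\ell-r-1$ (the left/right edge-labels fed through $\theta$) equals $\ell(v)-\ell(u)-1$; and the $r-\ell$ buds that $\theta$ inserts in each light-square corner are exactly those created by Figure~\ref{fig:local-rule-hyperori} there. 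Since the local outputs agree everywhere, the global maps agree.

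The step I expect to be the main obstacle is the local verification in the second step, and within it the bookkeeping of buds: one must confirm that the number and placement of buds inserted by $\theta$ in each corner of a light square vertex (namely $r-\ell$, read from the adjacent edge-labels) match precisely the buds the master-bijection rule generates around that vertex, and that the edge-label convention of well-labeled mobiles is compatible with the vertex-label differences $\ell(v)-\ell(u)$ along the corresponding edges. This is elementary but demands a careful side-by-side reading of the two local-rule figures together with the definition of $\theta$; everything else is immediate from the parameter dictionary of $\Phi_0$.
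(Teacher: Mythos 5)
Your proposal is correct and follows essentially the same route as the paper's proof: it uses the excess-$0$ count for $0$-weighted hypermobiles, the parameter correspondence of $\Phi_0$ together with Lemma~\ref{lem:0ori} to get the restricted bijection, and a local-rule comparison (via $\theta$) to identify it with the Bouttier--Di Francesco--Guitter construction. The paper merely states these verifications as ``easy''; your write-up supplies the details (in particular the double-counting of edge weights proving excess $0$), but no new idea is involved.
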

\begin{remark} The bijection~\cite{BDG07}, as reformulated in Proposition~\ref{prop:BDG1}, can be thought of as the ``case $d=0$'' of Theorem~\ref{theo:bij_d_plane}. Indeed, one can think of a vertex-rooted hypermap as a dark-rooted hypermap of degree 0. Then the definition of $0$-weighted hyperorientation coincides with the case $d=0$ of $d$-weighted hyperorientations given in Section~\ref{sec:bij_plane}, except that the weight 0 are authorized on 1-way edges instead of on 0-way edge. Also the definition of $0$-weighted hypermobile coincides with the case $d=0$ of $d$-weighted hypermobile given in Section~\ref{sec:bij_plane}, except that the weight 0 is authorized on edges incident to round vertices instead of on edges incident to light square vertices.
\end{remark}

\begin{proof}
It is easy to prove that $0$-weighted hypermobiles have excess $0$.
Hence the master bijection $\Phi_0$ clearly yields a bijection between $0$-weighted hypermobiles and $0$-weighted vertex-rooted hyperorientations in $\cHz$.
By Lemma~\ref{lem:0ori}, the latter family identifies to the family of vertex-rooted hypermaps.

Now one easily verifies from Figure~\ref{fig:local_rule_BDG} that, if $M$ is a vertex-rooted hypermap and $T$ is the associated well-labeled mobile, then $\theta(T)$ is obtained from the geodesic hyperorientation by applying the local rules of Figure~\ref{fig:local-rule-hyperori}, that is, by applying~$\Phi_0$.
\end{proof}


We now discuss the bijection given in~\cite{BDG07}, which is an extension of the bijection in \cite{BDG04} for vertex-rooted hypermaps with \emph{blocked edges}. Let $M$ be a vertex-rooted hypermap, with $v_0$ the root-vertex, and let $X$ be a subset of the edges of $M$ called \emph{blocked edges}. 
Let $\Om_X$ be the hyperorientation of $M$ where the edges in $X$ are 0-way and the edges not in $X$ are 1-way. The subset $X$ is called \emph{admissible} if $\Om_X$ is accessible from $v_0$. A pair $(M,X)$, with $M$ a vertex-rooted hypermap and $X$ an admissible subset of edges of $M$, is shortly called a (vertex-rooted) hypermap with blocked edges.

The bijection of~\cite{BDG07} proceeds similarly as the one above (which corresponds to the case $X=\emptyset$).
Namely, we give to each vertex $v$ a label equal to the minimal length of the directed paths in $\Om_X$ from $v_0$ to $v$. 
We call $e=(u,v)$ (with the dark face on the right of $e$) \emph{geodesic} if it is not blocked and $\ell(v)=\ell(u)+1$, and \emph{non-geodesic} otherwise. 
Then a labeled mobile is associated with $(M,X)$ by applying the rule of Figure~\ref{fig:local_rule_BDG}, and marking as \emph{blocked} the edges of the mobile corresponding to blocked edges of $M$.
The associated labeled mobiles, called \emph{generalized well-labelled mobiles} satisfy the same conditions as well-labelled mobiles, with the only difference that there can be some blocked edges incident to light square vertices and that the difference between the edge-labels on the two sides of a blocked edge is arbitrary.

\begin{figure}
\begin{center}
\includegraphics[width=\linewidth]{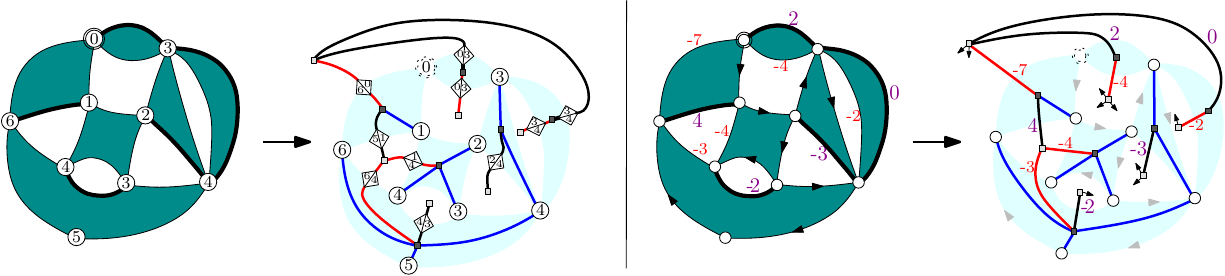}
\end{center}
\caption{Left: The bijection of \cite{BDG07} between vertex-rooted hypermaps with blocked edges 
 and generalized well-labeled mobiles.
 Right: the same bijection seen as a specialization of $\Phi_0$.}
\label{fig:BDG3}
\end{figure}

As above we can encode the distance-labelling by a weighted hyperorientation. More precisely, we define the geodesic hyperorientation as follows:
each geodesic edge is oriented 1-way and given weight $0$, 
each non-geodesic edge $e=(u,v)$ (with the dark face on the right of $e$) 
is oriented 0-way and given weight $\ell(v)-\ell(u)-1$. The geodesic hyperorientation satisfies:
\begin{compactitem}
\item The weight of an edge $e$ is $0$ if $e$ is directed, and is negative if $e$ is non-blocked and 0-way.
\item The weight of a vertex is $0$, and the weight of a face $f$ (light or dark) is $-\deg(f)$.
\end{compactitem}
A hyperorientation satisfying these conditions is called a \emph{generalized $0$-weighted hyperorientation}. 
The geodesic hyperorientation has two additional properties: it is accessible from $v_0$ and it is acyclic; hence it is in $\cHz$. 
\begin{lem}\label{lem:0ori-gen}
A vertex-rooted hypermap with blocked edges $(M,X)$ has a unique generalized $0$-weighted hyperorientation in $\cHz$; it is the geodesic hyperorientation. 
\end{lem}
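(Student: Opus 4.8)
The plan is to mirror the proof of Lemma~\ref{lem:0ori}, the only genuinely new ingredient being the bookkeeping forced by the blocked edges. First I would introduce the appropriate notion of \emph{admissible labelling} of the pair $(M,X)$: a map $L\colon V\to\bz$ with $L(v_0)=0$ and $L(v)\le L(u)+1$ for every \emph{non-blocked} edge $e=(u,v)$ (with the dark face on its right), imposing no constraint across blocked edges. I would then show that admissible labellings are in bijection with generalized $0$-weighted hyperorientations: starting from $L$, set $w(e)=L(v)-L(u)-1$ on every edge, orient a non-blocked edge $1$-way exactly when $w(e)=0$, and leave every blocked edge $0$-way. The inequality $L(v)\le L(u)+1$ guarantees that a non-blocked $0$-way edge receives negative weight and a $1$-way edge receives weight $0$, while blocked edges carry an arbitrary weight; this matches the three weight conditions in the definition.

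I would then verify that this correspondence lands in the right class by a telescoping computation around faces. Every ingoing edge at a vertex is $1$-way, hence of weight $0$, so each vertex automatically has weight $0$. Around a dark face of degree $k$ with cyclic vertex-sequence $v_1,\dots,v_k$, the incident edge weights sum to $\sum_i(L(v_{i+1})-L(v_i)-1)=-k$; for a light face the same telescoping $\sum_i(L(u_{i+1})-L(u_i)-1)=-k$ holds over all boundary edges, and since the $1$-way edges have weight $0$ the sum collapses to the $0$-way edges, giving light-face weight $-\deg$. Conversely, the face-weight conditions are precisely what make the increments $w(e)+1$ consistent around every cycle, so that $L$ can be reconstructed from a hyperorientation by integrating these increments from $L(v_0)=0$; this yields the inverse map.

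Next I would identify the geodesic hyperorientation with the distance labelling $\ell(v)$, the length of a shortest directed $\Om_X$-path from $v_0$ to $v$, and check it lies in $\cHz$. Admissibility of $X$ makes every $\ell(v)$ finite, and a shortest $\Om_X$-path uses only non-blocked edges along which $\ell$ increases by exactly $1$, i.e.\ geodesic edges; hence that path is directed in the geodesic hyperorientation, proving accessibility from $v_0$. Since $\ell$ strictly increases along every $1$-way edge, there is no directed circuit, so the hyperorientation is acyclic and a fortiori minimal, and therefore in $\cHz$.

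The crux is uniqueness, which I would establish by a two-sided estimate forcing any admissible $L$ whose hyperorientation is in $\cHz$ to equal $\ell$. Evaluating admissibility along a shortest $\Om_X$-path from $v_0$ to $v$ telescopes to $L(v)\le\ell(v)$. Conversely, accessibility of the hyperorientation supplies a directed path from $v_0$ to $v$ whose edges are non-blocked $1$-way edges, hence $1$-way in $\Om_X$; this is thus an $\Om_X$-path along which $L$ rises by $1$ at each step, so $L(v)$ equals its length, which is at least $\ell(v)$. Hence $L=\ell$, and the geodesic hyperorientation is the unique generalized $0$-weighted hyperorientation in $\cHz$. The main obstacle, and the one point where the blocked edges must be handled with care, is exactly this last step: one must ensure that the path witnessing accessibility never crosses a blocked edge, which is guaranteed because blocked edges remain $0$-way while only non-blocked geodesic edges become $1$-way.
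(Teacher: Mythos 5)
Your proof is correct and follows essentially the same route as the paper's, which likewise reduces the statement to a bijection between admissible labellings (constrained only across non-blocked edges) and generalized $0$-weighted hyperorientations, and then uses accessibility to single out the distance labelling. The only cosmetic difference is in the uniqueness step: you argue directly via the two-sided bound $L(v)\le\ell(v)\le L(v)$, whereas the paper argues contrapositively that any admissible labelling other than $\ell$ produces a vertex with no ingoing $1$-way edge and hence a non-accessible hyperorientation; the two arguments are equivalent.
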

\begin{proof}
The proof is similar to the proof of Lemma~\ref{lem:0ori}. Let $(M,X)$ be a vertex-rooted hypermap with blocked edges, and let $v_0$ be its root-vertex. 
We call \emph{admissible labelling} of $M$ a labelling $L$ of its vertices such that $L(v_0)=0$ and for each non-blocked edge $e=(u,v)$ (with the dark face on the right of $e$) $L(v)\leq L(u)+1$. As before there is a bijection between the admissible labellings and the generalized $0$-weighted hyperorientations of $M$. Moreover any admissible labelling $L$ which is not the distance-labelling $\ell$ is associated with a hyperorientation which is not accessible, hence not in $\cHz$. 
\end{proof}

We call \emph{generalized $0$-weighted hypermobile} a hypermobile with some 
marked edges incident to light-square vertices, such that the following conditions hold:
\begin{compactitem}
\item Edges incident to a round vertex have weight $0$ (hence each round vertex has weight $0$), and non-marked edges incident to a light square have negative weight. 
\item Each square vertex $v$ (light or dark) has weight $-\deg(v)$.
\end{compactitem}
Similarly as in the case without blocked edges, generalized well-labeled mobiles can be identified to generalized $0$-weighted hypermobiles. 
We now state how the bijection of~\cite{BDG07} can be obtained as a specialization of $\Phi_0$; see Figure~\ref{fig:BDG3}.
\begin{prop}\label{prop:BDG2}
The master bijection $\Phi_0$ yields a bijection between vertex-rooted
hypermaps with blocked edges and generalized $0$-weighted hypermobiles.
This bijection coincides with the bijection of~\cite{BDG07}, up to the identification of generalized well-labeled mobiles with generalized $0$-weighted hypermobiles.
\end{prop}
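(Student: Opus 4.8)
The plan is to mirror the proof of Proposition~\ref{prop:BDG1}, adding the bookkeeping needed to track the blocked/marked edges. First I would check that every generalized $0$-weighted hypermobile has excess $0$, so that $\Phi_0$ is indeed the relevant master bijection. This follows from a weight count identical to the one in the non-blocked case. Write $e_r$ (resp. $e_s$) for the number of edges with a round (resp. light square) extremity and $b$ for the number of buds, so that the excess equals $e_r-b$. On one hand, summing the condition $\mathrm{weight}(v)=-\deg(v)$ over the square vertices and using that round vertices have weight $0$ gives $\sum_v\mathrm{weight}(v)=-(e_r+2e_s+b)$; on the other hand this sum equals twice the total edge weight, since each non-bud edge contributes to both of its endpoints. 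Summing the same condition over the dark square vertices alone, and using that each edge has a unique dark square endpoint, shows that the total edge weight equals $-(e_r+e_s)$. Comparing the two expressions gives $e_r=b$, i.e. excess $0$. The marked (blocked) edges do not disturb this count, which uses only the weight conditions at the square vertices and the vanishing of round-incident edge weights.

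Second, I would argue that $\Phi_0$ restricts to a bijection between generalized $0$-weighted hypermobiles and generalized $0$-weighted hyperorientations in $\cHz$. By the parameter correspondence stated after Theorem~\ref{theo:master_bij1}, $\Phi_0$ preserves all face degrees, all vertex indegrees and all weights, matching $1$-way edges with round-incident edges and $0$-way edges with edges between a dark and a light square vertex. Under this dictionary the defining conditions translate term by term: vertex weight $0$ $\leftrightarrow$ round-vertex weight $0$, face weight $-\deg(f)$ $\leftrightarrow$ square-vertex weight $-\deg(v)$, weight $0$ on $1$-way edges $\leftrightarrow$ weight $0$ on round-incident edges, and negative weight on non-blocked $0$-way edges $\leftrightarrow$ negative weight on non-marked light-square edges. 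It then suffices to transport the marking along the bijection, declaring an edge of the hypermobile marked exactly when the corresponding edge of the hyperorientation is blocked; the blocked $0$-way edges (of arbitrary weight) correspond precisely to the marked light-square edges (of arbitrary weight). Finally, Lemma~\ref{lem:0ori-gen} identifies the family of generalized $0$-weighted hyperorientations in $\cHz$ with the family of vertex-rooted hypermaps with blocked edges, each pair $(M,X)$ being represented by its unique geodesic hyperorientation. Composing this identification with the bijection just described proves the first assertion of the Proposition.

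For the coincidence with the bijection of~\cite{BDG07}, I would repeat the local verification used for Proposition~\ref{prop:BDG1}: one checks from Figure~\ref{fig:local_rule_BDG} that, for a vertex-rooted hypermap with blocked edges $(M,X)$ with associated generalized well-labeled mobile $T$, the image of $T$ under the natural extension $\theta$ of the map defined in the non-blocked case coincides with the hypermobile obtained by applying the local rules of Figure~\ref{fig:local-rule-hyperori} (that is, $\Phi_0$) to the geodesic hyperorientation of $(M,X)$. The only new case relative to Proposition~\ref{prop:BDG1} is that of a blocked edge $e=(u,v)$, for which the geodesic hyperorientation records the weight $\ell(v)-\ell(u)-1$ on a $0$-way edge and $\theta$ records the same quantity on a marked light-square edge; the two rules agree there as well.

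I expect the main obstacle to be the consistent handling of the blocked edges. Unlike a non-blocked $0$-way edge, whose negative weight pins down the orientation, a blocked edge may carry a weight of arbitrary sign, so the defining weight inequalities no longer single out the geodesic hyperorientation by themselves; this is exactly why accessibility (via Lemma~\ref{lem:0ori-gen}) must be invoked, and why one must be careful that the marking is transported consistently by $\Phi_0$ and that the local-rule comparison in the last step still holds verbatim on blocked edges.
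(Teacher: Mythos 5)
Your proposal is correct and follows exactly the route the paper intends: the paper's own proof of Proposition~\ref{prop:BDG2} is simply the remark that it is ``very similar to the one of Proposition~\ref{prop:BDG1} and is left to the reader,'' and your argument is a faithful and accurate expansion of that proof (excess-$0$ count, restriction of $\Phi_0$ via the parameter correspondence and Lemma~\ref{lem:0ori-gen}, and the local-rule comparison extended to blocked edges).
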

\begin{proof}
The proof is very similar to the one of Proposition~\ref{prop:BDG1} and is left to the reader. 
\end{proof}


\medskip
\section{Bijections for hypermaps with general cycle-length constraints}\label{sec:bij-charged-maps}
In this section we consider a far-reaching generalization of the girth constraints considered in Section~\ref{sec:bij_plane}, and obtain bijections for hypermaps satisfying these constraints.

We call \emph{charge function} $\si$ of a hypermap $H$ the assignment of a real number $\si(a)$, called \emph{charge}, to each vertex and face $a$ of $H$. 
The pair $(H,\si)$ is called a \emph{charged hypermap}. We call \emph{total charge}, and denote it by $\si_\tot$, the sum of all the charges of the hypermap. 
We will now define some cycle-length constraints on charged hypermaps.
A \emph{light region} of $H$ is a proper subset $R$ of the faces of $H$ such that any face in $R$ sharing an edge with a face not in $R$ is light. 
We say that an edge or a vertex is \emph{strictly inside} a light region $R$ if all its incident faces are in $R$. 
We denote 
$$\si(R)=\sum_{f\textrm{ face inside } R}\si(f)+\sum_{v\textrm{ vertex strictly inside } R}\si(v).$$
The \emph{boundary} of a light region $R$ is the set of edges incident both to a face in $R$ and to a face not in $R$. We denote by $\R$ the boundary of $R$ and by $|\R|$ its cardinality.

\begin{Def}\label{def:girth-condition}
Let $H$ be a hypermap, and let $\si$ be a charge function. 
If the hypermap $H$ is dark-rooted (resp. light-rooted, vertex-rooted), we say that $H$ satisfies the \emph{$\si$-girth condition} if every light region $R$ satisfies $|\R|\geq \si(R)$ with strict inequality if all the outer vertices are strictly inside $R$ (resp. if one of the outer edges is strictly inside $R$, if the root-vertex is strictly inside $R$).
\end{Def}

Various girth constraints can be realized as a $\si$-girth condition by choosing an appropriate charge function $\si$; examples are given in Section~\ref{sec:application-charged-maps}. 

We will now characterize the $\si$-girth condition by the existence of certain hyperorientations. 
\begin{Def} 
Let $(H,\si)$ be a charged hypermap.
If $H$ is light-rooted or vertex-rooted, we call \emph{$\si$-weighted hyperorientation} of $H$ a weighted hyperorientation such that: 
\begin{itemize}
\item[(i)] the weight of 1-way edges is positive, and the weight of 0-way edges is non-positive,
\item[(ii)] the weight of every light face $f$ is $\si(f)-\deg(f)$,
\item[(iii)] the weight of every inner dark face $f$ is $-\si(f)-\deg(f)$,
\item[(iv)] the weight of every vertex $v$ is $\si(v)$.
\end{itemize}
If $H$ is dark-rooted, we call \emph{$\si$-weighted hyperorientation} of $H$, a weighted hyperorientation satisfying (i), (ii), (iii) and
\begin{itemize}
\item[(iv')] the weight of every inner vertex $v$ is $\si(v)$, the weight of every outer vertex $v$ is $\si(v)+1$, the weight of every outer edge is 1, and the weight of the dark outer face $f_0$ is $-\si(f_0)$.
\end{itemize}
\end{Def}

We now state the key result for dark-rooted hypermaps. We say that a charge function $\si$ \emph{fits} a dark-rooted hypermap $H$ if $H$ satisfies the $\si$-girth condition, the charge of every inner vertex is positive, the charge of every outer vertex is 0, the charge of the dark outer face $f_0$ is $-\deg(f_0)$, and $\si_\tot=0$. 
\begin{theo}\label{theo:shifted-orientation-dark}
Let $H$ be a dark-rooted hypermap, and let $\si$ be a charge function.
The hypermap $H$ admits a $\si$-weighted hyperorientation in $\cHm$ if and only if $\si$ fits $H$ and the outer face of $H$ is simple. Moreover, in this case $H$ admits a unique $\si$-weighted hyperorientation in $\cHm$.
\end{theo}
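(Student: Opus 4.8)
The plan is to prove the two implications separately: necessity by weight-counting inside the structure $\cHm$, and sufficiency together with uniqueness by a feasibility-plus-minimality argument for weighted hyperorientations with prescribed vertex and face weights. For necessity, suppose $H$ carries a $\si$-weighted hyperorientation $O$ in $\cHm$. The outer face is simple by the very definition of $\cHm$. To read off the charges I would use Remark~\ref{rk:edges-inward}: no inner edge is $1$-way into an outer vertex, so the only ingoing edge at an outer vertex $v$ is the edge of the counterclockwise outer contour entering $v$, of weight $1$; hence the weight of $v$ equals $1$, and (iv') forces $\si(v)=0$. Since the $\deg(f_0)$ edges bordering the simple outer face all have weight $1$, the outer face has weight $\deg(f_0)$, so $\si(f_0)=-\deg(f_0)$; and accessibility gives every inner vertex an ingoing $1$-way edge of positive weight, whence $\si(v)=\mathrm{weight}(v)>0$. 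The identity $\si_\tot=0$ then follows from the global relation
$$\sum_v \mathrm{weight}(v)+\sum_{f\ \mathrm{light}}\mathrm{weight}(f)=\sum_{f\ \mathrm{dark}}\mathrm{weight}(f),$$
(each edge is counted once on its dark side, each $0$-way edge once on its light side, each $1$-way edge once at its head) together with $\sum_{\mathrm{light}}\deg(f)=\sum_{\mathrm{dark}}\deg(f)=|E|$.

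The $\si$-girth condition is the substantive part of necessity. For a light region $R$ I would substitute the weight formulas into $\si(R)$ and regroup edge by edge. Because $R$ is light, every boundary edge has its light side in $R$ and its dark side outside, so the degree contributions telescope to exactly $|\R|$, giving $\si(R)=W_R+|\R|$ for a sum $W_R$ of edge weights. Classifying edges shows that interior $0$-way edges cancel and interior $1$-way edges whose head is strictly inside cancel, leaving only the terms $-w(e)$ over interior $1$-way edges with head \emph{not} strictly inside, plus $w(e)$ over boundary $0$-way edges; crucially, a boundary edge can never have its head strictly inside $R$ (its dark side lies outside $R$), so both surviving families have the right sign and $W_R\le 0$, i.e.\ $|\R|\ge\si(R)$. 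For the strict case, when all outer vertices (hence $f_0$) are strictly inside $R$, the charge shift $\si(v)=\mathrm{weight}(v)-1$ at the outer vertices exactly cancels the contribution of $f_0$, so the same identity $\si(R)=W_R+|\R|$ holds; accessibility then supplies a ``first-entry'' edge from the strictly-inside set to a boundary vertex, which is an interior $1$-way edge of positive weight with head not strictly inside, forcing $W_R<0$ and hence $|\R|>\si(R)$.

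For sufficiency and uniqueness, assume $\si$ fits $H$ and the outer face is simple. The core is to realize the prescribed totals (ii)--(iv') by an accessible, minimal hyperorientation and to show it is unique. I would first treat existence as a feasibility problem: conditions (i)--(iv') ask for a real edge-weighting matching prescribed totals at every vertex and face, with the sign of each weight tied to the orientation. Setting this up as a transportation/flow problem on the planar structure, the max-flow/min-cut obstruction is an over-demanded region, and the cut inequalities should be exactly the $\si$-girth inequalities; thus the fitting hypothesis guarantees feasibility. One such weighted hyperorientation can be made accessible from the outer vertices, and then the standard lattice structure on accessible hyperorientations with fixed prescribed weights (reversal of circuits, as in the theory underlying the master bijection) yields a unique minimal element, whose only counterclockwise circuit is the outer contour, i.e.\ the desired member of $\cHm$. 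Uniqueness can alternatively be argued directly: the difference of two $\si$-weighted hyperorientations in $\cHm$ is a circulation conserved at every vertex and face, hence decomposes into circuits, and minimality forbids any nonzero such circulation.

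The step I expect to be the main obstacle is the existence half of the last paragraph: matching the min-cut side of the feasibility argument precisely to the $\si$-girth inequalities while respecting the coupling between the combinatorial orientation (which edges are $1$-way) and the signs of the real weights in~(i). By contrast, the necessity direction is essentially bookkeeping with the weight identities above, and uniqueness reduces to the distributive-lattice description of minimal accessible orientations.
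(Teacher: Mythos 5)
Your necessity argument is essentially the paper's (Lemma~\ref{lem:girth-necessary}): the same weight bookkeeping, the same use of Remark~\ref{rk:edges-inward} at outer vertices, and the same use of accessibility to get strict inequality. That half is fine. The gap is in the existence/uniqueness half, and it is exactly the step you flag as ``the main obstacle'': you assert that the feasibility problem is a transportation problem whose min-cut obstructions ``should be'' the $\si$-girth inequalities, but the coupling in condition~(i) between the orientation and the \emph{sign} of the real weight means this is not a standard flow problem. A $0$-way edge may carry a strictly negative weight contributing to its light face, so the demands at vertices, light faces and dark faces cannot all be modelled as nonnegative flow through a bipartite network. The paper's proof exists precisely to circumvent this: it first reduces the dark-rooted case to the light-rooted case by gluing a digon onto each outer edge (Section~\ref{sec:endproof-outer-dark}), then reduces the light-rooted case to the special case where every light face has charge equal to its degree --- so that all $0$-way edges are forced to have weight $0$ --- via the sea-star subdivision (Section~\ref{sec:endproof-outer-light}), and only then sets up a genuine nonnegative $\al$-hyperflow on the bipartite star graph $G_H$ and verifies the Hall-type condition $\al(A)\ge 0$ by recognizing each induced subgraph $G_A$ as the star graph of a sub-hypermap and applying the $\si$-girth condition face by face (Lemmas~\ref{lem:existence-hyperflow} and~\ref{lem:si-flow}). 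Without some such device your feasibility claim is unsubstantiated.

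Two further points are missing. First, membership in $\cHm$ requires the outer contour to be a simple counterclockwise circuit of weight-$1$ edges and the \emph{unique} counterclockwise circuit; nothing in ``take the minimal accessible feasible solution'' produces this, and in the paper it costs a separate gadget argument (the edges $e',e''$ and Lemma~\ref{lem:minimal-greater-on-outer} in the proof of Proposition~\ref{prop:case-degd}, then Claim~\ref{claim:reduction-works}). Second, your uniqueness argument (``the difference is a circulation, decompose into circuits, minimality forbids it'') is too quick: the conservation laws live on vertices \emph{and} faces with different rules for light and dark faces, so the decomposition must be carried out in the star graph where dark faces become vertices; and relating a positive counterclockwise cycle of the star graph back to a counterclockwise circuit of the hyperorientation uses accessibility, not minimality alone (see the uniqueness part of the proof of Lemma~\ref{lem:almost-cop}). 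So the overall architecture you propose is the right one in spirit, but the existence step, the boundary condition for $\cHm$, and the translation between circuits of $H$ and of the auxiliary flow graph all require the constructions the paper supplies and your sketch does not.
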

It will be shown in Section~\ref{sec:application-charged-maps} that Theorem~\ref{theo:plane_dori} is a special case of Theorem~\ref{theo:shifted-orientation-dark} corresponding to a particular choice of charge function. We now state the analogous result for light-rooted and vertex-rooted hypermaps. We say that a charge function $\si$ \emph{fits} a light-rooted hypermap $H$ if $H$ satisfies the $\si$-girth condition, the charge of every vertex is positive, the charge of the light outer face $f_0$ is $\deg(f_0)$, and $\si_\tot=0$. 
\begin{theo}\label{theo:shifted-orientation-light}
Let $H$ be a light-rooted hypermap, and let $\si$ be a charge function. 
The hypermap $H$ admits a $\si$-weighted hyperorientation in $\cHp$ if and only if $\si$ fits $H$. Moreover, in this case $H$ admits a unique $\si$-weighted hyperorientation in $\cHp$. 
\end{theo}
 
We say that a charge function $\si$ \emph{fits} a vertex-rooted hypermap if $H$ satisfies the $\si$-girth condition, the charge of every non-root vertex is positive, the charge of the root-vertex is 0, and $\si_\tot=0$. 
\begin{theo}\label{theo:shifted-orientation-0}
Let $H$ be a vertex-rooted hypermap, and let $\si$ be a charge function. 
The hypermap $H$ admits a $\si$-weighted hyperorientation in $\cH_0$ if and only if $\si$ fits $H$. Moreover, in this case $H$ admits a unique $\si$-weighted hyperorientation in $\cH_0$. 
\end{theo}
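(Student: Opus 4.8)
The plan is to prove Theorem~\ref{theo:shifted-orientation-0} in close analogy with the dark-rooted case (Theorem~\ref{theo:shifted-orientation-dark}), whose proof is assumed available. I would proceed in two directions, establishing separately that (a) a $\si$-weighted hyperorientation in $\cHz$ forces $\si$ to fit $H$, and (b) if $\si$ fits $H$ then such a hyperorientation exists and is unique.

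For direction (a), suppose $H$ carries a $\si$-weighted hyperorientation $O$ in $\cHz$. The key observation is a \emph{weight-counting identity}: summing the edge weights over the boundary of any light region $R$ and comparing with the weights attributed to the vertices and faces strictly inside $R$. Concretely, each inner edge of $R$ contributes its weight to exactly one incident dark face and (through condition (i)) is accounted for in the vertex/face weight balance; the boundary edges are the slack. Using conditions (ii)--(iv), the total charge $\si(R)$ equals $\sum_{f\in R}(\text{weight}(f)+\deg(f))\cdot(\pm 1)+\sum_v \text{weight}(v)$, which telescopes to $|\R|$ minus the total weight of the boundary edges. Since $O$ is a hyperorientation, each boundary edge is either $0$-way (non-positive weight) or $1$-way; a careful bookkeeping of orientations across $\R$ shows $\si(R)\le|\R|$, giving the $\si$-girth condition. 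The strict inequality when $v_0$ is strictly inside $R$ comes from accessibility: Remark~\ref{rk:edges-inward} guarantees every edge at $v_0$ is $0$-way or outgoing, so $v_0$ contributes strictly, forcing at least one boundary edge to carry positive (ingoing) weight and hence be $1$-way. The remaining sign conditions on charges follow by specializing $R$ to singletons or small regions, and $\si_\tot=0$ follows by taking $R$ to be everything except a single face and invoking the global weight balance (the total indegree equals the total edge weight, which matches the sum of face and vertex weights to zero since the map is closed).

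For direction (b), assuming $\si$ fits $H$, I would mimic the existence/uniqueness argument for $\cHm$. Uniqueness follows from minimality: any two minimal accessible hyperorientations with the same indegree/weight data differ by flipping circuits, and minimality (no counterclockwise circuit, here no circuit at all since rooted at $v_0$) pins down the orientation. For existence, I would set up the problem as a flow/potential feasibility statement: conditions (ii)--(iv) prescribe the \emph{weight} of each vertex (its ingoing edge-weight total $=\si(v)$) and each face, and the $\si$-girth inequalities are exactly the cut conditions needed for a Hall-type / max-flow--min-cut feasibility theorem to produce a valid weight assignment, which one then orients (positive weight $\Rightarrow$ $1$-way, non-positive $\Rightarrow$ $0$-way) to obtain accessibility and minimality. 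The strict inequalities at regions containing $v_0$ translate into the accessibility of $v_0$.

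The main obstacle, as I see it, is the translation between the $\si$-girth \emph{inequalities on light regions} and the \emph{feasibility of the weight system}: one must show these cut inequalities are not only necessary but sufficient for an integer/real weighted hyperorientation to exist with the prescribed vertex and face weights. This is where a flow-type argument (or an adaptation of the $\alpha$-orientation machinery from \cite{BFbij,OB:boisees}) must be invoked, and the delicate point is matching the \emph{strict} inequality at $v_0$-containing regions with the accessibility requirement in $\cHz$ — this is precisely the $d=0$ degeneration of the dark-rooted argument, so I expect the dark-rooted proof of Theorem~\ref{theo:shifted-orientation-dark} to carry over almost verbatim, with the outer dark face and its contour replaced by the single root-vertex $v_0$ and Remark~\ref{rk:edges-inward}'s second statement playing the role its first statement played there.
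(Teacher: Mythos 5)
Your direction (a) is essentially the paper's argument: Lemma~\ref{lem:girth-necessary} proves necessity by exactly the weight-counting over light regions that you describe, with accessibility supplying the strict inequality and the global weight balance giving $\si_\tot=0$. That half of your plan is sound and matches the paper.

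The gap is in direction (b). You say you expect the dark-rooted proof to ``carry over almost verbatim,'' but in the paper the dark-rooted case is itself not proved directly: Theorem~\ref{theo:shifted-orientation-dark} is obtained from the light-rooted Theorem~\ref{theo:shifted-orientation-light} by adding digons along the outer edges, and only the light-rooted case is attacked with the flow machinery (and even there, the $\al$-hyperflow argument of Section~\ref{sec:proof-case-d-light} only covers the case where every light face has charge equal to its degree, so that all weights are non-negative; the general case needs the separate ``sea-star'' subdivision of Section~\ref{sec:endproof-outer-light} to absorb negative weights). So ``carrying over'' the dark-rooted proof to the vertex-rooted setting requires finding the analogous reduction, and that is the one concrete idea your proposal is missing: the paper adds a single loop edge $e_0$ at the root-vertex $v_0$ inside an incident light face $f_0$, takes the resulting dark face of degree $1$ as the outer face of a dark-rooted hypermap $H'$, transfers the charges by $\si'(f_1)=-1$, $\si'(f_2)=\si(f_0)+1$, checks that $\si'$ fits $H'$, and then restricts the unique $\si'$-weighted hyperorientation of $H'$ in $\cHm$ back to $H$. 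Without this (or an equivalent device), your fallback of a direct max-flow/min-cut feasibility argument does not by itself produce an orientation that is simultaneously minimal and accessible, nor does it handle the negative weights on $0$-way edges. A further small error: you assert that minimality for a vertex-rooted hyperorientation means ``no circuit at all''; in fact only circuits through $v_0$ are excluded (they are simultaneously clockwise and counterclockwise), while clockwise circuits avoiding $v_0$ are allowed, so your uniqueness sketch via ``flipping circuits'' needs the same care as the paper's cycle-pushing argument for minimal hyperflows.
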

The proof of Theorems~\ref{theo:shifted-orientation-dark},~\ref{theo:shifted-orientation-light} and~\ref{theo:shifted-orientation-0} are postponed to Section~\ref{sec:proofs}.

We will now obtain bijections for charged hypermaps using the master bijections $\Phi_-$, $\Phi_+$ and $\Phi_0$. We call \emph{fittingly charged hypermap} a charged hypermap such that $\si$ fits $H$. We call \emph{consistently-weighted} a hypermobile with weights in $\br$ such that the weights of edges incident to round vertices are positive, while the weights of edges incident to light square vertices are non-positive. We will now show that fittingly charged hypermaps are in bijection with consistently-weighted hypermobiles. 

We call \emph{charge} of a vertex $u$ of a hypermobile the quantity 
\begin{itemize}
\item $w(u)$ if $u$ is a round vertex,
\item $w(u)+\deg(u)$ if $u$ is a light square vertex,
\item $-w(u)-\deg(u)$ if $u$ is a dark square vertex.
\end{itemize}
We now relate the excess of a hypermobile to the charges.
\begin{lem} 
The excess of a hypermobile of vertex-set $V$ is $\ds -\sum_{v\in V}\si(v)$, where $\sigma(v)$ denotes the charge
of vertex $v$.
\end{lem}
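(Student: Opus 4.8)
The plan is to expand $\sum_{v}\si(v)$ directly from the definition of the charge of a vertex, and to separate the resulting expression into a part involving the vertex weights $w(u)$ and a part involving the degrees. Writing
$$
\sum_{v}\si(v)=\sum_{u\ \mathrm{round}}w(u)+\sum_{u\ \mathrm{light\ sq.}}\big(w(u)+\deg(u)\big)-\sum_{u\ \mathrm{dark\ sq.}}\big(w(u)+\deg(u)\big),
$$
I would regroup this as a \emph{weight part}, namely $\sum_{\mathrm{round}}w(u)+\sum_{\mathrm{light}}w(u)-\sum_{\mathrm{dark}}w(u)$, plus a \emph{degree part}, namely $\sum_{\mathrm{light}}\deg(u)-\sum_{\mathrm{dark}}\deg(u)$.

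The first key step is to show that the weight part vanishes. This uses the defining property of hypermobiles that every (non-bud) edge is incident to exactly one dark square vertex, its other endpoint being either round or light square. Since $w(u)$ is by definition the sum of the weights of the non-bud edges incident to $u$, the weight of each edge $e$ is counted once with a $+$ sign (at its round or light square endpoint) and once with a $-$ sign (at its dark square endpoint), so it contributes $0$ to the alternating sum. Hence the weight part is identically $0$, and $\sum_v\si(v)$ equals the degree part.

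The second step is to evaluate the degree part by a half-edge count. Let $e_\circ$ be the number of edges with a round extremity (equivalently, the edges joining a round and a dark square vertex), let $e_\square$ be the number of edges joining a light and a dark square vertex, and let $b$ be the number of buds (all incident to light square vertices). Summing degrees over light square vertices counts all half-edges at those vertices, namely $e_\square+b$; summing degrees over dark square vertices counts $e_\circ+e_\square$. Thus the degree part equals $(e_\square+b)-(e_\circ+e_\square)=b-e_\circ$, and therefore $\sum_v\si(v)=b-e_\circ=-(e_\circ-b)$, which is precisely minus the excess by its definition (number of edges with a round extremity minus number of buds). This yields the claim.

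The argument is essentially a signed counting identity, so there is no genuinely hard step; the only point requiring care is tracking the signs and recognizing the cancellation in the weight part, which relies crucially on the structural fact that every edge is incident to exactly one dark square vertex.
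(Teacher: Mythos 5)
Your proof is correct and follows essentially the same route as the paper: both arguments rest on the identity that the total edge weight equals the sum of weights over dark square vertices and also over round-plus-light-square vertices (since every edge has exactly one dark square endpoint), combined with a half-edge count of the degrees. You merely reorganize the computation (splitting $\sum_v\si(v)$ into a vanishing weight part and a degree part) rather than writing three separate weight equations and substituting, so the content is the same.
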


\begin{proof}
Let $T$ be a hypermobile. Let $R$, $D$, and $L$ be respectively the sets of round vertices, dark square vertices, and light square vertices of $T$. Let $b$, $e_R$, and $e_L$ be respectively the number of buds, edges incident to a round vertex, and edges incident to a light square vertex. 
By definition, the excess of $T$ is $e_R-b$. If we denote by $w(u)$ and $\si(u)$ the weight and charge of a vertex $u$, we get 
 \begin{eqnarray*}
\sum_{u\in R}w(u)&=&\sum_{u\in R}\si(u),\\
\sum_{u\in L}w(u)&=& -e_L-b+\sum_{u\in L}\si(u),\\
\sum_{u\in D}w(u)&=& -e_R-e_L-\sum_{u\in D}\si(u).\\
\end{eqnarray*}
Plugging these relations in $\sum_{u\in R}w(u)+\sum_{u\in L}w(u)=\sum_{u\in D}w(u)$ gives $e_R-b=-\sum_{v\in R\cup L\cup D}\si(v)$, as wanted.
\end{proof}

A hyperorientation is called \emph{consistently-weighted} if the weight of every 1-way edge is a positive real number, and the weight of every 0-way edge is a non-positive real number. 
By Theorem~\ref{theo:shifted-orientation-dark}, the set of fittingly charged dark-rooted hypermaps such that the outer face is simple identifies with the set of consistently-weighted hyperorientations in $\cHm$ such that the weight of every outer edge is 1. Moreover, 
\begin{itemize}
\item the charge of an inner vertex $v$ of $H$ is $\si(v)=w(v)$, 
\item the charge of an inner light face $f$ of $H$ is $\si(f)=w(f)+\deg(f)$,
\item the charge of an inner dark face $f$ of $H$ is $\si(f)=-w(f)-\deg(f)$.
\end{itemize}
Hence by applying the master bijection $\Phi_-$ and keeping track of the parameter-correspondences we obtain:
\begin{theo}\label{theo:bij-shifted-dark}
The mapping $\Phi_-$ gives a bijection between the set of fittingly charged dark-rooted hypermaps such that the outer face is simple, and the set of consistently-weighted hypermobiles with negative excess.
Moreover, each light (resp. dark) inner face of degree $\de$ and charge $x$ of the hypermap corresponds to a light (resp. dark) square vertex of degree $\de$ and charge $x$ of the associated charged hypermobile. Also each inner vertex of charge $x$ in the hypermap corresponds to a round vertex of charge $x$ of the associated charged hypermobile. Lastly, the outer degree of the hypermap corresponds to minus the excess of the associated hypermobile.
\end{theo}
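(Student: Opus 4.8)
The plan is to compose two bijections already available in the excerpt: the characterization of fittingly charged dark-rooted hypermaps by canonical orientations (Theorem~\ref{theo:shifted-orientation-dark}) and the weighted master bijection $\Phi_-$ (Theorem~\ref{theo:master_bij1}). First I would use Theorem~\ref{theo:shifted-orientation-dark} to set up a bijection between the set of fittingly charged dark-rooted hypermaps $(H,\si)$ with simple outer face and the set of consistently-weighted hyperorientations in $\cHm$ whose outer edges all have weight $1$. In the forward direction, Theorem~\ref{theo:shifted-orientation-dark} provides a unique $\si$-weighted hyperorientation $\Om\in\cHm$; condition (i) in the definition of a $\si$-weighted hyperorientation says exactly that $\Om$ is consistently-weighted, and condition (iv') forces every outer edge to have weight $1$. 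In the reverse direction, a consistently-weighted $\Om\in\cHm$ with outer edges of weight $1$ determines a charge function $\si$ through the displayed formulas $\si(v)=w(v)$ on inner vertices, $\si(f)=w(f)+\deg(f)$ on light faces, $\si(f)=-w(f)-\deg(f)$ on inner dark faces, together with the values on the outer face and outer vertices prescribed by (iv'); that this $\si$ fits $H$ is precisely the content of the converse half of Theorem~\ref{theo:shifted-orientation-dark}.

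Second, I would apply the weighted version of $\Phi_-$. By Theorem~\ref{theo:master_bij1}, $\Phi_-$ is a weight-preserving bijection from weighted hyperorientations in $\cHm$ onto weighted hypermobiles of negative excess, sending each non-frozen edge of $H$ to a non-bud edge of the same weight. The crucial verification is that the sign constraints match. For a hypermap in $\cHm$ the frozen edges are exactly the outer edges, and every non-bud edge of a hypermobile is incident to exactly one dark square vertex, so it is either round-incident or light-square-incident. Inspecting the local rule of Figure~\ref{fig:local-rule-hyperori} (equivalently, the closure description of $\Psi_-$), an unoriented ($0$-way) edge becomes the edge joining the light square and dark square sitting in its two incident faces, hence a light-square-incident edge, whereas a non-frozen oriented ($1$-way) edge becomes a round-incident edge (joining the dark square in its right face to the round vertex at its head). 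Hence the consistently-weighted condition on $\Om$ (positive weights on $1$-way edges, non-positive on $0$-way edges) translates into positive weights on round-incident edges and non-positive weights on light-square-incident edges, i.e. into the consistently-weighted condition on the resulting hypermobile. Moreover the constraint ``outer edges have weight $1$'' bears only on frozen edges, which are deleted by $\Phi_-$; it therefore imposes no condition on the image, so every consistently-weighted hypermobile of negative excess is attained.

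Finally I would read off the parameter correspondences. The parameter dictionary accompanying Theorem~\ref{theo:master_bij1} states that each non-frozen light (resp. dark) face of degree $\de$ and weight $w$ becomes a light (resp. dark) square vertex of the same degree and weight, and each non-frozen vertex of weight $w$ becomes a round vertex of weight $w$. Combining this with the weight-to-charge formulas above and with the definition of the charge of a hypermobile vertex (namely $w$ for round, $w+\deg$ for light square, and $-w-\deg$ for dark square) shows that a light (resp. dark) inner face of degree $\de$ and charge $x$ corresponds to a light (resp. dark) square vertex of degree $\de$ and charge $x$, and that an inner vertex of charge $x$ corresponds to a round vertex of charge $x$. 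The assertion that the outer degree equals minus the excess is the corresponding statement of Theorem~\ref{theo:master_bij1}.

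The main obstacle is the bookkeeping of the second paragraph: pinning down the precise edge-type correspondence ($1$-way with round-incident, $0$-way with light-square-incident) so that the sign conditions transfer correctly, and confirming that the outer-edge weight condition becomes vacuous once the frozen edges are deleted. Everything else reduces to transcribing the already-established characterization of Theorem~\ref{theo:shifted-orientation-dark} and the parameter dictionary of $\Phi_-$.
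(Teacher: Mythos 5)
Your proposal is correct and follows essentially the same route as the paper: the paper likewise derives this theorem by combining Theorem~\ref{theo:shifted-orientation-dark} (identifying fittingly charged dark-rooted hypermaps with simple outer face with consistently-weighted hyperorientations in $\cHm$ whose outer edges have weight $1$) with the weighted master bijection $\Phi_-$ and its parameter dictionary, using the same weight-to-charge formulas. Your extra verification of the edge-type/sign correspondence and of the vacuity of the outer-edge condition after deleting frozen edges is exactly the bookkeeping the paper leaves implicit.
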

We will see below (see Lemma~\ref{lem:sigma-d-condition}) that Theorem~\ref{theo:bij_d_plane} corresponds to a special case of Theorem~\ref{theo:bij-shifted-dark}. We now consider light-rooted hypermaps. Similarly as above, using Theorem~\ref{theo:shifted-orientation-light} and applying the master master bijection $\Phi_+$ we obtain:
\begin{theo}\label{theo:bij-shifted-light}
The mapping $\Phi_+$ gives a bijection between the set of fittingly charged light-rooted hypermaps and the set of consistently-weighted hypermobiles with positive excess.
Moreover, each light (resp. dark) inner face of degree $\de$ and charge $x$ of the hypermap corresponds to a light (resp. dark) square vertex of degree $\de$ and charge $x$ of the associated hypermobile. Also each vertex of charge $x$ in the hypermap corresponds to a round vertex of charge $x$ of the associated hypermobile. Lastly, the outer degree of the hypermap corresponds to the excess of the associated hypermobile.
\end{theo}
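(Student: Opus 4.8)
The plan is to follow verbatim the two-step strategy used for the dark-rooted case in Theorem~\ref{theo:bij-shifted-dark}, with $\cHm$ replaced by $\cHp$ and $\Phi_-$ by $\Phi_+$: first I would translate the condition ``$\si$ fits $H$'' into the existence of a canonical hyperorientation via Theorem~\ref{theo:shifted-orientation-light}, and then I would apply the weighted master bijection $\Phi_+$ of Theorem~\ref{theo:master_bij1}, reading off the parameter correspondences.

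First I would set up the charge-weight dictionary for light-rooted hypermaps and use it to identify fittingly charged light-rooted hypermaps with consistently-weighted hyperorientations in $\cHp$. Given a consistently-weighted $O\in\cHp$ of a light-rooted hypermap $H$, define $\si$ by $\si(v)=w(v)$ for every vertex $v$, $\si(f)=w(f)+\deg(f)$ for every light face $f$, and $\si(f)=-w(f)-\deg(f)$ for every inner dark face $f$. Then condition (i) of a $\si$-weighted hyperorientation is exactly the consistently-weighted condition, while (ii)--(iv) are precisely this dictionary read backwards; hence $O$ is a $\si$-weighted hyperorientation in $\cHp$, and Theorem~\ref{theo:shifted-orientation-light} guarantees that $\si$ fits $H$. (Note that in $\cHp$ all outer edges are 1-way, so the light outer face has no incident 0-way edge and thus weight $0$, consistent with the requirement $\si(f_0)=\deg(f_0)$.) Conversely, if $\si$ fits $H$, Theorem~\ref{theo:shifted-orientation-light} yields a unique $\si$-weighted hyperorientation in $\cHp$, which is consistently-weighted by (i). These two passages are mutually inverse, giving a bijection between fittingly charged light-rooted hypermaps and consistently-weighted hyperorientations in $\cHp$ under which charges and weights correspond through the dictionary.

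Next I would invoke the edge-weighted refinement of the master bijection $\Phi_+$ (Theorem~\ref{theo:master_bij1}), which maps $\cHp$ bijectively onto hypermobiles of positive excess and transfers the weight of each non-frozen edge to the corresponding non-bud edge. The parameter correspondences of $\Phi_+$ then translate the dictionary directly into the claimed statement: a non-frozen light (resp.\ dark) face of degree $\de$ and weight $w$ goes to a light (resp.\ dark) square vertex of degree $\de$ and weight $w$, whose mobile-charge is $w+\de=\si(f)$ (resp.\ $-w-\de=\si(f)$), and a non-frozen vertex of weight $w$ goes to a round vertex of weight $w$, whose mobile-charge is $w=\si(v)$. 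Finally, Theorem~\ref{theo:master_bij1} gives that the outer degree of $\gamma\in\cHp$ equals the excess of $\Phi_+(\gamma)$, which is the last assertion.

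The bookkeeping of the dictionary and of the parameter correspondences is routine once the dark-rooted case is in hand. The one point requiring care is the preservation of the consistently-weighted condition under $\Phi_+$: I would extract from the local rule of Figure~\ref{fig:local-rule-hyperori} that a 1-way edge (positive weight) becomes an edge incident to a round vertex while a 0-way edge (non-positive weight) becomes an edge incident to a light square vertex, so that a consistently-weighted hyperorientation in $\cHp$ maps exactly to a consistently-weighted hypermobile, and conversely via $\Psi_+$. This matching of the 1-way/0-way dichotomy with the round/light-square incidence dichotomy is the only step not already subsumed by the general statement of the master bijection, and is where I expect the main (though modest) obstacle to lie.
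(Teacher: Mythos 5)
Your proposal is correct and follows essentially the same route as the paper: the paper likewise identifies fittingly charged light-rooted hypermaps with consistently-weighted hyperorientations in $\cHp$ via Theorem~\ref{theo:shifted-orientation-light} (using exactly your charge--weight dictionary) and then applies the weighted master bijection $\Phi_+$ with its parameter correspondences. The point you flag about the 1-way/0-way dichotomy matching the round/light-square incidence dichotomy is indeed the only extra check, and it is immediate from the local rules of Figure~\ref{fig:local-rule-hyperori}.
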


Similarly, using Theorem~\ref{theo:shifted-orientation-0} and applying the master master bijection $\Phi_0$ we obtain:
\begin{theo}\label{theo:bij-shifted-0}
The mapping $\Phi_0$ gives a bijection between the set of fittingly charged vertex-rooted hypermaps and the set of consistently-weighted hypermobiles with excess zero.
Moreover, each light (resp. dark) face of degree $\de$ and charge $x$ of the hypermap corresponds to a light (resp. dark) square vertex of degree $\de$ and charge $x$ of the associated hypermobile. Also each non-root vertex of charge $x$ in the hypermap corresponds to a round vertex of charge $x$ of the associated hypermobile. 
\end{theo}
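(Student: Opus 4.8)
The plan is to mirror the proof of Theorem~\ref{theo:bij-shifted-dark}, replacing the dark-rooted ingredients by their vertex-rooted analogues: the characterization of Theorem~\ref{theo:shifted-orientation-0} in place of Theorem~\ref{theo:shifted-orientation-dark}, and the master bijection $\Phi_0$ in place of $\Phi_-$.

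First, I would use Theorem~\ref{theo:shifted-orientation-0} to identify fittingly charged vertex-rooted hypermaps with consistently-weighted hyperorientations in $\cHz$. Starting from $(H,\si)$, the theorem provides a unique $\si$-weighted hyperorientation in $\cHz$, and condition (i) makes it consistently-weighted. Conversely, a consistently-weighted hyperorientation $O\in\cHz$ determines a charge function by solving conditions (ii)--(iv): set $\si(v)=w(v)$ for each vertex, $\si(f)=w(f)+\deg(f)$ for each light face, and $\si(f)=-w(f)-\deg(f)$ for each dark face. By construction $O$ is then a $\si$-weighted hyperorientation lying in $\cHz$, so the forward implication of Theorem~\ref{theo:shifted-orientation-0} forces $\si$ to fit $H$, and uniqueness shows the two assignments are mutually inverse. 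In particular the root-vertex satisfies $\si(v_0)=w(v_0)=0$, matching the fitting condition and the fact that $v_0$ is frozen.

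Second, I would apply the weighted master bijection: $\Phi_0$ is a weight-preserving bijection between weighted hyperorientations in $\cHz$ and weighted hypermobiles of excess zero. The point requiring care---and the only real work beyond citing earlier results---is to check that the two notions of ``consistently-weighted'' match. Reading the local rule of Figure~\ref{fig:local-rule-hyperori}, each $1$-way edge of $H$ becomes a non-bud edge incident to a round vertex and each $0$-way edge becomes a non-bud edge incident to a light square vertex, with the weight carried across unchanged. This is corroborated by the parameter correspondence: the degree of a round vertex equals the indegree of the underlying vertex (so round-incident edges biject with the $1$-way edges, each counted at its head), while the weight of a light square vertex equals that of the underlying light face, i.e. the sum of the weights of its incident $0$-way edges. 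Consequently the constraint ``$1$-way edges positive, $0$-way edges non-positive'' transports exactly to ``round-incident edges positive, light-square-incident edges non-positive'', so $\Phi_0$ restricts to a bijection between consistently-weighted hyperorientations in $\cHz$ and consistently-weighted hypermobiles of excess zero.

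Finally, composing the two bijections gives the statement, and the parameter correspondence is read off from the definition of the charge of a hypermobile vertex: a non-root vertex with $\si(v)=w(v)$ maps to a round vertex of weight $w(v)$, hence of charge $w(v)=\si(v)$; a light face of degree $\de$ and charge $\si(f)=w(f)+\deg(f)$ maps to a light square vertex of degree $\de$ and weight $w(f)$, hence of charge $w(f)+\de=\si(f)$; and a dark face maps analogously. As a consistency check, the preceding lemma gives that the excess equals $-\sum_v\si(v)$, which vanishes exactly because $\si_\tot=0$ for a fittingly charged hypermap, confirming that excess zero is the correct target family and that no separate excess condition need be imposed.
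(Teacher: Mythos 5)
Your proposal is correct and follows exactly the paper's route: identify fittingly charged vertex-rooted hypermaps with consistently-weighted hyperorientations in $\cH_0$ via Theorem~\ref{theo:shifted-orientation-0}, then apply the weighted master bijection $\Phi_0$ and read off the parameter correspondences. The paper states this proof in one line (``similarly \ldots we obtain''); your write-up just makes explicit the details it leaves implicit, including the transfer of the consistently-weighted condition through the local rules and the excess-zero check via $\si_\tot=0$.
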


We will use Theorem~\ref{theo:bij-shifted-dark} and~\ref{theo:bij-shifted-light} in the next section to count annular hypermaps. In the remaining part of this section we give a general lemma about $\si$-girth conditions, and then explain how to derive Theorem~\ref{thm:special-charged-maps} stated in the introduction from Theorem~\ref{theo:bij-shifted-0}.

A light region $R$ is said to be \emph{connected} (resp. \emph{simply connected}) if the union of the faces in $R$, and the edges and vertices strictly inside $R$ is a connected (resp. simply connected) subset of the sphere. For instance, the light region in Figure~\ref{fig:inward} is simply connected. When we consider the simple connectedness of a light region containing the outer face, we think of the outer face simply as a marked face of a hypermap on the sphere, so that this face is finite and simply connected. 

\fig{width=.4\linewidth}{inward}{A simply connected light region.}

The following lemma shows that the $\si$-girth condition can be stated as a condition on simply connected light regions whenever $\si_\tot=0$ (hence in particular when $\si$ is fitting).
\begin{lemma}\label{lem:simply-connected-sufficient}
Let $H$ be a dark-rooted, light-rooted or vertex-rooted hypermap, and let $\si$ be a charge function such that $\si_\tot=0$.
The hypermap satisfies the $\si$-girth condition if and only if the inequalities and strict inequalities stated in Definition~\ref{def:girth-condition} hold for every \emph{simply connected} light region $R$.
\end{lemma}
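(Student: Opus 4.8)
\emph{The forward implication is immediate}, since simply connected light regions are a subclass of all light regions: if the (strict where required) inequalities of Definition~\ref{def:girth-condition} hold for every light region, they hold in particular for the simply connected ones. The whole content is the converse, and the plan is to recover the inequality for an arbitrary light region from those for simply connected light regions. Throughout write $g(R):=|\R|-\si(R)$, so that the $\si$-girth condition reads $g(R)\geq 0$, with $g(R)>0$ when the relevant root object (the root-vertex; an outer edge; or all outer vertices, according to the rooting) is strictly inside $R$.

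\emph{Step 1: reduction to connected regions.} Given a light region $R$, let $R_1,\dots,R_k$ be the connected components of the closed set $\bR$ consisting of the faces of $R$ together with the edges and vertices strictly inside $R$. Two faces of $R$ sharing an edge lie in the same component (that edge is strictly inside $R$), so each $R_i$ is again a light region, each face of $R$ and each vertex strictly inside $R$ lies in exactly one $R_i$, and each edge of $\R$ is a boundary edge of exactly one $R_i$. Consequently $|\R|=\sum_i|\pa R_i|$ and $\si(R)=\sum_i\si(R_i)$, whence $g(R)=\sum_i g(R_i)$. Moreover the object triggering strictness is strictly inside $R$ if and only if it is strictly inside exactly one component, so a strict requirement for $R$ becomes a strict requirement for a single $R_i$. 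Thus it suffices to treat \emph{connected} light regions.

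\emph{Step 2: reduction of a connected region to simply connected ones.} If a connected light region $R$ is not simply connected, its complement on the sphere splits into components $U_0,\dots,U_m$ with $m\geq 1$, each an open disk (a standard fact for complementary components of a connected set on the sphere); let $H_j$ be the set of faces contained in $U_j$, which is nonempty. For each $j$ the region $Q_j:=\{\text{all faces}\}\setminus H_j$ is a \emph{simply connected} light region, its boundary $\pa Q_j$ is exactly the set $E_j$ of edges separating $R$ from $U_j$, and $\R=\bigsqcup_j E_j$, so $|\R|=\sum_j|\pa Q_j|$. The one place where the hypothesis $\si_\tot=0$ is used is the identity $\si(Q_j)=-\big(\sum_{f\in H_j}\si(f)+\sum_{v\,\text{incident to}\,U_j}\si(v)\big)$, obtained by writing each sum over $Q_j$ as the total sum minus the excluded terms and cancelling $\si_\tot$. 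Combining these with an inclusion--exclusion over the $U_j$ yields $g(R)=\sum_{j}g(Q_j)-\sum_{v}(|J(v)|-1)\,\si(v)$, where $J(v)$ is the set of complementary components incident to $v$; the $g(Q_j)$ are nonnegative by the simply connected hypothesis.

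\emph{The main obstacle} is precisely the correction term $\sum_{v}(|J(v)|-1)\si(v)$, supported on the boundary vertices with $|J(v)|\geq 2$, i.e.\ the \emph{pinch} vertices at which $R$ touches itself and which are incident to several holes. Such a vertex is strictly inside none of $R,Q_0,\dots,Q_m$, yet the inclusion--exclusion double-counts its incident-hole charges, so this term is not manifestly of the right sign. My plan to control it is to run Step 2 as an induction that fills the holes one at a time rather than all at once: filling a single component $U_{j}$ (replacing $R$ by the light region $R\cup H_{j}$, which has one fewer hole) resolves all pinches incident to $U_{j}$, and the resulting one-step discrepancy is exactly the charge of those pinches, which I would match against the simply connected inequality for $Q_{j}$ and the (possibly strict) inequality furnished by the inductive hypothesis, again using $\si_\tot=0$ to redistribute the shared-vertex charges. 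Carrying the strictness trigger correctly through both reductions, and verifying that the accumulated pinch charges are dominated at every step, is the delicate technical heart of the argument.
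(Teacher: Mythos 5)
Your forward direction and Step 1 are sound and agree with the paper (which performs the same reduction in minimal\--counterexample form). The genuine gap is in Step 2, and it is exactly the ``main obstacle'' you name but never resolve: the correction term $\sum_{v}(|J(v)|-1)\,\si(v)$. Your proposed fix --- filling holes one at a time and ``dominating the accumulated pinch charges'' --- cannot work in the stated generality, because the lemma assumes only $\si_\tot=0$; vertex charges are arbitrary real numbers of either sign, so no bookkeeping can bound a sum of charges of pinch vertices. As written, the proposal stops precisely at what you yourself call the technical heart of the argument.

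The missing observation is that the correction term is identically zero, provided you use the complementary components that match the paper's definition of (simple) connectedness. The subset of the sphere realizing $R$ consists of the open faces of $R$ together with only the edges and vertices \emph{strictly inside} $R$; its complement is therefore $\bigcup_{f\notin R}\overline{f}$, a union of \emph{closed} faces. The connected components $U_1,\dots,U_m$ of this closed set are pairwise disjoint, so a vertex incident to faces of two ``visual'' holes forces those holes into a single component; consequently every vertex not strictly inside $R$ lies in exactly one $U_j$, i.e.\ $|J(v)|\leq 1$ for all $v$, and your identity collapses to $g(R)=\sum_j g(Q_j)$ with no correction. (Your picture of the $U_j$ as open disks separated by pinch vertices describes a different, finer decomposition; note also that a region pinched between two such visual holes can already be \emph{simply connected} in the paper's sense --- e.g.\ the complement of two closed faces sharing only a vertex.) This is exactly how the paper argues: a connected, non\--simply\--connected $R_0$ is written as $R_1\cap R_2$ with $R_1\cup R_2$ equal to all faces and every vertex strictly inside $R_1$ or $R_2$, giving $\si(R_0)=\si(R_1)+\si(R_2)-\si_\tot$ and $|\partial R_0|=|\partial R_1|+|\partial R_2|$ directly. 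With $|J(v)|\leq 1$ in hand, your Step 2 closes immediately (strictness also propagates, since any object strictly inside $R$ is strictly inside every $Q_j$), but without it the proof is incomplete.
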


\begin{remark} We point out that, in general (even if $\si_\tot=0$), the $\si$-girth condition might not be satisfied even if the inequalities and strict inequalities stated in Definition~\ref{def:girth-condition} hold for every light region $R$ whose boundary is a simple cycle. 
\end{remark}

\begin{proof}
We treat the case in which $H$ is dark-rooted (the cases of light-rooted and vertex-rooted hypermaps are proved similarly).
We suppose that any simply connected light region $R$ satisfies $|\R|\geq \si(R)$ with strict inequality if all of the outer vertices are strictly inside $R$. We want to prove that the same property holds for any light region $R$. 
Suppose this does not hold, and take $R_0$ a light region such that the property does not hold and $|\R_0|$ is minimal. If $R_0$ is not connected, then $R_0$ is the disjoint union of two light regions $R_1$ and $R_2$
(with $|\R_1|\geq 1$ and $\R_2|\geq 1$). We have $|\R_0|=|\R_1|+|\R_2|$ and $\si(R_0)=\si(R_1)+\si(R_2)$ which contradicts the minimality of $|\R_0|$ (note that if the outer vertices are strictly inside $R_0$, then they are strictly inside either $R_1$ or $R_2$). Now if $R_0$ is connected but not simply connected, then $R_0$ is the intersection of two light regions $R_1$ and $R_2$ such that every face of $H$ is inside $R_1$ or $R_2$ and every vertex of $H$ is strictly inside $R_1$ or $R_2$. Hence, $\si(R_0)=\si(R_1)+\si(R_2)-\si_\tot=\si(R_1)+\si(R_2)$. Moreover, $\R_0$ is the disjoint union of $\R_1$ and $\R_2$. Thus $|\R_0|=|\R_1|+ |\R_2|$ and this again contradicts the minimality of $|\R_0|$ (note that if the outer vertices are strictly inside $R_0$, then they are strictly inside both $R_1$ and $R_2$). Thus $R_0$ must be simply connected, which is a contradiction.
\end{proof}

We now explain how to get Theorem~\ref{thm:special-charged-maps} from Theorem~\ref{theo:bij-shifted-0}. 
Recall that vertex-rooted maps identify with vertex-rooted hypermaps such that every dark face has degree 2 (see Figure \ref{fig:maps-are-hypermaps}). We can therefore translate the setting of \ref{thm:special-charged-maps} in terms of hypermaps. 
Let $\cC$ be the set of pairs $(H,\si)$ such that $H$ is a vertex-rooted hypermap where every dark face has degree 2, and $\si$ is a fitting charge function with $\si(e)=-2$ for every dark face $e$  and $\si(f)=2$ for every light face $f$. We need to prove the two following claims:\\


\begin{claim}\label{claim1}
The set $\cC$ identifies to the set of partially charged maps considered in Theorem~\ref{thm:special-charged-maps}.
\end{claim}


\begin{claim}\label{claim2}
The weighted hypermobiles associated with the set $\cC$ by the bijection of Theorem~\ref{theo:bij-shifted-0} identify with the suitably weighted mobiles considered in Theorem~\ref{thm:special-charged-maps}.
\end{claim}

We first prove Claim \ref{claim1}. If $M$ is a vertex-rooted map endowed with a partial charge function $\sigma$, we let $H$
be the vertex-rooted hypermap identified to $M$, keeping the same $\si$-values at vertices, and setting $\si(e)=-2$ for every dark face $e$ (of degree $2$, corresponding to an edge of $M$) and $\si(f)=2$ for every light face $f$ (corresponding to a face of $M$). Note that Condition (b) for a partial charge function $\si$ gives $\si_\tot=0$ (by the Euler relation). Thus, proving Claim \ref{claim1} amounts to proving that Condition (a) holds for $\si$ if and only if $M$ satisfies the $\si$-girth condition. 
If $R$ is a set of faces of the map $M$, we consider the set $E(R)$ of edges of $M$ having both incident faces in $R$. 
Thus $\bR=R\cup E(R)$ identifies to a light region of $H$, 
and it is easily seen that if $\bR$ is simply connected then the Euler relation gives 
$$\si(\bR)=2|R|-2|E(R)|+\sum_{v \textrm{ inside }R}\si(v)=2+\sum_{v \textrm{ inside }R}(\si(v)-2).$$
Therefore Condition (a) for a partial charge function $\si$ can be reformulated as: 
``for any simply connected light region of $M$ of the form $\bR=R\cup E(R)$, $|\partial \bR|\leq\si(\bR)$ with strict inequality if the root-vertex $v_0$ is inside $\bR$''. Moreover it is easy to check that if $\bR'=R\cup E'$ with $E'\subseteq E(R)$, then $|\partial \bR'|-\si(\bR')\leq |\partial \bR|-\si(\bR)$. Thus Condition (a) is equivalent to $|\partial \bR|\leq\si(\bR)$ (with strict inequality if $v_0$ inside $\bR$) for any light simply connected region $\bR$ of $M$. This together with Lemma~\ref{lem:simply-connected-sufficient} proves Claim \ref{claim1}.

It only remains to prove Claim \ref{claim2}. First note that the hypermobiles associated with maps have all the dark square vertices of degree 2, hence (upon removing the dark square vertices) these hypermobiles identify with the \emph{mobiles} as defined in the introduction. Hence by Theorem~\ref{theo:bij-shifted-0} the weighted hypermobiles associated with the set $\cC$ are mobiles having excess 0, with weights on half-edges such that 
\begin{compactitem}
\item every half-edge has a positive weight if it is incident to a round vertex and has a non-positive weight otherwise,
\item for every edge $e$, the weights of the two half-edges of $e$ add up to 0, 
\item every light square vertex $v$ has weight $2-\deg(v)$.
\end{compactitem}
These mobiles clearly identify with the suitably weighted mobiles considered in Theorem~\ref{thm:special-charged-maps} upon replacing the weights on half-edges (summing to 0) by non-negative weights on edges. This proves Claim \ref{claim2} and Theorem~\ref{thm:special-charged-maps}.


\section{Applications of the bijection for charged hypermaps to annular hypermaps}\label{sec:application-charged-maps}
In this section we characterize the $\si$-girth condition for particular choices of the charge function~$\si$, and derive from it bijections for annular hypermaps.

Given a real number $d$, we define the charge function $\si_d$ by 
\begin{itemize}
\item $\si_d(v)=d$ for every vertex $v$,
\item $\si_d(f)=d$ for every light face $f$,
\item $\si_d(f)=d-d\cdot\deg(f)$ for every dark face.
\end{itemize}
\begin{lemma}\label{lem:sid}
For any simply-connected light region $R$, $\si_d(R)=d$. Moreover ${\si_d}_\tot=2d$.
\end{lemma}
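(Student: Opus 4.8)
The plan is to evaluate $\si_d(R)$ by grouping the charges according to the three cases in the definition of $\si_d$, and then to recognize the resulting alternating sum as an Euler characteristic. Write $F_\circ$ and $F_\bullet$ for the numbers of light and dark faces in $R$, write $V_{\mathrm{in}}$ and $E_{\mathrm{in}}$ for the numbers of vertices and edges strictly inside $R$, and set $F=F_\circ+F_\bullet$. Substituting the values of $\si_d$ gives
\[
\si_d(R)=d\,F_\circ+\sum_{f\textrm{ dark in }R}\big(d-d\deg(f)\big)+d\,V_{\mathrm{in}}
=d\Big(F+V_{\mathrm{in}}-\sum_{f\textrm{ dark in }R}\deg(f)\Big).
\]

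First I would establish the combinatorial identity $\sum_{f\textrm{ dark in }R}\deg(f)=E_{\mathrm{in}}$. Since every edge separates its unique dark face from its unique light face, the left-hand side counts exactly the edges whose dark face lies in $R$. I claim these are precisely the edges strictly inside $R$: if an edge $e$ has its dark face $f_\bullet$ in $R$ but its light face outside $R$, then $e\in\R$ would witness a face of $R$ (namely $f_\bullet$) adjacent to the complement, contradicting the light-region condition that forces such a face to be light; hence the light face of $e$ is also in $R$ and $e$ is strictly inside. Conversely every strictly-inside edge has both faces in $R$, so its dark face is in $R$. As each edge has exactly one dark face, summing degrees over dark faces of $R$ counts each strictly-inside edge once, giving $E_{\mathrm{in}}$, and therefore $\si_d(R)=d\,(V_{\mathrm{in}}-E_{\mathrm{in}}+F)$.

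It then remains to show $V_{\mathrm{in}}-E_{\mathrm{in}}+F=1$, and this is where I would invoke simple connectedness. By definition the region $\Sigma$ made of the faces of $R$ together with the edges and vertices strictly inside $R$ is a simply connected subset of the sphere, so its closure $\bar\Sigma$ is a topological disk, with a boundary cycle carrying $V_\partial$ vertices and $E_\partial=|\R|$ edges. The crucial point is that this boundary is a \emph{simple} cycle, so that $V_\partial=E_\partial=|\R|$: if around some boundary vertex $v$ the corners belonging to $R$ split into two or more arcs, then, since $v$ is not strictly inside and hence $v\notin\Sigma$, these arcs are locally separated in $\Sigma$, and reconnecting them elsewhere would create a loop encircling $v$ that is non-contractible in $\Sigma$, contradicting simple connectedness. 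Thus each boundary vertex is traversed once, $V_\partial=E_\partial$, and Euler's formula for the disk $(V_{\mathrm{in}}+V_\partial)-(E_{\mathrm{in}}+|\R|)+F=1$ collapses to $V_{\mathrm{in}}-E_{\mathrm{in}}+F=1$, yielding $\si_d(R)=d$.

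Finally, for the total charge I would run the same bookkeeping globally. Summing $\si_d$ over all vertices, all light faces and all dark faces and using $\sum_{f\textrm{ dark}}\deg(f)=E$ (again because each edge has a unique dark face) gives ${\si_d}_\tot=d\,(V+F_\circ+F_\bullet-E)=d\,(V-E+F)$, which equals $2d$ by Euler's formula $V-E+F=2$ on the sphere. I expect the one genuinely delicate step to be the topological claim that simple connectedness of $R$ forces its boundary to be a simple cycle (so that $V_\partial=E_\partial$); the rest is a direct substitution together with two applications of Euler's formula.
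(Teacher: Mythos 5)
Your overall route is the same as the paper's: substitute the definition of $\si_d$, identify $\sum_{f\ \textrm{dark in}\ R}\deg(f)$ with the number of edges strictly inside $R$ (your justification of this step via the light-region condition is correct, and is a detail the paper leaves implicit), and finish with an Euler-characteristic count; the computation of ${\si_d}_\tot$ is also fine. The problem is precisely the step you flag as delicate: the claim that simple connectedness forces $\R$ to be a simple cycle is false. Take a vertex $v$ with four faces $N,E,S,W$ around it, put $N$ and $S$ in $R$ but not $E$ and $W$, and join $N$ to $S$ by a chain of faces of $R$ avoiding $v$: the set $\Sigma$ is a thickened arc, hence connected and simply connected (its complement remains connected \emph{through the point $v$ itself}, which lies in the complement), yet $v$ is visited twice by the boundary walk, so $V_\partial<E_\partial$. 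Your ``non-contractible loop encircling $v$'' does not exist in $\Sigma$: closing such a loop would require passing through a neighborhood of $v$, which is disjoint from $\Sigma$. The paper itself acknowledges this phenomenon when defining the non-separating ingirth in Section~\ref{sec:application-charged-maps}, noting that the boundary of a simply connected light region may fail to be a simple cycle (e.g.\ it can be a union of two simple cycles meeting at a vertex).

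Fortunately the identity you actually need, $V_{\mathrm{in}}-E_{\mathrm{in}}+F=1$, is still true and can be obtained without any claim about the boundary: $\Sigma$ is a connected, simply connected, proper open subset of the sphere, hence homeomorphic to the plane, and $V_{\mathrm{in}}-E_{\mathrm{in}}+F$ is exactly its compactly supported Euler characteristic (the alternating sum over the open cells constituting $\Sigma$), which equals $1$. Alternatively, one checks that each repeated visit of the boundary walk to a vertex lowers $V_\partial$ by one and simultaneously lowers the Euler characteristic of the closure of $\Sigma$ by one (the closure is then a disk with boundary points identified, not a disk), so the two corrections cancel in your bookkeeping. With either repair the proof is complete; as written, however, the argument only covers regions whose boundary happens to be a simple cycle.
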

\begin{proof}
Let $V,E,F,K$ be respectively the set of vertices strictly inside $R$, edges strictly inside $R$, light faces inside $R$, and dark faces inside $R$. By the Euler relation we get,
\begin{equation*}
\si_d(R)=d\,(|V|+|F|+|K|-|E|)=d,
\end{equation*}
because $R$ is simply connected. Similarly, the Euler relation gives ${\si_d}_\tot=2d$.
\end{proof}

We now define a charge function, which will make clear that Theorem~\ref{theo:plane_dori} is a special case of Theorem~\ref{theo:shifted-orientation-dark}. Observe that an inward cycle (as defined in Section~\ref{sec:bij_plane}) is the boundary $C=\R$ of a simply connected light region $R$ not containing the outer face, \emph{such that $C$ is a simple cycle}.
\begin{lemma}\label{lem:sigma-d-condition}
Let $d$ be a positive integer and let $H$ be a dark-rooted hypermap of outer degree $d$. Let $\si$ be the charge function defined by 
\begin{itemize}
\item $\si(v)=d$ for every inner vertex $v$, and $\si(v)=0$ for every outer vertex $v$,
\item $\si(f)=d$ for every light face $f$,
\item $\si(f)=d-d\cdot\deg(f)$ for every inner dark face, and $\si_d(f_0)=-d$ for 
the dark outer face~ $f_0$.
\end{itemize}
The hypermap $H$ satisfies the $\si$-girth condition if and only if $H$ has ingirth $d$ (i.e., every inward cycle $C$ of $H$ has length at least $d$). Moreover in this case, the outer face is simple, and $\si_\tot=0$.
\end{lemma}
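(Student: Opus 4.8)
The plan is to analyze the given $\si$ by comparing it to the uniform charge function $\si_d$ of Lemma~\ref{lem:sid}, for which $\si_d(R)=d$ on every simply connected light region and ${\si_d}_\tot=2d$. The two functions differ only on the outer vertices (where $\si=0$ rather than $d$) and on the dark outer face $f_0$ (where $\si=-d$ rather than $d-d\deg(f_0)=d-d^2$). Hence, writing $V$ for the number of distinct outer vertices, $\si_\tot={\si_d}_\tot-d\,V+(d^2-2d)=d(d-V)$. Since $f_0$ is dark, no edge carries $f_0$ on both sides, so its length-$\deg(f_0)=d$ boundary walk visits $d$ distinct edge-sides and thus $V\le d$, with equality exactly when the outer face is simple. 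Therefore $\si_\tot\ge 0$, and $\si_\tot=0$ if and only if the outer face is simple; this settles the two ``moreover'' assertions as soon as either side of the equivalence is shown to force outer-simplicity.

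For the forward direction, assume $H$ satisfies the $\si$-girth condition. Applying it to the light region $R$ consisting of all inner faces, one has $|\R|=d$ (the outer edges) and no outer vertex is strictly inside $R$; since the only excluded face is $f_0$ and all outer vertices have charge $0$, we get $\si_\tot=\si(R)+\si(f_0)=\si(R)-d$, so $\si(R)=\si_\tot+d$. The inequality $d=|\R|\ge\si(R)=\si_\tot+d$ forces $\si_\tot\le 0$, hence $\si_\tot=0$ and the outer face is simple. Next, any inward cycle $C$ is the boundary of a simply connected light region $R$ not containing $f_0$; on such $R$ the function $\si$ agrees with $\si_d$ on every face and every (necessarily inner) strictly-inside vertex, so $\si(R)=\si_d(R)=d$ by Lemma~\ref{lem:sid}, and the condition yields $|C|=|\R|\ge d$. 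Thus $H$ has \igirth\ $d$.

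For the backward direction, assume $H$ has \igirth\ $d$. I would first deduce that the outer face is simple: otherwise its length-$d$ boundary walk repeats a vertex, and a repetition spanning the fewest steps produces a simple closed subwalk $C$ of length $<d$, every edge of which has $f_0$ on its outer side; then $C$ is an inward cycle of length $<d$, contradicting the hypothesis. With the outer face simple we have $\si_\tot=0$, so by Lemma~\ref{lem:simply-connected-sufficient} it suffices to verify the $\si$-girth inequalities on simply connected light regions $R$. When $f_0\notin R$ the boundary $\R$ is an inward cycle and, exactly as above, $|\R|\ge d=\si(R)$.

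The genuinely hard case is $f_0\in R$. Because $f_0$ is dark, a light region containing it must surround it, so the outer $d$-cycle lies strictly inside $R$ and $\R$ is a cycle bearing light faces on its $f_0$-side and dark faces on the other side; in particular $\R$ is \emph{not} an inward cycle, so its length is not controlled directly by the \igirth. Using Lemma~\ref{lem:sid} together with the two charge corrections, one computes $\si(R)=d(c-1)$, where $c$ is the number of outer vertices lying on $\R$ (the strict-inequality clause occurs only when $c=0$, where $\si(R)=-d$ and the inequality is vacuous), so the required estimate is $|\R|\ge d(c-1)$. To prove this I would exploit the further consequence of the \igirth\ hypothesis that every inner light face has degree $\ge d$ (its boundary is an inward cycle, since $f_0$ lies outside it), and run a counting/discharging argument in the annular part of $R$ between the outer $d$-cycle and $\R$ — equivalently, showing that a surrounding region violating $|\R|\ge d(c-1)$ must harbour either a light face of degree $<d$ or a short inward cycle. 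I expect this annular length estimate to be the main obstacle; the remainder is routine bookkeeping with Euler's relation and Lemmas~\ref{lem:sid} and~\ref{lem:simply-connected-sufficient}.
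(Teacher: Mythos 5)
Your computation of $\si_\tot=d(d-V)$, your derivation of outer-face simplicity and $\si_\tot=0$ from the girth condition applied to the region of all inner faces, and your treatment of regions avoiding $f_0$ are all correct and essentially match the paper (which does not prove this lemma directly but as the case $e=d$ of Lemma~\ref{lem:sigma-condition-annular-dark}, using the same decomposition $\si=\si_d+\si'$ relative to Lemma~\ref{lem:sid}). One small imprecision: for a simply connected light region $R$ not containing $f_0$, the boundary $\R$ need not \emph{be} an inward cycle, but it \emph{contains} one, which still gives $|\R|\geq d$.

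The genuine gap is the case $f_0\in R$ with $b\geq 1$ outer vertices on $\R$, which you correctly identify as the crux, correctly reduce to $|\R|\geq d(b-1)$, but then leave as an unexecuted plan (``I would run a counting/discharging argument\dots I expect this annular length estimate to be the main obstacle''). No discharging or face-degree bound is needed. The paper's resolution is a decomposition: since $R$ is simply connected and the closed face $f_0$ meets the boundary of the region of $R$ exactly at the $b$ outer vertices lying on $\R$, the light region $R'=R\setminus\{f_0\}$ falls apart into exactly $b$ simply connected light regions $R_1,\ldots,R_b$, none containing $f_0$, with $\R'$ the disjoint union of the $\R_i$. The case you have already established gives $|\R_i|\geq d$ for each $i$, and since the $d$ outer edges belong to $\R'$ but not to $\R$ (all light faces adjacent to $f_0$ lie in $R$), one has $|\R|=\sum_{i=1}^b|\R_i|-d\geq bd-d=\si(R)$, as required. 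Without this step your proof of the backward direction is incomplete.
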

Let $\si$ be the charge function of Lemma~\ref{lem:sigma-d-condition}. It is clear that the definition of $d$-weighted hyperorientations coincide with the 
definition of $\si$-weighted hyperorientations. Moreover Lemma~\ref{lem:sigma-d-condition} together with Theorem~\ref{theo:shifted-orientation-dark} implies that $H$ admits a (unique) $\si$-weighted hyperorientation in $\cHm$ if and only if it has ingirth at least $d$. Thus Theorem~\ref{theo:plane_dori} is a special case of Theorem~\ref{theo:shifted-orientation-dark}.

Instead of proving Lemma~\ref{lem:sigma-d-condition}, we will prove a slight extension which will be used for counting hypermaps with given ingirth in Section~\ref{sec:counting-annular}. 
We define an \emph{annular hypermap} as a face-rooted hypermap with a marked inner face (hence $H$ has two distinct marked faces). 
Let $H$ be an annular hypermap, let $f_0$ be its outer face $f_0$, and let $f_1$ be its marked inner face. 
The \emph{separating ingirth} of $H$ is
 the minimal length of the boundary of a light region containing $f_1$ but not $f_0$. Observe that this minimal length is necessarily achieved for a boundary $C$ which is a simple cycle, that is, an inward cycle containing $f_1$. 
We call \emph{separating outgirth} of $H$ the minimal length of the boundary of a light region containing $f_0$ but not $f_1$. This minimal length is necessarily achieved for a boundary $C$ which is a simple cycle. We call \emph{separating outward cycle} a simple cycle which is the boundary of a light region containing $f_0$ but not $f_1$ (so that the separating outgirth is the minimal length of separating outward cycles). 
We call \emph{non-separating ingirth} of $H$ the minimal length of the boundary of a \emph{simply connected} light region containing neither $f_0$ nor $f_1$. Observe that this minimal length is \emph{not} necessarily achieved for a boundary $C$ which is simple (it could be that $C$ is the union of two simple cycles). 

\begin{lemma}\label{lem:sigma-condition-annular-dark}
Let $d,e$ be positive integers. Let $H$ be an annular hypermap with a 
 dark outer face $f_0$ of degree $e$ and a marked inner face $f_1$. 
We consider the charge function $\si$ defined by
\begin{itemize}
\item $\si(v)=d$ for every inner vertex $v$, and $\si(v)=0$ for every outer vertex~$v$,
\item $\si(f)=d$ for every non-marked light face~$f$, 
\item $\si(f)=d-d\cdot\deg(f)$ for every non-marked inner dark face, and $\si(f_0)=-e$ for the outer face~$f_0$,
\item $\si(f_1)=e$ if the marked inner face $f_1$ is light, and $\si(f_1)=e-d\cdot\deg(f_1)$ if $f_1$ is dark.
\end{itemize}
The hypermap $H$ satisfies the $\si$-girth condition if and only if $H$ has non-separating ingirth at least $d$, and separating ingirth $e$. In this case $\si_\tot=0$ and the outer face is simple.
\end{lemma}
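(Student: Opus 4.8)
The plan is to analyze the $\si$-girth condition region by region, exploiting that $\si$ differs from the uniform charge $\si_d$ of Lemma~\ref{lem:sid} only at the outer vertices, the outer face $f_0$, and the marked face $f_1$. I would first record two elementary computations. By the Euler-relation argument of Lemma~\ref{lem:sid}, every \emph{simply connected} light region $R$ has $\si_d(R)=d$; since an outer vertex can be strictly inside $R$ only if $f_0\in R$, subtracting the local corrections gives
$$\si(R)=d-d\,p+[f_0\in R](de-d-e)+[f_1\in R](e-d),$$
where $p$ is the number of outer vertices strictly inside $R$. In particular $\si(R)=d$ when $R$ avoids both $f_0$ and $f_1$, and $\si(R)=e$ when $f_1\in R\not\ni f_0$. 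Secondly, summing the corrections globally yields $\si_\tot=d(e-n)$, where $n$ is the number of distinct outer vertices; since $f_0$ is dark its $e$ incident edges are distinct, so $n\le e$ and hence $\si_\tot\ge 0$, with equality exactly when the outer face is simple.

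For the forward direction (assume the $\si$-girth condition), I would simply test the condition on well-chosen regions. Applying it to any simply connected light region avoiding $f_0$ and $f_1$ gives $|\R|\ge d$, i.e.\ non-separating ingirth $\ge d$; applying it to any such region containing $f_1$ but not $f_0$ gives $|\R|\ge e$, i.e.\ separating ingirth $\ge e$. The light region $R=\{\text{all faces except }f_0\}$ has $|\R|=e$ and separates $f_1$ from $f_0$, so separating ingirth $\le e$, giving equality; moreover for this $R$ one computes $\si(R)=\si_\tot+e$, and the condition reads $e\ge\si_\tot+e$, i.e.\ $\si_\tot\le 0$. Combined with $\si_\tot\ge0$ this forces $\si_\tot=0$ and the outer face to be simple.

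For the converse (assume non-separating ingirth $\ge d$ and separating ingirth $=e$), I would first deduce that the outer face is simple: if its boundary walk were not a simple cycle it would pinch at some vertex and split into loops, and the loop enclosing $f_1$ would bound a light region separating $f_1$ from $f_0$ of length $<e$, contradicting separating ingirth $=e$. Hence $\si_\tot=0$, and by Lemma~\ref{lem:simply-connected-sufficient} it suffices to verify the inequalities of Definition~\ref{def:girth-condition} on simply connected light regions $R$. If $f_0\notin R$ the required bound $|\R|\ge\si(R)$ is literally the hypothesis (with $\si(R)=d$ or $e$ from the formula above).

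The remaining --- and I expect the only delicate --- case is $f_0\in R$, i.e.\ regions bounded by ``outward'' cycles, which the statement does not mention. The key device is to remove $f_0$: since $f_0$ is dark and lies in the light region $R$, all its neighbours lie in $R$, so $R^\circ:=R\setminus\{f_0\}$ is again a light region, with $\si(R^\circ)=\si(R)+e$ (the outer vertices that stop being strictly inside carry charge $0$) and $|\partial R^\circ|=|\R|+e$; thus the condition for $R$ is equivalent to $|\partial R^\circ|\ge\si(R^\circ)$. Topologically $R^\circ$ is the disk $R$ with the interior face $f_0$ deleted, whose boundary meets $\partial R$ exactly at the $q=e-p$ outer vertices lying on $\partial R$; for $q\ge1$ this cuts $R^\circ$ into $q$ \emph{simply connected} light regions avoiding $f_0$, along which both boundary length and charge are additive. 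Applying the already-established bounds (separating ingirth $=e$ to the one piece possibly containing $f_1$, non-separating ingirth $\ge d$ to the others) to each piece and summing yields $|\partial R^\circ|\ge\si(R^\circ)$, hence $|\R|\ge\si(R)$. The degenerate case $q=0$ (all outer vertices strictly inside $R$) makes $\si(R)$ negative, so the required strict inequality is automatic. The main obstacle is precisely this reduction of outward-cycle regions to inward ones via deletion of $f_0$ and the pinch-point decomposition; everything else is bookkeeping with the Euler relation.
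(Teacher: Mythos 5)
Your proof is correct and follows essentially the same route as the paper: writing $\si=\si_d+\si'$ so that $\si(R)=d+\si'(R)$ on simply connected light regions, reducing to simply connected regions via Lemma~\ref{lem:simply-connected-sufficient}, and handling regions containing $f_0$ by deleting $f_0$ and splitting the remainder into $q$ simply connected pieces to which the two ingirth bounds apply additively. The only (harmless) variation is cosmetic: you obtain $\si_\tot=0$ and the simplicity of the outer face from the global identity $\si_\tot=d(e-n)\ge 0$ combined with the girth condition on the region of all faces except $f_0$, whereas the paper deduces simplicity directly from separating ingirth $=e$ and then computes $\si_\tot$.
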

Observe that Lemma~\ref{lem:sigma-d-condition} corresponds to the special case $e=d$ of Lemma~\ref{lem:sigma-condition-annular-dark} (up to forgetting the marked face which plays no particular role).

\begin{proof}
Let $\si'$ be the charge function defined by $\si'(f_0)=d\,e-d-e$, $\si'(f_1)=-d+e$, $\si'(v)=-d$ if $v$ is an outer vertex, and $\si'(a)=0$ for any inner vertex and any non-marked non-root face $a$. We have $\si=\si_d+\si'$. Hence by Lemma~\ref{lem:sid}, we get $\si(R)=d+\si'(R)$ for any simply connected light region $R$. Note also that if $H$ has separating ingirth $e$, then it implies that the outer face is simple. In this case, there are $e$ outer vertices hence $\si'_\tot=-2d$, and since by Lemma~\ref{lem:sid} ${\si_d}_\tot=2d$ we get $\si_\tot=0$.

Now assume that $H$ satisfies the $\si$-girth condition. For any separating inward cycle $C$, we consider the corresponding light region $R$ and get $|C|\geq \si(R)=d+\si'(R)=d+\si'(f_1)=e$. Similarly and for any non-separating inward cycle $C$ we get $|C|\geq \si(R)=d+\si'(R)=d$. Thus $H$ has non-separating ingirth at least $d$, and separating ingirth $e$. 

Conversely assume that $H$ has non-separating ingirth at least $d$, and separating ingirth $e$. We want to prove that $H$ satisfies the $\si$-girth condition. Since $\si_\tot=0$, Lemma~\ref{lem:simply-connected-sufficient} implies that we can focus on simply connected light regions of $H$. For a simply connected light region $R$, we know $\si(R)=d+\si'(R)$, and need to prove $|\R|>\si(R)$. 
First suppose that $R$ does not contain $f_0$. We get $\si(R)=e$ if $f_1\in R$ and $\si(R)=d$ otherwise. Moreover $\R$ contains an inward cycle $C$, thus by hypothesis, $|\R|\geq |C|\geq \si(R)$. 
Suppose now that $R$ contains $f_0$. Let $b$ be the number of outer vertices incident to $\R$ (the other outer vertices are all strictly inside $R$). We get $\si(R)= bd-d$ if $f_1\in R$ and $\si(R)=bd-e$ otherwise. If $b=0$ then $\si(R)<0$, hence $|\R|>\si(R)$ holds trivially. Suppose now that $b>0$. In this case the light region $R'=R\setminus\{f_0\}$ is the disjoint union of $b$ simply connected light regions $R_1,\ldots,R_b$, and $\R'$ is the disjoint union of their boundaries $\R_1,\ldots,\R_b$. Each  boundary $\R_i$ contains an inward cycle, so $|\R_i|\geq e$ if $R_i$ contains $f_1$ and $|\R_i|\geq d$ otherwise. Since $\ds |\R|=|\R'|-e=\sum_{i=1}^b|\R_i|-e$ we get $|\R|\geq bd-d=\si(R)$ if $R$ contains $f_1$ and $|\R|\geq bd-e=\si(R)$ otherwise. This completes the proof that $H$ satisfies the $\si$-girth condition.
\end{proof}

We now give a similar result for light-rooted hypermaps.
\begin{lemma}\label{lem:sigma-condition-annular-light}
Let $d,e$ be positive integers. Let $H$ be an annular hypermap with a light outer face face $f_0$ of degree $e$ and a marked inner face $f_1$. 
We consider the charge function $\si$ defined by
\begin{itemize}
\item $\si(v)=d$ for every vertex $v$, 
\item $\si(f)=d$ for every non-marked inner light face $f$, and $\si(f_0)=e$ for the outer face~$f_0$,
\item $\si(f)=d-d\cdot\deg(f)$ for every non-marked inner dark face, 
\item $\si(f_1)=-e$ if the marked face $f_1$ is light, and $\si(f_1)=-e-d\cdot\deg(f_1)$ if $f_1$ is dark.
\end{itemize}
Then $\si_\tot=0$, and the hypermap $H$ satisfies the $\si$-girth condition if and only if $H$ has non-separating ingirth at least $d$, and separating outgirth $e$ and such that the only outward cycle of length $e$ is the contour of the outer face.
\end{lemma}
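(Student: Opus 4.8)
Following the proof of Lemma~\ref{lem:sigma-condition-annular-dark}, the plan is to write $\si=\si_d+\si'$, where $\si_d$ is the charge function of Lemma~\ref{lem:sid} and $\si'$ is a correction supported on the two marked faces. A direct computation shows $\si'(f_0)=e-d$ and $\si'(f_1)=-e-d$ (in the case where $f_1$ is dark, the term $-d\cdot\deg(f_1)$ is already carried by $\si_d$, so again $\si'(f_1)=-e-d$), while $\si'(a)=0$ for every other face and for every vertex. Hence $\si'_\tot=(e-d)+(-e-d)=-2d$, and since ${\si_d}_\tot=2d$ by Lemma~\ref{lem:sid} we get $\si_\tot=0$. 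By the same lemma, for every simply connected light region $R$ we have $\si(R)=d+\si'(R)$; that is, $\si(R)=d$ if $R$ contains neither $f_0$ nor $f_1$, $\si(R)=e$ if $R$ contains $f_0$ but not $f_1$, $\si(R)=-e$ if $R$ contains $f_1$ but not $f_0$, and $\si(R)=-d$ if $R$ contains both. As $\si_\tot=0$, Lemma~\ref{lem:simply-connected-sufficient} allows us to restrict all verifications to simply connected light regions.

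For the forward direction, assume the $\si$-girth condition. A simply connected light region $R$ avoiding both $f_0$ and $f_1$ has $\si(R)=d$, and since $f_0\notin R$ no outer edge can be strictly inside $R$, so the non-strict inequality gives $|\R|\geq d$; minimizing over such regions shows the non-separating ingirth is at least $d$. Every separating outward cycle bounds a simply connected light region $R$ with $f_0\in R$, $f_1\notin R$, hence $|\R|\geq\si(R)=e$, so the separating outgirth (which is realized by such a cycle) is at least $e$; since $\{f_0\}$ is a light region containing $f_0$ but not $f_1$ with $|\partial\{f_0\}|=\deg(f_0)=e$ (each incident edge meets $f_0$ on its unique light side, so all $e$ of them lie on the boundary), the separating outgirth equals $e$. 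Finally, if $C$ is any separating outward cycle of length $e$ bounding $R$, then $|\R|=e=\si(R)$, so the strict-inequality clause must fail, i.e. no outer edge is strictly inside $R$; thus every dark face adjacent to $f_0$ lies outside $R$, all $e$ contour edges of $f_0$ belong to $\R$, and as $|\R|=e$ we conclude $C=\R$ is exactly the contour of $f_0$. This establishes the three announced properties.

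Conversely, assume non-separating ingirth at least $d$, separating outgirth $e$, and that the contour of $f_0$ is the only outward cycle of length $e$. It suffices to verify $|\R|\geq\si(R)$, strictly when some outer edge is strictly inside $R$, for each simply connected light region $R$. If $f_1\in R$, then $\si(R)\in\{-d,-e\}$ is negative while $|\R|\geq 1$, so the (possibly strict) inequality is automatic. If $f_0,f_1\notin R$, then $\si(R)=d$, no outer edge can be strictly inside $R$, and $|\R|\geq d$ by the non-separating ingirth hypothesis. The remaining case is $f_0\in R$, $f_1\notin R$, where $\si(R)=e$: the separating outgirth hypothesis gives $|\R|\geq e$, and it remains to prove that $|\R|=e$ forces $R=\{f_0\}$ (then no outer edge is strictly inside $R$ and the non-strict inequality suffices, whereas $R\supsetneq\{f_0\}$ gives $|\R|>e$ and both inequalities hold).

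This last implication is the crux, and it is where I expect the main obstacle to lie. I would argue that $|\R|=e$ forces $\R$ to be a simple cycle: otherwise its boundary walk revisits a vertex $v$, at which the topological disk $R$ pinches into two sub-disks $R_1\ni f_0$ and $R_2$ meeting only at $v$, with $\R$ the disjoint union of their boundaries; each $R_i$ is again a light region, since its boundary edges lie in $\R$ and therefore carry a light face of $R$ on the inner side. Then $R_1$ is a simply connected light region containing $f_0$ but not $f_1$ with $|\partial R_1|<|\R|=e$, contradicting that the separating outgirth is $e$. Hence $\R$ is a separating outward cycle of length $e$, and by the uniqueness hypothesis it equals the contour of $f_0$, so $R=\{f_0\}$. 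The only genuinely delicate points are this planar decomposition of a region with non-simple boundary into light sub-regions, and the bookkeeping $|\partial\{f_0\}|=\deg(f_0)$, which relies on $f_0$ being light so that it borders each incident edge exactly once.
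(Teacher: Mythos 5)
Your proposal is correct and follows essentially the same route as the paper: write $\si=\si_d+\si'$ with $\si'$ supported on $f_0,f_1$, use Lemma~\ref{lem:sid} to get $\si(R)=d+\si'(R)$ on simply connected light regions and $\si_\tot=0$, invoke Lemma~\ref{lem:simply-connected-sufficient}, and then match the four cases (according to which of $f_0,f_1$ lie in $R$) with the three girth properties, the cases with $f_1\in R$ being void. The extra care you take at the end (showing $|\R|=e$ forces $\R$ to be a simple cycle, hence the contour of $f_0$) is precisely the content the paper compresses into ``clearly equivalent,'' so nothing is missing.
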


\begin{proof}
We have $\si=\si_d+\si'$, where $\si'(f_0)=-d+e$, $\si'(f_1)=-d-e$, and $\si'(a)=0$ for any vertex and any non-marked non-root face $a$.
Lemma~\ref{lem:sid} gives $\si_\tot={\si_d}_\tot+\si'(f_0)+\si'(f_1)=0$.
Using Lemmas~\ref{lem:simply-connected-sufficient} and~\ref{lem:sid}, we easily see that the $\si$-girth condition translates into the following condition for any simply connected region $R$:
\begin{itemize}
\item[(i)] $|\R|\geq d$ if $R$ contains neither $f_0$ nor $f_1$,
\item[(ii)] $|\R|\geq e$ if $R$ contains $f_0$ but not $f_1$, with strict inequality if $R\neq \{f_0\}$,
\item[(iii)] $|\R|\geq -e$ if $R$ contains $f_1$ but not $f_0$,
\item[(iv)] $|\R|> -d$ if $R$ contains both $f_0$ and $f_1$,
\end{itemize}
The conditions (iii) and (iv) are void, while the conditions (i) and (ii) are clearly equivalent to the fact that $H$ has non-separating ingirth at least $d$, separating outgirth $e$, and the contour of the outer face is the only separating outward cycle of length $e$. 
\end{proof}

We will now use Lemmas~\ref{lem:sigma-condition-annular-dark} and~\ref{lem:sigma-condition-annular-light} in conjunction with Theorems~\ref{theo:bij-shifted-dark} and~\ref{theo:bij-shifted-light} to get bijections with classes of hypermobiles. 
For any integers $d,e$ (where $e$ is allowed to be negative), 
we call \emph{$(d,e)$-weighted hypermobile} a consistently weighted hypermobile (that is, a weighted hypermobile such that edges incident to a round vertex have positive weight, while edges incident to a light square vertex have non-positive weight), with a marked square vertex, such that 
\begin{itemize}
\item every round vertex has weight $d$.
\item every unmarked light square vertex $v$ has weight $d-\deg(v)$,
\item every unmarked dark square vertex $v$ has weight $d\cdot\deg(v)-d-\deg(v)$,
\item the marked square vertex $v$ has weight $e-\deg(v)$ if $v$ is light and $d\cdot\deg(v)-e-\deg(v)$ if $v$ is dark.
\end{itemize}
By similar arguments as in Claim~\ref{claim:excess-d-weighted-hypermobile} one can check that 
a $(d,e)$-weighted hypermobile always has excess $-e$.

Let $\cB^{(d,e)}$ be the family of annular hypermaps with a dark outer face of 
degree $e$, non-separating ingirth at least $d$ and separating ingirth $e$. 
Let $\cB'^{(d,e)}$ be the set of fittingly charged annular hypermaps $(H,\si)$
with a dark outer face of degree $e$, 
where the charge $\si$ is defined as in Lemma~\ref{lem:sigma-condition-annular-dark}.
 By Lemma~\ref{lem:sigma-condition-annular-dark} we can identify the sets $\cB^{(d,e)}$ and $\cB'^{(d,e)}$. Moreover, by Theorem~\ref{theo:bij-shifted-dark}, the mapping $\Phi_-$ gives a bijection between $\cB'^{(d,e)}$ and the family of $(d,e)$-weighted hypermobiles. Thus we obtain the following result (see Figure~\ref{fig:bij_ingirth_d_annular_dark} for an example).
\begin{thm} \label{thm:bij_annular_dark}
For $d$ and $e$ positive integers, the family $\cB^{(d,e)}$ 
is in bijection with the family of $(d,e)$-weighted hypermobiles. 
 Each light (resp. dark) inner face in the hypermap corresponds to a light (resp. dark) square vertex of the same degree in the associated hypermobile. Moreover, the marked inner face corresponds to the marked square vertex. 
\end{thm}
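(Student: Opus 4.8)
The plan is to deduce the statement by chaining together Lemma~\ref{lem:sigma-condition-annular-dark} and Theorem~\ref{theo:bij-shifted-dark}. First I would attach to each annular hypermap $H$ with dark outer face of degree $e$ the charge function $\si$ of Lemma~\ref{lem:sigma-condition-annular-dark}, and check that $\si$ \emph{fits} $H$ (in the sense of Theorem~\ref{theo:bij-shifted-dark}) exactly when $H\in\cB^{(d,e)}$. Indeed, that lemma already gives that the $\si$-girth condition is equivalent to having non-separating ingirth at least $d$ and separating ingirth $e$, that $\si_\tot=0$, and that the outer face is then simple; and the prescribed values $\si(v)=d>0$ on inner vertices, $\si(v)=0$ on outer vertices, and $\si(f_0)=-e=-\deg(f_0)$ are precisely the remaining requirements in the definition of ``fits'' for a dark-rooted hypermap. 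This identifies $\cB^{(d,e)}$ with the family ${\cB'}^{(d,e)}$ of fittingly charged dark-rooted hypermaps carrying this $\si$ and having a simple outer face.

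Next I would invoke Theorem~\ref{theo:bij-shifted-dark}, which turns $\Phi_-$ into a bijection from ${\cB'}^{(d,e)}$ onto the consistently-weighted hypermobiles of negative excess, sending each inner face (resp. inner vertex) of charge $x$ to a square vertex (resp. round vertex) of the same degree and charge $x$. It then remains to translate the charge prescriptions of $\si$ into the weight prescriptions defining $(d,e)$-weighted hypermobiles, using the dictionary that the charge of a round vertex is $w(u)$, of a light square vertex $w(u)+\deg(u)$, and of a dark square vertex $-w(u)-\deg(u)$. A one-line computation gives: charge $d$ on round vertices forces weight $d$; charge $d$ (resp. $d-d\cdot\deg(v)$) on unmarked light (resp. dark) square vertices forces weight $d-\deg(v)$ (resp. $d\cdot\deg(v)-d-\deg(v)$); and charge $e$ (resp. $e-d\cdot\deg(v)$) on the square vertex coming from the marked light (resp. dark) inner face $f_1$ forces weight $e-\deg(v)$ (resp. $d\cdot\deg(v)-e-\deg(v)$). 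These are exactly the four conditions defining a $(d,e)$-weighted hypermobile, with the image of $f_1$ playing the role of the marked square vertex; and since $\si_\tot=0$ with $\si(f_0)=-e$ and all outer vertices of charge $0$, the relation between excess and charges established above yields excess $-e$, consistent with the outer degree $e$ under $\Phi_-$.

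I would then conclude that $\Phi_-$ restricts to the announced bijection, carrying light (resp. dark) inner faces to light (resp. dark) square vertices of the same degree and the marked inner face to the marked square vertex. The only point requiring care --- and the nearest thing to an obstacle --- is the bookkeeping of the marking in the degenerate case $e=d$: there the charge of $f_1$ coincides with that of a generic face of the same colour and degree, so the marking cannot be recovered from the weights alone and must be carried as explicit data on both sides. Since a $(d,e)$-weighted hypermobile already comes with a marked square vertex and an annular hypermap already comes with a marked inner face, simply declaring $\Phi_-$ to match the one with the other keeps the correspondence bijective for all positive integers $d,e$; every other verification reduces to the routine charge-to-weight computations above.
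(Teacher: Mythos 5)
Your proposal is correct and follows essentially the same route as the paper: identify $\cB^{(d,e)}$ with the fittingly charged hypermaps ${\cB'}^{(d,e)}$ via Lemma~\ref{lem:sigma-condition-annular-dark} (which also supplies $\si_\tot=0$ and the simplicity of the outer face), then apply Theorem~\ref{theo:bij-shifted-dark} and translate charges into the weight conditions defining $(d,e)$-weighted hypermobiles. The charge-to-weight computations and the excess check are exactly as in the paper, and your remark about carrying the marking as explicit data is a sound (if implicit in the paper) point.
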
 
\begin{remark}
Theorem~\ref{thm:bij_annular_dark} generalizes Theorem~\ref{theo:plane_dori} which corresponds to the case $e=d$ (by forgetting the marked inner face). Indeed the ingirth of a hypermap with a marked inner face is the minimum of its separating and non-separating ingirths. This theorem also generalize the bijection established for so-called \emph{annular maps} (planar maps with a root-face and an additional marked face) in \cite{BFgir}. 
\end{remark}
\begin{remark} 
It would be possible to extend Theorem~\ref{thm:bij_annular_dark} to the case where there is a marked inner vertex $v_1$ instead of a marked inner face $f_1$. In that case the separating ingirth
is the minimal boundary-length of a light region not containing $f_0$ but containing $v_1$ in its strict interior. 
In the associated hypermobiles, the marked vertex would be round instead of square, and its weight would be $e$ 
instead of $d$. 
It would also be possible to extend Theorem~\ref{thm:bij_annular_dark} to the case where there is a root-vertex $v_0$ instead of a root-face $f_0$: this would correspond to the case $e=0$ (no constraint on the 
separating ingirth) and we would apply $\Phi_0$ instead of
 $\Phi_-$.  These extensions can be obtained from the charged-map setup in a way similar to the results proved in this section.
\end{remark}

\begin{figure}
\begin{center}
\includegraphics[width=\linewidth]{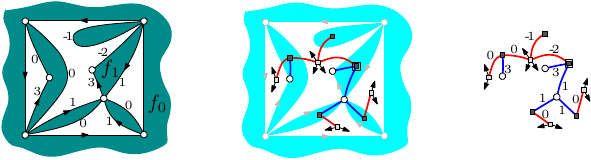}
\end{center}
\caption{The bijection of Theorem~\ref{thm:bij_annular_dark} on an example (case $d=3$, $e=4$, with a dark marked inner face $f_1$).} 
\label{fig:bij_ingirth_d_annular_dark}
\end{figure}

\begin{figure}
\begin{center}
\includegraphics[width=\linewidth]{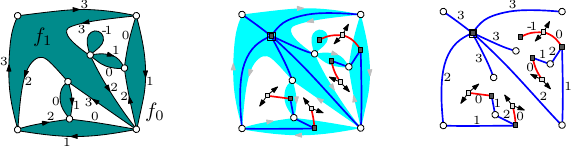}
\end{center}
\caption{The bijection of Theorem~\ref{thm:bij_annular_light} on an example (case $d=3$, $e=4$, with a dark marked inner face $f_1$).} 
\label{fig:bij_ingirth_d_annular_light}
\end{figure}

Similarly let $\cC^{(d,e)}$ be the set of annular hypermaps with a light outer face of degree $e$, non-separating ingirth at least $d$, separating outgirth $e$, and such that the only outward cycle of length $e$ is the contour of the outer face. 
Let $\cC'^{(d,e)}$ be the set of fittingly charged annular hypermaps $(H,\si)$ with a light
outer face of degree $e$, where the charge $\si$ is defined as in Lemma~\ref{lem:sigma-condition-annular-light}.
 By Lemma~\ref{lem:sigma-condition-annular-light} we can identify the sets $\cC^{(d,e)}$ and $\cC'^{(d,e)}$, and by Theorem~\ref{theo:bij-shifted-light}, the mapping $\Phi_+$ gives a bijection between $\cC'^{(d,e)}$ and the family of $(d,-e)$-weighted hypermobiles. 
 In conclusion we obtain (see Figure~\ref{fig:bij_ingirth_d_annular_light} for an example):
\begin{thm} \label{thm:bij_annular_light}
For $d$ and $e$ positive integers, the family $\cC^{(d,e)}$ is in bijection 
with the family of $(d,-e)$-weighted hypermobiles. 
 Each light (resp. dark) inner face in the hypermap corresponds to a light (resp. dark) square vertex of the same degree in the associated hypermobile. Moreover, the marked inner face corresponds to the marked square vertex. 
\end{thm}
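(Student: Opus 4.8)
The plan is to follow the exact same route as for the dark-rooted case (Theorem~\ref{thm:bij_annular_dark}), replacing the input Lemma~\ref{lem:sigma-condition-annular-dark} by its light-rooted analogue Lemma~\ref{lem:sigma-condition-annular-light}, and the master bijection $\Phi_-$ by $\Phi_+$. Concretely, I would first invoke Lemma~\ref{lem:sigma-condition-annular-light} to identify $\cC^{(d,e)}$ with the set $\cC'^{(d,e)}$ of fittingly charged light-rooted annular hypermaps carrying the charge function $\si$ defined there. That lemma already asserts $\si_\tot=0$ and the equivalence of the $\si$-girth condition with the three geometric conditions defining $\cC^{(d,e)}$ (non-separating ingirth at least $d$, separating outgirth $e$, and the outer contour being the unique outward cycle of length $e$); it remains only to observe that $\si$ genuinely \emph{fits} $H$ in the light-rooted sense of the definition preceding Theorem~\ref{theo:shifted-orientation-light}, namely that every vertex has positive charge ($\si(v)=d>0$ since $d$ is a positive integer) and that the light outer face $f_0$ has charge equal to its degree ($\si(f_0)=e=\deg(f_0)$). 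Thus $\cC^{(d,e)}\cong\cC'^{(d,e)}$ is precisely a class of fittingly charged light-rooted hypermaps.

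Next I would apply Theorem~\ref{theo:bij-shifted-light}, which gives a bijection $\Phi_+$ from fittingly charged light-rooted hypermaps to consistently-weighted hypermobiles of positive excess, together with the stated parameter correspondences: each inner face of degree $\de$ and charge $x$ maps to a square vertex of degree $\de$ and charge $x$ of the matching colour, each vertex of charge $x$ maps to a round vertex of charge $x$, and the outer degree maps to the excess. Designating the square vertex that is the image of the marked inner face $f_1$ as the marked square vertex — legitimate since $f_1$ is an inner face, hence non-frozen for $\cHp$ — turns the image into a hypermobile with one marked square vertex.

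The substance of the argument is then to check that the image of $\cC'^{(d,e)}$ under $\Phi_+$ is \emph{exactly} the family of $(d,-e)$-weighted hypermobiles. For this I would convert the charges prescribed by Lemma~\ref{lem:sigma-condition-annular-light} into vertex weights through the charge–weight dictionary (charge $=w$ for a round vertex, $w+\deg$ for a light square vertex, $-w-\deg$ for a dark square vertex). The unmarked vertices carry charges $d$, $d$, and $d-d\cdot\deg$ (round, light, dark), which convert to weights $d$, $d-\deg$, and $d\cdot\deg-d-\deg$, precisely the unmarked conditions in the definition of a $(d,-e)$-weighted hypermobile. The marked face $f_1$ has charge $-e$ if light and $-e-d\cdot\deg(f_1)$ if dark, which convert to weights $-e-\deg$ and $d\cdot\deg+e-\deg$ respectively, matching the marked-vertex condition with $e$ replaced by $-e$. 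The consistently-weighted sign condition is preserved verbatim by $\Phi_+$, and the excess equals the outer degree $e>0$, consistent both with the positive-excess codomain of $\Phi_+$ and with the earlier observation that a $(d,-e)$-weighted hypermobile has excess $e$. Since every condition matches term by term and $\Phi_+$ is a bijection, injectivity and surjectivity follow at once (the $\Phi_+$-preimage of any $(d,-e)$-weighted hypermobile is a fittingly charged light-rooted hypermap whose charge function is forced to be $\si$, hence lies in $\cC'^{(d,e)}$), and the degree and marked-face correspondences are exactly those read off from Theorem~\ref{theo:bij-shifted-light}.

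I expect the genuinely delicate point to be hidden in Lemma~\ref{lem:sigma-condition-annular-light} rather than in this final assembly: the strict-inequality clause of the light-rooted $\si$-girth condition (triggered when an outer edge lies strictly inside $R$) is what forces the outer contour to be the \emph{unique} outward cycle of length $e$, and this asymmetry with the dark-rooted case (where the separating condition carries no strictness) is the only place where real care is needed. Granting that lemma and the master bijection, the remainder of the proof is the bookkeeping of charges, signs, and excess sketched above.
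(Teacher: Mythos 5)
Your proposal is correct and follows exactly the paper's route: identify $\cC^{(d,e)}$ with the fittingly charged light-rooted hypermaps $\cC'^{(d,e)}$ via Lemma~\ref{lem:sigma-condition-annular-light}, then apply $\Phi_+$ through Theorem~\ref{theo:bij-shifted-light} and translate charges into weights to recognize the image as the $(d,-e)$-weighted hypermobiles. Your charge-to-weight bookkeeping (including the sign flip $e\mapsto -e$ at the marked vertex and the excess being $+e$) matches the paper's, which states this argument even more tersely.
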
 


\section{Counting annular hypermaps according to girth parameters}\label{sec:counting-annular}
 In Section~\ref{sec:count_plane_hypermaps} we have counted plane hypermaps with control on the ingirth and the face degrees, but with the restriction that the outer degree is equal to the ingirth. Here we will drop this restriction. Our strategy (as in our previous article~\cite{BFgir} dealing with maps) is to consider annular hypermaps instead of plane hypermaps, and use a canonical decomposition of annular hypermaps into two hypermaps that can be counted bijectively.

Recall that an \emph{annular hypermap} is a hypermap with two marked faces: one called the 
outer face and the other called the marked inner face.
An annular hypermap is \emph{corner-rooted} by marking a corner in the outer face and a corner in the marked inner face. 
 Let $\cAad$ be the family of corner-rooted annular hypermaps of separating girth $e$
and non-separating ingirth at least $d$. 
Let $\cRad$ (resp. $\cSad$) be the family of corner-rooted annular hypermaps such that
the underlying (unrooted) annular hypermap is in the family $\cB_{d,e}$
 (resp. $\cC_{d,e}$) defined in Section~\ref{sec:application-charged-maps}. 
\begin{lem}\label{lem:cut_C}
There is an $e$-to-1 correspondence between $\cAad$ and the Cartesian product $\cSad\times\cRad$. 
\end{lem}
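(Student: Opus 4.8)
The plan is to set up the correspondence as a \emph{gluing} map $\Gamma\colon\cSad\times\cRad\to\cAad$ and to prove that it is onto with every fibre of size $e$. Given $(S,R)\in\cSad\times\cRad$, both the (light) outer face of $S$ and the (dark) outer face of $R$ have degree $e$, so I would glue the two pieces along these boundaries: delete both outer faces and identify their two length-$e$ contours, using the root-corner of the outer face of $S$ and of $R$ to fix the rotational alignment. Across the resulting interface cycle $C$ every edge separates a dark face of $S$ from a light face of $R$, so the glued object is again face-bicolored; its two remaining marked faces are the marked inner faces of $S$ and of $R$, which I declare to be the outer face $f_0$ and the marked inner face $f_1$ of $A=\Gamma(S,R)$, keeping their two root-corners. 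Thus $A$ is a corner-rooted annular hypermap in which $C$ is a separating cycle of length $e$, light on one side and dark on the other.

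The first substantial step is to check that $\Gamma$ takes values in $\cAad$ and admits a well-defined inverse on unrooted data, i.e.\ that cutting an arbitrary $A\in\cAad$ along a canonical minimal separating cycle returns a pair in $\cSad\times\cRad$. For $A\in\cAad$ I would cut along its canonical minimal separating cycle $C$ of the relevant type (an inward cycle, matching the ingirth flavour of the statement), made canonical either as the outermost such minimal cycle---via an uncrossing/lattice argument on minimal separating inward cycles---or, using the root-corners, as the leftmost one. Filling the two holes left by $C$ with a face of each colour (the colour on each side being forced by the bicoloring across $C$) produces a light-outer piece and a dark-outer piece, each of outer degree $|C|=e$. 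To identify these with members of $\cC_{d,e}$ and $\cB_{d,e}$, and conversely to certify $\Gamma(S,R)\in\cAad$, I would transfer the girth conditions through $C$ by an uncrossing argument: any separating cycle, and (using Lemma~\ref{lem:simply-connected-sufficient}) any boundary of a simply connected light region avoiding $f_0$ and $f_1$, can be modified at its intersections with $C$ into cycles of no greater length lying entirely on one side of $C$, where the bounds from $\cB_{d,e}$ (separating ingirth $e$, non-separating ingirth $\ge d$) and $\cC_{d,e}$ (separating outgirth $e$ with the outer contour as its unique realiser, non-separating ingirth $\ge d$) apply. Here the bicoloring is crucial: since the dark-outer piece has a dark outer face it admits no separating outward cycle at all, so all of its separating cycles are long, and the contour of $C$ on the light-outer side is of the wrong type to spoil the uniqueness condition defining $\cC_{d,e}$.

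It then remains to count preimages. After cutting $A$ along $C$, the two pieces inherit the root-corners in $f_0$ and $f_1$, but the two newly created degree-$e$ outer faces still need root-corners. To glue back to exactly $A$, these two roots must sit at a common corner-position along $C$; there are precisely $e$ such positions, yielding $e$ corner-rooted pairs in $\cSad\times\cRad$, all of which map to $A$ under $\Gamma$. These pairs are pairwise distinct, because fixing the root-corner in $f_0$ (resp.\ $f_1$) already rigidifies each piece---a map carrying a marked corner has no nontrivial automorphism---so distinct positions give distinct rooted pieces. Hence each $A\in\cAad$ has exactly $e$ preimages, which is the asserted $e$-to-$1$ correspondence; equivalently, for fixed underlying unrooted pieces the glued map depends only on the cyclic offset between the two boundary roots, and each of the $e$ offsets is realised by $e$ of the $e^{2}$ rooted pairs.

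The step I expect to be the main obstacle is the colour-sensitive uncrossing analysis of the second paragraph: one must show that when an arbitrary cycle crosses the interface $C$ it can be replaced by cycles on a single side \emph{of the correct inward/outward type}, so that exactly the available girth bound of that piece applies, and one must pin down the canonical choice of $C$ together with the precise reading of ``separating girth $e$'' for $\cAad$ (in particular that the contour of the outer face does not itself provide a shorter separating cycle of the relevant type). Once these are in place, the bicoloring, the equality of boundary degrees, and the transfer of face-degree statistics are routine bookkeeping.
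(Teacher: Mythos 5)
Your proposal is correct and follows essentially the same route as the paper: cut/glue along the canonical (outermost) minimal separating inward cycle, made canonical via the lattice identity $|\cup(C_1,C_2)|+|\cap(C_1,C_2)|=|C_1|+|C_2|$, transfer the girth conditions by uncrossing arbitrary cycles with the interface $C$ (with the uniqueness clause in $\cC_{d,e}$ forcing the glued cycle to be the outermost one), and obtain the factor $e$ from the $e$ choices of common root position on $C$. The step you flag as the main obstacle is exactly the content of the paper's verification that $\phi\circ\psi=\Id$, carried out with the same $\cup/\cap$ bookkeeping.
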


\begin{proof}
We will first define the \emph{canonical cycle} of an annular hypermap $H\in \cAad$.
 For any cycles $C_1,C_2$ that are contours of some light regions $R_1,R_2$ of a hypermap $H$, we denote by $\cap(C_1,C_2)$ (resp. $\cup(C_1,C_2)$) the contour
of the light region $R_1\cap R_2$ (resp. $R_1\cup R_2$). It is easy to see that 
$$
|\cup(C_1,C_2)|+|\cap(C_1,C_2)|=|C_1|+|C_2|.
$$
Thus, if $C_1$ and $C_2$ are separating inward cycles of length $e$, then $\cup(C_1,C_2)$ and $\cap(C_1,C_2)$ are both separating inward cycles of length $e$ (since $H$ has separating girth $e$). Thus $H$ has a separating inward cycle $C$ of length $e$ which is the outermost (that is, its light region contains the light region of any separating inward cycle of length $e$), and we call it the \emph{canonical cycle} of $H$. 

We now define the $e$-to-1 correspondence between $\cAad$ and $\cSad\times\cRad$. 
Let $\cAadb$ be the set of pairs $(H,v)$ where $H\in \cAad$ and $v$ is a vertex on the canonical cycle. 
For $(H,v)\in \cAadb$, with $f_0$ the outer face and $f_1$ the marked inner face of $H$, 
we denote by $\phi(H,v)$ the pair $(I,J)$ of
corner-rooted annular hypermaps obtained by cutting along $C$: the marked inner face of $I$ is $f_0$ and the outer
face of $I$ is delimited by $C$, while the marked inner face of $J$ is $f_1$
and the outer face of $J$ is delimited by $C$ (the marked corners in the
faces delimited by $C$ are at $v$). It is immediate to check that $I\in\cSad$, and $J\in\cRad$. Hence $\phi$ is a mapping from $\cAadb$ to $\cSad\times\cRad$. 

It remains to prove that $\phi$ is a bijection, which we do by exhibiting the inverse mapping. For $(I,J)\in \cSad\times\cRad$, we let $\psi(I,J)$ be the pair $(H,v)$, where $H$ is the corner-rooted annular hypermap obtained by patching
the outer face of $I$ with the outer face of $J$ so that their marked outer corners coincide, defining $v$ as their common incident vertex after patching, and defining the outer face of $H$ as the marked inner face of $I$. It is clear that $\psi\circ\phi=\Id$ and we need to prove $\phi\circ\psi=\Id$.
Hence, we need to prove that if $(H,v)=\psi(I,J)$ then $H\in \cAad$ and the cycle $C'$ of $H$ resulting from merging the outer face of $I$ with the outer face of $J$ is the canonical cycle $C$ of $H$. Note that $|C'|=e$ and $|C|\leq e$. 
Moreover, since $\cap(C,C')$
is a separating inward cycle of $J$, we get $|\cap(C,C')|\geq e$. And since $\cup(C,C')$ is a separating outward cycle of $I$, we get $|\cup(C,C')|\geq e$ with
equality if and only if $\cup(C,C')=C'$. Thus $|C|+|C'|=|\cup(C,C')|+|\cup(C,C')|\geq 2e$, and finally $|C|=|C'|=e$. This implies that the separating ingirth of $H$ is $e$, and moreover $\cup(C,C')=C'$ which implies that $C'=C$ (since $C$ is the outermost separating inward cycle of length $e$). It only remains to show that $H$ has non-separating girth at least $d$. 
Let $\hat{R}$ be a light region of $H$ not containing the inner marked face $f_1$ and let $\hat{C}$
be the contour of $\hat{R}$. 
If $\cap(C,\hat{C})$ encloses no face, then $\hat{C}$ completely belongs to $I$, so
that $|\hat{C}|\geq d$. 
Otherwise, $\cap(C,\hat{C})$ completely belongs to $J$ and
 is the contour of a non-empty light region not 
containing $f_1$, 
hence $|\cap(C,\hat{C})|\geq d$. 
Moreover
 $\cup(C,\hat{C})$ is a separating outward cycle of $I$, 
hence $|\cup(C,\hat{C})|\geq e$.
Thus $|\hat{C}|=|\cup(C,\hat{C})|+|\cap(C,\hat{C})|-|C|\geq d$. Thus $H$ has non-separating ingirth $d$ and $H\in\cAad$, which completes the proof that $\phi$ is a bijection.
\end{proof}

For $k,\ell\geq 1$, we define $\cAbw$ as the family of corner-rooted annular hypermaps of
separating ingirth $e$, non-separating ingirth at least $d$, 
where the outer face is dark of degree $k$ and the marked inner face is light of degree $\ell$.
Let $\Abw\equiv\Abw(x_1,x_2,\ldots;y_1,y_2,\ldots)$ be the generating function
of $\cAbw$ where $x_i$ and $y_i$ mark respectively the number of unmarked inner light and dark faces of degree $i$. 
We define the families $\cAbb$, $\cAwb$, $\cAww$ (depending on the types, light or dark, of the outer face and of the marked inner face) and their associated generating functions similarly.
Let $\cRadb$ (resp. $\cSadb$) be the subfamily of $\cRad$ (resp. $\cSad$) for which the marked inner face is dark of degree $k$. 
Let $\Radbk\equiv \Radbk(x_1,x_2,\ldots;y_1,y_2,\ldots)$ (resp. $\Sadbk\equiv \Sadbk(x_1,x_2,\ldots;y_1,y_2,\ldots)$) be the generating function of $\cRadb$ (resp. $\cSadb$) where $x_i$ and $y_i$ mark respectively the number of light and dark unmarked inner faces of degree $i$. We define the families $\cRadw$, $\cSadw$ and their generating functions similarly.
Lemma~\ref{lem:cut_C} gives $$A^{\ast k,\star\ell}_{d,e}=\frac{1}{e}C^{\ast k}_{d,e}B^{\star\ell}_{d,e}.$$
for $\ast\in\{\blacklozenge,\lozenge\}$ and $\star\in\{\blacklozenge,\lozenge\}$.

We now use Theorems~\ref{thm:bij_annular_dark} and~\ref{thm:bij_annular_light} to determine $B^{\star\ell}_{d,e}$ and $C^{\ast k}_{d,e}$.
Theorem~\ref{thm:bij_annular_dark} gives a bijection between $\cB_{d,e}$ and the family of $(d,e)$-weighted hypermobiles. It is easily seen that marking a corner in the marked inner face of an annular hypermap in $\cB_{d,e}$ corresponds to marking a corner at the marked square vertex of the associated $(d,e)$-weighted hypermobile. Thus, there is an $e$-to-$1$ correspondence between $\cRad$ and the family $\cT_{d,e}$ of $(d,e)$-weighted hypermobiles with a marked corner at the marked square vertex (the factor $e$ correspond to choosing the marked corner in the outer face of the annular hypermap). Moreover the degree and color of the marked inner face of the hypermap corresponds to the degree and color of the marked vertex of the hypermobile. Thus by decomposing hypermobiles in $\cT_{d,e}$ at their root-vertex (which yields a sequence of planted $d$-hypermobiles) we get 
$$
\Radwk=e[u^e]\W(u)^k,\ \ \ 
\Radbk=e[u^{-e}]L(u)^k,
$$
where $\W(u)$ and $L(u)$ are defined by~\eqref{eq:LM}. Similarly, Theorem~\ref{thm:bij_annular_light} leads to 
$$
\Sadwk=e[u^{-e}]\W(u)^k,\ \ \ 
\Sadbk=e[u^e]L(u)^k.
$$
We therefore obtain the following result.
\begin{theo}\label{theo:count_ann}
For $e,d,k,\ell\geq 1$, the generating functions of corner-rooted annular maps have the 
following expressions:
\begin{eqnarray*}
&&\Abw=e[u^e]L(u)^k[v^e]\W(v)^{\ell},\ \ \ \Awb=e[u^{-e}]\W(u)^k[v^{-e}]L(v)^{\ell},\\
&&\Abb=e[u^e]L(u)^k[v^{-e}]L(v)^{\ell},\ \ \ \Aww=e[u^{-e}]\W(u)^k[v^e]\W(v)^{\ell},
\end{eqnarray*}
where $L(u)$ and $\W(u)$ are specified by \eqref{eq:LM}, \eqref{eq:2} and \eqref{eq:3}.
\end{theo}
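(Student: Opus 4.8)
The plan is to read off the four series directly from the product decomposition of Lemma~\ref{lem:cut_C}, combined with the explicit expressions for the two factors established just above. Concretely, Lemma~\ref{lem:cut_C} supplies the product formula
\[
A^{\ast k,\star\ell}_{d,e}=\frac1e\,C^{\ast k}_{d,e}\,B^{\star\ell}_{d,e}\qquad(\ast,\star\in\{\blacklozenge,\lozenge\}),
\]
while Theorems~\ref{thm:bij_annular_dark} and~\ref{thm:bij_annular_light}, after decomposing the associated $(d,e)$- and $(d,-e)$-weighted hypermobiles at their marked square vertex into a sequence of planted $d$-hypermobiles, have already yielded
\[
\Sadbk=e[u^e]L(u)^k,\quad \Sadwk=e[u^{-e}]M(u)^k,\quad \Radbk=e[u^{-e}]L(u)^k,\quad \Radwk=e[u^e]M(u)^k.
\]
So the theorem will follow by substitution once the bookkeeping of face-types is settled.

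The one step I would check with care is which factor records which marked face. In the cut $\phi(H,v)=(I,J)$ of Lemma~\ref{lem:cut_C}, the piece $I\in\cSad$ retains the outer face $f_0$ of $H$ as \emph{its} marked inner face, whereas the piece $J\in\cRad$ retains the marked inner face $f_1$ of $H$ as \emph{its} marked inner face; both acquire a fresh outer face of length $e$ along the canonical cycle (light for $I$, dark for $J$). Hence, writing $\blacklozenge$ for dark and $\lozenge$ for light, an annular hypermap whose outer face is of type $\ast$ and degree $k$ and whose marked inner face is of type $\star$ and degree $\ell$ contributes $C^{\ast k}_{d,e}$ through $I$ and $B^{\star\ell}_{d,e}$ through $J$, which is precisely the product formula. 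Substituting the four series (using an independent formal variable $v$ for the $B$-factor, and noting that the three factors of $e$ collapse as $\tfrac1e\cdot e\cdot e=e$) gives, for instance,
\[
\Abw=\frac1e\,\Sadbk\cdot\bigl(e[v^e]M(v)^{\ell}\bigr)=e[u^e]L(u)^k[v^e]M(v)^{\ell},
\]
and the remaining three identities for $\Awb$, $\Abb$, $\Aww$ follow in exactly the same fashion from their matching pairs $(\ast,\star)$.

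The only genuinely content-bearing part is this type-matching --- verifying that the outer face of $H$ becomes the marked inner face of the light-outer piece $I$, and that the marked inner face of $H$ becomes the marked inner face of the dark-outer piece $J$ --- together with, one level below, the coefficient-extraction exponents $\pm e$ in the four series. Those exponents are not accidental: the marked square vertex of a $(d,\pm e)$-weighted hypermobile carries weight $e-\deg$ or $d\deg-e-\deg$ (respectively $-e-\deg$ or $d\deg+e-\deg$), so that summing the per-slot contribution ``edge-weight $+1$'' at a light vertex (as in \eqref{eq:2}) or ``edge-weight $-d+1$'' at a dark vertex (as in \eqref{eq:3}) over all incident slots telescopes the total exponent to exactly $\pm e$. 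Everything else is routine substitution, so I expect no real obstacle beyond keeping the four sign/type combinations straight.
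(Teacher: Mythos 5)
Your proof is correct and follows essentially the same route as the paper: the product formula from Lemma~\ref{lem:cut_C}, the four series for $B^{\star\ell}_{d,e}$ and $C^{\ast k}_{d,e}$ obtained by decomposing the $(d,\pm e)$-weighted hypermobiles of Theorems~\ref{thm:bij_annular_dark} and~\ref{thm:bij_annular_light} at the marked square vertex, and substitution. Your type-matching of which piece of the cut records which marked face, and your verification that the marked-vertex weights force the coefficient-extraction exponents $\pm e$, both agree with the paper's derivation.
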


\begin{remark}
Under the specialization $y_2=1,y_i=0\ \mathrm{for}\ i\neq 2$, the generating function 
$\Aww$ counts corner-rooted annular maps with control on the separating girth, the non-separating
girth, and the face degrees. 
Hence Theorem~\ref{theo:count_ann} gives an extension to annular 
hypermaps of the counting results obtained in~\cite{BFgir} for annular maps.

Moreover, it is easy to see that the generating function $F_d$ defined in Section~\ref{sec:count_plane_hypermaps} is related to $A^{\blacklozenge d,\star \ell}_{d,d}$ by 
$\ell\frac{\partial F_d}{x_{\ell}}=A^{\blacklozenge d,\lozenge\ell}_{d,d}$ and $A^{\blacklozenge d,\blacklozenge\ell}_{d,d}=\ell\frac{\partial F_d}{y_{\ell}}$. Hence the expressions for the derivatives of $F_d$ given in Theorem~\ref{theo:count_plane_hypermaps} are a special case of Theorem~\ref{theo:count_ann}.
\end{remark}

For any sets $\Delta,\Delta'$, the generating function $A^{\ast k,\star\ell}_{d,e,\Delta,\Delta'}$ of hypermaps in $\vcA^{\ast k,\star\ell}_{d,e,\Delta,\Delta'}$ with inner light faces having degrees in $\Delta$ and inner dark faces having degrees in $\Delta'$ is obtained by setting $x_i=0\ \mathrm{for}\ i\notin \Delta,\ y_i=0\ \mathrm{for}\ i\notin \Delta'$. This is an algebraic series as soon as $\Delta,\Delta'$ are both finite. 
For instance, for $d=4$, $e=2$, $\Delta=\{4\}$, $\Delta'=\{3\}$, we have
$$
A^{\blacklozenge 4,\lozenge 2}_{4,2}=2(4L_2+6L_3^2)(1+W_0)^2,
$$ 
where the series $\{L_0,L_1,L_2,L_3,L_4,W_0,W_1,W_2,W_3,W_4\}$ (already considered in the example
of Section~\ref{sec:count_plane_hypermaps}) are specified by
\begin{eqnarray*}
&&L_0=x_4(1+W_0)^3,\ L_1=W_1^3+2W_1W_2+W_3,\ L_2=W_1^2+W_2,\ L_3=W_1,\ L_4=1,\\
&&W_0=2y_3L_2L_3,\ W_1=y_3(2L_1L_3+L_2^2),\ W_2=2y_3L_1L_2,\ W_3=y_3L_1^2,\ W_4=2y_3L_1.
\end{eqnarray*}

\bigskip

\section{Proof of Theorems~\ref{theo:master_bij1} and~\ref{theo:master_bij2} about the master bijection}\label{sec:proof_master_bij}
In this section we prove Theorems~\ref{theo:master_bij1} and~\ref{theo:master_bij2} about the three master bijections $\Phi_+$, $\Phi_-$ and $\Phi_0$. The proofs for the three bijections are similar. We give a detailed proof for $\Phi_+$ in Section~\ref{subsec:Phi+} and a more succinct proof for $\Phi_-$ and $\Phi_0$ in Sections~\ref{subsec:Phi-} and~\ref{subsec:Phi0}. 

\subsection{Proof for $\Phi_+$}\label{subsec:Phi+}
Let $\cJp$ be the family of light-rooted hyperorientations such that all outer edges are 1-way. Note that $\cHp$ is a subset of $\cJp$. 
We now extend the definition of the mapping $\Phi_+$ to $\cJp$. For $H\in\cJp$, we define $\Phi_+(H)$ as the map obtained from $H$ by placing a dark (resp. light) square vertex in each dark (resp. light) face, then applying the local rule of Figure~\ref{fig:local-rule-hyperori} to each edge of $H$,
and then deleting the edges of $H$ and the light square vertex corresponding to the outer face (see Figures~\ref{fig:Phinotmin} and~\ref{fig:Phinotacc} for examples). 
\begin{lem}\label{lem:iffp}
Let $H$ be an hyoeroriented hypermap in $\cJp$, and let $T=\Phi_+(H)$. Then, $T$ is a hypermobile if and only if $H\in\cHp$. \\
Moreover, in this case the following property holds for each inner 1-way edge $e$ of $H$:
\begin{itemize}
\item[$(\spadesuit)$]
Let $u,v$ be the square vertices in the faces incident to $e$, and let $C$ be the cycle contained in $T\cup\{e^*\}$, where $e^*$ is the edge joining $u$ and $v$ across $e$. Then $e$ is oriented from the outside of $C$ to the inside of $C$ (across $e^*$).
\end{itemize}
\end{lem}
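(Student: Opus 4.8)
The plan is to split the statement into two parts. The equivalence ``$T$ is a hypermobile $\iff H\in\cHp$'' I would first reduce, by a counting argument, to a purely topological property of $T$ (being a tree), and then match that property with the orientation conditions defining $\cHp$. Property $(\spadesuit)$ I would treat afterwards as a local consequence of minimality.

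First I would observe that for \emph{every} $H\in\cJp$ the map $T=\Phi_+(H)$ already satisfies all the defining conditions of a hypermobile except possibly connectedness and acyclicity: the three vertex-types are built in, and a direct inspection of the local rule of Figure~\ref{fig:local-rule-hyperori} shows that each bud is attached to a light square, that each edge of $H$ produces exactly one non-bud edge of $T$ (joining the two square vertices across a $0$-way edge, and joining the dark square to the head-vertex across a $1$-way edge), and that each such edge is incident to exactly one dark square. Since all outer edges of $H$ are $1$-way, the deleted outer light square carries only buds, so its deletion removes no non-bud edge. Writing $V,E,D,L$ for the numbers of vertices, edges, dark faces and light faces of $H$, the map $T$ thus has $V+D+(L-1)$ vertices and $E$ non-bud edges, and Euler's relation $V-E+D+L=2$ gives that $T$ has exactly one fewer edge than vertices. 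Consequently, \emph{for every $H\in\cJp$, the map $T$ is a hypermobile $\iff$ $T$ is connected $\iff$ $T$ is acyclic.}

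It then remains to prove that (recalling that $H\in\cJp$ already has all outer edges $1$-way) $T$ is a tree $\iff$ $H$ is minimal and accessible from every outer vertex. I would establish this through three implications: (i) if $H$ is not minimal then $T$ has a cycle; (ii) if $H$ is not accessible from some outer vertex then $T$ is disconnected; (iii) if $H$ is minimal and accessible from every outer vertex then $T$ is connected. These are mutually consistent precisely because, by the edge count, connectedness is equivalent to the \emph{conjunction} of the two conditions and not to either one alone: granting (i)--(iii), if $T$ is a tree then it is both acyclic and connected, so the contrapositives of (i) and (ii) give minimality and accessibility, i.e. $H\in\cHp$; conversely $H\in\cHp$ gives connectedness by (iii), hence a tree. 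The main obstacle is the pair (i)/(iii), i.e. pinning down the exact dictionary between cycles of $T$ and the combinatorial structure of $H$. This is genuinely geometric: a cycle $\gamma$ of $T$ bounds a disk $D$ on the sphere, and since non-bud edges of $T$ cross only $0$-way edges of $H$ while the relevant boundaries consist of $1$-way edges, I would compare the numbers of vertices and non-bud edges of $T$ lying in $\overline{D}$ through a \emph{local} Euler count, reading off from the local rule how the $1$-way edges crossing out of $\overline{D}$ are oriented. The conclusion I would aim for is that such a disk forces either a chain of $1$-way edges closing into a counterclockwise circuit (contradicting minimality, which proves (iii) in contrapositive form), or a sub-region of $H$ with no incoming $1$-way edge whose vertices are then unreachable from the outer vertices (yielding (ii)); conversely a counterclockwise circuit encloses such a trapped piece of $T$ whose local edge count forces a cycle (proving (i)). Carrying out this case analysis at the dark squares, round vertices and light squares met along the boundary is the technical heart of the argument.

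Finally, for property $(\spadesuit)$ I would assume $H\in\cHp$, so $T$ is a tree and $C$ is the unique cycle of $T\cup\{e^*\}$, namely $e^*$ together with the tree-path $P$ from $u$ to $v$. The non-bud edge $m_e$ created by the $1$-way edge $e$ joins the dark square $u$ to the head-vertex $b$ of $e$ and lies in the dark corner on the head-side of $e$, so a local inspection at the crossing of $e$ and $e^*$ identifies the head-side of $e$ with one of the two sides of $e^*$, and I would determine which one is interior to the disk bounded by $C$ from the placement of $m_e$. To exclude the reversed global configuration I would argue exactly as in the obstacle above: were $e$ directed from inside $C$ to outside $C$, then $e$ together with an appropriate directed portion of the structure along $C$ would close into a counterclockwise circuit, contradicting the minimality of $H$; hence $e$ is directed from outside $C$ to inside $C$ across $e^*$, as claimed.
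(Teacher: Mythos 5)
Your architecture coincides with the paper's: the Euler count showing $T=\Phi_+(H)$ always has exactly one fewer non-bud edge than vertices, so that \emph{hypermobile}, \emph{tree}, \emph{connected} and \emph{acyclic} are interchangeable; then a dictionary between cycles of $T$ on one side and counterclockwise circuits or unreachable regions (outward cocycles) of $H$ on the other, established by local Euler counts in sub-regions; and $(\spadesuit)$ by the same dichotomy. Your bookkeeping with (i)--(iii) is logically sound given that equivalence (the paper routes everything through acyclicity where you split between acyclicity and connectedness; this is immaterial).

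The substantive gap is that the one genuinely non-routine step is precisely the one you defer as the ``technical heart''. For the implication ($H\in\cHp$ and $T$ has a cycle $\gamma$) $\Rightarrow$ contradiction, the easy branch is when every vertex of $\gamma$ is square: the edges of $H$ dual to the edges of $\gamma$ form a cocycle of $0$-way edges, so the interior is unreachable. But when $\gamma$ passes through a round vertex there is no ``chain of $1$-way edges closing into a counterclockwise circuit'' waiting to be read off the boundary; the paper must \emph{manufacture} the contradiction by an iteration: at a round vertex $u_0$ of $\gamma$ the local rule forces a $1$-way edge of $H$ pointing at $u_0$ from inside $\gamma$, accessibility supplies a directed path of $H$ ending at that edge, the last portion of that path inside $\gamma$ must start at some new vertex $u_1$ of $\gamma$, and minimality forces each successive $u_{i+1}$ to avoid a strictly growing arc of $\gamma$ --- impossible after finitely many steps. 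Without this device (or an equivalent one) your case analysis ``at the dark squares, round vertices and light squares met along the boundary'' does not close. The same mechanism is needed for $(\spadesuit)$, where your sketch invokes only minimality, whereas both the accessibility branch (all-square cycle) and this iterative branch occur. Finally, your implication (ii) (non-accessibility implies $T$ is not a tree) is asserted but never argued --- it cannot be a by-product of the disk analysis, which runs in the opposite logical direction; the paper proves it separately by extracting an outward cocycle $D$ from the set of reachable vertices and running the exterior Euler count on the dual cycle $D^*$, the same count you describe for (i), so this piece is recoverable but must be supplied.
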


\begin{proof}
First observe that $T$ is a hypermobile if and only if it is a tree (since the local conditions of hypermobiles are satisfied by $T$). Let $N_v$, $N_e$, $N_f$ be the numbers of vertices, edges, and faces of $H$. 
The map $T$ has $E=N_e$ edges (because each edge of $H$ yields an edge in $T$), and $V=N_v+N_f-1$ vertices (the $-1$ accounts for the deletion of the light square vertex in the outer face of $H$). The Euler relation for $H$ gives $N_v-N_e+N_f=2$, hence $E=V-1$. Thus $T$ is a hypermobile if and only if it is acyclic. 

\begin{figure}
\begin{center}
\includegraphics[width=\linewidth]{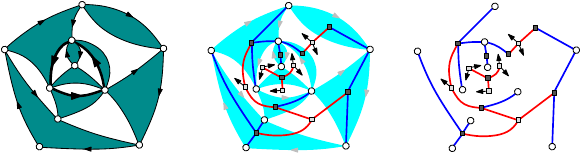}
\end{center}
\caption{If $H$ has a counterclockwise circuit $C$ (shown in bold line on the leftmost picture), then $\Phi(H)$ has a cycle outside of $C$.}
\label{fig:Phinotmin}
\end{figure}

\begin{figure}
\begin{center}
\includegraphics[width=.95\linewidth]{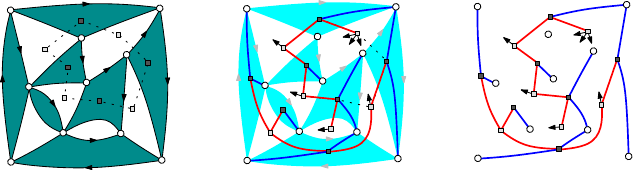}
\end{center}
\caption{If $H$ is not accessible from the outer boundary, there is a cycle $C$ in the dual of $H$ such that
all edges dual to edges on $C$ are either 0-way or 1-way from the inside to the outside of $C$; then 
$\Phi(H)$ has a cycle in the area exterior to $C$ (including $C$).}
\label{fig:Phinotacc}
\end{figure}

Now we prove that if $H\notin\cHp$, then $T$ has a cycle. For $H\notin\cHp$, either $H$ has a counterclockwise circuit or $H$ is not accessible from the outer vertices. Suppose first that $H$ has a counterclockwise circuit $C$ (see Figure~\ref{fig:Phinotmin}). 
Let $n_v $ and $n_e$ be the numbers of vertices and edges of $H$ that are on $C$ or outside of $C$, and let $n_f$ be the number of faces of $H$ that are outside of $C$. Note that the Euler relation (applied to $H$ where everything strictly inside $C$ is erased) yields $n_v-n_e+n_f=1$. 
Let $K$ be the submap of $T$ made of all its vertices on $C$ or outside of $C$ and all its edges outside of $C$. Since all edges on $C$ are counterclockwise, the submap $K$ has $E=n_e$ edges (because each edge on $C$ yields an edge of $T$ outside of $C$), and $V=n_v+n_f-1$ vertices (the $-1$ accounts for the deletion of the light square vertex in the outer face). Hence, $E=V$, so that $K$ has a cycle, and $T$ is not a tree. 
Suppose now that $H$ is not accessible from the outer vertices (see Figure~\ref{fig:Phinotacc}). We consider the \emph{dual map} $H^*$ which is obtained by placing a vertex $f^*$ of $H^*$ in each face $f$ of $H$, and drawing an edge $e^*$ of $H^*$ from $f_1^*$ to $f_2^*$ across each edge $e$ of $H$ separating the faces $f_1$ and $f_2$. An \emph{outward cocycle of $H$} is a sequence $D=e_1,\ldots,e_k$ of edges such that the dual edges $D^*=e_1^*,\ldots,e_k^*$ form a simple cycle of $H^*$, and for all $i\in\{1,\ldots,k\}$ the edge $e_i$ is either 0-way or 1-way toward the outside of $D^*$. It is not hard to prove that because $H$ is not accessible it has an outward cocycle $D=e_1,\ldots,e_k$ (to prove the existence of $D$ start by considering the set of vertices of $H$ that are reachable from the outer vertices). Let $n_v^*$, $n_e^*$ be the number of vertices and edges of $H^*$ that are on $D^*$ or outside of $D^*$, and let $n_f^*$ be the number of faces of $H^*$ that are outside of $D^*$. By the Euler relation applied to $H^*$ (where everything strictly inside $D^*$ is erased), $n_v^*-n_e^*+n_f^*=1$. Let $K$ be the submap of $T$ made of all its vertices on $D^*$ or outside of $D^*$ and all its edges on $D^*$ or outside of $D^*$. Since all edges in $D$ are 0-way or are 1-way from the inside to the outside of $D^*$, the submap $K$ has $E=n_e^*$ edges (because each edge in $D$ yields an edge of $T$ on $D^*$ or outside of $D^*$), and $V=n_v^*+n_f^*-1$ vertices (the $-1$ accounts for the deletion of the light square vertex in the outer face of $H$). Hence $E=V$, so that $K$ has a cycle, and $T$ is not a tree.

Next we prove that, if $H\in\cHp$, then $T$ is a hypermobile. We suppose by contradiction that $H\in\cHp$ and $T$ has a cycle $C$. We first consider the case where all vertices on $C$ are squares. In this case, the edges dual to edges on $C$ form a cocycle of 0-way
edges, so the (non-empty) set of vertices of $H$ inside $C$ is unreachable from the outer vertices of $H$,
a contradiction. We now suppose that there is a round vertex $u_0$ on $C$. Let $v_0$ be the (dark square) vertex following $u_0$ in clockwise order around $C$, and let $e_0$ be the edge of $H$ following the edge $\{u_0,v_0\}$ clockwise around $u_0$; see Figure~\ref{fig:prisoner_cycle1}. 
By the local rule of Figure~\ref{fig:local-rule-hyperori}, $e_0$ is 1-way toward $u_0$, and by accessibility of $H$, $e_0$ is the 
ending edge of some directed path $P_0$ starting from some outer vertex of $H$. Let $\widetilde{P_0}$ be the last portion
of $P_0$ inside $C$, and let $u_1\in C$ be the starting vertex of $\widetilde{P_0}$. Note that $u_1\neq u_0$, otherwise
$\widetilde{P_0}$ would form a counterclockwise circuit. By the same argument as for $u_0$, the next vertex $v_1$ after $u_1$ 
in clockwise order around $C$ is a dark square, and denoting by $e_1$ the next edge after $\{u_1,v_1\}$ in clockwise order around $u_1$,
there is a path $\widetilde{P_1}$ inside $C$ that starts from a vertex $u_2\in C$ and ends at $e_1$. Note that $u_2$
is not on the portion of $C$ going clockwise from $u_0$ to $u_1$ (otherwise it would yield a counterclockwise cycle in $H$). Continuing iteratively
we reach a contradiction, because at each step $i$, the vertex $u_i$ has to avoid a strictly growing portion of $C$; see Figure~\ref{fig:prisoner_cycle1}. 

\begin{figure}[h]\begin{center}
\subfigure[]{\label{fig:prisoner_cycle1}
\includegraphics[width=.3\linewidth]{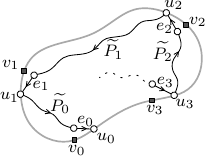}
}
\quad\quad\quad\quad\quad\quad
\subfigure[]{\label{fig:prisoner_cycle2}
\includegraphics[width=.3\linewidth]{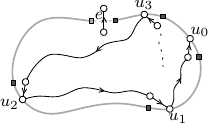}
}
\end{center}
\caption{(a) If $H$ is accessible, then the existence of a cycle in $\Phi(H)$ implies the existence of a counterclockwise circuit in $H$. (b) Proof of the property $(\spadesuit)$.}
\end{figure}

Lastly, the proof of property $(\spadesuit)$ follows the exact same line of argument as above. Assuming by contradiction that $H\in\cHp$ but that $(\spadesuit)$ does not hold for an edge $e$ we consider two cases. First if all vertices of $C$ are square, then the dual of the edges of $C$ are 0-way, so the inside of $C$ is unreachable from the outer vertices, giving a contradiction. Second, if there is a round vertex $u_0\in H$ on $C$, then one can construct a sequence $u_0,u_1,u_2,\ldots$ of vertices on $C$ such that $u_i$ has to avoid a strictly growing portion of $C$, again giving a contradiction; see Figure~\ref{fig:prisoner_cycle2}. 
\end{proof}

Next we prove that the mappings $\Phi_+$ and $\Psi_+$ are inverse bijections.

\begin{lemma}\label{lem:remains-tree}
Let $T$ be a hypermobile of positive excess, and let $H=\Psi_+(T)$ be the closure of $T$. 
Then $T$ is a tree covering all the vertices of $H$ and all the square vertices placed in the inner faces of $H$. 
\end{lemma}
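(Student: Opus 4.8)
The plan is to use the outerplanar-map description of $\Psi_+$ and to establish, by construction together with one Euler-characteristic count, that the round vertices of $T$ are exactly the vertices of $H$ and that the dark and light square vertices of $T$ are exactly the square vertices placed in the inner faces of $H$; the spanning property then follows from connectedness of $T$. That $T$ is a tree needs no argument, being part of the definition of a hypermobile, so the whole content is this covering claim. Throughout I write $r$ for the number of round vertices, $s$ for the number of square vertices, $e=e_L+e_R$ for the number of (non-bud) edges split into the dark--light edges ($e_L$) and dark--round edges ($e_R$), and $b$ for the number of buds, so that the tree identity reads $r+s=e+1$ and the positive excess reads $e_R-b>0$.

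First I would record the face structure. By the construction of $\hT$, each square vertex of $T$ grows into a polygon, and these polygons are never merged nor split by the subsequent gluings: gluing two polygon-sides along a dark--light edge inserts a $0$-way edge that separates the two polygons, while the gluings of cw-matching outer edges act only on the outer boundary. Hence the inner faces of $H$ are in bijection with the square vertices of $T$, a dark (resp.\ light) square vertex sitting in a dark (resp.\ light) inner face; this is precisely the placement of square vertices in inner faces, so $T$ covers all of them.

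Next I would control the vertices by a count. The map $\hT$ has $e_L+e_R+b$ edges (the $e_L$ inner $0$-way edges from the dark--light edges, the $e_R$ cw-outer edges from the dark--round edges, and the $b$ ccw-outer edges from the buds) and $s+1$ faces (the $s$ polygons and the outer face). Since $T$ is a tree with $r+s=e+1$ and $e=e_L+e_R$, Euler's relation for $\hT$ gives $V(\hT)=(e_L+e_R+b)-(s+1)+2=r+b$. Each of the cw-matching gluings identifies a boundary vertex born from a bud with the round-vertex endpoint of the matched cw-outer edge, and never identifies two round vertices, since a matched pair always consists of one dark--round edge and one bud; as the excess $e_R-b$ is positive, exactly $b$ such gluings occur and the $\epsilon=e_R-b$ leftover cw-outer edges bound the outer (light) face. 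Consequently $V(H)=r$ and every vertex of $H$ is a round vertex of $T$, so $T$ covers all vertices of $H$. Since $T$ is connected and its vertex set is now seen to be exactly $V(H)$ together with the inner-face squares, it is a spanning tree; as a consistency check, Euler's relation on $H$ with $V(H)=r$ and $F(H)=s+1$ gives $E(H)=e$, matching the $e$ non-bud edges of $T$, with each $0$-way edge of $H$ coming from a dark--light edge and each $1$-way edge from a dark--round edge.

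The main obstacle is the structural point inside the count: that the cw-matching gluings absorb the $b$ bud-born boundary vertices into round vertices, decreasing the vertex count by exactly one per gluing and never collapsing two distinct round vertices (nor two distinct inner faces). I would justify this from the planarity of the cw-matching already granted in the construction of $\Psi_+$, together with an argument of the same flavor as the ``prisoner-cycle'' analysis of Lemma~\ref{lem:iffp} (see Figure~\ref{fig:prisoner_cycle1}): each bud is matched to the unique cw-outer edge against which it closes, so the gluing is a clean merge of a single bud endpoint into a round vertex. Everything else is bookkeeping, so this non-collapsing of the gluing is where the real work lies.
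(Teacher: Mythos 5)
Your global count is sound as far as it goes: Euler's relation applied to $\hT$ (or directly to $H$) does give $V(H)=r$ and $F(H)=s+1$, and the observation that the polygons are neither merged nor split correctly accounts for the square vertices in the inner faces. But the statement you yourself flag as ``where the real work lies'' --- that the gluings absorb each bud-born boundary vertex into an already-covered vertex and never identify two round vertices --- is exactly the content of the lemma, and your proposal does not prove it. Knowing $V(H)=r$ is not enough: a priori some gluing could identify two round vertices while leaving a bud-born vertex unabsorbed, which would preserve the count $V(H)=r$ yet leave a vertex of $H$ uncovered by $T$. Moreover your description of a single gluing (``identifies a boundary vertex born from a bud with the round-vertex endpoint of the matched cw-outer edge'') is not accurate in general: the vertex on the other side of a gluing need not be a round vertex at all --- it can itself be the origin of another not-yet-matched bud, in which case the merged vertex remains uncovered at that stage. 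Finally, the pointer to the ``prisoner-cycle'' argument of Lemma~\ref{lem:iffp} does not help here: that argument concerns the acyclicity of $\Phi_+(H)$ for an oriented hypermap $H\in\cHp$, not the behaviour of the closure of a tree.

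The paper closes this gap by a different and essentially count-free mechanism: it decomposes the closure into \emph{local closures}, each gluing a cw-outer edge to a \emph{consecutive} ccw-outer edge, and maintains the invariant that at every stage $T$ is a tree covering all vertices of the partially closed map except the \emph{floating} ones (origins of remaining ccw-outer edges). Each local closure identifies a floating vertex with one other vertex, and the result is floating iff both were; since a floating vertex is by the invariant never covered by $T$, no two round vertices are ever identified, and when the excess is positive no ccw-outer edges (hence no floating vertices) remain at the end. If you want to keep your Euler-count skeleton, you still need this step-by-step invariant (or an equivalent structural argument) to justify the non-collapsing claim; at that point the count becomes redundant, since the invariant alone already yields the lemma.
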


\begin{proof}
To prove the lemma, it is convenient to see the closure mapping $\Phi_+$ as done ``step by step''. Let $\hT$ be the outerplanar map associated with $T$. Starting from $\hT$, define a \emph{local closure} as the operation of gluing a cw-outer edge $e_1$ with a ccw-outer edge $e_2$ such that $e_1$ and $e_2$ are consecutive edges in clockwise order around the outer face; see Figure~\ref{fig:invariant}. Then $H$ is obtained as the result of performing local closure operations greedily until there remains no pair to glue. At each step of the closure, we call \emph{floating} a vertex which is the origin of a ccw-outer edge. 
We now claim that \emph{at each step of the closure, $T$ is a tree covering all the vertices of the partially closed map except all the floating vertices}. Indeed this property is true for $\hT$. Moreover, it remains true through local closures because each local closure identifies a floating vertex with another vertex, and the resulting vertex is floating if both vertices are floating; see Figure~\ref{fig:invariant}.
\end{proof}

\begin{figure}
\begin{center}
\includegraphics[width=10cm]{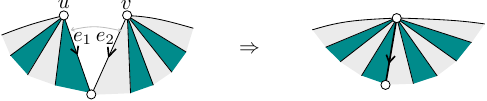}
\end{center}
\caption{A local closure glues a cw-outer edge, with a consecutive ccw-outer edge. This identifies a floating vertex $v$ with another vertex $u$.}\label{fig:invariant}
\end{figure}

\begin{cor}\label{cor:closurep}
Let $T\in\cTp$, and let $H=\Psi_+(T)$. Then $H$ is in $\cHp$, and $\Phi_+(H)=T$. 
Moreover, the excess of $T$ equals the outer degree of $H$. 
\end{cor}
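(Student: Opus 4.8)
The plan is to derive all three assertions from the two preceding lemmas, the crux being the identity $\Phi_+\circ\Psi_+=\Id$ on $\cTp$. First I would read off the outer structure of $H$ directly from the construction of $\Psi_+$: since $T$ has positive excess $\eps$, the gluing of the cw-matching outer edges of the outerplanar map $\hT$ leaves exactly $\eps$ unmatched cw-outer edges, all incident to the face that $\Psi_+$ colors light. Thus $H$ is a light-rooted hyperorientation all of whose outer edges are $1$-way, so that $H\in\cJp$, and its outer face has degree $\eps$. This already establishes the last sentence of the statement (outer degree $=$ excess) and places us in the situation covered by Lemma~\ref{lem:iffp}.

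Second, and this is the heart of the argument, I would prove that $\Phi_+(H)=T$. By Lemma~\ref{lem:remains-tree}, $T$ is a spanning tree of $H$: its round vertices sit at the vertices of $H$ and its square vertices sit in the inner faces of $H$. I would then superimpose $T$ on $H$ and check, edge by edge, that applying the local rule of Figure~\ref{fig:local-rule-hyperori} to every edge of $H$ reproduces $T$. Each edge of $H$ is either an inner $0$-way edge, which by the polygon-growing step separates the two square vertices joined by an edge of $T$, or a $1$-way edge produced by gluing a cw-outer edge (a leg from a round vertex to a dark square vertex of $T$) with a ccw-outer edge (a bud of $T$), or an unmatched cw-outer edge bounding the outer face. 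I would verify that the local rule sends the $0$-way edges back to the edges of $T$ between square vertices, sends each glued $1$-way edge back to the corresponding round-to-dark-square edge together with a bud, and that the final deletion of the light square vertex in the outer face removes precisely the pieces coming from the unmatched cw-outer edges. In other words, the cw-matching gluings carried out by $\Psi_+$ are exactly inverted by the edge-splittings of $\Phi_+$; this is the superimposition observation already made in Section~\ref{sec:alter}. This yields $\Phi_+(H)=T$.

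Finally, since $\Phi_+(H)=T$ is a hypermobile and $H\in\cJp$, Lemma~\ref{lem:iffp} immediately gives $H\in\cHp$, which completes the proof.

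I expect the edge-by-edge verification in the second step to be the main obstacle: although each local check is routine, one must treat uniformly the three kinds of edges (inner $0$-way, glued $1$-way, and unmatched outer), and make sure the bud counts and the deletion of the outer light square vertex line up, so that no spurious edge and no missing bud survives. Property $(\spadesuit)$ of Lemma~\ref{lem:iffp} can be invoked to control the orientation of the reconstructed $1$-way edges and thereby guarantee that the local pictures match consistently.
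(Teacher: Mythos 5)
Your proposal is correct and follows essentially the same route as the paper: establish $H\in\cJp$ with outer degree $\eps$ from the closure, verify $\Phi_+(H)=T$ by superimposing $T$ on $H$ and checking that the local rules (valid for $\hT$ and preserved by each local closure) reproduce $T$, then invoke Lemma~\ref{lem:remains-tree} and Lemma~\ref{lem:iffp} to conclude $H\in\cHp$. The only minor difference is your suggestion to invoke property $(\spadesuit)$, which the paper reserves for the converse direction (Lemma~\ref{lem:openp}) and does not need here.
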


\begin{proof}
Since the excess $\eps$ of $T$ is positive, after doing the closure of $\hT$ there remains $\epsilon$ cw-outer edge. Thus $H$ is in $\cJp$ and has outer degree $\eps$. Moreover it is clear that, while superimposing $T$ and $H$,  we have the local rules indicated in Figure~\ref{fig:local-rule-mobile} (since these rules are true for the outerplanar map $\hT$ and are preserved by the closure). Since these rules are also those of Figure~\ref{fig:local-rule-hyperori} (disregarding the incidences with the outer face), we conclude that $T=\Phi_+(H)$. Moreover, by Lemma~\ref{lem:remains-tree}, $T$ is a tree, hence a hypermobile. Thus by Lemma~\ref{lem:iffp}, $H$ is in $\cHp$. 
\end{proof}

\begin{figure}
\begin{center}
\includegraphics[width=10cm]{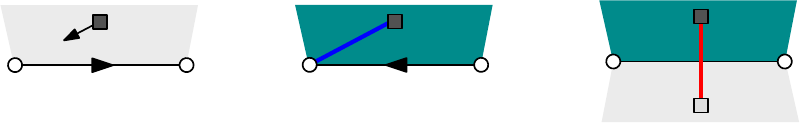}
\end{center}
\caption{The local rules for the configuration of $T$ for each incidence of an inner face with an edge of $H$.}\label{fig:local-rule-mobile}
\end{figure}

\begin{lem}\label{lem:openp}
Let $H\in\cHp$, and let $T=\Phi_+(H)$. Then $T$ is in $\cTp$, and $\Psi_+(T)=H$. 
\end{lem}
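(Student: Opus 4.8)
The plan is to establish the two assertions separately: first that $T=\Phi_+(H)$ lies in $\cTp$, and then that closing $T$ recovers $H$, i.e. $\Psi_+(T)=H$. The first assertion is almost immediate from Lemma~\ref{lem:iffp}: since $H\in\cHp$, that lemma already guarantees $T$ is a hypermobile, so it only remains to check that its excess is positive. I would compute the excess directly by counting. Let $n_1$ be the number of $1$-way edges of $H$. By the local rule of Figure~\ref{fig:local-rule-hyperori}, each $1$-way edge produces exactly one edge of $T$ incident to the round vertex sitting at its head, so $T$ has $n_1$ edges with a round extremity. Likewise each $1$-way edge produces one bud, attached to the light square vertex lying in the face on its left; a bud survives in $T$ precisely when this left face is an inner face. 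Since the outer face is light and all $d$ outer edges are $1$-way with the outer face on their left, exactly these $d$ buds are destroyed when the outer light square vertex is deleted. Hence $T$ has $n_1-d$ buds, and its excess is $n_1-(n_1-d)=d$, the outer degree of $H$. As $d\geq 1$, we get $T\in\cTp$ (and, as a by-product, the excess/outer-degree correspondence of Theorem~\ref{theo:master_bij1}).

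For the second assertion I would superimpose $T$ on $H$ and show that the canonical gluing defining $\Psi_+$ reconstructs the edges of $H$. In the sprout formulation of Section~\ref{sec:alter}, the $0$-way edges of $H$ are in obvious correspondence with the original (dark-square--light-square) edges of $T$ and are untouched by the closure, so everything reduces to matching sprouts. Each $1$-way edge $e$ of $H$ carries an outgoing sprout $s^-_e$ (the bud in the light face on its left) and an ingoing sprout $s^+_e$ (the one created at the dark square in the face on its right, next to the round--dark edge going to the head of $e$); the $d$ outer edges are exactly those whose outgoing sprout has been deleted. The natural pairing $\{s^+_e,s^-_e\}$ over all inner $1$-way edges recreates $H$ by construction and is non-crossing, since it is realizable by the planar gluing that rebuilds the planar map $H$, leaving exactly the $d$ ingoing sprouts of the outer edges unmatched (to be attached to the recreated outer light vertex $v_0$). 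Thus it suffices to prove that this natural pairing coincides with the cw-matching, for then $\Psi_+$ glues precisely along it and returns $H$.

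This last coincidence is the crux, and is exactly what property $(\spadesuit)$ of Lemma~\ref{lem:iffp} is designed to supply. For a fixed inner $1$-way edge $e$, let $u,v$ be the light and dark square vertices in the two faces incident to $e$, let $e^*$ join them across $e$, and let $C$ be the cycle contained in $T\cup\{e^*\}$. Property $(\spadesuit)$ says $e$ is oriented from the outside of $C$ into its inside; this is precisely the information that singles out, among the two arcs of the clockwise contour walk of $T$ running between $s^+_e$ and $s^-_e$, the one enclosed by $C$, and forces it to be read clockwise from $s^+_e$ to $s^-_e$. I would then argue that this arc contains only completely matched pairs: by planarity the two sprouts of every $1$-way edge of $H$ enclosed by $C$ lie on this arc together, so the corresponding subword of $w_T$ has equally many ingoing and outgoing letters in each prefix, i.e. is a parenthesis word. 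Hence $s^+_e$ and $s^-_e$ are cw-matching, and by the uniqueness of cw-matching the natural pairing is the cw-matching, which yields $\Psi_+(T)=H$.

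The main obstacle I anticipate is the verification that the $C$-enclosed arc contains only complete matched pairs, equivalently that no $1$-way edge of $H$ meets $C$ in an uncontrolled way. This requires tracking how the closed curve $C$ crosses the edges of $H$ in the superimposed picture, and I expect it to reduce to the same ``growing forbidden arc'' analysis already carried out in the accessibility and minimality arguments of Lemma~\ref{lem:iffp} (Figure~\ref{fig:prisoner_cycle1}). In the cleanest write-up one would likely phrase it as an induction on the number of faces of $H$ enclosed by $C$, the base case (where $C$ encloses no inner face) giving $s^+_e$ and $s^-_e$ adjacent in $w_T$ and hence trivially cw-matching.
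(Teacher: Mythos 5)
Your proposal follows essentially the same route as the paper: Lemma~\ref{lem:iffp} supplies that $T$ is a hypermobile, and the heart of the argument is to show that the natural planar gluing reconstructing $H$ coincides with the cw-matching, which is exactly what property $(\spadesuit)$ is used for. (The paper phrases the closure via the outerplanar map $\hT$ rather than sprouts, but Section~\ref{sec:alter} makes these formulations interchangeable.) Your explicit excess count --- $n_1$ round-incident edges against $n_1-d$ surviving buds, giving excess $d>0$ --- is a correct and welcome addition that the paper leaves implicit.

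One step does not close as written. After locating the arc of the contour word between $s^+_e$ and $s^-_e$ and observing that every $1$-way edge of $H$ enclosed by $C$ contributes both of its sprouts to that arc, you conclude that the subword ``has equally many ingoing and outgoing letters in each prefix, i.e.\ is a parenthesis word.'' Completeness of the pairs only gives global balance of $a$'s and $\ba$'s; the prefix inequality (every prefix has at least as many $a$'s as $\ba$'s) additionally requires that, for each enclosed $1$-way edge $\tilde e$, its ingoing sprout precedes its outgoing sprout in the clockwise reading of the arc --- without this, a single enclosed pair read as $\ba a$ would already violate the parenthesis condition while still being ``complete.'' The paper obtains the ordering by applying property $(\spadesuit)$ a \emph{second} time, to $\tilde e$ itself: the cycle $\tilde C$ contained in $T\cup\{\tilde e^*\}$ lies inside $C$, and $(\spadesuit)$ for $\tilde e$ forces its $a$ to come before its $\ba$. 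Once every letter of the subword is matched within it and every $a$ precedes its matching $\ba$, the prefix inequality follows automatically and the subword is a parenthesis word. Your suggested induction on the number of enclosed faces would also work, but its inductive step still needs this second application of $(\spadesuit)$; with that supplied, your argument is the paper's.
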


\begin{proof}
We have proved in Lemma~\ref{lem:iffp} that $T$ is a hypermobile. It remains to show that $\Psi_+(T)=H$. 
First of all, we claim that there exists a ``planar matching'' of the outer edges of the outerplanar map $\hT$ of $T$ such that gluing the outer edges of $\hT$ according to this matching yields $H$. Indeed to obtain the outerplanar map $\hT$ from $H$, one can apply the following operations illustrated on Figure~\ref{fig:open_into_cactus}:
\begin{compactitem}
\item[(i)] Replace each 1-way inner edge of $H$ by a pair of parallel 1-way edges, thereby creating a new face of degree $2$. 
\item[(ii)] For each 1-way edge $e$ with a new face on its right, detach from the origin $v$ of $e$ the sector between $e$ and the next 1-way edge $e'$ incident to $v$ in counterclockwise order around $v$; note that $e'$ has on its left  
either a new face or the outer face  
(see Figure~\ref{fig:open_into_cactus}). 
\end{compactitem}

\begin{figure}[h!]
\begin{center}
\includegraphics[width=\linewidth]{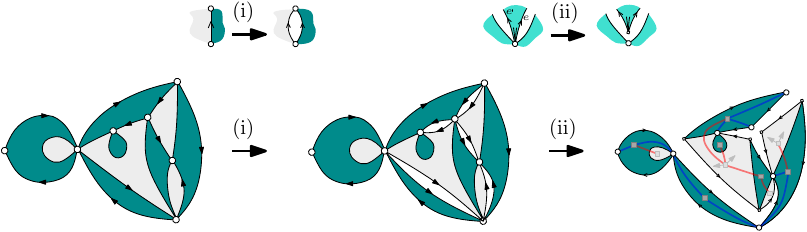}
\end{center}
\caption{Going from an hyperoriented hypermap $H$ (in $\cHp$) to the outerplanar map $\hT$ of the hypermobile $T=\Phi_+(H)$.}
\label{fig:open_into_cactus}
\end{figure} 

In order to prove that $\Psi_+(T)=H$ it remains to prove that the ``planar matching'' of the outer edges of $\hT$ giving $H$ corresponds to the cw-matching of these edges. This is essentially what property $(\spadesuit)$ in Lemma~\ref{lem:iffp} ensures. Indeed, consider a cw-outer edge $e'$ and a ccw-outer edge $e''$ of $\hT$ glued into an edge $e$ of $H$, and the sequence $e_1,e_2,\ldots,e_{n}$ of outer edges of $\hT$ appearing between $e'$ and $e''$ in clockwise order around the outer face of $\hT$. We need to prove that the sequence $e_1,e_2,\ldots,e_{n}$ is a parenthesis word (when cw-edges are interpreted as $a$'s and ccw-outer edges are interpreted as $\ba$'s). By the property $(\spadesuit)$ applied to $e$, all the outer edges $e_1,e_2,e_3,\ldots,e_n$ are glued into edges of $H$ which are \emph{inside} the cycle $C$ contained in $T\cup\{e^*\}$, hence they are all matched. Moreover, if $e_i$ and $e_j$ are matched into an edge $\tilde{e}$ of $H$, the property $(\spadesuit)$ applied to $\tilde{e}$ ensures that $i<j$ (since the cycle $\tilde{C}$ contained in $T\cup\{\tilde{e}^*\}$ lies inside $C$). Thus the sequence $e_1,e_2,\ldots,e_{n}$ is a parenthesis word. Therefore the ``planar matching'' of the outer edges of $\hT$ giving $H$ corresponds to the cw-matching of these edges, that is, $H=\Psi_+(T)$.
\end{proof}

Corollary~\ref{cor:closurep} and Lemma~\ref{lem:openp} conclude the proof of Theorems~\ref{theo:master_bij1} and~\ref{theo:master_bij2} for $\Phi_+$.

\subsection{Proof for $\Phi_-$}\label{subsec:Phi-}
The proof for $\Phi_-$ follows very similar lines. We highlight here the main differences. 
Let $\cJm$ be the set of dark-rooted hyperorientations such that the root face contour is simple and every outer edge is ccw-outer, and each incidence of an inner edge $e$ with an outer vertex $v$ is such that $e$ is either 0-way or 1-way out of $v$. 
 Note that $\cHm$ is a subset of $\cJm$. We now extend the definition of the mapping $\Phi_-$ to $\cJm$. For $H\in\cJm$, we define $\Phi_-(H)$ as the map obtained from $H$ by placing a dark (resp. light) square vertex in each dark (resp. light) face, then applying the local rule of Figure~\ref{fig:local-rule-hyperori} to each edge of $H$, and finally deleting the edges of $H$, the dark square vertex $v_0$ corresponding to the outer face, the outer vertices of $H$ and the edges between these vertices and $v_0$.

\begin{lem}\label{lem:iffm}
Let $H$ be an hyperoriented hypermap in $\cJm$, and let $T=\Phi_-(H)$. Then, $T$ is a hypermobile if and only if $H\in\cHm$. 
Moreover, in this case the property $(\spadesuit)$ holds for each inner 1-way edge $e$ of $H$.
\end{lem}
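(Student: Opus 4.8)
The plan is to mirror the proof of Lemma~\ref{lem:iffp} line by line, tracking the single structural difference between $\Phi_+$ and $\Phi_-$: instead of deleting only an isolated (light) square vertex sitting in the outer face, the map $T=\Phi_-(H)$ is obtained from the local rules by deleting the dark square vertex $v_0$, the $r$ outer vertices of $H$, and the $r$ edges joining them to $v_0$, where $r$ denotes the outer degree (recall that in $\cJm$ the outer contour is a simple circuit, so $r$ equals both the number of outer edges and the number of outer vertices). Since every outer edge is ccw-outer, hence $1$-way with $v_0$ on its right, the local rule of Figure~\ref{fig:local-rule-hyperori} turns each outer edge into exactly one mobile edge joining $v_0$ to an outer round vertex; these are precisely the $r$ deleted edges, forming the ``star graph'' already mentioned after the definition of $\Phi_-$.

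First I would redo the Euler count. Writing $N_v,N_e,N_f$ for the numbers of vertices, edges and faces of $H$, the map $T$ has $V=N_v+N_f-1-r$ vertices (the $-1$ for $v_0$, the further $-r$ for the outer vertices) and $E=N_e-r$ edges. The Euler relation $N_v-N_e+N_f=2$ then gives $E=V-1$, exactly as for $\Phi_+$. As $T$ visibly satisfies the local incidence conditions of a hypermobile, it is a hypermobile if and only if it is acyclic.

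Next I would prove necessity: if $H\in\cJm\setminus\cHm$ then $T$ contains a cycle. By definition of $\cHm$, this means either $H$ is not accessible from the outer vertices, or $H$ has a counterclockwise circuit distinct from the outer contour. In each case I would reuse verbatim the region-counting arguments of Lemma~\ref{lem:iffp} (a counterclockwise circuit $C$, respectively an outward cocycle $D^*$) and build the same subgraph $K$ of $T$ supported on the side of $C$, respectively $D^*$, containing the outer face. The only change is bookkeeping: that side now also contains the deleted star, so both the vertex count and the edge count of $K$ drop by exactly $r$ relative to the $\Phi_+$ computation, and the identity $E_K=V_K$ forcing a cycle is preserved. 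As a sanity check, applying this count to the outer contour itself yields the empty graph, consistent with the outer contour being an admissible counterclockwise circuit; any counterclockwise circuit other than the outer contour contributes at least one surviving (undeleted) edge to $K$, so $K$ is genuinely cyclic.

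Finally, for sufficiency (if $H\in\cHm$ then $T$ is acyclic) and for property $(\spadesuit)$, I would run the argument of Lemma~\ref{lem:iffp} unchanged: assuming a cycle in $T$, if all its vertices are square then the dual $0$-way edges form a cocycle enclosing a nonempty set of inner vertices unreachable from the outer vertices, contradicting accessibility (note that such a cocycle cannot cross the $1$-way outer contour, so all outer vertices lie on one side and the other side is the unreachable one); and if the cycle carries a round vertex, the ``prisoner'' construction produces a counterclockwise circuit of $H$. The main obstacle, and the one genuinely new point compared with $\Phi_+$, is that in $\cHm$ a counterclockwise circuit is not outright forbidden — only circuits other than the outer contour are. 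Here I would argue that the circuit produced by the prisoner construction lies strictly inside the mobile cycle, hence is bounded away from the outer face and cannot coincide with the outer contour; it is therefore a forbidden counterclockwise circuit, yielding the required contradiction. The verification of $(\spadesuit)$ then follows the same prisoner argument and I would not repeat it.
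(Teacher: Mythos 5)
Your proposal is correct and follows the same route as the paper, which itself proves this lemma by adapting the argument of Lemma~\ref{lem:iffp}: the same Euler count (with the star of $r+1$ vertices and $r$ edges removed, preserving $E=V-1$ for $T$ and $E_K=V_K$ for the region subgraphs), the same cycle/cocycle region-counting for necessity, and the same cocycle/prisoner dichotomy for sufficiency. Your two added observations — that a counterclockwise circuit other than the outer contour contributes a surviving inner edge to $K$, and that the circuit produced by the prisoner construction lies inside the mobile cycle and hence cannot be the (permitted) outer contour — are exactly the points where the $\Phi_-$ case differs from $\Phi_+$, and they are handled correctly.
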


\begin{proof}
The proof is very similar to the proof of Lemma~\ref{lem:iffp}. As in Lemma~\ref{lem:iffp}, the Euler relation implies that $\Phi_-(H)$ is a hypermobile if and only if it is acyclic and the outer face is simple. 
Next one shows that if $H\notin\cHm$ then $\Phi_-(H)$ has a cycle. The proof is as for $\Phi_+$: consider either a counterclockwise cycle or outward cocycle $C$ of $H$, and can prove using the Euler relation that there is a cycle of $T$ in the region of $H$ outside of $C$. The only difference is that when applying the Euler relation, one needs to consider the subgraph $K$ of $T$ made of all its \emph{inner} vertices outside of $C$ and all its edges outside of $C$.
Lastly one shows that if $H\in\cHm$ then $\Phi_-(H)$ is acyclic, and property $(\spadesuit)$ holds exactly as for $\Phi_+$. 
\end{proof}

Next we prove that $\Phi_-$ and $\Psi_-$ are inverse of each other. 

\begin{lemma}\label{lem:remains-treem}
Let $T$ be a hypermobile of negative excess, and let $H=\Psi_-(T)$ be the closure of $T$. 
Then $T$ is a tree covering all the \emph{inner} vertices of $H$ (but none of the outer vertices)
 and all the square vertices placed in the inner faces of $H$. 
\end{lemma}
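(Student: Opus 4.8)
The plan is to mirror, almost verbatim, the step-by-step closure argument used in the proof of Lemma~\ref{lem:remains-tree}, the only genuinely new ingredient being the treatment of the outer boundary forced by the \emph{negative} sign of the excess. Concretely, I would again realize $\Psi_-$ as a greedy sequence of \emph{local closures}: starting from the outerplanar map $\hT$ associated with $T$, repeatedly glue a cw-outer edge $e_1$ with a ccw-outer edge $e_2$ that is consecutive to it in clockwise order around the outer face, continuing until no such consecutive pair remains. Because the excess $\eps$ of $T$ is negative, the process terminates with an outer face of degree $-\eps$ incident only to ccw-outer edges, and colouring this face dark yields $H=\Psi_-(T)$.

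As in Lemma~\ref{lem:remains-tree}, I would call a vertex \emph{floating} if it is the origin of some ccw-outer edge of the current partially closed map, and maintain the invariant: \emph{at every step, $T$ is a tree covering all the vertices of the partially closed map except exactly the floating ones.} The invariant holds for $\hT$ by construction (the round vertices and the merged polygon corners are covered by $T$, whereas the origins of the ccw-outer edges arising from the buds of $T$ are not), and it is preserved under each local closure since such a closure identifies a floating vertex with one further vertex and the merged vertex is floating if and only if both glued vertices were floating. This bookkeeping is identical to the $\Phi_+$ case, so this part of the argument can be inherited with only the change that, when applying the Euler count, one restricts attention to the inner vertices outside of the relevant cycle, exactly as noted in the proof of Lemma~\ref{lem:iffm}.

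The step I expect to require the most care — and the only real difference with Lemma~\ref{lem:remains-tree} — is to identify the floating vertices surviving at the end of the process with the outer vertices of $H$. For positive excess no ccw-outer edge survives, so no floating vertex remains and $T$ covers everything; for negative excess the $-\eps$ surviving outer edges are all ccw-outer and form the contour of the dark outer face. I would check that every vertex of this contour is the origin of a surviving ccw-outer edge, hence floating, and conversely that every floating vertex lies on this contour because, after the greedy process, the only remaining ccw-outer edges are those bounding the outer face. Thus the surviving floating vertices are precisely the outer vertices of $H$.

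Combining the invariant with this identification gives the statement: $T$ is a tree; it covers every non-floating vertex, that is, every inner vertex of $H$, and none of the floating vertices, that is, none of the outer vertices; and since the inner faces of $H$ are exactly the polygons grown around the square vertices of $T$ (the dark outer face carrying no square vertex in this formulation), $T$ covers all square vertices placed in the inner faces of $H$. The main obstacle is therefore entirely concentrated in the outer-contour analysis for negative excess; the tree and covering structure is transferred directly from the proof of Lemma~\ref{lem:remains-tree}.
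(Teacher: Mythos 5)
Your proposal is correct and follows exactly the paper's route: the paper proves Lemma~\ref{lem:remains-treem} by simply invoking the step-by-step local-closure argument and floating-vertex invariant from the proof of Lemma~\ref{lem:remains-tree}, which is precisely what you do. Your added observation that the surviving floating vertices are exactly the outer vertices of $H$ (since the remaining ccw-outer edges form the contour of the dark outer face) is the correct way to make explicit the one point the paper leaves implicit.
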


\begin{proof}
The proof of Lemma~\ref{lem:remains-treem} is the same as the proof of Lemma~\ref{lem:remains-tree}.
\end{proof}

\begin{cor}\label{cor:closurem}
Let $T\in\cTm$, and let $H=\Psi_-(T)$. Then $H$ is in $\cHm$, and $\Phi_-(H)=T$. 
Moreover, the excess of $T$ equals minus the outer degree of $H$. 
\end{cor}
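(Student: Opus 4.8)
The plan is to mirror the proof of Corollary~\ref{cor:closurep}, replacing the positive-excess ingredients by their negative-excess analogues, namely Lemma~\ref{lem:iffm} and Lemma~\ref{lem:remains-treem}. First I would record the degree count: since the excess $\eps$ of $T$ is negative, the cw-matching of the outer edges of $\hT$ leaves exactly $-\eps$ unmatched ccw-outer edges, and these bound the face that is colored dark in order to form $H=\Psi_-(T)$. Hence the outer face of $H$ is dark of degree $-\eps$, which immediately yields the last assertion, that the excess of $T$ equals minus the outer degree of $H$.

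Next I would check that $H$ belongs to the auxiliary family $\cJm$. By construction every outer edge of $H$ is one of the surviving ccw-outer edges, and $H$ is dark-rooted; it then remains to verify that the outer-face contour is a simple circuit and that every incidence of an inner edge with an outer vertex is either 0-way or directed out of that vertex. Once $H\in\cJm$ is established, I would superimpose $T$ with $H$ (through the intermediate outerplanar map $\hT$) and observe that, around each inner face, the local configuration is the one of Figure~\ref{fig:local-rule-mobile}; these configurations hold for $\hT$ and are preserved by each local closure, and they coincide with the local rules of Figure~\ref{fig:local-rule-hyperori} once the incidences with the outer face are disregarded. This gives $\Phi_-(H)=T$, where the deletion step of $\Phi_-$ (removing the dark square vertex of the outer face, the outer vertices of $H$, and the edges joining them) removes exactly the vertices that, by Lemma~\ref{lem:remains-treem}, are \emph{not} covered by $T$. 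Since $T$ is a hypermobile, Lemma~\ref{lem:iffm} then forces $H\in\cHm$, which completes the proof.

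The step I expect to require the most care is the verification that $H\in\cJm$, and specifically that the outer-face contour of $H$ is a \emph{simple} circuit. This is a genuinely new requirement compared with the $\Phi_+$ case, since the family $\cJp$ imposes no simplicity condition on the outer face, whereas $\cJm$ (and hence $\cHm$) does. I would deduce it from the structure of the closure, using the invariant underlying Lemma~\ref{lem:remains-treem} that the outer vertices of $H$ are precisely the floating vertices surviving at the end of the closure, so that the $-\eps$ remaining ccw-outer edges form the boundary of a single, simply-traversed face. The orientation condition at outer vertices (no inner edge is 1-way toward an outer vertex) should follow from the same analysis of how edges are created and oriented during the closure; it is the $\cJm$-level counterpart of the property recorded, for genuine elements of $\cHm$, in Remark~\ref{rk:edges-inward}. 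The remaining points, the identification $\Phi_-(H)=T$ via the local rules and the degree count, are then routine once $\cJm$-membership is in place.
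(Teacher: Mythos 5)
Your proposal follows essentially the same route as the paper's proof: the same degree count giving outer degree $-\eps$, the same passage through the auxiliary family $\cJm$, the same use of Lemma~\ref{lem:remains-treem} to obtain both the orientation condition at outer vertices and the fact that $T$ is a tree, the same superimposition/local-rules argument for $\Phi_-(H)=T$, and the final appeal to Lemma~\ref{lem:iffm} to conclude $H\in\cHm$. The one place you go beyond the paper is in explicitly flagging the simplicity of the outer-face contour required by $\cJm$, which the paper's proof passes over silently when asserting ``Thus $H$ is in $\cJm$''; your plan to extract it from the floating-vertex invariant behind Lemma~\ref{lem:remains-treem} is the natural way to discharge that (minor) point.
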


\begin{proof}
Since the excess $\eps$ of $T$ is negative, after doing the closure operations on $T$ there remain $-\epsilon$ ccw-outer edge. Moreover since $T$ covers none of the outer vertices of $H$, each incidence of an inner edge $e$ of $H$ with an outer vertex $v$ is such that $e$ is either 0-way or 1-way out of $v$. Thus $H$ is in $\cJm$ and has outer degree $-\eps$. 
Moreover it is clear that superimposing $T$ and $H$ we have the local rules indicated in Figure~\ref{fig:local-rule-mobile} (since these rules are true for the outerplanar map $\hT$ and are preserved by the closure), hence $T=\Phi_-(H)$. Lastly, by Lemma~\ref{lem:remains-tree}, $T$ is a tree, hence a hypermobile. Thus by Lemma~\ref{lem:iffm}, $H$ is in $\cHm$. 
\end{proof}

\begin{lem}\label{lem:openm}
Let $H\in\cHm$, and let $T=\Phi_-(H)$. Then $T$ is in $\cTm$, and $\Psi_-(T)=H$. 
\end{lem}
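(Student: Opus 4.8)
The plan is to follow the proof of Lemma~\ref{lem:openp} almost verbatim, adapting only the treatment of the outer boundary from the light-rooted to the dark-rooted setting. By Lemma~\ref{lem:iffm} we already know that $T=\Phi_-(H)$ is a hypermobile and that property $(\spadesuit)$ holds for every inner $1$-way edge of $H$; so the essential task is to prove $\Psi_-(T)=H$, from which the membership $T\in\cTm$ will follow once we have located the surplus boundary edges.

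First I would exhibit a planar matching of the outer edges of the outerplanar map $\hT$ of $T$ whose gluing produces $H$, by ``opening'' $H$ into $\hT$. On the inner edges of $H$ this is exactly operations (i)--(ii) of Lemma~\ref{lem:openp}: replace each inner $1$-way edge by a pair of parallel $1$-way edges (creating a degree-$2$ face) and detach the appropriate sectors. The one genuinely new ingredient is the outer face: writing $d$ for the outer degree of $H$, its contour is a simple counterclockwise circuit of $d$ ccw-outer edges and, by Remark~\ref{rk:edges-inward}, no inner edge points toward an outer vertex. Opening $H$ along these $d$ outer edges replaces the deleted ``star'' at the dark square $v_0$ by $d$ ccw-outer edges of $\hT$ that remain unmatched. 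In particular $\hT$ has $d$ more ccw-outer than cw-outer edges, so $T$ has excess $-d<0$ and $T\in\cTm$, and after closure these $d$ surplus edges bound the dark outer face of $\Psi_-(T)$.

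Second, I would check that this planar matching coincides with the cw-matching of $\hT$, which is the step that actually identifies the closure with $H$. The argument is identical to Lemma~\ref{lem:openp}: for two outer edges of $\hT$ glued into an inner $1$-way edge $e$ of $H$, property $(\spadesuit)$ forces every outer edge lying between them (clockwise around the outer face) to be matched as well and nested inside the cycle carried by $T\cup\{e^*\}$, so the intervening sequence of cw- and ccw-outer edges is a parenthesis word. Hence the planar matching is the cw-matching, and $\Psi_-(T)=H$. Since the local rules of Figure~\ref{fig:local-rule-mobile} are insensitive to the root-face type away from the boundary, everything here transfers unchanged from the $\Phi_+$ case.

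I expect the only real obstacle to be the outer-boundary bookkeeping in the first step: one must verify that cutting $H$ along its \emph{dark} outer contour yields exactly $d$ surplus \emph{ccw}-outer (rather than cw-outer) edges, that these are precisely the edges the cw-matching leaves unmatched, and that recoloring the bounded face as dark faithfully reconstructs the outer face of $H$ together with all its outer vertices and the edges of the star at $v_0$. Once this is pinned down, the remaining verifications are routine transcriptions of Lemma~\ref{lem:openp}.
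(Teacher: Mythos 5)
Your proposal is correct and follows essentially the same route as the paper, which proves Lemma~\ref{lem:openm} by transcribing the proof of Lemma~\ref{lem:openp} (opening $H$ into the outerplanar map $\hT$ and then using property $(\spadesuit)$ from Lemma~\ref{lem:iffm} to identify the planar matching with the cw-matching). The boundary bookkeeping you flag — the $d$ surplus ccw-outer edges coming from the dark outer contour, Remark~\ref{rk:edges-inward} guaranteeing no inner edge enters an outer vertex, and the resulting excess $-d$ — is exactly the adaptation the paper leaves implicit.
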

\begin{proof}
The proof of Lemma~\ref{lem:openm} is the same as the proof of Lemma~\ref{lem:openp}. 
\end{proof}
Corollary~\ref{cor:closurem} and Lemma~\ref{lem:openm} conclude the proof of Theorems~\ref{theo:master_bij1} and~\ref{theo:master_bij2} for $\Phi_-$.

\subsection{Proof for $\Phi_0$}\label{subsec:Phi0}
The proof for $\Phi_0$ is again very similar. We define $\cJz$ as the family
of vertex-rooted hyperorientations such that for each incidence of an edge $e$ with the 
root-vertex $v_0$, $e$ is either 0-way or 1-way out of $v_0$. 
 We extend the definition of the mapping $\Phi_0$ to $\cJz$: for $H\in\cJz$, we define $\Phi_0(H)$ as the map obtained from $H$ by placing a dark (resp. light) square vertex in each dark (resp. light) face, then applying the local rule of Figure~\ref{fig:local-rule-hyperori} to each edge of $H$, and finally deleting the edges of $H$, and the root vertex $v_0$.
In a similar way as for $\Phi_-$, one proves:
\begin{lemma}\label{lem:iffz}
Let $H$ be an hyperoriented hypermap in $\cJz$, and let $T=\Phi_0(H)$. Then, $T$ is a hypermobile if and only if $H\in\cHz$. 
Moreover, in this case the following property holds for each inner 1-way edge $e$ of $H$:
\begin{itemize}
\item[$(\clubsuit)$]
Let $u,v$ be the square vertices in the faces incident to $e$, and let $C$ be the (unique) cycle contained in $T\cup\{e^*\}$, where $e^*$ is the edge joining $u$ and $v$ across $e$. Then $e$ is oriented from the region delimited by $C$ containing the root-vertex, to the other region delimited by $C$ (across $e^*$).
\end{itemize} 
\end{lemma}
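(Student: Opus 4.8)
The plan is to mirror the proofs of Lemma~\ref{lem:iffp} and Lemma~\ref{lem:iffm} almost verbatim, substituting the root-vertex $v_0$ for the role played by the outer face (resp. outer vertices) in the cases $\Phi_+$ and $\Phi_-$. First I would reduce the statement ``$T$ is a hypermobile'' to ``$T$ is acyclic'': since the local rules of Figure~\ref{fig:local-rule-hyperori} guarantee that $T$ satisfies the defining incidence conditions of a hypermobile, $T$ is a hypermobile exactly when it is a tree. Counting as before, $T$ has $E=N_e$ edges and $V=N_v+N_f-1$ vertices, where now the $-1$ accounts for the deletion of the round vertex $v_0$ (rather than of a square vertex in the outer face). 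The Euler relation $N_v-N_e+N_f=2$ then gives $E=V-1$, so $T$ is a tree if and only if it is acyclic.

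For the ``only if'' direction I would show that if $H\in\cJz\setminus\cHz$ then $T$ contains a cycle. Since $H\in\cJz$, failure of the $\cHz$ conditions means either $H$ has a counterclockwise circuit (relative to $v_0$) or $H$ is not accessible from $v_0$. In the first case, let $C$ be such a circuit and take the closed region it delimits that contains $v_0$ (the region on the right of $C$); counting the submap $K$ of $T$ carried by that region exactly as in Lemma~\ref{lem:iffp}, the deletion of $v_0$ supplies the $-1$ and yields $E=V$, so $K$ contains a cycle. In the second case, starting from the set of vertices reachable from $v_0$ I would extract an outward cocycle $D$ whose dual cycle $D^*$ separates the reachable set from a nonempty set of unreachable vertices; taking the region outside $D^*$ to be the one containing $v_0$, the same Euler count (with $v_0$ deleted) gives $E=V$ and a cycle in $K$. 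The only departures from the $\Phi_+$ argument are that the distinguished region is chosen to contain $v_0$ and that it is $v_0$ itself that is deleted.

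For the ``if'' direction, assume $H\in\cHz$ and suppose for contradiction that $T$ has a cycle $C$. If every vertex of $C$ is a square vertex, then the edges of $H$ dual to the edges of $C$ form a cocycle of $0$-way edges, so the (nonempty) set of vertices of $H$ lying in the region of $C$ not containing $v_0$ is unreachable from $v_0$, contradicting accessibility. Otherwise $C$ carries a round vertex $u_0$, and I would run the same iterative ``prisoner'' construction as in Lemma~\ref{lem:iffp}: accessibility from $v_0$ provides, at each step, a directed path ending at the relevant inward edge, while minimality (no counterclockwise circuit relative to $v_0$) forces the successive entry vertices $u_0,u_1,u_2,\dots$ to avoid a strictly growing arc of $C$, which is impossible. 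Property $(\clubsuit)$ then follows by the identical line of argument, with the ``distinguished'' region now taken to be the one containing $v_0$ in place of the one containing the outer face.

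The main obstacle, and essentially the only point requiring care beyond transcription, is the bookkeeping around $v_0$ in the Euler counts: one must check that in each subcase the chosen region does contain $v_0$, so that the deleted round vertex $v_0$ (not a square vertex) correctly contributes the $-1$ that converts the Euler identity into $E=V$. In particular, in the non-accessibility case one must verify that the cocycle $D$ can be taken so that the reachable part, which contains $v_0$, is the retained region, and in the all-square-cycle case one must confirm that the side of $C$ opposite to $v_0$ indeed contains at least one vertex of $H$; both follow from the fact that a $0$-way cocycle partitions the vertices of $H$ into two nonempty classes.
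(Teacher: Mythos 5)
Your proposal is correct and matches the paper's intended argument: the paper itself disposes of this lemma with the single phrase ``In a similar way as for $\Phi_-$, one proves\dots'', so the expected proof is precisely the transcription of Lemmas~\ref{lem:iffp} and~\ref{lem:iffm} with the root-vertex $v_0$ playing the role of the outer face (respectively the outer vertices). You correctly identify the only points needing care — that $v_0$ is an isolated round vertex of $T$ before deletion (so the Euler count still gives $E=V-1$), that the distinguished side of each cycle or cocycle must be chosen to contain $v_0$, and that the far side of an all-square cycle of $T$ is nonempty because the corresponding $0$-way cocycle genuinely separates the vertex set.
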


Then the proof that $\Phi_0$ and $\Psi_0$ are inverse mappings is similar to the case $\Phi_-$. It implies Theorems~\ref{theo:master_bij1} and~\ref{theo:master_bij2} for $\Phi_0$.\\





\section{Proofs of Theorems~\ref{theo:plane_dori},~\ref{theo:shifted-orientation-dark},~\ref{theo:shifted-orientation-light}, and~\ref{theo:shifted-orientation-0} about canonical orientations}\label{sec:proofs}
Theorems~\ref{theo:shifted-orientation-dark},~\ref{theo:shifted-orientation-light}, and~\ref{theo:shifted-orientation-0} state that a hypermap $H$ admits a (unique) $\si$-weighted orientation in $\cHm$, $\cHp$, or $\cH_0$ if and only if the charge function $\si$ fits $H$. Recall that Theorem~\ref{theo:shifted-orientation-dark} actually generalizes Theorem~\ref{theo:plane_dori} about plane hypermaps (see Lemma~\ref{lem:sigma-d-condition}). In this section, we prove Theorems~\ref{theo:shifted-orientation-dark},~\ref{theo:shifted-orientation-light}, and~\ref{theo:shifted-orientation-0}. The proof is organized as follows.
\begin{itemize}
\item In Section~\ref{sec:girth-necessary}, we prove the necessity of the fitting condition in Theorems~\ref{theo:shifted-orientation-dark},~\ref{theo:shifted-orientation-light}, and~\ref{theo:shifted-orientation-0}.
\item In Section~\ref{sec:hyperflow}, we develop some tools useful for proving the existence of constrained hyperorientations.
\item In Section~\ref{sec:proof-case-d-light}, we prove Theorem~\ref{theo:shifted-orientation-light} in the case where every light face has charge equal to its degree.
\item In Section~\ref{sec:endproof-outer-light}, we complete the proof of Theorem~\ref{theo:shifted-orientation-light} by reduction to the case treated in Section~\ref{sec:proof-case-d-light}.
\item In Section~\ref{sec:endproof-outer-dark}, we complete the proof of Theorems~\ref{theo:shifted-orientation-dark} and~\ref{theo:shifted-orientation-0} by reduction to Theorem~\ref{theo:shifted-orientation-light}.
\end{itemize}

\subsection{Necessity of the fitting condition in Theorems~\ref{theo:shifted-orientation-dark},~\ref{theo:shifted-orientation-light}, and~\ref{theo:shifted-orientation-0}}\label{sec:girth-necessary}
In this subsection we prove the following lemma. 
\begin{lem}\label{lem:girth-necessary}
If a dark-rooted (resp. light-rooted, vertex-rooted) hypermap $H$ admits a $\si$-weighted orientation in $\cHm$ (resp. $\cHp$, $\cH_0$), then $\si$ fits $H$. Moreover if $H$ is dark-rooted, then the contour of the outer face is simple.
\end{lem}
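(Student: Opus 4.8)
The plan is to translate the defining conditions (i)--(iv) [resp. (iv$'$)] of a $\si$-weighted hyperorientation into statements about charges and then read off each requirement of the fitting condition. Rewriting (ii)--(iv) as $\si(f)=w(f)+\deg(f)$ for light faces, $\si(f)=-w(f)-\deg(f)$ for (inner) dark faces, and $\si(v)=w(v)$ for (inner) vertices, the sign conditions follow from accessibility: any non-frozen vertex $v$ is reached by a directed path (from an outer vertex in $\cHm$ or $\cHp$, from $v_0$ in $\cHz$), whose last edge is $1$-way into $v$ and, by (i), has positive weight, so $\si(v)=w(v)>0$. For the frozen vertices, Remark~\ref{rk:edges-inward} gives that $v_0$ has no ingoing edge in $\cHz$, whence $\si(v_0)=0$, and that each outer vertex of an orientation in $\cHm$ has its unique ingoing edge on the simple outer circuit of weight $1$, whence $\si(v)=w(v)-1=0$. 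The outer-face charges are obtained the same way: in $\cHp$ all edges of the light outer face are $1$-way so $w(f_0)=0$ and $\si(f_0)=\deg(f_0)$; in $\cHm$ the dark outer face is bounded by $\deg(f_0)$ outer edges of weight $1$, giving $\si(f_0)=-\deg(f_0)$. Finally the simplicity of the outer face in the dark-rooted case is immediate, as it is part of the definition of $\cHm$.

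For $\si_\tot=0$ I would use the global double-counting identity $\sum_v w(v)+\sum_{f\text{ light}}w(f)=\sum_{f\text{ dark}}w(f)$, valid because each $1$-way edge contributes its weight to exactly one vertex (its head), each $0$-way edge to exactly one light face, and each edge to exactly one dark face. Substituting the charge expressions and using $\sum_{f\text{ light}}\deg(f)=\sum_{f\text{ dark}}\deg(f)=N_e$ (each edge has one light and one dark side) collapses everything to $\si_\tot=0$; in the dark-rooted case the extra $+1$ on outer vertices, the weight-$1$ outer edges, and the relation $\deg(f_0)=\#\{\textrm{outer vertices}\}$ compensate one another.

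The heart of the proof is the $\si$-girth condition. Given a light region $R$, I would evaluate $\si(R)=\sum_{f\in R}\si(f)+\sum_{v\text{ strictly inside }R}\si(v)$ by replacing each charge with the corresponding weight and sorting the resulting edge-contributions according to their position relative to $R$. Writing $E^{\mathrm{in}}$ for the edges strictly inside $R$ (both incident faces in $R$) and calling an edge \emph{returning} if it is $1$-way, lies in $E^{\mathrm{in}}$, but has its head on $\R$, the degree terms telescope to $|\R|$ and the weight terms reduce to
$$\si(R)=|\R|+\!\!\sum_{\substack{e\in\R\\ e\ 0\text{-way}}}\!\!w(e)\;-\!\!\sum_{e\text{ returning}}\!\!w(e).$$
By condition (i) the first sum is $\le 0$ and the second is $\ge 0$, so $|\R|\ge\si(R)$ (in the dark-rooted case with $f_0\in R$, the spurious $\deg(f_0)$ created by the outer edges, the term $-w(f_0)$, and the $-1$ shifts on the outer vertices cancel, leaving the same formula). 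For the strict inequalities I would show that a returning edge must exist under each frozen-element hypothesis. Let $V_{\mathrm{si}}$ denote the set of vertices strictly inside $R$. If there were no returning edge, $V_{\mathrm{si}}$ would be closed under outgoing $1$-way edges; since $R$ is proper, $V_{\mathrm{si}}$ is not the whole vertex set, so no root- or outer vertex from which the orientation is accessible can lie in $V_{\mathrm{si}}$. In the vertex-rooted case $v_0\in V_{\mathrm{si}}$, and in the dark-rooted case (where all outer vertices strictly inside forces $f_0\in R$) every outer vertex lies in $V_{\mathrm{si}}$, a contradiction; in the light-rooted case the strictly-inside outer edge $e_0$ is $1$-way with both endpoints outer, and either its head lies outside $V_{\mathrm{si}}$, so $e_0$ is itself returning, or it lies in $V_{\mathrm{si}}$, again contradicting accessibility from an outer vertex. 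In all cases a returning edge exists, making the second sum strictly positive and yielding $|\R|>\si(R)$.

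The main obstacle is the bookkeeping in the region formula: correctly tracking which edges contribute to $\si(R)$ through their incident light face, dark face, or head, checking that the $1$-way boundary edges drop out entirely, and verifying that the dark-rooted corrections cancel exactly. Once the displayed formula is in hand the inequality is immediate, and the strict case rests on the clean observation that $V_{\mathrm{si}}$ cannot be closed under out-edges unless it is the whole vertex set, so accessibility forces a returning edge.
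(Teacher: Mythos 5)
Your proof follows the same route as the paper's (signs and $\si_\tot=0$ from the local and global weight identities, the girth inequality from a region-by-region accounting of edge weights, strictness from a ``returning'' edge forced by accessibility), and the light-rooted and vertex-rooted cases are correct. The gap is in the dark-rooted case of the $\si$-girth condition. Your parenthetical claim that the corrections coming from the outer face and the outer vertices ``cancel, leaving the same formula'' is false: writing $b$ for the number of outer vertices strictly inside $R$, the outer-vertex shift $\si(v)=w(v)-1$ contributes $-b$ to $\si(R)$, while replacing the generic dark-face charge $-w(f_0)-\deg(f_0)$ by the actual $\si(f_0)=-w(f_0)$ contributes $+\deg(f_0)$; so when $f_0\in R$ the correct identity is
$$\si(R)=|\R|+\sum_{\substack{e\in\R\\ e\ 0\textrm{-way}}}w(e)-\sum_{e\textrm{ returning}}w(e)+\deg(f_0)-b,$$
with a surplus $\deg(f_0)-b\geq 0$ that your sign argument does not absorb. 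For a light region $R$ containing $f_0$ with $b<\deg(f_0)$, nonnegativity of the returning sum only yields $|\R|\geq\si(R)-(\deg(f_0)-b)$, which is weaker than required.

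Closing the gap needs a quantitative lower bound on the returning sum, not merely its nonnegativity: since $f_0$ is dark and lies in $R$, every outer edge is strictly inside $R$ (its light face must also be in $R$), is $1$-way of weight $1$, and points to an outer vertex; hence each of the $\deg(f_0)-b$ outer vertices lying on $\R$ receives a distinct returning edge of weight $1$, so $\sum_{e\textrm{ returning}}w(e)\geq\deg(f_0)-b$, exactly covering the surplus. (This is the argument the paper gives. Your strictness discussion for the dark-rooted case is unaffected, since there $b=\deg(f_0)$ and the surplus vanishes.) The rest of your proof --- the accessibility argument for positive vertex charges, the treatment of the frozen elements via Remark~\ref{rk:edges-inward}, the double-counting identity for $\si_\tot=0$, and the ``closed under outgoing $1$-way edges'' argument for strict inequality --- is sound and matches the paper.
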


\begin{proof}
Let $H$ be a dark-rooted, light-rooted, or vertex-rooted hypermap, and let $\si$ be a charge function such that $H$ admits a $\si$-weighted hyperorientation $\Om$ in $\cHm$, $\cHp$, or $\cH_0$. We denote by $w(a)$ the weight of a vertex, edge, or face $a$ of $H$ in $\Om$. 

We first suppose that $H$ is dark-rooted and prove that \emph{the contour of the outer face $f_0$ of $H$ is simple, the charge of every inner vertex is positive, the charge of every outer vertex is 0, the charge of the dark outer face $f_0$ is $-\deg(f_0)$, and $\si_\tot=0$}. 
By definition of $\cHm$ the contour of $f_0$ is a simple cycle, and since the weight of each outer edge is 1 in $\Om$, the weight of the outer face is $w(f_0)=\deg(f_0)$. Since, by definition, $w(f_0)=-\si(f_0)$, we get $\si(f_0)=-\deg(f_0)$. Moreover, by definition, the weight of any outer vertex $v$ is $w(v)=1=\si(v)+1$, hence $\si(v)=0$. Consider now an inner vertex $v$. Since the orientation $\Om\in \cHm$ is accessible from the outer vertices there is a 1-way edge $e$ directed toward $v$, hence $w(v)\geq w(e)>0$. It only remains to prove that $\si_\tot=0$. Let $V$, $F$, and $K$ be respectively the set of vertices, light faces, and dark faces of $H$. By definition, 
$$\sum_{v\in V}w(v)+\sum_{f\in F}w(f)=\sum_{k\in K}w(k),$$
and since $\Om$ is $\si$-weighted we get
$$\bigg(\sum_{v\in V}\si(v)\bigg)+\deg(f_0)+\bigg(\sum_{f\in F}\si(f)-\deg(f)\bigg)=\bigg(\sum_{k\in K}-\si(k)-\deg(k)\bigg)+\deg(f_0).$$
Hence, $\ds \si_\tot=\sum_{v\in V}\si(v)+\sum_{f\in F}\si(f)+\sum_{k\in K}\si(k)=0$.

With similar arguments, one proves that if $H$ is light-rooted then \emph{the charge of every vertex is positive, the charge of the light outer face $f_0$ is $\deg(f_0)$, and $\si_\tot=0$}, and if $H$ is vertex-rooted then \emph{the charge of every non-root vertex is positive, the charge of the root-vertex is $0$, and $\si_\tot=0$}. 

It only remains to prove that $H$ satisfies the $\si$-girth condition. We first suppose that $H$ is dark-rooted. 
Let $R$ be a light region. Let $V$, $E$, $F$ and $K$ be respectively the set of vertices strictly inside $R$, edges strictly inside $R$, light faces inside $R$, and dark faces inside $R$. We want to prove
 \begin{equation}\label{eq:difference-weights}
|\R|\geq \si(R):=\sum_{v\in V}\si(v)+\sum_{f\in F}\si(f)+\sum_{k\in K}\si(k),
\end{equation}
with strict inequality if every outer vertex is strictly in $R$.

Because $\Om$ is $\si$-weighted we get 
\begin{eqnarray*}
\sum_{v\in V}\si(v)&=&-b+\sum_{v\in V}w(v),\\
\sum_{f\in F}\si(f)&=& |E|+|\R|+\sum_{f\in F}w(f),\\
\sum_{k\in K}\si(k)&=& -|E|+\textbf{1}_{f_0\in R}\cdot \deg(f_0) -\sum_{k\in K}w(k),\\
\end{eqnarray*}
where $b$ is the number of outer vertices in $V$, and $f_0$ is the dark outer face. Hence
$$\si(R)=|\R|-b+\textbf{1}_{f_0\in R}\cdot \deg(f_0)+\sum_{v\in V}w(v)+\sum_{f\in F}w(f)-\sum_{k\in K}w(k),$$
and the requirement \eqref{eq:difference-weights} becomes
\begin{equation}\label{eq:difference-weights2}
\sum_{k\in K}w(k)-\sum_{v\in V}w(v)-\sum_{f\in F}w(f)\geq \textbf{1}_{f_0\in R}\cdot \deg(f_0)-b,
\end{equation}
Moreover we have
$$\sum_{k\in K}w(k)- \sum_{v\in V}w(v)-\sum_{f\in F}w(f)=x-y\geq x,$$
where $x$ is the sum of the (positive) weights of the 1-way edges in $E$ oriented toward vertices incident to edges in $\R$, and $y$ is the sum of the (non-positive) weights of the 0-way edges in $\R$. 
If $f_0\notin R$, then $b=0$ and the inequality \eqref{eq:difference-weights2} holds because $x\geq 0$. If $f_0\in R$ and $b=\deg(f_0)$ (i.e. every outer vertex is strictly inside $R$), then inequality \eqref{eq:difference-weights2} is strict because $x>0$ (indeed, since $\Om$ is accessible from the outer vertices of $H$, there exists a 1-way edge in $E$ oriented toward vertices of $\R$). Lastly suppose that $f_0\in R$ and $b<\deg(f_0)$. 
Because $f_0$ is a dark face, all the edges incident to $f_0$ are in $E$, and because $\Om\in\cHm$ these edges are 1-way and have weight 1.
Thus for each outer vertex $v$ on $\R$ there is an edge in $E$ of weight 1 oriented toward $v$. Hence $x\geq \deg(f_0)-b$ which is the number of outer vertices on $\R$. This proves \eqref{eq:difference-weights2} and completes that proof that $H$ satisfies the $\si$-girth condition when $H$ is dark-rooted.

The case where $H$ is light-rooted (resp. vertex-rooted) is similar. Indeed, by the same arguments, we see that the $\si$-girth condition a light region $R$ becomes the following requirement: 
\begin{equation*}\label{eq:difference-weights3}
\sum_{k\in K}w(k)-\sum_{v\in V}w(v)-\sum_{f\in F}w(f)\geq 0,
\end{equation*}
with strict inequality if one of the outer edges is strictly inside $R$ (resp. if the root vertex is strictly inside $R$). This is easily seen to hold with arguments similar to the ones above. The only point that requires a special argument is that the equality is strict if $H$ is light-rooted and one of the outer edges is strictly inside $R$. For this particular case, we need to prove that the sum $x$ of weights of the 1-way edges in $E$ oriented toward vertices incident to edges of $\R$ is positive. This holds, because if one of the outer vertices $v$ is strictly inside $R$ then $x>0$ because the vertices on $\R$ are accessible from $v$, while if none of the outer vertices is strictly in $R$, then the outer edge $e$ strictly inside $R$ is a 1-way edge in $E$ oriented toward a vertex of $\R$ (indeed, $e$ is 1-way because $\Om\in \cHp$). 
\end{proof}

\subsection{A preliminary result about $\al$-hyperflows}\label{sec:hyperflow}
In this subsection we prove a result akin to the mincut-maxflow theorem for the \emph{hyperflows} of bipartite graphs. This result will then be used in Section~\ref{sec:proof-case-d-light}.

Throughout this subsection, we fix a (finite, undirected) bipartite graph $G=(X\sqcup Y,E)$ where every edge $e\in E$ joins a vertex in $X$ to a vertex in $Y$. We call \emph{hyperflow} of $G$, a function $\varphi$ from the edge set $E$ to the set $\rp$ of non-negative real numbers. Let $P$ be a directed path, or cycle, of $G$ and let $P_X$ be the subset of edges of $P$ oriented toward a vertex in $X$. Given a hyperflow $\varphi$ of $G$, we say that $P$ is \emph{$\varphi$-positive} if $\varphi(e)>0$ for every edge $e\in P_X$. A $\varphi$-positive path is represented in Figure~\ref{fig:hyperflow}(a).
For a vertex $x_0\in X$, we say that a hyperflow $\varphi$ is \emph{accessible from $x_0$} if for all $x\in X$ there is a $\varphi$-positive path from $x_0$ to $x$. For instance, The hyperflow represented in Figure~\ref{fig:hyperflow}(a) is accessible from $x_0$.

\fig{width=.8\linewidth}{hyperflow}{(a): A bipartite graph endowed with a hyperflow $\varphi$, and a $\varphi$-positive directed path from $x_0$ to $x$. The vertices in $X$ and $Y$ are represented in light and dark respectively and the value of $\varphi$ is indicated on each edge. (b),(c): The cycles $C$ and $D$ in the proof of Lemma~\ref{lem:unique-minimal}.}

Let $\varphi$ be a hyperflow of $G=(X\sqcup Y,E)$. We call \emph{$\varphi$-flow at a vertex $v\in X\sqcup Y$} the sum 
$$\phi(v):=\sum_{e\in E\textrm{ incident to }v}\varphi(e).$$ 
Given a function $\al$ from $X\sqcup Y$ to $\rp$, we say that $\varphi$ is an $\al$-hyperflow if the $\varphi$-flow at every vertex $v\in X\sqcup Y$ is equal to $\al(v)$. We now establish a criterion for the existence of an accessible $\al$-hyperflow:

\begin{lem} \label{lem:existence-hyperflow}
Let $\al$ be a function from $X\sqcup Y$ to $\rp$. 
For a subset $A\subseteq X$, let us denote 
\begin{equation*}
\al(A):= \sum_{x\in X}\al(x)-\sum_{y\in Y_A}\al(y),
\end{equation*}
where $Y_A$ denotes the set of vertices in $Y$ having all their neighbors in $A$. Then there exists an $\al$-hyperflow of $G$ if and only if
$$\forall A\subseteq X,~~ \al(A)\geq 0,$$ 
with equality for $A=X$. 
Moreover for any vertex $x_0\in X$ and any $\al$-hyperflow $\varphi$, the hyperflow $\varphi$ is accessible from $x_0$ if and only if $\al(A)>0$ for all non-empty subset $A\subset X$ not containing $x_0$.
\end{lem}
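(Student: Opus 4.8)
The plan is to deduce the existence statement from the max-flow--min-cut theorem and to settle accessibility by a direct reachability argument. The workhorse in both parts is the following identity, valid for every $A\subseteq X$ and every $\al$-hyperflow $\varphi$:
\[
\al(A)=\sum_{x\in A}\al(x)-\sum_{y\in Y_A}\al(y)=\sum_{e}\varphi(e),
\]
where the last sum ranges over the edges joining a vertex of $A$ to a vertex of $Y\setminus Y_A$. To see it, note that $\sum_{x\in A}\al(x)=\sum_{x\in A}\phi(x)$ counts each edge incident to $A$ exactly once (the graph is bipartite), while $\sum_{y\in Y_A}\al(y)=\sum_{y\in Y_A}\phi(y)$ counts each edge incident to $Y_A$ once; since by definition of $Y_A$ every edge incident to $Y_A$ has its $X$-endpoint in $A$, the difference is exactly the flow carried by edges from $A$ to $Y\setminus Y_A$. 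As $\varphi\geq 0$, this identity already yields the necessity half of the existence criterion: $\al(A)\geq 0$ for all $A$, and $\al(X)=0$ because $Y_X=Y$ makes the relevant edge set empty.

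For sufficiency, I would assume $\al(A)\geq 0$ for all $A\subseteq X$ with equality at $A=X$; the equality says $\sum_{x\in X}\al(x)=\sum_{y\in Y}\al(y)=:T$ (global balance). Build the standard transportation network: a source $s$ with arcs $s\to x$ of capacity $\al(x)$, arcs $x\to y$ of infinite capacity for each edge $xy$, and arcs $y\to t$ of capacity $\al(y)$ into a sink $t$. An $\al$-hyperflow is then exactly the restriction to the middle arcs of an $s$--$t$ flow of value $T$, and such a flow exists iff every cut has capacity at least $T$. A cut of finite capacity is described by a set $A\subseteq X$ on the source side together with its neighborhood $N(A)\subseteq Y$, and has capacity $T-\sum_{x\in A}\al(x)+\sum_{y\in N(A)}\al(y)$; so the min-cut condition is the weighted Hall inequality $\sum_{y\in N(A)}\al(y)\geq\sum_{x\in A}\al(x)$. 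The key point is that this is equivalent to the hypothesis after complementation: applying $\al(X\setminus A)\geq 0$ and using $Y_{X\setminus A}=Y\setminus N(A)$ together with the balance $T$ turns $\al(X\setminus A)\geq 0$ into precisely the weighted Hall inequality for $A$. Hence every cut has capacity $\geq T$, a flow of value $T$ exists, and its middle arcs form the desired $\al$-hyperflow.

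For accessibility, fix $x_0$ and an $\al$-hyperflow $\varphi$, and let $A^{*}\subseteq X$ be the set of vertices reachable from $x_0$ by a $\varphi$-positive path, so that accessibility means $A^{*}=X$. If $\varphi$ is not accessible, set $A:=X\setminus A^{*}$, a nonempty proper subset of $X$ with $x_0\notin A$ (since $x_0\in A^{*}$ via the empty path). I claim $\al(A)=0$: by the identity it suffices to show every edge $xy$ with $x\in A$ and $\varphi(xy)>0$ satisfies $y\in Y_A$. Indeed, if such a $y$ had a neighbor $x'\in A^{*}$, then $y$ would be reachable through the free step $x'\to y$, and the positive-flow step $y\to x$ would make $x$ reachable, contradicting $x\in A$. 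This shows that if accessibility fails then some admissible $A$ has $\al(A)=0$, which is the contrapositive of the ``if'' direction. Conversely, suppose $\al(A)=0$ for some nonempty proper $A\subset X$ with $x_0\notin A$ (recall $\al(A)\geq 0$ always, so failure of strictness means equality); by the identity every positive-flow edge leaving $A$ lands in $Y_A$. If some $\varphi$-positive path from $x_0$ reached $A$, let $x$ be its first vertex in $A$: the path enters $x$ by a step $y\to x$ with $\varphi(xy)>0$, forcing $y\in Y_A$, so the $X$-vertex $x'$ preceding $y$ (the step $x'\to y$ exists as the path alternates and starts in $X$) is a neighbor of $y$ and thus lies in $A$, contradicting the minimality of $x$. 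Hence no vertex of $A$ is reachable and $\varphi$ is not accessible, giving the ``only if'' direction.

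The main obstacle is the sufficiency direction for existence: the hypothesis is phrased through the sets $Y_A$ of $Y$-vertices all of whose neighbors lie in $A$, whereas the min-cut bound naturally involves the neighborhoods $N(A)$, and reconciling the two requires the complementation relation $Y_{X\setminus A}=Y\setminus N(A)$ together with the global balance $\al(X)=0$; after that, max-flow--min-cut applies verbatim. By contrast, once the displayed identity is available, both the necessity of the existence criterion and the full accessibility equivalence follow by the short combinatorial arguments above.
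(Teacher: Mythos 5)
Your proof is correct, and the two halves compare differently with the paper. For the accessibility statement your argument is essentially the paper's: both directions come down to the identity $\al(A)=\sum_e\varphi(e)$ over edges joining $A$ to $Y\setminus Y_A$ (which also gives necessity of $\al(A)\ge 0$), combined with the reachable-set argument in one direction and the first-entry-point argument in the other; the paper proves ``accessible $\Rightarrow\al(A)>0$'' directly where you prove its contrapositive, but the content is identical. For the existence (sufficiency) direction, however, you take a genuinely different route: you reduce to max-flow--min-cut on the standard source/sink network, observe that the finite cuts are indexed by $A\subseteq X$ together with $N(A)$, and translate the min-cut condition into the hypothesis via the complementation $Y_{X\setminus A}=Y\setminus N(A)$ and the global balance $\al(X)=0$ --- that translation is the one nontrivial step and you carry it out correctly. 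The paper instead gives a self-contained induction on $|X\cup Y\cup E|$: it saturates an edge as much as possible, and when a constraint becomes tight it either deletes a vertex or splits $G$ along a tight set $A$ into the induced subgraphs on $A\cup Y_A$ and on the complements, verifying the hypothesis on each piece. Your approach is shorter and makes the connection to weighted Hall/transportation theory explicit (the authors themselves note in a footnote that the lemma is equivalent to a known result on perfect $b$-matchings, Corollary 21.1b of Schrijver, but chose to reprove it for completeness); the paper's induction buys self-containedness and, as a by-product, an explicit recursive construction of a hyperflow. Two minor points: the displayed definition of $\al(A)$ in the lemma has a typo ($\sum_{x\in X}$ should be $\sum_{x\in A}$), which you silently and correctly fix; and if you want to be fully rigorous you should note that max-flow--min-cut holds for arbitrary nonnegative real capacities (the infinite middle capacities can be replaced by $\sum_{x\in X}\al(x)$), which is standard.
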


\begin{proof}
First suppose that there exists an $\al$-hyperflow $\varphi$ of $G$. In this case, for all $A \subseteq X$, 
$$\ds \sum_{x\in A}\al(x)=\sum_{e \textrm{ incident to } A}\varphi(e)\geq \sum_{y\in Y_A}\al(y),$$
with equality if $A=X$. Hence $\al(A)\geq 0,$ with equality for $A=X$. \\
 
We will now prove that an $\al$-hyperflow exists whenever $\al(A)\geq 0$ for all $A \subseteq X$, with equality for $A=X$. 
We make an induction on $|X\cup Y\cup E|$. The property is trivial when $E=\emptyset$, hence for the induction step we can assume $E\neq \emptyset$.
We consider an edge $e_0\in E$ with endpoints $x_0\in X$ and $y_0\in Y$. For $\eps\geq 0$ we denote by $\al_\eps$ the function from $X\sqcup Y$ to $\rp$ defined by: $\al_\eps(x_0)=\al(x_0)-\eps$, $\al_\eps(y_0)=\al(y_0)-\eps$ and $\al_\eps(z)=\al(z)$ for all $z\neq x_0,y_0$. Observe that if $\varphi$ is an $\al_\eps$-hyperflow of $G$, then $\varphi'$ defined by $\varphi'(e_0)=\varphi(e_0)+\eps$ and $\varphi'(e)=\varphi(e)$ for all $e\neq e_0$ is an $\al$-hyperflow of $G$. Hence it suffices to prove that there exists an $\al_\eps$-hyperflow of $G$ for some $\eps\geq 0$. We choose $\eps$ maximal such that $\al_\eps(x_0)\geq 0$, $\al_\eps(y_0)\geq 0$, and $\al_\eps(A)\geq 0$ for all $A\subseteq X$. Clearly, $\al_\eps(X)=\al(X)=0$, and $\al_\eps(A)\geq 0$ for all $A\subseteq X$. Moreover, we have either $\al_\eps(x_0)=0$, or $\al_\eps(y_0)=0$ or $\al_\eps(A)= 0$ for some $A\neq \emptyset, X$. 
Suppose first $\al_\eps(x_0)=0$. In this case we consider the subgraph $G'$ obtained from $G$ by deleting $x_0$ and the incident edges, and we denote by $\al'$ the restriction of $\al_\eps$ to $G'$. Clearly $\al'(A)\geq 0$ for all $A\subseteq X\setminus \{x_0\}$, with equality for $A=X\setminus \{x_0\}$. Hence by the induction hypothesis, there exists an $\al'$-hyperflow of $G'$ and this gives an $\al_\eps$-hyperflow of $G$ (by setting the flow on edges incident to $x_0$ to be 0), 
and hence an $\al$-hyperflow of $G$. The case $\al_\eps(y_0)=0$ is similar. Suppose lastly that $\al_\eps(A)=0$ for some subset $A\neq \emptyset, X$. 
Let $\ov{A}=X\setminus A$ and $\ov{Y_A}=Y\setminus Y_A$.
Let $G_1$ (resp. $G_2$) be the graph with vertex set $A\cup Y_A$ (resp. $\ov{A}\cup\ov{Y_A}$) and edge set $E_1$ (resp. $E_2$) made of all the edges with both endpoints in $A\cup Y_A$ (resp. $\ov{A}\cup\ov{Y_A}$). Observe that the graph $G_1\cup G_2$ is simply obtained from $G$ by deleting the set $E_0$ of edges having both endpoints in $A\cup \ov{Y_A}$; see Figure~\ref{fig:induction-flow}.
We denote by $\al'$ and $\al''$ respectively the restriction of $\al_\eps$ to $G_1$ and $G_2$. 
Observe that for all $B\subseteq A$, the set of vertices of $G_1$ with all their neighbors in $B$ is $Y_{B\cup \ov{A}}\cap Y_A=Y_B$. 
Thus
$$\al'(B)=\sum_{x\in B}\al_\eps(x)-\sum_{y\in Y_B}\al_\eps(y)=\al_\eps(B).$$ 
Hence $\al'(B)\geq 0$ for all $B\subseteq A$, with equality for $B=A$. Hence, by the induction hypothesis, there exists a $\al'$-hyperflow $\varphi_1$ of $G_1$.
Now for $B\subseteq \ov{A}$, the set of vertices of $G_2$ with all their neighbors in $B$ is $Y_{A\cup B}\cap \ov{Y_A}$. Hence
\begin{eqnarray*}
\al''(B)&=&\sum_{x\in B}\al_\eps(x)-\sum_{Y_{B\cup A}\cap \ov{Y_A}}\al_\eps(y)\\
&=&\left(\sum_{x\in A\cup B}\al_\eps(x)-\sum_{x\in A}\al_\eps(x)\right)-\left(\sum_{y\in Y_{A\cup B}}\al_\eps(y)-\sum_{y\in Y_A}\al_\eps(y)\right)=\al_\eps(A\cup B).
\end{eqnarray*}
Hence $\al''(B)\geq 0$ for all $B\subseteq \ov{A}$, with equality for $B=\ov{A}$. Hence, by the induction hypothesis, there exists an $\al''$-hyperflow $\varphi_2$ of $G_2$. We now consider the hyperflow $\varphi$ of $G$ defined by $\varphi(e)=0$ if $e\in E_0$, $\varphi(e)=\varphi_1(E)$ if $E$ in $E_1$, and $\varphi(e)=\varphi_2(e)$ if $e\in E_2$. It is clear that $\varphi$ is an $\al_\eps$-hyperflow. This completes the proof by induction.

\fig{width=.3\linewidth}{induction-flow}{The bipartite graph $G=(X\sqcup Y,E)$, and the subgraphs $G_1$ and $G_2$. We have $X=A\cup \ov{A}$, $Y=Y_A\cup \ov{Y_A}$ and $E=E_0\cup E_1\cup E_2$.}

It remains to prove that an $\al$-hyperflow $\varphi$ is accessible from a vertex $x_0\in X$ if and only if $\al(A)>0$ for all non-empty subset $A\subset X$ not containing $x_0$.
Suppose first that $\varphi$ is accessible from $x_0$ and let $A\subset X$ be a non-empty subset not containing $x_0$. Let $P$ be a $\varphi$-positive path from $x_0$ to a vertex $x\in A$. Let $e_0$ be the first edge of $P$ incident to a vertex in $A$. This edge of $P$ is directed from its endpoint $y\in Y$ to its endpoint $a\in A$, hence $\varphi(e_0)>0$. Moreover $y\notin Y_A$, hence 
$$\ds \sum_{x\in A}\al(x)=\sum_{e \textrm{ incident to } A}\varphi(e)\geq \varphi(e_0)+\sum_{y\in Y_A}\al(y).$$
Thus $\al(A)\geq \varphi(e_0)>0$, as wanted.
Suppose now that $\varphi$ is not accessible from $x_0$. Consider the set $A$ of vertices $x\in X$ such that there exists no $\varphi$-positive path from $x_0$ to $x$. This definition implies that every edge $e$ incident to a vertex $x\in A$ and a vertex $y\in Y\setminus Y_A$ satisfies $\varphi(e)=0$. Thus
$$\ds \sum_{x\in A}\al(x)=\sum_{e \textrm{ incident to } A}\varphi(e)=\sum_{e \textrm{ incident to } Y_A}\varphi(e)= \sum_{y\in Y_A}\al(y).$$
Hence $\al(A)=0$ for a non-empty set $A\subset X$ not containing $x_0$.
\end{proof}

\begin{remark}\label{rk:flow}
In the literature, $\al$-hyperflows are also known
as \emph{$b$-matchings}~\cite[Chap.~21]{Sh}. Our existence criterion in 
Lemma~\ref{lem:existence-hyperflow} can be checked
to be equivalent to Corollary 21.1b from~\cite{Sh} (we have provided our own proof and terminology
for completeness and convenience). About efficiently computing an $\al$-hyperflow of $G=(V,E)$, when $\al$ only has 
integer values the problem can easily be reduced to that of 
finding a perfect matching in a bipartite graph $G'=(V',E')$ associated to $G$ (each vertex $v\in G$ is turned into $\al(v)$ copies in $G'$, and for each edge $(u,v)\in G$, there is an edge in $G'$ between every copy of $u$ and every copy of $v$). 
The algorithm of Hopcroft and Karp~\cite{hopcroft1973n} yields a perfect matching of $G'$ in time $O(\sqrt{|V'|}|E'|)$,
which is $O(c\sqrt{|V|}|E|)$, with $c=(\sum_{v\in V}\al(v))^{1/2}\sum_{(u,v)\in E}\al(u)\al(v)$. A detailed 
survey on complexity results (for the general case of flow values in $\rp$) is given in~\cite[Chap.~21]{Sh}. 
\end{remark} 

Suppose that a bipartite graph $G=(X\sqcup Y, E)$ is embedded (i.e., drawn without edge crossings) in the plane. 
In this case, a directed cycle $C$ of $G$ is called \emph{counterclockwise} if the outer face of $G$ lies to the right of $C$.
A hyperflow $\varphi$ of $G$ is called \emph{minimal} if there is no $\varphi$-positive counterclockwise directed cycle of $G$. The hyperflow $\varphi$ represented in Figure~\ref{fig:hyperflow}(a) is \emph{not} minimal because there is a $\varphi$-positive counterclockwise directed cycle of length 4. 

\begin{lem}\label{lem:unique-minimal}
Let $G=(X\sqcup Y, E)$ be a bipartite graph embedded in the plane. If $\al$ is a function from $X\sqcup Y$ to $\rp$ such that there exists an $\al$-hyperflow of $G$, then there exists a unique minimal $\al$-hyperflow of $G$.
\end{lem}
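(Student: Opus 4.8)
The plan is to treat the two assertions separately: existence by a compactness argument, and uniqueness by a flow-decomposition argument exploiting planarity. Throughout I would fix the orientation $\vec{G}$ of $G$ in which every edge points from its endpoint in $X$ to its endpoint in $Y$. The set $\mathcal{F}$ of $\al$-hyperflows is the set of $\varphi\in\rp^E$ satisfying the linear equalities $\sum_{e\ni v}\varphi(e)=\al(v)$ for all $v\in X\cup Y$; it is a closed convex polytope, and it is bounded since $0\le\varphi(e)\le\al(x)$ whenever $e$ is incident to $x\in X$. By hypothesis $\mathcal{F}\neq\emptyset$, so $\mathcal{F}$ is a nonempty compact convex set. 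The key preliminary observation is that if $\varphi,\varphi'\in\mathcal{F}$ then $g=\varphi'-\varphi$ satisfies $\sum_{e\ni v}g(e)=0$ at every vertex, which (since in $\vec{G}$ every edge at a vertex of $X$ is outgoing and every edge at a vertex of $Y$ is incoming) means exactly that $g$ is a circulation on $\vec{G}$. On the plane graph $G$ a circulation is encoded by a potential $p$ on the faces, unique once $p$ is set to $0$ on the outer face, and pushing one unit of flow counterclockwise around a simple cycle $C$ (that is, decreasing $\varphi$ by $1$ on the edges of $C$ oriented toward $X$ and increasing it by $1$ on those oriented toward $Y$) amounts to lowering by $1$ the potential of every face enclosed by $C$, leaving the other potentials unchanged. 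This local sign convention I would verify on a single edge.

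\textbf{Existence.} I would consider the functional $\Sigma(\varphi)=\sum_f p_\varphi(f)$, where $p_\varphi$ is the face potential of $\varphi$ relative to a fixed reference flow in $\mathcal{F}$, normalized so that the outer face has potential $0$; the map $\varphi\mapsto p_\varphi$ is affine, so $\Sigma$ is continuous and attains its minimum on the compact set $\mathcal{F}$ at some $\varphi^\ast$. Then $\varphi^\ast$ is minimal: if it had a $\varphi^\ast$-positive counterclockwise cycle $C$, one could decrease $\varphi^\ast$ by a small $t>0$ on the edges of $C$ oriented toward $X$ (feasible since $\varphi^\ast>0$ there) and increase it by $t$ on the edges oriented toward $Y$, staying in $\mathcal{F}$ while lowering the potential of each face enclosed by $C$ by $t$. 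Since a counterclockwise cycle encloses at least one bounded face, $\Sigma$ would strictly decrease, contradicting the minimality of $\Sigma(\varphi^\ast)$.

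\textbf{Uniqueness.} Suppose $\varphi_1,\varphi_2$ are minimal $\al$-hyperflows and let $g=\varphi_1-\varphi_2$. If $g\neq 0$, I would orient each edge with $g(e)>0$ from $X$ to $Y$ and each edge with $g(e)<0$ from $Y$ to $X$, giving it value $|g(e)|$. The computation above shows that at every vertex the incoming and outgoing values agree, so this is a genuine nonzero flow; following outgoing support-edges and using conservation, its support contains a simple directed cycle $\vec{C}$, which by the Jordan curve theorem is either counterclockwise or clockwise. If $\vec{C}$ is counterclockwise, then every edge of $\vec{C}$ oriented toward $X$ has $g(e)<0$, so $\varphi_2(e)>\varphi_1(e)\ge 0$, making $\vec{C}$ a $\varphi_2$-positive counterclockwise cycle and contradicting the minimality of $\varphi_2$. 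If $\vec{C}$ is clockwise, then its reverse $\vec{C}^{-1}$ is counterclockwise, each of its edges oriented toward $X$ has $g(e)>0$, so $\varphi_1(e)>0$, making $\vec{C}^{-1}$ a $\varphi_1$-positive counterclockwise cycle and contradicting the minimality of $\varphi_1$. Hence $g=0$ and $\varphi_1=\varphi_2$.

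The flow bookkeeping is routine. The one place that needs care is the existence argument, namely pinning down the sign convention so that a counterclockwise push is guaranteed to strictly decrease $\Sigma$; this is where planarity, through the face-potential encoding of circulations, is essential. The uniqueness argument is the combinatorial heart and corresponds exactly to the situation depicted with the cycles $C$ and $D$ in Figure~\ref{fig:hyperflow}(b),(c).
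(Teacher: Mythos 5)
Your proof is correct. The uniqueness half is essentially identical to the paper's argument: form the difference $g=\varphi_1-\varphi_2$, use conservation at every vertex to extract a simple directed cycle in its support that is positive for one flow and whose reversal is positive for the other, and observe that one of the two orientations is counterclockwise. The existence half, however, takes a genuinely different route. The paper proves existence combinatorially: it defines the operation of \emph{pushing} a $\varphi$-positive counterclockwise cycle, introduces the potential $N(\varphi)$ counting the faces enclosed by some $\varphi$-positive counterclockwise cycle, and shows that pushing a cycle enclosing a maximal number of faces strictly decreases $N$ (the somewhat delicate step illustrated in Figure~\ref{fig:hyperflow}(b),(c)). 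You instead observe that the set of $\al$-hyperflows is a nonempty compact convex polytope, encode differences of hyperflows as circulations via face potentials, and minimize the affine functional $\Sigma(\varphi)=\sum_f p_\varphi(f)$; a single small push then contradicts optimality. Your argument is shorter and avoids the maximal-cycle analysis entirely (the sign convention you defer to a single-edge check is indeed just a convention choice and is harmless). The trade-off is that the paper's constructive version yields more than bare existence: it shows that the minimal $\al$-hyperflow is reachable from \emph{any} $\al$-hyperflow by repeatedly pushing counterclockwise cycles, and this stronger statement is invoked verbatim in the proof of Lemma~\ref{lem:minimal-greater-on-outer} (pushing never decreases the value on an edge whose right side is the outer face). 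If one adopts your existence proof, that monotonicity fact would have to be re-derived separately, e.g.\ by noting that your optimality argument still shows any non-minimal hyperflow admits a push and that pushes converge to the (unique) minimal one.
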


\begin{proof}
We first prove the existence of a minimal $\al$-hyperflow. We first define the operation of \emph{pushing} a cycle. 
Let $\varphi$ be an $\al$-hyperflow, and let $C$ be a $\varphi$-positive counterclockwise directed cycle. Let $C_X$ (resp. $C_Y$) be the subset of edges of the directed cycle $C$ oriented toward a vertex in $X$ (resp. $Y$). Let $m=\min\{\varphi(e),e\in C_X\}$ and let $\psi$ be the hyperflow defined by $\psi(e)=\varphi(e)-m$ if $e\in C_X$, $\psi(e)=\varphi(e)+m$ if $e\in C_Y$, and $\psi(e)=\varphi(e)$ if $e$ is not in $C$. Observe that $\psi$ is an $\al$-hyperflow.
We say that $\psi$ is the $\al$-hyperflow obtained from $\varphi$ by \emph{pushing} the cycle $C$. We will now prove that \emph{the minimal $\al$-hyperflow can be obtained from any $\al$-hyperflow by repeatedly pushing counterclockwise directed cycles}. For an $\al$-hyperflow $\varphi$ we consider the total number $N(\varphi)$ of faces which are \emph{enclosed in} (i.e., separated from the outer face by) a $\varphi$-positive counterclockwise directed cycle. By definition, an $\al$-hyperflow $\varphi$ is minimal if and only if $N(\varphi)=0$. Hence it is sufficient to show that for any non-minimal $\al$-hyperflow $\varphi$ there is an $\al$-hyperflow $\psi$ obtained from $\varphi$ by pushing a $\varphi$-positive counterclockwise directed cycle such that $N(\psi)<N(\varphi)$. Let $\varphi$ be a non minimal $\al$-hyperflow, and let $C$ be a $\varphi$-positive counterclockwise directed cycle $C$ enclosing a maximal number of faces. We consider the $\al$-hyperflow $\psi$ obtained from $\varphi$ by pushing the cycle $C$. Now consider a face $f$ not enclosed by a $\varphi$-positive counterclockwise directed cycle. 
If $f$ is enclosed by a $\psi$-positive counterclockwise directed cycle $D$, then $D$ must have an edge in $C_Y$. But this would imply the existence of a $\varphi$-positive counterclockwise directed cycle $D'\subset C\cup D$ enclosing $f$ and all the faces inside $C$: see Figure~\ref{fig:hyperflow}(b). This is impossible by the choice of the cycle $C$. Consider now a face $f$ inside $C$ and incident to an edge of $C$. This face cannot be inside a $\psi$-positive counterclockwise directed cycle $D$, otherwise $D$ would cross $C$, and there would be again a $\varphi$-positive counterclockwise directed cycle $D'\subset C\cup D$ enclosing more faces than $C$: see Figure~\ref{fig:hyperflow}(c). This is impossible by the choice of the cycle $C$. Thus $N(\psi)<N(\varphi)$ as wanted. This proves the existence to a minimal $\al$-hyperflow.

We now prove the uniqueness of the minimal $\al$-hyperflow. Suppose that $\varphi$ and $\psi$ are distinct $\al$-hyperflows. We want to show that they are not both minimal. Let $e_1$ be an edge such that $\varphi(e_1)<\psi(e_1)$. Let $x_1\in X$ and $y_1\in Y$ be the endpoints of $e$. Since 
$$\sum_{e \textrm{ incident to }y_1}\varphi(e)= \al(y_1)=\sum_{e \textrm{ incident to }y_1}\psi(e),$$
there exists an edge $e_1'\neq e_1$ incident to $y_1$ such that $\varphi(e_1')>\psi(e_1')$. Continuing in this way, one find a directed path made of edges $e_1,e_1',e_2,e_2',e_3,e_3',\ldots$ such that $\varphi(e_i)<\psi(e_i)$ and $\varphi(e_i')>\psi(e_i')$. This path will eventually intersect itself, so we get a directed simple cycle $C$ of $G$ such that $C$ is $\psi$-positive and the directed cycle $C'$ obtained by reversing $C$ is $\varphi$-positive. Either $C$ or $C'$ is counterclockwise, hence $\varphi$ and $\psi$ are not both minimal.
\end{proof}

\begin{remark}\label{rk:felsner} 
When $\al$ has only integer values and $G$ has at least one $\al$-hyperflow, 
more can be said on the structure of the set $K$ of $\al$-hyperflows of $G$ 
such that all flow-values are integers. 
By a result of Felsner and Knauer~\cite[Sec.4.2]{felsner2009uld} 
(extending an earlier result by Khuller et al.~\cite{khuller1993lattice}),
the set $K$ carries the structure of a distributive lattice (their result is formulated on flows
of directed graphs with prescribed flow-excess at each vertex, which are equivalent to our formulation
of $\al$-hyperflows upon orienting all the edges from black to white vertices); and naturally the minimum element
in the lattice is the minimal $\al$-hyperflow. This is an extension of a well-known result of Propp~\cite{propp2002lattice} and Felsner~\cite{Felsner:lattice} on $\al$-orientations of planar maps (an $\al$-orientation is an orientation 
where every vertex $v$ has outdegree $\al(v)$): Propp and Felsner have shown that, if non-empty, the set of 
$\al$-orientations of a map embedded in the plane is a distributive lattice, the minimum element of which is 
the unique $\al$-orientation with no clockwise cycle. 

About algorithmic aspects, it should be doable to compute the minimal $\al$-hyperflow in linear time \emph{once an $\al$-hyperflow is computed} (which has superlinear complexity as we have seen in Remark~\ref{rk:flow}), 
by extending the approach described in~\cite{brandes2000linear} for $\al$-orientations.  
\end{remark}

Lastly we prove an additional technical lemma about the minimal hyperflow.

\begin{lem}\label{lem:minimal-greater-on-outer} 
Let $G=(X\sqcup Y, E)$ and $\al$ be as in Lemma~\ref{lem:unique-minimal}, 
and let $\varphi_0$ be the minimal $\al$-hyperflow of $G$.
Let $x\in X$, $y\in Y$, and let $a=(x,y)$ be an edge of $G$ such that the face on the right of $a$ (when oriented from $x$ to $y$) is the outer face. If there is an $\al$-hyperflow $\varphi$ such that $\varphi(a)>0$, then $\varphi_0(a)>0$.
\end{lem}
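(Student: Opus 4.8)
The plan is to argue by contradiction: assume $\varphi_0(a)=0$ and produce a $\varphi_0$-positive counterclockwise directed cycle passing through $a$, which contradicts the minimality of $\varphi_0$. Write $a=\{x,y\}$ with $x\in X$, $y\in Y$, and let $\varphi$ be an $\al$-hyperflow with $\varphi(a)>0$. The key starting observation is that the difference $g:=\varphi-\varphi_0$ is a \emph{circulation}: since $\varphi$ and $\varphi_0$ are both $\al$-hyperflows, at every vertex $v$ we have $\sum_{e\ni v}g(e)=\al(v)-\al(v)=0$. I would orient each edge $e=\{x',y'\}$ with $g(e)\neq 0$ toward its endpoint in $Y$ if $g(e)>0$ and toward its endpoint in $X$ if $g(e)<0$, assigning it the positive value $|g(e)|$; the conservation relation above says exactly that this is a nonnegative circulation on the resulting partial orientation of $G$.

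Next I would invoke the standard decomposition of a nonnegative circulation into directed simple cycles. Since $g(a)=\varphi(a)-\varphi_0(a)=\varphi(a)>0$, the edge $a$ carries positive flow and is oriented from $x$ to $y$, so it lies on at least one directed simple cycle $C$ of the decomposition, traversed from $x$ to $y$. By the orientation rule, every edge of $C$ directed toward $X$ has $g<0$ on it, i.e. $\varphi_0>\varphi\geq 0$; hence $\varphi_0$ is strictly positive on all edges of $C$ pointing toward $X$, which is precisely the statement that $C$ is $\varphi_0$-positive. This is the place where routing the argument through the circulation $g$ pays off, since it forces $a$ to sit on the cycle we construct.

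It then remains to check that $C$ is counterclockwise, and this is where the hypothesis on $a$ enters. The cycle $C$ is a simple closed curve bounding a bounded region, so the unbounded outer face of $G$ lies in its exterior. Since $a$ is incident to the outer face on its right when oriented from $x$ to $y$, and the outer face is connected, unbounded, and cannot cross the edges of $C$, the right-hand side of $a$ must be the exterior of $C$ (otherwise a connected unbounded face would be trapped in the bounded interior). Thus, traversing $C$ along $a$ from $x$ to $y$, the interior of $C$ lies on the left, so the outer face lies to the right of $C$, i.e. $C$ is counterclockwise in the sense fixed before Lemma~\ref{lem:unique-minimal}. Hence $\varphi_0$ admits a $\varphi_0$-positive counterclockwise directed cycle, contradicting its minimality and proving $\varphi_0(a)>0$.

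I expect the main obstacle to be guaranteeing that the cycle witnessing non-minimality genuinely passes through $a$. A direct adaptation of the path-tracing argument in the proof of Lemma~\ref{lem:unique-minimal} only yields \emph{some} cycle on which $\varphi_0$ and $\varphi$ disagree, and the self-intersection used to extract that cycle might discard $a$; the flow-decomposition of $g$ is precisely what pins $a$ onto the cycle. After that, the only remaining delicate point is the planarity bookkeeping that identifies the orientation of $C$ as counterclockwise, which relies on $a$ being an outer edge with the prescribed side condition.
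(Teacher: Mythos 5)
Your proof is correct, but it takes a genuinely different route from the paper's. The paper does not argue by contradiction: it reuses the \emph{pushing} procedure from the proof of Lemma~\ref{lem:unique-minimal}, where it was shown that the minimal $\al$-hyperflow $\varphi_0$ can be reached from any $\al$-hyperflow $\varphi$ by repeatedly pushing counterclockwise directed cycles. Because the outer face lies to the right of $a$ oriented from $x$ to $y$, any counterclockwise cycle through $a$ must traverse $a$ in that direction, so $a$ always belongs to $C_Y$ and each push can only increase the flow on $a$; hence $\varphi_0(a)\geq\varphi(a)>0$. That argument is shorter given the machinery already in place and yields the slightly stronger quantitative conclusion $\varphi_0(a)\geq\varphi(a)$. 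Your argument instead forms the circulation $g=\varphi-\varphi_0$, decomposes it into directed simple cycles to force $a$ onto a cycle $C$ on which every edge directed toward $X$ satisfies $\varphi_0>\varphi\geq 0$ (so $C$ is $\varphi_0$-positive), and then uses the side condition on $a$ together with the consistency of interior/exterior along a simple closed curve to conclude that $C$ is counterclockwise, contradicting minimality. You correctly diagnose why the naive path-tracing used in the uniqueness part of Lemma~\ref{lem:unique-minimal} would not suffice here (the extracted cycle may miss $a$) and why the full flow-decomposition theorem is the right tool; granting that standard decomposition, every step checks out, and your proof has the mild advantage of relying only on the statement of existence, uniqueness and minimality of $\varphi_0$ rather than on the internal pushing construction from the earlier proof.
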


\begin{proof}
Let $\varphi$ be a $\al$-hyperflow such that $\varphi(a)>0$.
It was shown in the proof of Lemma~\ref{lem:unique-minimal}, that the minimal $\al$-hyperflow $\varphi_0$ can be obtained from $\varphi$ by repeatedly pushing counterclockwise directed cycles. Moreover, because the face on the right of $a$ is the outer face, for any counterclockwise directed cycle $C$, the edge $a$ belongs to the subset $C_Y$ of edges of $C$ oriented toward a vertex in $Y$. Thus pushing cycles will only increase the value of the hyperflow on $a$, so  
$\varphi_0(a)\geq \varphi(a)>0$.
\end{proof}

\subsection{Proof of Theorem~\ref{theo:shifted-orientation-light} when the charge of every light face is equal to its degree}\label{sec:proof-case-d-light}
This subsection is devoted to the proof of the following result.
\begin{prop} \label{prop:case-degd}
Let $H$ be a light-rooted hypermap. Let $\si$ be a charge function which fits $H$ and such that every light face has charge equal to its degree. Then $H$ admits a unique $\si$-weighted hyperorientation in $\cHp$. 
\end{prop}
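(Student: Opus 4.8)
The plan is to reduce the statement to the existence and uniqueness of a minimal $\al$-hyperflow on an auxiliary bipartite graph, and then invoke the machinery of Section~\ref{sec:hyperflow}. First I observe that the special hypothesis simplifies the orientation drastically: by condition (ii) every light face has weight $\si(f)-\deg(f)=0$, and since the weight of a light face is the sum of the (non-positive) weights of its incident $0$-way edges, \emph{every $0$-way edge must have weight $0$}. Thus a $\si$-weighted hyperorientation in this case is entirely encoded by the set of $1$-way edges (each with its forced direction, dark face on the right) together with their positive weights. I then build the bipartite graph $G=(X\cup Y,E)$ with $X$ the vertex set of $H$ and $Y$ the set of dark faces (all inner, since the outer face is light), putting one edge of $G$ per edge $\epsilon$ of $H$, joining the incident dark face $f_\epsilon$ to the head $h_\epsilon$ of $\epsilon$. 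Setting $\varphi(\epsilon)$ to be the weight of $\epsilon$ identifies $\si$-weighted hyperorientations (positive weight $\leftrightarrow$ $1$-way, zero $\leftrightarrow$ $0$-way) with hyperflows of $G$ whose $\varphi$-flow equals $\al(v):=\si(v)$ at each vertex (condition (iv)) and $\al(f):=-\si(f)-\deg(f)$ at each dark face (condition (iii), using that $0$-way edges no longer contribute).

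For existence I would verify the hypotheses of Lemma~\ref{lem:existence-hyperflow}. The requirement $\al\ge 0$ holds at vertices since $\si(v)>0$; at a dark face $f$ it amounts to $\si(f)\le-\deg(f)$, which I get by applying the $\si$-girth condition of Definition~\ref{def:girth-condition} to the light region $R$ consisting of all light faces together with the single dark face $f$: there $|\partial R|=|E|-\deg(f)$ while $\si(R)=|E|+\si(f)+\sum_{v}\si(v)$, the last sum ranging over vertices whose only dark face is $f$, so $|\partial R|\ge\si(R)$ yields $\si(f)\le-\deg(f)$ (the degenerate case of a single dark face follows instead from the global balance). For the cut condition, given $A\subseteq X$ the set $Y_A$ consists of the dark faces all of whose incident vertices lie in $A$; applying the $\si$-girth condition to the light region $R$ obtained by \emph{removing} the dark faces of $Y_A$ gives $\sum_{f\in Y_A}\al(f)\le\sum_{v}\al(v)$ over the vertices incident to $Y_A$, hence $\al(A)=\sum_{A}\al(v)-\sum_{Y_A}\al(f)\ge 0$, with equality at $A=X$ coming from $\si_\tot=0$ (which forces $\sum_{\text{dark}}\al(f)=\sum_v\si(v)$). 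Lemma~\ref{lem:existence-hyperflow} then produces an $\al$-hyperflow, i.e.\ a $\si$-weighted hyperorientation.

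To pin down membership in $\cHp$ and uniqueness I would pass to the \emph{minimal} $\al$-hyperflow, which exists and is unique by Lemma~\ref{lem:unique-minimal}, after checking that $\varphi$-positive counterclockwise cycles of $G$ correspond to counterclockwise circuits of $H$, so that minimality of the hyperflow matches minimality of the hyperorientation. For accessibility I use the second part of Lemma~\ref{lem:existence-hyperflow}: fixing an outer vertex $x_0$, I need $\al(A)>0$ for every nonempty $A\subsetneq X$ avoiding $x_0$. Taking again $R$ equal to the complement of $Y_A$, every outer edge incident to $x_0$ is strictly inside $R$ (its dark face cannot lie in $Y_A$ because it touches $x_0\notin A$), so the \emph{strict} part of the light-rooted $\si$-girth condition applies and gives $\al(A)>0$. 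This makes the hyperorientation accessible from $x_0$; since the outer edges are all $1$-way (Lemma~\ref{lem:minimal-greater-on-outer}, applied to each outer edge, which carries the light outer face on one side) they form a closed directed boundary walk through all outer vertices, upgrading accessibility from $x_0$ to accessibility from every outer vertex. Hence the minimal hyperorientation lies in $\cHp$. Uniqueness is immediate: any $\si$-weighted hyperorientation in $\cHp$ is minimal by definition, so corresponds to a minimal $\al$-hyperflow, which is unique.

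I expect the main obstacle to be the bookkeeping that converts the two flow inequalities (the sink/source nonnegativity $\al(f)\ge 0$ and the cut inequalities $\al(A)\ge0$) into instances of the $\si$-girth condition: one must choose the right light regions, compute $|\partial R|$ and $\si(R)$ correctly via the Euler relation (as in Lemma~\ref{lem:girth-necessary}), and—crucially—track \emph{strictness} so that accessibility rather than mere existence is obtained. A secondary subtlety is matching the planar conventions of $G$ with those of $H$ so that ``counterclockwise'' and ``outer face on the right'' translate faithfully, ensuring that Lemmas~\ref{lem:unique-minimal} and~\ref{lem:minimal-greater-on-outer} deliver exactly the three defining properties of $\cHp$.
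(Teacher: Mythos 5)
Your overall strategy is the same as the paper's: encode $\si$-weighted hyperorientations as $\al$-hyperflows on the star graph joining vertices to dark faces, verify the hypotheses of Lemma~\ref{lem:existence-hyperflow} via the $\si$-girth condition, and take the minimal $\al$-hyperflow of Lemma~\ref{lem:unique-minimal}. Your verification of $\al(A)\geq 0$ (and of strictness when $A$ avoids an outer vertex) via the single light region $F(H)\setminus Y_A$ is a legitimate variant of the paper's computation, which instead sums the girth inequalities over the light faces of the sub-hypermap induced by $A$. However, there are two genuine gaps.

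The serious one is the claim that every outer edge is $1$-way in the minimal hyperorientation. You justify it by citing Lemma~\ref{lem:minimal-greater-on-outer}, but that lemma has a hypothesis you never discharge: one must first exhibit \emph{some} $\al$-hyperflow that is positive on the edge of $G_H$ corresponding to the given outer edge $e_0$. Nothing in the existence argument produces such a hyperflow -- the weight $-\si(f)-\deg(f)$ of the dark face incident to $e_0$ could a priori be carried entirely by the other edges of that face. The paper has to work for this: it builds an auxiliary hypermap $H'$ by doubling $e_0$ with two extra edges, perturbs the charge function by a carefully chosen $\eps>0$ extracted from the strict girth inequalities, proves the perturbed charge fits $H'$ (a nontrivial case analysis over light regions), and only then transfers the resulting orientation back to $H$ to obtain the hyperflow required by Lemma~\ref{lem:minimal-greater-on-outer}. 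Without some such construction your argument does not establish that the minimal orientation lies in $\cHp$.

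The second gap is the dictionary between $G_H$ and $H$. A $\varphi$-positive path in $G_H$ does \emph{not} project to a directed path in $H$: the $G_H$-edge from a dark face $y$ to a vertex $x$ with $\varphi>0$ corresponds to a $1$-way edge of $H$ entering $x$ from some \emph{other} vertex on the boundary of $y$, not from the previous vertex of the $G_H$-path. So accessibility of the hyperflow does not immediately give accessibility of the hyperorientation; the paper proves this implication by an iterative argument that crucially uses minimality (and conversely, for uniqueness, it must show that a minimal \emph{and accessible} hyperorientation yields a minimal hyperflow, again by a nontrivial cycle-chasing argument). Your one-line assertions that ``minimality of the hyperflow matches minimality of the hyperorientation'' and that accessibility transfers are exactly the statements that need proof here.
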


Throughout this subsection $(H,\si)$ is a charged hypermap satisfying the hypotheses of Proposition~\ref{prop:case-degd}. We say that a weighted hyperorientation of $H$ is \emph{$\rp$-weighted} if 0-way edges have weight 0, and 1-way edges have positive real weights. In fact, the $\si$-weighted hyperorientations of $H$ are precisely the $\rp$-weighted hyperorientations such that 
\begin{compactitem}
\item every vertex has weight $\si(v)$,
\item every inner dark face $f$ has weight $-\si(f)-\deg(f)$. 
\end{compactitem}
We will prove Proposition~\ref{prop:case-degd} in two steps. First we will establish the existence of a certain $\al$-hyperflow in a related graph $G_H$ using Lemma~\ref{lem:existence-hyperflow}, and then we will use this $\al$-hyperflow to define a $\si$-weighted hyperorientation of $H$.

We call \emph{star graph} of $H$ the bipartite graph $G_H$ (embedded in the plane) obtained as follows: for each dark face $h$ of $H$, place a vertex $y$ of $G_H$ inside $h$ and draw an edge $e$ of $G_H$ going from $y$ to each  corner of $h$. The construction is illustrated in Figure~\ref{fig:star-graph2}. We denote by $X$ the vertex set of $H$, and by $Y$ the remaining set of vertices of $G_H$ (which are placed inside the dark faces of $H$).

\fig{width=.6\linewidth}{star-graph2}{A hypermap $H$ and the associated star graph $G_H$. The bipartite map $G_H$ is endowed with a hyperflow $\varphi$, while $H$ is endowed with the $\rp$-weighted hyperorientation $\Ga(\varphi)$.}

Given a hyperflow $\varphi$ of $G_H$, we define an $\rp$-weighted hyperorientations $\Ga(\varphi)$ of $H$ as follows: 
\begin{compactitem}
\item for every edge $e$ of $G_H$, we give weight $\varphi(e)$ to the edge $e'$ of $H$ preceding $e$ clockwise around the endpoint of $e$ in $X$
\item we orient $e'$ 1-way if $\varphi(e)>0$ and 0-way otherwise.
\end{compactitem}
The mapping $\Ga$ is illustrated in Figure~\ref{fig:star-graph2}. It is clear that $\Ga$ is a bijection between the hyperflows of $G_H$ and the $\rp$-weighted hyperorientations of $H$. Moreover, the $\varphi$-flow at a vertex $v$ of $G_H$ is equal to the weight of the corresponding vertex or dark face of $H$ in the hyperorientation $\Ga(\varphi)$. This proves the following result. 
\begin{lemma} \label{lem:bij-hyperflow}
The mapping $\Ga$ is a bijection between the $\si$-weighted hyperorientations of $H$ and the $\al$-hyperflows of $G_H$, where 
$\al$ is the function defined on $X\sqcup Y$ by 
\begin{itemize}
\item for every vertex $x\in X$, $\al(x)=\si(x)$,
\item for every vertex $y\in Y$, $\al(y)=-\si(f_y)-\deg(f_y)$, where $f_y$ is the dark face of $H$ containing~$y$.
\end{itemize}
\end{lemma}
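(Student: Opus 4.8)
The plan is to obtain the lemma by combining the two facts recorded just before its statement---that $\Ga$ is a bijection between hyperflows of $G_H$ and $\rp$-weighted hyperorientations of $H$, and that the $\varphi$-flow at a vertex of $G_H$ matches the weight of the corresponding vertex or dark face of $H$---with a description of which $\rp$-weighted hyperorientations are $\si$-weighted under the present hypothesis. First I would pin down the underlying edge correspondence: the edges of $G_H$ are in natural bijection with the corners of the dark faces of $H$, and since in a hypermap every edge has exactly one dark side, the corners of dark faces are in bijection with the edges of $H$. The rule sending $e$ to the edge $e'$ preceding it clockwise around its endpoint in $X$ realizes this bijection $E(G_H)\to E(H)$. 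Transporting $\varphi$ along it and declaring $e'$ to be $1$-way (with its unique dark side on the right) exactly when $\varphi(e)>0$, and $0$-way of weight $0$ otherwise, produces a well-defined $\rp$-weighted hyperorientation; the construction is manifestly invertible, so $\Ga$ is a bijection.

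Next I would verify the flow-weight correspondence by a local count, separating the two types of vertices of $G_H$. For $x\in X$, the edges of $G_H$ incident to $x$ correspond under the clockwise routing to the edges of $\Ga(\varphi)$ that are ingoing at $x$, so $\phi(x)=\sum_{e\ni x}\varphi(e)$ is the total weight of the ingoing edges at $x$, i.e. the weight of the vertex $x$. For $y\in Y$ inside a dark face $h=f_y$, the edges of $G_H$ incident to $y$ are exactly the corners of $h$, and the routing restricts to a bijection between these corners and the edges of $H$ incident to $h$; hence $\phi(y)=\sum_{e\ni y}\varphi(e)$ equals the total weight of the edges incident to $h$, which is the weight of the dark face $h$. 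I expect the only genuinely delicate point of the whole argument to be here: one must fix the clockwise orientation convention of Figure~\ref{fig:star-graph2} so that the $G_H$-edges at $x$ route precisely to the \emph{ingoing} edges at $x$ (a global consistency check---each $1$-way edge is ingoing at exactly one endpoint, and summing the dark-face weights recovers the total edge weight---confirms that this is the correct bookkeeping).

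Then I would record the characterization of the target orientations, which is where the hypothesis that every light face has charge equal to its degree enters. Condition (ii) forces each light face to have weight $\si(f)-\deg(f)=0$; since the weight of a light face is the sum of the non-positive weights of its incident $0$-way edges and each edge of $H$ has a unique light side, every $0$-way edge must have weight $0$. Thus a $\si$-weighted hyperorientation is automatically $\rp$-weighted, while conversely an $\rp$-weighted hyperorientation satisfies (i) and (ii) for free. Consequently the $\si$-weighted hyperorientations are exactly the $\rp$-weighted ones in which every vertex $v$ has weight $\si(v)$ and every inner dark face $f$ has weight $-\si(f)-\deg(f)$.

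Finally I would assemble the pieces. By the flow-weight correspondence, $\varphi$ is an $\al$-hyperflow---that is, $\phi(x)=\al(x)=\si(x)$ for all $x\in X$ and $\phi(y)=\al(y)=-\si(f_y)-\deg(f_y)$ for all $y\in Y$---if and only if in $\Ga(\varphi)$ every vertex has weight $\si(v)$ and every inner dark face $f$ has weight $-\si(f)-\deg(f)$, which by the previous step is precisely the condition that $\Ga(\varphi)$ be $\si$-weighted. Hence $\Ga$ restricts to a bijection between the $\al$-hyperflows of $G_H$ and the $\si$-weighted hyperorientations of $H$, as claimed.
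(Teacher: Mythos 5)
Your proposal is correct and takes essentially the same route as the paper: the paper's argument consists of the two assertions made just before the lemma (that $\Ga$ is a bijection onto the $\rp$-weighted hyperorientations, and that the $\varphi$-flow at each vertex of $G_H$ equals the weight of the corresponding vertex or dark face of $H$), combined with the earlier remark that, under the standing hypothesis that each light face has charge equal to its degree, the $\si$-weighted hyperorientations are exactly the $\rp$-weighted ones with the prescribed vertex and inner-dark-face weights. You have simply made explicit the corner/edge bookkeeping and the orientation convention that the paper leaves to the figure.
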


We will now prove the existence of a minimal $\al$-hyperflow for $G_H$ by using Lemmas~\ref{lem:existence-hyperflow} and~\ref{lem:unique-minimal}.
\begin{lem}\label{lem:si-flow} 
Let $\al$ be the function defined in Lemma~\ref{lem:bij-hyperflow}.
Then $G_H$ admits a unique minimal $\al$-hyperflow $\varphi$. Moreover this hyperflow is accessible from every outer vertex of $H$.
\end{lem}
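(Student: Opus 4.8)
The plan is to verify the two numerical criteria supplied by Lemma~\ref{lem:existence-hyperflow} and then invoke Lemma~\ref{lem:unique-minimal} for minimality. Writing $X$ for the vertex set of $H$ and identifying each $y\in Y$ with the dark face $f_y$ containing it, a subset $A\subseteq X$ has $Y_A=\{y:\textrm{all vertices incident to }f_y\textrm{ lie in }A\}$, so that $\al(A)=\sum_{v\in A}\si(v)+\sum_{h\in D_A}(\si(h)+\deg(h))$, where $D_A$ is the set of dark faces with all incident vertices in $A$. By Lemma~\ref{lem:existence-hyperflow} it suffices to prove (i) $\al(A)\geq 0$ for all $A$, with equality at $A=X$, which yields an $\al$-hyperflow and hence, by Lemma~\ref{lem:unique-minimal}, a unique minimal one; and (ii) for each outer vertex $x_0$, $\al(A)>0$ for every non-empty $A\subsetneq X$ with $x_0\notin A$. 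Since (ii) is the accessibility-from-$x_0$ criterion of Lemma~\ref{lem:existence-hyperflow} and depends on $\al$ alone, it automatically applies to the minimal hyperflow, giving the accessibility claim for every outer vertex at once.

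First I would dispose of the equality $\al(X)=0$: every edge borders exactly one light and one dark face, so $\sum_{\textrm{light}}\deg=\sum_{\textrm{dark}}\deg=|E|$, and combining the hypothesis that each light face has charge equal to its degree with $\si_\tot=0$ collapses $\al(X)$ to $0$. The heart of (i) is to pass to the complement $B=X\setminus A$: subtracting from $\al(X)=0$ gives $\al(A)=-\sum_{v\in B}\si(v)-\sum_{h\in K_B}(\si(h)+\deg(h))$, where $K_B$ is the set of dark faces meeting $B$. I would then form the light region $S_B=K_B\cup F_B$, with $F_B$ the light faces incident to a face of $K_B$; this is a genuine light region (dark faces of $K_B$ only border light faces, all of which lie in $F_B$), and it is a proper subset whenever $D_A\neq\emptyset$. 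Every vertex of $B$ is strictly inside $S_B$, so $B\subseteq V_B:=\{v\textrm{ strictly inside }S_B\}$. A double count of edge-incidences gives $\sum_{f\in F_B}\deg(f)=\sum_{h\in K_B}\deg(h)+|\partial S_B|$, and feeding this, together with $\si=\deg$ on light faces, into $\si(S_B)$ turns the $\si$-girth inequality $|\partial S_B|\geq\si(S_B)$ into $\sum_{h\in K_B}(\si(h)+\deg(h))+\sum_{v\in V_B}\si(v)\leq 0$. Since $B\subseteq V_B$ and all vertex charges are positive, this yields $\al(A)\geq\sum_{v\in V_B\setminus B}\si(v)\geq 0$. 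The remaining degenerate case $D_A=\emptyset$ is immediate, since then $\al(A)=\sum_{v\in A}\si(v)\geq 0$.

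For (ii) I would extract the needed strictness from the $\si$-girth condition. If $x_0$ is an outer vertex with $x_0\notin A$, then $x_0\in B$; an outer edge $e$ at $x_0$ has its dark face in $K_B$ (it contains $x_0$) and its light face $f_0$ in $F_B$, so this \emph{outer} edge is strictly inside $S_B$. This is exactly the configuration in Definition~\ref{def:girth-condition} forcing the strict inequality $|\partial S_B|>\si(S_B)$ for a light-rooted hypermap, and the same computation then gives $\al(A)>\sum_{v\in V_B\setminus B}\si(v)\geq 0$; the case $D_A=\emptyset$ again gives $\al(A)=\sum_{v\in A}\si(v)>0$ for non-empty $A$.

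The step I expect to be the main obstacle is choosing the correct region and orienting the inequality the right way. The naive choice (all light faces together with the dark faces internal to $A$) makes the $\si$-girth inequality bound $\al(A)$ from \emph{above}, which is useless; only after exploiting $\si_\tot=0$ to re-express $\al(A)$ through the complement $B$ does the region $S_B$ deliver a \emph{lower} bound. The one genuine slack is that $V_B$ may be strictly larger than $B$, but the positivity of vertex charges absorbs this harmlessly, and the outer-edge observation supplies precisely the strictness needed for accessibility.
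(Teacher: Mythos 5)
Your proof is correct, and it rests on the same two pillars as the paper's (the existence/accessibility criterion of Lemma~\ref{lem:existence-hyperflow} and the uniqueness of Lemma~\ref{lem:unique-minimal}), but you verify the key inequality $\al(A)\geq 0$ by a genuinely different route. The paper restricts to the case where the induced subgraph $G_A$ is connected, recognizes it as the star graph of a sub-hypermap $H_A$, and applies the $\si$-girth condition to \emph{each light face of $H_A$} viewed as a light region of $H$; summing these inequalities exactly reconstitutes the complement of $A\cup D_A$, so the bound $\al(A)\geq\si_\tot=0$ comes out with no slack and without invoking positivity of the vertex charges. You instead pass to the complement $B=X\setminus A$ via $\al(X)=0$ and apply the girth condition \emph{once}, to the single light region $S_B=K_B\cup F_B$; this avoids both the connected-component reduction and the sub-hypermap construction, at the cost of the slack term $\sum_{v\in V_B\setminus B}\si(v)$, which you correctly absorb using the positivity of vertex charges from the fitting hypothesis. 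Your treatment of strictness is also a little more direct: you exhibit an outer edge strictly inside $S_B$, which is literally the trigger in Definition~\ref{def:girth-condition}, whereas the paper argues through the outer face of $H_A$ differing from that of $H$. All the supporting computations check out -- the identity $\sum_{f\in F_B}\deg(f)=\sum_{h\in K_B}\deg(h)+|\partial S_B|$, the fact that every vertex of $B$ is strictly inside $S_B$, the properness of $S_B$ when $D_A\neq\emptyset$, and the separate handling of the degenerate case $D_A=\emptyset$ -- so the argument is complete.
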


\begin{proof}
For $A\subseteq X$, we denote by $Y_A$ the set of vertices of $G_H$ placed in the inner dark faces of $H$ having all of their incident vertices in $A$ and we let
\begin{eqnarray*}
\al(A):=\sum_{x\in A}\al(x)-\sum_{y\in Y_A}\al(y).
\end{eqnarray*}
By Lemmas~\ref{lem:existence-hyperflow} and~\ref{lem:unique-minimal}, the existence and uniqueness of $\varphi$ are granted provided $\al(A)\geq 0$ for all $A\subseteq X$ with equality for $A=X$. 
We denote by $G_A=(A\cup Y_A,E_A)$ the subgraph of $G_H$ \emph{induced} by $A\cup Y_A$ (that is, $E_A$ is the set of edges of $G_H$ with both endpoints in $A\cup Y_A$). See for instance Figure~\ref{fig:subset-star-graph2}(a). 
Since $\al(X)$ is linear over the connected components of the subgraph $G_A$, it is sufficient to prove $\al(A)\geq 0$ when $G_A$ is connected (with equality for $A=X$). 
\fig{width=.5\linewidth}{subset-star-graph2}{(a) A subgraph $G_A=(A\cup Y_A,E_A)$ of the star graph $G_H$: the vertices in $A\cup Y_A$ are represented by big discs and the edges in $E_A$ are represented in bold lines. (b) The hypermap $H_A$.}

Let $A\subseteq X$ be such that $A\neq \emptyset$ and $G_A$ is connected. Observe that $G_A$ is the star graph of a hypermap $H_A$ with a light outer face: the set of vertices of $H_A$ is $A$ and the set of dark faces of $H_A$ is the set of inner dark faces of $H$ having all their incident vertices in $A$. See for instance Figure~\ref{fig:subset-star-graph2}(b). Let $D_A$ and $L_A$ be the set of dark and light faces of $H_A$. 
By definition of $\al$, we get 
$$\al(A)=\sum_{x\in A}\si(x)+\sum_{f\in D_A}(\si(f)+\deg(f))=\sum_{x\in A}\si(x)+\sum_{f\in D_A}\si(f)+\sum_{\ell \in L_A}\deg(\ell).$$
Now every face $\ell\in L_A$ corresponds to a light region of $H$, thus the $\si$-girth condition gives
$$\deg(\ell)\geq \sum_{x\textrm{ vertex of }H \textrm{ strictly inside }\ell}\si(x)+\sum_{f\textrm{ face of }H \textrm{ inside } \ell}\si(f),$$
with strict inequality for the outer face $\ell_0$ of $H_A$ if $\ell_0$ is not equal to the outer face of $H$. Thus, 
$$\sum_{\ell \in L_A}\deg(\ell)\geq \sum_{x\textrm{ vertex of }H\textrm{ not in }A}\si(x)+\sum_{f\textrm{ face of }H \textrm{ not in } D_A}\si(f),$$
with strict inequality if the outer face of $H_A$ is not equal to the outer face of $H$.
This gives 
$$\al(A)\geq \si_\tot=0.$$
Moreover, if one of the outer vertices is not in $A$, then one of the dark faces incident to the outer edges is not in $D_A$, hence the inequality is strict: $\al(A)> \si_\tot=0$. 
Thus, by Lemmas~\ref{lem:existence-hyperflow} and~\ref{lem:unique-minimal}, the graph $G_H$ admits a unique minimal $\al$-hyperflow $\varphi$, and $\varphi$ is accessible from every outer vertex.
\end{proof}

Next we use lemma~\ref{lem:si-flow} to prove the following.
\begin{lemma}\label{lem:almost-cop}
The hypermap $H$ admits a unique $\si$-weighted hyperorientation $\Om$ which is both minimal and accessible from the outer vertices. 
\end{lemma}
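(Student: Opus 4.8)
The plan is to transport the statement through the bijection $\Ga$ of Lemma~\ref{lem:bij-hyperflow}, which identifies the $\si$-weighted hyperorientations of $H$ with the $\al$-hyperflows of the star graph $G_H$, for $\al$ as in that lemma. By Lemma~\ref{lem:si-flow}, $G_H$ carries a unique minimal $\al$-hyperflow $\varphi$, and $\varphi$ is accessible from every outer vertex of $H$. I would set $\Om:=\Ga(\varphi)$, which is a $\si$-weighted hyperorientation, and prove that $\Om$ is the desired hyperorientation; the whole point is that $\Ga$ matches the flow-theoretic notions of minimality and accessibility with their orientation-theoretic counterparts, so that the existence and uniqueness of $\Om$ reduce to those of $\varphi$.

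First I would make the local dictionary between $\varphi$ and $\Om$ explicit. By the defining rule of $\Ga$, an edge $e'$ of $H$ receives the weight of the star edge $(y_h,v)$ of $G_H$, where $h$ is the dark face on the right of $e'$, $y_h$ its center, and $v$ the head of $e'$; hence $e'$ is $1$-way (directed into $v$, with $h$ on its right) exactly when $\varphi((y_h,v))>0$. Traversing such a $1$-way edge thus corresponds, in $G_H$, to a $\varphi$-positive step $v'\to y_h\to v$ for any corner $v'$ of $h$. Consequently a directed path, resp. circuit, of $\Om$ projects to a $\varphi$-positive path, resp. cycle, of $G_H$ with the same endpoints and the same cyclic turning, hence the same clockwise/counterclockwise type. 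In particular a counterclockwise circuit of $\Om$ would produce a $\varphi$-positive counterclockwise cycle of $G_H$, which is impossible since $\varphi$ is minimal; therefore $\Om$ is minimal.

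For accessibility I would argue from minimality. Because $\varphi$ is accessible from the outer vertices, every vertex of $H$ receives positive $\varphi$-flow on some incident star edge, and so has an incoming $1$-way edge in $\Om$. If $\Om$ were not accessible from the outer vertices, the complement $B$ of the set of vertices reachable from them would be nonempty, would contain no outer vertex, and would receive no $1$-way edge from outside $B$; following incoming $1$-way edges inside $B$ then yields a directed circuit contained in $B$. Since this circuit encloses no outer vertex (the outer face lying outside it), a planar analysis in the spirit of the cocycle/circuit arguments of Lemma~\ref{lem:iffp} shows that it may be taken counterclockwise, contradicting the minimality of $\Om$. I expect this step — controlling the orientation of the trapped circuit so as to force a contradiction with minimality, which is precisely where planarity and the interaction between accessibility and minimality enter — to be the main obstacle, since the naive projection of paths only transfers reachability from $\Om$ to $G_H$ and not back (a $\varphi$-positive path may ``jump'' between non-adjacent corners of a dark face), so the converse transfer genuinely requires the planar/minimality input.

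Finally, uniqueness would follow from the same dictionary read backwards together with the uniqueness half of Lemma~\ref{lem:si-flow}. Given any minimal $\si$-weighted hyperorientation $\Om'$ accessible from the outer vertices, its preimage $\varphi'=\Ga^{-1}(\Om')$ is an $\al$-hyperflow; projecting directed paths shows $\varphi'$ is accessible from the outer vertices, and the lifting statements (that $\varphi'$-accessibility and the absence of $\varphi'$-positive counterclockwise cycles correspond to $\Om'$-accessibility and minimality of $\Om'$) — established by the same planar arguments flagged above — show that $\varphi'$ is minimal. Hence $\varphi'$ is \emph{the} unique minimal $\al$-hyperflow $\varphi$, and therefore $\Om'=\Ga(\varphi)=\Om$, which proves Lemma~\ref{lem:almost-cop}.
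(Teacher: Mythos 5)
Your overall strategy --- transporting the statement through the bijection $\Ga$ of Lemma~\ref{lem:bij-hyperflow}, setting $\Om=\Ga(\varphi)$ for the minimal $\al$-hyperflow $\varphi$ of Lemma~\ref{lem:si-flow}, and reducing uniqueness to Lemma~\ref{lem:unique-minimal} --- is exactly the paper's, and your argument for minimality of $\Om$ (a counterclockwise circuit of $\Om$ lifts to a $\varphi$-positive counterclockwise cycle of $G_H$) is correct. But the two hard implications are not actually established. For accessibility of $\Om$, the only consequence of the accessibility of $\varphi$ you retain is that every vertex of $H$ has an incoming $1$-way edge (which already follows from $\si(v)=\al(v)>0$, with no accessibility needed), and you then assert that the directed circuit trapped in the unreachable set $B$ ``may be taken counterclockwise''. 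That is not a theorem about minimal orientations: a clockwise triangle whose three vertices receive no other incoming edges, embedded in a map all of whose other circuits are clockwise, is minimal, has every vertex of in-degree at least $1$, and is not accessible. Non-accessibility plus positive in-degrees therefore does not force a counterclockwise circuit, and the cocycle argument of Lemma~\ref{lem:iffp} only shows that the associated mobile fails to be a tree, which is not a contradiction here. The paper instead uses the accessibility of $\varphi$ pointwise: for each $1$-way edge $e$ into a vertex $u$ it selects the star edge $\th(e)$ of $A_u$ (lying on a $\varphi$-positive path from the outer vertex) that is clockwise-closest to $e$, sets $\pi(e)$ to be the following edge of $H$, and shows the backward chain $e,\pi(e),\pi^2(e),\ldots$ must reach an outer vertex, because a loop would be a \emph{clockwise} circuit that the $\varphi$-positive path is then forced to enter at a corner forbidden by the choice of $\th$. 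Some argument of this kind, genuinely exploiting the $\varphi$-positive paths rather than just in-degrees, is indispensable.

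The uniqueness half has the symmetric gap: you need ``$\wOm$ minimal and accessible implies $\Ga^{-1}(\wOm)$ minimal'', and you defer this to ``the same planar arguments''. It is not a formal consequence of the path dictionary, for the same jumping reason you yourself flag: a $\wt{\varphi}$-positive counterclockwise cycle $D$ of $G_H$ gives, at each of its $H$-vertices $x$, a $1$-way edge $e_x$ of $\wOm$ pointing into $x$, but the origin of $e_x$ may lie strictly inside $D$, so one cannot directly read off a counterclockwise circuit of $\wOm$. The paper closes this by using the accessibility of $\wOm$ to prolong each $e_x$ backwards by a directed path $Q_x$ starting on $D$ and staying strictly inside $D$, and then extracting a counterclockwise circuit from the concatenation of the $Q_{x_i}$. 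So the skeleton of your proof is the right one, but both directions of the minimality/accessibility dictionary that you treat as transferable are precisely where the substantive planar constructions must be supplied.
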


\begin{proof}
\textbf{Existence.}
By Lemma~\ref{lem:si-flow}, the bipartite graph $G_H$ admits a unique minimal $\al$-hyperflow $\varphi$.
By Lemma~\ref{lem:bij-hyperflow}, we know that $\Om=\Ga(\varphi)$ is a $\si$-weighted hyperorientation of $H$. We now want to prove that $\Om$ is both minimal and accessible from the outer vertices. 

We first prove that $\Om$ is minimal. Suppose, by contradiction, that there is a simple counterclockwise directed cycle $C$ of $\Om$ distinct from the outer face. We will show that in this case, there is a $\varphi$-positive counterclockwise directed cycle $D$ of $G_H$; see Figure~\ref{fig:ccw-cycle-GH}. Let $e_1,\ldots, e_k$ be the oriented edges of $C$. Let $f_i$ be the dark face of $H$ incident to $e_i$ (which is on the right of $e_i$). 
Let $x_i,x_i'\in X$ be the origin and end of $e_i$, and let $y_i\in Y'$ be the vertex of $G_H$ in the dark face $f_{i}$. Let $a_i,a_i'$ be the oriented edges $(x_i,y_i)$ and $(y_i,x_i')$; see Figure~\ref{fig:ccw-cycle-GH}. The edges $a_1,a_1',\ldots,a_k,a_k'$ form a counterclockwise directed cycle $D$ of $G_H'=(X\sqcup Y',E')$, hence it contains a simple counterclockwise directed cycle $D'$. By definition, the hyperflow $\varphi'(a_i')$ is equal to the weight of $e_i$ which is positive. Hence the counterclockwise directed cycle $D'$ is $\varphi$-positive. This is a contraction since the hyperflow $\varphi$ is minimal.

\fig{width=.28\linewidth}{ccw-cycle-GH}{A counterclockwise directed cycle $C$ of $H$ (thick lines) and the corresponding $\varphi$-positive counterclockwise directed simple cycle $D'$ of $G_H$ (thick dashed lines).}

We now want to prove that $\Om$ is accessible from every outer vertex of $H$. 
Let $u_0$ be an outer vertex of $H$, and let $v$ be an inner vertex. We want to exhibit a directed path of $H$ from $u_0$ to $v$. We know (by Lemma~\ref{lem:si-flow}) that the hyperflow $\varphi$ of $G_H$ is accessible from $u_0$, hence for every vertex $u$ of $H$ there exists a $\varphi$-positive path of $G_H$ from $u_0$ to $u$. For a vertex $u\neq u_0$ of $H$, we consider the set $A_u$ of all the edges of $G_H$ incident to $u$ which are part of a $\varphi$-positive simple directed path of $G_H$ from $u_0$ to $u$. For a 1-way edge $e$ of $H$ having origin $u\neq u_0$, we denote by $\th(e)$ the edge of $A_u$ preceding $e$ in clockwise direction around $u$, and we denote by $\pi(e)$ the edge of $H$ following $\th(e)$ around $u$; see Figure~\ref{fig:proof-accessibility}(a). By definition, $\varphi(\th(e))>0$ hence $\pi(e)$ is a 1-way edge of $H$ directed toward $u$ in $\Om$. Moreover, there exists no $\varphi$-positive (simple) directed path of $G_H$ from $u_0$ to $u$ ending between $e$ and $\pi(e)$ in clockwise direction around $u$. 
We now construct a directed path of $H$ ending at $v$ as follows\footnote{Our construction corresponds to the so-called \emph{leftmost path} which has proved useful for other bijective results on maps.}. First we choose an edge $a\in A_v$ and denote by $e_0$ the edge of $H$ following $a$ clockwise around $v$. The edge $e_0$ is a 1-way edge oriented toward~$v$. Then we define some edges $e_1,e_2,\ldots$ as follows: for all $i\geq 0$, if the origin of the 1-way edge $e_i$ is distinct from $u_0$ we define $e_{i+1}=\pi(e_{i})$. We will now prove that there exists $i$ such that the origin of $e_i$ is $u_0$ (so that $e_i,e_{i-1},\ldots,e_0$ is a directed path from $u_0$ to $v$). Suppose the contrary. In this case, there must exist integers $i<j$ such that the origin of $e_j$ is the end of $e_i$, and we consider the least such $j$. The edges $e_i,e_i+1,\ldots,e_{j}$ form a simple directed cycle $C$ of $H$, which is not the outer face of $H$. And since $\Om$ has no counterclockwise directed cycle, except for the outer faces of $H$, the cycle $C$ is directed clockwise. The situation is represented in Figure~\ref{fig:proof-accessibility}(b). Let $u$ be the end of $e_i$ (also the origin of $e_j$). By definition of $e_i$, the edge of $G_H$ preceding $e_i$ around $u$ is part of a $\varphi$-positive directed path $P$ of $G_H$ from $u_0$ to $u$. Hence, the path $P$ must intersect the cycle $C$. We denote by $w$ the first vertex of $C$ on the path $P$ from $u_0$ to $u$, and by $e_k$ the edge of $C$ with origin $w$ (with $i\leq k<j$). Note that the directed path $P$ arrives at $w$ between $e_{k+1}=\pi(e_k)$ and $e_k$ in clockwise direction around $w$. This is impossible by definition of $\pi$. This completes the proof that there is a directed path from $u_0$ to $v$ in $\Om$. Hence the hyperorientation $\Om$ is accessible from every outer vertex of $H$.

\fig{width=.7\linewidth}{proof-accessibility}{(a) Definition of the edge $\pi(e)$, for a 1-way edge $e$ of $H$ with origin $u\neq u_0$. The $\varphi$-positive paths of $G_H$ are represented in dashed lines. (b) The cycle $C=\{e_i,e_{i+1},\ldots,e_j\}$ of $H$, and the $\varphi$-positive path $P$ of $G_H$ (represented in dashed line).}

\ni \textbf{Uniqueness.}
Suppose that $\wOm$ is a $\si$-weighted hyperorientation of $H$ which is minimal and accessible from every outer vertex of $H$.
We want to prove that $\wOm=\Om$. 
It suffices to prove that the hyperflow $\wt{\varphi}:=\Ga^{-1}(\wOm)$ of $G_H$ is equal to $\varphi$. By Lemma~\ref{lem:bij-hyperflow}, we know that $\wt{\varphi}$ is an $\al$-hyperflow of $G_H$. Hence by Lemma~\ref{lem:unique-minimal}, it suffices to prove that $\wt{\varphi}$ is minimal. Suppose, by contradiction that $\wt{\varphi}$ is not minimal, and consider a simple $\wt{\varphi}$-positive counterclockwise cycle $D$ of $G_H'$. We will now exhibit a counterclockwise directed cycle $C$ of $\wOm$. 
For a vertex $x$ of $H$ on the cycle $D$ we consider the edge $a_x$ of $D$ oriented toward $x$, and the edge $e_x$ of $H$ following $a_x$ clockwise around $x$; see Figure~\ref{fig:ccw-cycle-H}(a). Since $\varphi(a_x)>0$, the edge $e_x$ is 1-way oriented toward $x$ in $\wOm=\Ga(\wt{\varphi})$. The origin $x'$ of $e_x$ is either on the cycle $D$ or strictly inside $D$. If $x'$ is strictly inside $D$, we consider a directed path of $\wOm$ going from an outer vertex of $H$ to $x'$ (we know that such a path exists since $\wOm\in\cHp$). We extract from this path a directed path of $\wOm$ starting at a vertex on the cycle $D$, staying strictly inside $D$ and ending at $x'$. We denote by $Q_x$ the directed path of $\Om$ made of $P_x$ followed by the edge $e_x$, with the convention that $P_{x}$ is empty if the origin $x'$ of $e_x$ is on $D$. With this convention, for all $x$ of $H$ on the cycle $D$, the directed path $Q_x$ of $\wOm$ starts at a vertex of $D$, stays strictly inside $D$ and ends at $x$. We now consider a vertex $x_0$ of $H$ on $D$, and for all $i\geq 0$ we denote by $x_{i+1}$ the origin of $Q_{x_i}$. The infinite path $\cup_{i=0}^\infty Q_{x_i}$ stays inside $D$ and must intersect itself. Let $n$ be the largest integer such that the path $Q=\cup_{i=0}^{n-1} Q_{x_i}$ from $x_n$ to $x_0$ is simple. Since $Q$ is simple, it cuts the interior of the cycle $D$ into two regions. Moreover, the edge $e_{x_n}$ is easily seen to be in the region on the left of $Q$. Therefore the path $\cup_{i=0}^n Q_{x_i}$ contains a counterclockwise cycle; see Figure~\ref{fig:ccw-cycle-H}(b). This implies that $\wOm$ is not minimal, a contradiction. 
\end{proof}

\fig{width=.7\linewidth}{ccw-cycle-H}{(a) The counterclockwise cycle $D$ of $G_H$ (dashed lines), a vertex $x$ of $H$ on $D$ and the 1-way edge $e_{x}$ of $H$. (b) The directed paths $Q_{x_0},Q_{x_1},\ldots$ of $H$ inside the cycle $D$ of $G_H$ and a counterclockwise cycle $C$ of $H$ contained in $\cup_{i=0}^3 Q_{x_i}$.}

\ni \textbf{Proof of Proposition~\ref{prop:case-degd}}. 
We now complete the proof of Proposition~\ref{prop:case-degd}. 
By Lemma~\ref{lem:almost-cop} there is a unique $\si$-weighted hyperorientation $\Om$ which is both minimal and accessible from the outer vertices. In order to complete the proof of Proposition~\ref{prop:case-degd} we need to prove that $\Om$ is in $\cHp$, that is, we need to prove that the outer face of $H$ is a clockwise directed cycle. Hence \emph{it suffices to prove that every outer edge of $H$ has positive weight} (hence is 1-way). 

Let $e_0$ be an outer edge of $H$. We denote by $w(a)$ the weight of a vertex, edge or face $a$ in $\Om$. We want to prove $w(e_0)>0$. 
Let us first treat the case where $e_0$ is a loop. Let $f_0$ be the light outer face and let $f$ be the dark inner face incident to $e_0$. The light region $R=\{f_0,f\}$ satisfies $|\R|=\deg(f_0)-1$ and $\si(R)=\si(f_0)+\si(f)=\deg(f_0)+\si(f)$. Thus the $\si$-girth condition gives $-1>\si(f)$. Hence $w(e_0)=w(f)=-1-\si(f)>0$ as wanted.

We now suppose that $e_0$ is not a loop and want to prove $w(e_0)>0$. We consider the hypermap $H'$ obtained from $H$ by adding two edges $e',e''$ with the same endpoints as $e_0$ in the outer light face of $H$ as indicated in Figure~\ref{fig:proof-cop4}(a). In $H'$, the edges $e_0$ and $e'$ enclose an inner light face $f'$ of degree 2, while the edges $e'$ and $e''$ enclose an inner dark face $f''$ of degree 2. Let 
$$\ds \eps=\frac{1}{2}\min_{R}\left(|\R|-\si(R)\right).$$ 
where $R$ ranges over all light regions containing strictly at least one of the outer edges. 
Since $H$ satisfies the $\si$-girth condition, we have $\eps>0$. Let $f$ be the dark face incident to $e_0$ and let $\si'$ be the charge function of $H'$ defined by $\si'(f)=\si(f)+\eps$, $\si'(f')=2$, $\si'(f'')=-2-\eps$, and $\si'(a)=\si(a)$ for any vertex or face $a\notin\{f,f',f''\}$ of $H'$. 

\fig{width=\linewidth}{proof-cop4}{(a) 
(a) The hypermap $H'$ obtained from $H$ by adding two edges $e',e''$ with the same endpoints as $e_0$ (and conveniently 
 assigning charges for the new faces and the dark face incident to $e_0$). (b) If $e'$ was 1-way, by the accessibility properties of $\cHp$, it would yield a $P$ forming with $e'$ a counterclockwise cycle (shown in red), a contradiction.
.}

\begin{claim}
The charge function $\si'$ fits $H'$.
\end{claim}

\begin{proof}
It is easy to see that the charge $\si'(v)=\si(v)$ of every vertex is positive, the charge $\si'(f_0)=\si(f_0)$ of the light outer face $f_0$ is $\deg(f_0)$, and $\si'_\tot=\si_\tot=0$. It remains to prove that $H'$ satisfies the $\si'$-girth condition. 
Let $R'$ be a light region of $H'$. We need to prove $|\R'|\geq \si'(R')$, with strict inequality if $R'$ strictly contains an outer edge.

First suppose that $f''\in R'$. In this case $f_0,f'\in R'$ (because $R'$ is a light region). Let $R$ be the light region of $H$ obtained from $R'$ by removing $f',f''$. If $f\in R$, then $R$ strictly contains the outer edge $e_0$ so that
$$|\R'|=|\R|\geq \si(R)+2\eps=\si'(R')+2\eps,$$
while if $f\notin R$, then
$$|\R'|=|\R|\geq \si(R)=\si'(R')+\eps.$$

Next suppose that $f''\notin R'$ and $f\notin R'$. If $f'\notin R'$ then $R'$ is a light region of $H$ and we get
$$|\R'|\geq \si(R')=\si'(R'),$$
 with strict inequality if $R'$ strictly contains an outer edge.
If $f'\in R'$ then we consider the light region $R$ of $H$ obtained from $R'$ by removing $f'$. 
We get 
$$|\R'|=|\R|+2\geq \si(R')+2=\si'(R),$$
 with strict inequality if $R'$ (hence $R$) strictly contains an outer edge.

Lastly suppose that $f''\notin R'$ and $f\in R'$. In this case $f'\in R'$. If $f_0\in R$, we consider the light region $R$ of $H$ obtained from $R'$ by removing $f'$. Since $R$ strictly contains the outer edge $e_0$ we get 
$$|\R'|=|\R|+2\geq \si(R)+2\eps+2=\si'(R')+\eps$$
If $f_0\notin R$, then we consider the light region $R$ of $H$ obtained from $R'$ by removing $f'$ and adding $f_0$. 
Since $R$ strictly contains the outer edge $e_0$ we get 
$$|\R'|=|\R|+2-\deg(f_0)\geq \si(R)+2\eps+2-\deg(f_0)=\si'(R')+\eps.$$
\end{proof}

Since $H'$ satisfies the $\si'$-girth condition, Lemma~\ref{lem:almost-cop} implies that $H'$ admits a $\si'$-weighted hyperorientation $\Om'$ which is minimal and accessible from every outer vertex. Let $e_1$ be the edge preceding $e_0$ in clockwise order around $f$. We define a weighted hyperorientation $\wt{\Om}$ of $H$ by setting $\wt{w}(e_0)=w'(e_0)+w'(e'')$, $\wt{w}(e_1)=w'(e_1)+w'(e')$, and $\wt{w}(e)=w'(e)$ for all edge $e\neq e_0,e_1$ of $H$ (as usual the edge in $\wt{\Om}$ are 1-way if they have positive weight and 0-way otherwise). We denote by $w'(a)$ and $\wt{w}(a)$ respectively the weight of a vertex, edge or face $a$ in $\Om'$ and $\wt{\Om}$. Recall that all the weights $w'(a)$ are non-negative, hence the weights $\wt{\Om}(a)$ are non-negative.

\begin{claim}\label{claim:outer-edge-1way}
The hyperorientation $\wt{\Om}$ is $\si$-weighted. Moreover $\wt{w}(e_0)>0$.
\end{claim}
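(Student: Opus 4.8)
The plan is to establish the two assertions of the claim in turn. The statement that $\wt{\Om}$ is $\si$-weighted is a direct local verification, while the inequality $\wt{w}(e_0)>0$ is the substantive point. Throughout, write $x,y$ for the two endpoints of $e_0$ (hence of $e'$ and $e''$ as well). Since every weight $w'(a)$ is non-negative, every $\wt{w}(a)$ is a sum of non-negative reals; as an edge of $\wt{\Om}$ is declared $1$-way exactly when its weight is positive (and $0$-way edges then carry weight $0$), the orientation $\wt{\Om}$ is automatically $\rp$-weighted. Moreover, because the dark face $f''$ lies on the right of both $e'$ and $e''$ while $f$ lies on the right of $e_0$, and $f,f',f'',f_0$ are nested in this order across the three parallel edges, whenever they are $1$-way the edges $e_0$ and $e''$ point the same way (say toward $y$) and $e'$ points the opposite way (toward $x$); the edge $e_1$ preceding $e_0$ clockwise around $f$ is incident to $x$ and, when positive, points toward $x$ with $f$ on its right.

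\textbf{First assertion.} It remains to check the weights of vertices and of dark faces, since every light face carries weight $0=\si(f)-\deg(f)$ automatically. Transferring $w'(e'')$ onto $e_0$ leaves the in-weight of $y$ unchanged, and transferring $w'(e')$ onto $e_1$ leaves the in-weight of $x$ unchanged (with $e_1$ becoming, if necessary, a valid $1$-way edge toward $x$); every other vertex keeps the same incident edges with the same weights, so each vertex $v$ retains weight $\si'(v)=\si(v)$. For dark faces, any inner dark face $a\neq f$ is incident in $H'$ to exactly the same edges, with $\si'(a)=\si(a)$, hence has weight $-\si(a)-\deg(a)$. The face $f$ is incident in $H'$ to the same edges as in $H$ (the new edges $e',e''$ border $f'$ and $f''$, not $f$), so its $\wt{\Om}$-weight equals its $\Om'$-weight increased by $w'(e')+w'(e'')$. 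But $w'(e')+w'(e'')$ is the $\Om'$-weight of $f''$, namely $-\si'(f'')-\deg(f'')=\eps$, while the $\Om'$-weight of $f$ is $-\si'(f)-\deg(f)=-\si(f)-\eps-\deg(f)$; adding these yields $-\si(f)-\deg(f)$, as required. Thus $\wt{\Om}$ is $\si$-weighted.

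\textbf{Second assertion.} Since $\wt{w}(e_0)=w'(e_0)+w'(e'')\geq w'(e'')$, it suffices to control $w'(e'')$. The $\Om'$-weight of the degree-$2$ dark face $f''$ is $\eps>0$, so $w'(e')+w'(e'')=\eps$ and at least one of $e',e''$ is $1$-way. Minimality of $\Om'$ forbids $e_0$ and $e'$ from being simultaneously $1$-way, for then they would bound a counterclockwise circuit enclosing the light face $f'$. Hence either $e'$ is $0$-way, in which case $w'(e'')=\eps>0$ and we are done, or else $e'$ is $1$-way and $e_0,e''$ are $0$-way; this last configuration is the only one to exclude. To exclude it, I pass to the star graph $G_{H'}$: under the bijection $\Ga$ of Lemma~\ref{lem:bij-hyperflow}, $\Om'$ is the image of the minimal $\al'$-hyperflow $\varphi_0'$, and $w'(e'')=\varphi_0'(a)$ for the edge $a$ of $G_{H'}$ joining the vertex $y_{f''}$ inside $f''$ to the relevant endpoint of $e''$. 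Because $e''$ borders the outer face $f_0$, the edge $a$ has the outer face of $G_{H'}$ on its right; and $\al'(y_{f''})=-\si'(f'')-\deg(f'')=\eps>0$. Exhibiting an $\al'$-hyperflow that routes this charge through $a$ then gives, by Lemma~\ref{lem:minimal-greater-on-outer}, that $\varphi_0'(a)>0$, i.e.\ $w'(e'')>0$, contradicting $e''$ being $0$-way.

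\textbf{Main obstacle.} The delicate step is precisely the existence of an $\al'$-hyperflow with positive flow on $a$ (equivalently, a $\si'$-weighted $\rp$-orientation of $H'$ in which $e''$ is $1$-way). The naive idea of diverting the charge of $f''$ from $e'$ to $e''$ violates the in-weight constraints at $x$ and $y$, so one must produce a rerouting between $x$ and $y$; I expect this to be where the strict slack built into the choice $\eps=\tfrac12\min_{R}\bigl(|\R|-\si(R)\bigr)$ is used, feeding the existence criterion of Lemma~\ref{lem:existence-hyperflow} to guarantee that such a hyperflow exists. Once $w'(e'')>0$ is secured in every case, $\wt{w}(e_0)>0$ follows, completing the proof of the claim.
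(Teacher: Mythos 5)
Your verification of the first assertion is correct and follows the paper's own argument: the orientation conventions force $e''$ and $e_0$ to point to the same endpoint and $e'$ and $e_1$ to point to the other, so the two weight transfers preserve every vertex weight; light faces are unaffected; and the only dark face whose weight changes is $f$, which gains $w'(e')+w'(e'')=w'(f'')=-\si'(f'')-2=\eps$, exactly compensating the shift $\si'(f)=\si(f)+\eps$.

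The second assertion, however, has a genuine gap, and you have located it yourself. Everything reduces to excluding the configuration where $e'$ is $1$-way and $e''$ is $0$-way, and your plan is to do so by exhibiting an $\al'$-hyperflow of $G_{H'}$ with positive flow on the edge $a$ corresponding to $e''$ and then invoking Lemma~\ref{lem:minimal-greater-on-outer}. But you never construct such a hyperflow; you only ``expect'' that the slack $\eps$ will make Lemma~\ref{lem:existence-hyperflow} deliver it. Producing an $\al'$-hyperflow with $\varphi(a)>0$ is equivalent to producing a $\si'$-weighted $\rp$-weighted hyperorientation of $H'$ in which $e''$ carries positive weight, which is essentially the statement being proved, so the detour through Lemma~\ref{lem:minimal-greater-on-outer} defers the difficulty rather than resolving it. The paper closes this case without any hyperflow machinery: if $w'(e'')=0$, then $w'(e')=w'(f'')=\eps>0$, so $e'$ is $1$-way; both endpoints of $e'$ are outer vertices of $H'$, so accessibility of $\Om'$ provides a directed path between them which, since it cannot use the $0$-way edge $e''$, closes $e'$ into a simple circuit having $f''$ (and hence the outer face $f_0$) on its right; such a circuit is counterclockwise, contradicting the minimality of $\Om'$. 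Note that $\Om'$ is only known to be minimal and accessible (it comes from Lemma~\ref{lem:almost-cop}, not from membership in $\cHp$), so one cannot shortcut the argument by declaring the outer edge $e''$ to be $1$-way by definition; the minimality-plus-accessibility argument is exactly what is needed here, and it replaces your unproved ``main obstacle''.
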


\begin{proof}
It is easily seen that the weight of every vertex is the same in $\Om'$ and $\wt{\Om}$ (also for the endpoints of $e_0$). Moreover the weight of every face of $H$ is the same in $\Om'$ and $\wt{\Om}$ except for the dark face $f$. For the dark face $f$ we have 
$$\wt{w}(f)=w'(f)+w'(e')+w'(e'')=w'(f)+w'(f'')=-\si'(f)-\deg(f)-\si'(f'')-2=-\si(f)-\deg(f),$$
as wanted. Since $\Om'$ is $\si'$-weighted, this shows that $\wt{\Om}$ is $\si$-weighted.

It remains to show that $\wt{w}(e_0)>0$. It suffices to show $w'(e'')>0$. Suppose by contradiction that $w'(e'')=0$. 
In this case $w'(e')=w'(f'')=\eps>0$, so that $e'$ is 1-way and $e''$ is 0-way in the hyperorientation $\Om'$. Let $u_2$ and $u_1$ be the origin and end of $e'$ as indicated in Figure~\ref{fig:proof-cop4}(b). Since $\Om'$ is accessible from the outer vertex $u_1$, there is a directed path $P$ from $u_1$ to $u_2$. This path does not use the outer edge $e''$ which is 0-way, hence the path $P$ together with $e'$ form a counterclockwise directed cycle; see Figure~\ref{fig:proof-cop4}(b). This is a contradiction since $\Om'$ is minimal. 
\end{proof}

We know $w(\wt{e_0})>0$ and want to prove $w(e_0)>0$. For this we use Lemma~\ref{lem:minimal-greater-on-outer}.
Since the hyperorientation $\wt{\Om}$ of $H$ is $\si$-weighted, we know by Lemma~\ref{lem:bij-hyperflow} that the hyperflow $\wt{\varphi}=\Gamma^{-1}\left(\wt{\Om}\right)$ is an $\al$-hyperflow of $G_H$. Let $a_0$ be the edge of $G_H$ preceding $e_0$ clockwise around the outer vertex $u_2$ of $H$. By definition of $\Gamma$, $\wt{\varphi}(a_0)=\wt{w}(e_0)>0$. Hence by Lemma~\ref{lem:minimal-greater-on-outer}, the minimal $\al$-hyperflow of $G_H$ satisfies $\varphi(a_0)>0$. Moreover the hyperorientation $\Om$ is equal to $\Gamma(\varphi)$ (see the proof of Lemma~\ref{lem:almost-cop}). Thus $w(e_0)=\varphi(a_0)>0$.
This completes the proof of Proposition~\ref{prop:case-degd}.

\subsection{Proof of Theorem~\ref{theo:shifted-orientation-light}}\label{sec:endproof-outer-light}
In this section we complete the proof of Theorem~\ref{theo:shifted-orientation-light}. 
We consider a light-rooted hypermap $H$, and a charge function $\si$ fitting $H$. We want to prove that $H$ admits a unique $\si$-weighted hyperorientation in $\cHp$. Our strategy is as follows. First, we will construct a related hypermap $\Hk$ and a fitting charge function $\sik$ satisfying the condition of Proposition~\ref{prop:case-degd}. This grants the existence of a unique $\sik$-weighted hyperorientation $\Omk$ in $\cHp$ for $\Hk$. We will then construct from $\Omk$ a hyperorientation $\Om$ of $H$, and prove that it is the unique $\si$-weighted hyperorientation of $H$ in $\cHp$.

Let $k$ be an integer greater than 
$$1+|E_0|+\sum_{a\in A}|\si(a)|,$$
where $E_0$ is the set of edges of $H$, and $A$ is the set of all vertices and faces of $H$.
Let $H_k$ be the hypermap obtained from $H$ by subdividing every edge into a path of length $k$. Hence, every face of degree $\de$ of $H$ corresponds to a face of degree $k\de$ of $H_k$. We now consider a hypermap $\Hk$ obtained by adding a dark face of degree $k(k-1)\de$, called a \emph{sea-star}, inside each inner light face of degree $k\de$ of $H_k$; see Figure~\ref{fig:sea-star2}. More precisely, $\Hk$ is obtained by adding the sea-stars inside the inner light faces of $H_k$ in such a way that every inner light faces of $\Hk$ has degree $k$ and is incident to one edge of $H_k$ and $k-1$ edges of a sea-star. We call \emph{sea-edges} the edges of $\Hk$ incident to sea-stars. For an edge $e=(u,v)$ of $H_k$, we call \emph{sea-arc} associated with $e$ the path of $\Hk$ made of the $k-1$ sea-edges around the face of $\Hk$ incident to $e$. 
For an edge $e=(u,v)$ of $H$, we call \emph{sea-path} associated with $e$ the path of $\Hk$ from $u$ to $v$ (of length $k(k-1)$) made of the $k$ sea-arcs associated with the edges of $H_k$ subdividing $e$.

We define a charge function $\sik$ of $\Hk$ as follows:
\begin{itemize}
\item $\sik(v)=k\si(v')$ if $v$ is a vertex of $\Hk$ corresponding to a vertex $v'$ of $H$ and $\sik(v)=k$ otherwise,
\item $\sik(f)=\deg (f)$ if $f$ is a light face (hence $\sik(f)=k$ for every inner light face $f$),
\item $\sik(f)=k\si(f')-k^2\de +k\de$ if $f$ is a dark face of $\Hk$ of degree $k\de$ corresponding to a dark face $f'$ of $H$ (of degree $\de$),
\item $\sik(f)=k\si(f')-k^2(k-1)\de$ if $f$ is a sea-star of degree $k(k-1)\de$ corresponding to a light face $f'$ of $H$ (of degree $\de$).
\end{itemize}

\fig{width=.7\linewidth}{sea-star2}{Left: A face $f$ of $H$. Right: the face $f$ after subdividing each edge into a path of length $k=4$, and adding the sea-star of $\Hk$ inside $f$.}

\begin{claim}\label{claim:girth-Hk} 
The charge function $\sik$ fits $\Hk$.
\end{claim}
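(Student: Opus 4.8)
The plan is to verify directly the four requirements in the definition of ``$\sik$ fits $\Hk$'' for the light-rooted hypermap $\Hk$: positivity of vertex charges, the outer-face charge being equal to its degree, the vanishing of the total charge, and the $\sik$-girth condition. The first two are immediate. Every vertex of $\Hk$ either comes from a vertex $v'$ of $H$, in which case $\sik(v)=k\si(v')>0$ because $\si$ fits $H$, or is a subdivision or sea-star vertex, in which case $\sik(v)=k>0$; and since the outer face $f_0$ remains light and receives no sea-star, $\sik(f_0)=\deg(f_0)$ by definition. (I note in passing that this gives exactly the extra hypothesis of Proposition~\ref{prop:case-degd}: every light face of $\Hk$ has charge equal to its degree.)

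For ${\sik}_{\tot}=0$ I would count the cells of $\Hk$ in terms of those of $H$. Writing $m=|E_0|$ and using that in a hypermap the dark faces and the light faces each have total degree $m$, one sees that subdivision creates $(k-1)m$ new vertices, and that the sea-star placed in an inner light face of $H$ of degree $\de$ has $k(k-1)\de$ sea-edges and $k(k-2)\de$ sea-vertices (by Euler's relation on the disc, the big light face of degree $k\de$ being cut into $k\de$ small light faces of degree $k$). Summing the four families of charges and grouping terms, the coefficients of $\sum_{\ell\ \mathrm{light}}\deg(\ell)$ and of $m$ cancel identically, and what is left is $k\,\si_\tot=0$.

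The heart of the matter is the $\sik$-girth condition. Since ${\sik}_{\tot}=0$, Lemma~\ref{lem:simply-connected-sufficient} allows me to restrict to simply connected light regions $R$ of $\Hk$, for which I must show $|\R|\geq \sik(R)$, with strict inequality when an outer edge of $\Hk$ lies strictly inside $R$. The key is an Euler-relation identity: substituting the values of $\sik$ and using both $\sum_{\text{dark inside}}\deg=|E|$ and $|V|-|E|+|F|=1$, all the $k^2$- and $k^3$-terms arising from the sea-star degrees and charges cancel, and one obtains $\sik(R)=k\,\Xi$ for an explicit quantity $\Xi$. When $R$ is \emph{saturated}, that is, when $R$ is the full preimage of a light region $R'$ of $H$ (containing all $\Hk$-cells of each chosen $H$-face), the Euler relation for $R'$ inside $H$ collapses $\Xi$ to exactly $\si(R')$, while each edge of $\partial R'$ contributes its $k$ subdivided copies to $\R$, so that $|\R|=k\,|\partial R'|$. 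The $\si$-girth condition for $H$ then yields $|\R|-\sik(R)=k\bigl(|\partial R'|-\si(R')\bigr)\geq 0$, with strict inequality precisely when an outer edge is strictly inside, as required.

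The main obstacle is to reduce the general case to the saturated one. I would take a simply connected light region $R$ minimizing $|\R|-\sik(R)$ and argue that it may be assumed saturated. Two ingredients make this work. First, simple connectivity pins down the sea-stars: if all $k\de$ small light faces of an inner light face of $H$ lie in $R$, then the enclosed sea-star and its sea-vertices cannot be excluded without creating a hole, so a light face of $H$ is never filled only ``around'' its sea-star. Second, the largeness of $k$ (chosen greater than $1+|E_0|+\sum_{a}|\si(a)|$) makes the trimming moves favorable: completing or deleting the partially-included cells of a single $H$-face, or straightening the boundary across a partially-crossed subdivided edge, changes $|\R|$ by an amount bounded in terms of $|E_0|$, whereas by the cancellations above it changes $\sik(R)$ by a quantity governed at order $k$ by the corresponding change in $\si(R')$. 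Since $k$ dominates $|E_0|+\sum_{a}|\si(a)|$, each such move can be oriented so as not to increase $|\R|-\sik(R)$, and iterating drives $R$ to a saturated region, for which the inequality is already established. This reduction, and the precise accounting that shows the bounded boundary variations cannot overcome the order-$k$ charge gaps, is the step demanding the most care and the place where the threshold on $k$ is essential.
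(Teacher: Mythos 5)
Your verification of the positivity of vertex charges, the outer-face condition, and ${\sik}_{\tot}=0$, as well as your treatment of \emph{saturated} regions (full preimages of light regions of $H$, where the Euler relation collapses $\sik(R)$ to $k\,\si(R')$ and $|\R|=k|\partial R'|$), all match the paper's proof. The gap is in your reduction of a general simply connected light region to a saturated one. The paper does not perform any trimming: it observes that a non-saturated region $R$ either consists of a single small light face of $\Hk$ (in which case $|\R|=k=\sik(R)$ and no outer edge is strictly inside), or contains some small light face together with its non-sea-star dark neighbour but not its sea-star; in the latter case the light-region property forces all $k$ small light faces along the corresponding edge of $H$ into $R$, so $\R$ contains an entire sea-path of $k(k-1)$ sea-edges, while the Euler computation and the choice of $k$ give the \emph{a priori} bound $\sik(R)<k(k-1)$ for \emph{every} region $R$. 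Thus $|\R|\geq k(k-1)>\sik(R)$ directly, with no comparison to a modified region.

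Your trimming argument, as sketched, does not close this case. First, the claim that completing the partially-included cells of a single $H$-face ``changes $|\R|$ by an amount bounded in terms of $|E_0|$'' is false: adding a sea-star of a degree-$\de$ light face removes up to $k(k-1)\de$ sea-edges from the boundary, a quantity of order $k^2$. Second, when one tracks the cancellation, the net effect of adjoining the sea-star $s$ of a light face $f'$ (all of whose small light faces already lie in $R$) is $\Delta\bigl(|\R|-\sik(R)\bigr)=k\bigl(\deg(f')-\si(f')\bigr)$, whose sign is not controlled by the fitting hypothesis; when it is positive you cannot instead delete the small light faces, because the adjacent dark face of $H$ pins them inside $R$, and deleting that dark face cascades. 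So the assertion that ``each such move can be oriented so as not to increase $|\R|-\sik(R)$'' is unjustified and appears false as stated. The strictness bookkeeping (preserving ``an outer edge is strictly inside $R$'' through the moves) is also left untouched. The missing idea is precisely the direct lower bound $|\R|\geq k(k-1)$ for non-saturated, non-singleton regions, paired with the universal upper bound $\sik(R)<k(k-1)$ coming from the threshold $k>1+|E_0|+\sum_{a}|\si(a)|$.
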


\begin{proof}
First observe that the charge $\sik(v)=k\si(v)$ is positive for every vertex $v$, and the charge of the outer face is equal to its degree. We now show that $\sik_\tot=0$. Let $V_0$, $E_0$, $F_0$, $S_0$, and $K_0$ be respectively the set of vertices, edges, light faces, sea-stars, and non-sea-star dark faces of $\Hk$. Let $E_1$ and $E_2$ be respectively the set of edges of $\Hk$ incident to sea-stars, and to non-sea-star dark faces of $\Hk$. We have
\begin{eqnarray*}
\sik_\tot&=&\sum_{v\in V_0}\sik(v)+\sum_{f\in F_0}\sik(f)+\sum_{f\in S_0}\sik(f)+\sum_{f\in K_0}\sik(f)\\
&=& k\bigg(|V_0|-|V_0'|+\sum_{v'\in V_0'}\si(v')\bigg)+k\bigg(|F_0|+\deg(f_0')-1\bigg)\\
&&+k\bigg(-|E_1|+\sum_{f'\in S_0'}\si(f')\bigg)+k\bigg(|E_0'|-|E_2|+\sum_{f'\in K_0'}\si(f')\bigg)\\
&=& k\bigg(|V_0|+|F_0|-|E_0|-|V_0'|+|E_0'|-1+\si_\tot\bigg)\\
\end{eqnarray*}
where $f_0'$ is the outer face of $H$, and $V_0'$, $E_0'$, $S_0'$, $K_0'$ are respectively the set of vertices, edges, inner light faces, and dark faces of $H$ (the last identity uses $|E_0|=|E_1|+|E_2|$ and $\si(f_0')=\deg(f_0')$). The Euler relation gives
$$|V_0|+|F_0|-|E_0|=-|S_0|-|K_0|+2=-|S_0'|-|K_0'|+2=|V_0'|-|E_0'|+1,$$
because $|S_0|=|S_0'|$ and $|K_0|=|K_0'|$. Moreover $\si_\tot=0$, hence $\sik_\tot=0$.

It remains to prove that $\Hk$ satisfies the $\sik$-girth condition. Let $R$ be a light region of $\Hk$. We want to prove $|\R|\geq \sik(R)$ with strict inequality if one of the outer edges is strictly contained in $R$. By Lemma~\ref{lem:simply-connected-sufficient} we can assume that the light region $R$ is simply connected.
Let $V$, $E$, $F$, $S$, and $K$ be respectively the set of vertices strictly inside $R$, edges strictly inside $R$, light faces inside $R$, sea-stars inside $R$, and non-sea-star dark faces inside $R$. Similarly 
as in the above computation of $\sik_\tot$, we have 
\begin{eqnarray*}
\sik(R)&=&\sum_{v\in V}\sik(v)+\sum_{f\in F}\sik(f)+\sum_{f\in K}\sik(f)+\sum_{f\in S}\sik(f)\\
&=& k\,\left(|V|+|F|-|E|-|V'|+|E'|+\textbf{1}_{f_0\in R}\cdot(\deg(f_0')-1)+\sum_{a\in V'\cup S'\cup K'}\si(a)\right),
\end{eqnarray*}
where $f_0$ is the outer face of $\Hk$, $V'$ is the set of vertices of $H$ corresponding to vertices of $\Hk$ in $V$, $S'$ is the set of inner light faces of $H$ corresponding to sea-stars in $S$, $K'$ is the set of dark faces of $H$ corresponding to dark faces in $K$, and $E'$ is the set of edges of $H$ incident to faces in $K'$.
Since $R$ is simply connected, the Euler relation gives $\ds |V|+|F|-|E|=-|S'|-|K'|+1$, hence
$$\sik(R)=k\,\bigg(|E'|-|V'|-|S'|-|K'|+1+\textbf{1}_{f_0\in R}(\deg(f_0')-1)+\sum_{a\in V'\cup S'\cup K'}\si(a)\bigg).$$
(In particular, by the choice of $k$, $\sik(R)<k(k-1)$.) 

We now prove $|\R|\geq \sik(R)$ with strict inequality if one of the outer edges is strictly contained in $R$. 
Suppose first that $R$ contains an inner light face $f$ but contains none of the two dark faces incident to $f$. Since $R$ is connected, we have $R=\{f\}$ and $|\R|=k=\si(R)$. Next, suppose that $R$ contains an inner light face and the incident non-sea-star dark face $f$, but not the incident sea-star $s$. Since $f\in R$ all the incident light faces are in $R$ (because $R$ is a light region) hence $C$ contains an entire sea path. Thus 
$|\R|\geq k(k-1)>\sik(R)$
by the choice of $k$. Lastly suppose that for every light face $f$ in $R$, the sea-star incident to $f$ is also in $R$. In this case, we consider the light region $R'$ of $H$ defined by $R'=K'\cup S'$ if $f_0\notin R$ and $R'=K'\cup S'\cup\{f_0'\}$ if $f_0\in R$. We have $|\R|=k|\R'|$. Moreover $V'$, $E'$, $S'$, and $K'$ are respectively the sets of vertices strictly inside $R'$, edges strictly inside $R'$, inner light faces inside $R'$, and dark faces inside $R'$, so that the Euler relation gives 
$$|E'|-|V'|-|S'|-|K'|+1-\textbf{1}_{f_0\in R'}=0.$$
Hence, using $\deg(f_0')=\si(f_0')$ we get
$$\sik(R)=k\bigg(\textbf{1}_{f_0\in R'}\si(f_0')+\sum_{a\in V'\cup S'\cup K'}\si(a)\bigg)=k\,\si(R').$$
Thus $|\R|=k|\R'|\geq k\,\si(R')=\sik(R)$ with strict inequality if one of the outer edges is strictly contained in $R$.
\end{proof}

By Claim~\ref{claim:girth-Hk} and Proposition~\ref{prop:case-degd}, the hypermap $\Hk$ admits a unique $\sik$-weighted hyperorientation $\Omk$ in $\cHp$. We now establish a few properties of $\Omk$. We denote by $w(a)$ the weight of an edge or face $a$ of $\Hk$ in the hyperorientation $\Omk$. Note that all the weights are non-negative because every light face of $\Hk$ has degree equal to its charge. 
Let $a$ be an inner edge of $H_k$ and let $P$ be the associated sea-arc. The $k-2$ first edges of $P$ are forced to have weight $k$, and we denote by $w'(a)$ the weight of the last edge of $P$; see Figure~\ref{fig:sea-path2}. Let $f$ be an inner light face of $H$ of degree $\de$, let $f_k$ be the corresponding light face of $H_k$ and let $s$ be the corresponding sea-star of $\Hk$. For the edges $e_1,\ldots,e_{k\de}$ incident to $f_k$ we get 
\begin{eqnarray}\label{eq:weight-sea-star}
w'(e_1)+\ldots+w'(e_{k\de})&=&w(s)-k^2(k-2)\de~=~-\sik(s)-\deg(s)-k^2(k-2)\de \nonumber\\
&=&-k\si(f)+k^2(k-1)\de-k(k-1)\de-k^2(k-2)\de\nonumber\\
&=&-k\si(f)+k\de.
\end{eqnarray}

\fig{width=.6\linewidth}{sea-path2}{The sea-path associated with an inner edge $e=(u,v)$ of $H$ for $k=4$. The edges with weights $w(e_i)$ and $w'(e_i)$ are indicated for $i=3$.}

\begin{claim}\label{claim:config-sea-path}
Let $e$ be an inner edge of $H$. 
Let $e_1,\ldots,e_k$ be the edges of $H_k$ subdividing $e$ in clockwise order around the incident dark face.
There exists $j\in\{1,\ldots,k\}$ such that $w(e_i)=k$ for all $i<j$, $w(e_i)=0$ for all $i>j$. Moreover $w'(e_1)=0$, and $w'(e_{i+1})=k-w(e_{i})$ for all $i\in\{1,\ldots,k-1\}$. 
\end{claim}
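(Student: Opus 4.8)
The plan is to read off all the required relations from the local structure of the unique orientation $\Omk\in\cHp$ around the subdivided edge $e$, using only the weight conditions at vertices together with minimality. Recall that the subdivision of $e$ consists of the edges $e_1,\dots,e_k$ of $H_k$ and the intermediate vertices $v_1,\dots,v_{k-1}$, each of which has charge $k$ (hence weight $k$ in $\Omk$); on the light side, $e_i$ bounds a light face $\ell_i$ of degree $k$ whose other $k-1$ edges form the sea-arc $P_i$, and the $k-2$ interior vertices of $P_i$ are degree-$2$ sea-vertices of weight $k$. I take as given (it is established just before the statement) that the first $k-2$ edges of each sea-arc carry weight $k$, so that $w'(e_i)$ is the weight of the one remaining sea-edge of $P_i$, and that all weights are non-negative.

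First I would establish the recursion. Each intermediate vertex $v_i$ has degree $4$ and is incident to the two consecutive boundary edges $e_i,e_{i+1}$ and to two sea-edges, one of which (the first edge of the adjacent sea-arc) is already known to have weight $k$ and the other of which carries the weight $w'(e_{i+1})$. Since every oriented edge keeps a dark face on its right, the orientation of all four edges at $v_i$ is determined by the embedding; identifying the two edges pointing into $v_i$ and using that the weight of $v_i$ (the total weight of its in-edges) equals its charge $k$ gives exactly $w'(e_{i+1})=k-w(e_i)$. This is the relation displayed in the statement, valid for $i=1,\dots,k-1$.

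Next I would obtain a sign dichotomy from minimality. Around each light face $\ell_i$ the boundary cycle is formed by $e_i$ together with the sea-arc $P_i$; all but one edge of $P_i$ is already 1-way (weight $k$), so if both $e_i$ and the remaining sea-edge of $P_i$ were 1-way, the whole boundary of $\ell_i$ would be a directed circuit, and by the chosen orientation convention this circuit is counterclockwise --- impossible since $\Omk$ is minimal. Hence for every $i$ at most one of $w(e_i)$ and $w'(e_i)$ is positive. The initialization $w'(e_1)=0$ is the delicate point: it does not follow from the vertex relation alone (that relation never involves $w'(e_1)$), and I would derive it from minimality applied to the extremal light face $\ell_1$, ruling out the otherwise locally consistent configuration in which all of $e_1,\dots,e_k$ are 0-way while the sea-arcs are fully 1-way.

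Finally, combining these facts gives the threshold structure. For $i\ge 2$ the recursion rewrites as $w'(e_i)=k-w(e_{i-1})$, so the dichotomy at index $i$ reads ``$w(e_i)=0$ or $w(e_{i-1})=k$''; equivalently, $w(e_i)>0$ forces $w(e_{i-1})=k$. Together with $w'(e_1)=0$ this propagates from $e_1$ and shows that once some $w(e_i)$ drops below $k$ all later boundary edges are 0-way, i.e.\ there is a threshold index $j$ with $w(e_i)=k$ for $i<j$ and $w(e_i)=0$ for $i>j$. I expect the main obstacle to be the orientation bookkeeping --- correctly reading off, from the ``dark face on the right'' rule and the clockwise labelling of $e_1,\dots,e_k$, which edges are incoming at each $v_i$ and that the $\ell_i$-boundary circuit is counterclockwise --- together with the minimality argument securing the initialization $w'(e_1)=0$.
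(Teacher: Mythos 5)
Your recursion $w'(e_{i+1})=k-w(e_i)$ from the weight condition at the intermediate vertices, the dichotomy ``$w(e_i)$ and $w'(e_i)$ are not both positive'' from minimality of the light face $\ell_i$, and the propagation yielding the threshold index $j$ all coincide with the paper's argument. The gap is the initialization $w'(e_1)=0$, which you rightly flag as the delicate point but then propose to settle by minimality applied to $\ell_1$. That cannot work. In the configuration you must exclude ($w(e_1)=0$ and $w'(e_1)>0$, which by your own recursion and dichotomy forces $w(e_i)=0$ and $w'(e_i)=k$ for all $i\geq 2$), every subdivision edge $e_i$ is $0$-way; hence the boundary of each light face $\ell_i$ contains a $0$-way edge, is not a directed circuit, and minimality says nothing. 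Accessibility also survives in this configuration (each intermediate vertex $u_i$ is entered through the last sea-edge of $P_{i+1}$, and the whole sea-path is reachable from an endpoint of $e$), so no local orientation argument rules it out.

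The paper excludes this configuration by a global weight count, and this is exactly where the choice of $k$ large is used. Equation \eqref{eq:weight-sea-star} shows that the sum of $w'(e_i)$ over all $k\de$ edges incident to the light face $f$ of $H$ (of degree $\de$) containing $e$ equals $-k\si(f)+k\de$, which is strictly smaller than $k(k-1)$ because $k>1+|E_0|+\sum_{a}|\si(a)|$. In the bad configuration, however, the $k$ edges subdividing $e$ alone contribute $w'(e_1)+k(k-1)>k(k-1)$, and all the $w'(e_i)$ are non-negative, a contradiction. Without this step (or some substitute exploiting the prescribed weight of the sea-star), your proof of $w'(e_1)=0$, and hence of the claim, is incomplete.
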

The situation described by Claim~\ref{claim:config-sea-path} is represented in Figure~\ref{fig:reduction-sea-path2} (first line).

\begin{proof} 
We let $e_{i}=(u_{i-1},u_i)$, with $u_0=u$, and $u_k=v$. Since for all $i\in\{1,\ldots,k-1\}$ the weight of the vertex $u_i$ is $k$, we get $w'(e_{i+1})=k-w(e_{i})$. Since $\Om$ is minimal, the weights $w(e_i)$ and $w'(e_i)$ cannot both be positive (otherwise the incident light face would be oriented counterclockwise). 
Thus if $w(e_i)\neq k$ for $i<k$, then $w'(e_{i+1})\neq 0$, hence $w(e_{i+1})=0$. This proves the existence of $j\in\{1,\ldots,k\}$ such that $w(e_i)=k$ for all $i<j$, and $w(e_i)=0$ for all $i>j$. Lastly, suppose by contradiction that $w'(e_1)>0$. In this case $w(e_1)=0$, and $w'(e_i)=k$ for all $i\in\{2,\ldots,k\}$. Thus 
$$w'(e_1)+\ldots+w'(e_{k})=w'(e_1)+k(k-1)>k(k-1).$$
By our choice of $k$ this contradicts \eqref{eq:weight-sea-star}.
\end{proof}

We now associate with the weighted hyperorientation $\Omk$ of $\Hk$ a weighted hyperorientation $\bOm$ of $H$; see Figure~\ref{fig:reduction-sea-path2} (note that, by these rules, an edge $e\in\bOm$ is 1-way iff the associated $j$ defined in Claim~\ref{claim:config-sea-path} is equal to $k$).   
Let $e$ be an edge of $H$ and let $e_1,\ldots,e_k$ be the edges of $H_k$ subdividing $e$ in clockwise order around the incident dark face. 
We define the weight $\bw(e)$ of $e$ in $\bOm$ to be 
$$\bw(e)=\frac{\sum_{i=1}^k w(e_i)}{k}-(k-1)$$ 
and we orient $e$ 1-way if the weight is positive and 0-way otherwise. 
Note that for any outer edge $e$, the edges $e_1,\ldots,e_{k-1}$ are all 1-way of weight $k$ and $e_k$ is also 1-way (because $\Omk\in\cHp$), so that 
$$\bw(e)=w(e_k)/k>0,$$ 
hence $e$ is 1-way.

\fig{width=\linewidth}{reduction-sea-path2}{Top part: the possible configurations of weights along a sea-path in the hyperorientation $\Omk$ of $\Hk$, as described by Claim~\ref{claim:config-sea-path}. Bottom part: the corresponding weight in the hyperorientation $\bOm$ of $H$. In the case $j<k$ (left) one gets 
$\bw(e)=\frac{\left(\sum_{i=1}^k w(e_i)\right)}{k}-(k-1)=w(e_j)/k-(j-1)\leq 0$, while in the case $j=k$ one gets $\bw(e)=w(e_k)/k> 0$.
}

We will now complete the proof of Theorem~\ref{theo:shifted-orientation-light} by proving the following claim.

\begin{claim}\label{lem:exists-kde-orient}
The hyperorientation $\bOm$ is the unique $\si$-weighted hyperorientation of $H$ in $\cHp$.
\end{claim}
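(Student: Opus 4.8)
The plan is to check, in this order, that $\bOm$ is $\si$-weighted, that it belongs to $\cHp$, and that it is the only such hyperorientation; throughout I write $w(\cdot)$ for the weights of $\Omk$ (as in Claim~\ref{claim:config-sea-path}) and $w_{\bOm}(\cdot)$ for the induced weights of $\bOm$. Condition~(i) in the definition of a $\si$-weighted hyperorientation holds by construction, since $e$ is declared $1$-way exactly when $\bw(e)>0$. Conditions~(ii)--(iv) are then obtained by transporting the $\sik$-weightedness of $\Omk$ through the rules of Claim~\ref{claim:config-sea-path} and equation~\eqref{eq:weight-sea-star}.

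For the faces I would argue by summation over boundaries. Fixing an inner edge $e$ with sub-edges $e_1,\dots,e_k$, the definition gives $\sum_{i=1}^k w(e_i)=k\,\bw(e)+k(k-1)$, and a short manipulation of $w'(e_1)=0$ and $w'(e_{i+1})=k-w(e_i)$ yields $\sum_{i=1}^k w'(e_i)=-k\,\bw(e)+w(e_k)$, which equals $-k\,\bw(e)$ when $e$ is $0$-way (then $w(e_k)=0$) and $0$ when $e$ is $1$-way (then $w(e_k)=k\,\bw(e)$). Summing the first identity over the $\de$ edges of an inner dark face $f$ and inserting $w(f)=-\sik(f)-k\de$ gives $w_{\bOm}(f)=-\si(f)-\de$, i.e. (iii); summing the second identity over the $k\de$ sea-arcs of an inner light face $f$ and invoking \eqref{eq:weight-sea-star} gives $-k\,w_{\bOm}(f)=-k\si(f)+k\de$, i.e. (ii). For a vertex (iv) I would make a local count at each original vertex $v'$, relying on the orientations forced in Claim~\ref{claim:config-sea-path}: every sea-edge incident to $v'$ is either oriented away from $v'$ or has weight $0$ (each sea-arc is directed \emph{against} its sub-edge, so it leaves the endpoint $u_k=v'$, while its edge reaching the other endpoint $u_0$ carries weight $w'(e_1)=0$). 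Hence the only ingoing contributions at $v'$ come from sub-edges $e_k$ of edges $e$ with $v'=u_k$, so $w(v')=\sum_{e:\,v'=u_k}w(e_k)=k\,\si(v')$, whereas $w_{\bOm}(v')=\sum_{e:\,v'=u_k}\bw(e)$ collects only the $1$-way such edges; since $0$-way ones have $w(e_k)=0$ and $1$-way ones have $\bw(e)=w(e_k)/k$, the two sums agree up to the factor $k$ and $w_{\bOm}(v')=\si(v')$.

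To place $\bOm$ in $\cHp$ I must verify three properties. That every outer edge is $1$-way was already noted ($\bw(e)=w(e_k)/k>0$ because $\Omk\in\cHp$). Minimality follows by lifting: a counterclockwise circuit of $\bOm$ uses only $1$-way edges, each corresponding to a fully forward-directed subdivision path $u_0\to\cdots\to u_k$ in $\Omk$, so their concatenation is a counterclockwise circuit of $\Omk$, contradicting its minimality. Accessibility from the outer vertices is the delicate point, and I expect it to be the main obstacle, because $\Omk$ carries many more $1$-way edges than $\bOm$ (all the sea-edges), so directed paths cannot be projected naively. The structural fact to establish is that the sea-structure creates \emph{no} directed shortcut between two original vertices: the sea-star of a light face is a single face whose boundary cycle is interrupted at every original vertex by the weight-$0$ edge $w'(e_1)=0$, and each sea-arc is oriented against the natural direction of its sub-edge, so in $\Omk$ an original vertex $u_0$ reaches an original vertex $u_k$ across $e$ if and only if $e$ is a $1$-way edge of $\bOm$ directed $u_0\to u_k$. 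Restricting the accessibility of $\Omk$ to original vertices then yields precisely the accessibility of $\bOm$; making ``no directed shortcut'' precise, by analysing via minimality the orientation of every sea-edge around a light face, is the technical heart of the argument.

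Finally, for uniqueness I would run the construction backwards. Given any $\si$-weighted hyperorientation $\Om$ of $H$ in $\cHp$, the weights of the sub-edges and sea-edges of $\Hk$ are forced by $\sik$-weightedness together with minimality (the configuration of Claim~\ref{claim:config-sea-path} is the unique minimal one compatible with a prescribed value $\bw(e)$), which defines a lift $\Omk'$. Using Claim~\ref{claim:girth-Hk} one checks that $\Omk'$ is $\sik$-weighted, and using the same reachability and cycle correspondences as above that $\Omk'\in\cHp$. By the uniqueness part of Proposition~\ref{prop:case-degd} this forces $\Omk'=\Omk$, and projecting back gives $\Om=\bOm$. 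Hence $\bOm$ is the unique $\si$-weighted hyperorientation of $H$ in $\cHp$, which completes the proof of Theorem~\ref{theo:shifted-orientation-light}.
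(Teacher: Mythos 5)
Your proposal is correct and follows essentially the same route as the paper: you transport the $\sik$-weight conditions through the subdivision identities to obtain $\si$-weightedness of $\bOm$, you establish the correspondence of directed paths and cycles between original vertices in $\Omk$ and in $\bOm$ (with $w'(e_1)=0$ blocking any directed shortcut through the sea-arcs) to get minimality and accessibility, and you prove uniqueness by lifting a competing orientation to $\Hk$ and invoking the uniqueness part of Proposition~\ref{prop:case-degd}. The only detail worth making explicit is that the lift in the uniqueness step requires $\bw(e)\geq -k+1$, which is exactly where the choice of $k$ enters.
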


\begin{proof}
We denote by $w(a)$ (resp. $\bw(a)$) the weight of a vertex, edge or face $a$ in the hyperorientation $\Omk$ of $\Hk$ (resp. $\bOm$ of $H$). If $e$ is an inner edge of $H$, adopting the notation of Claim~\ref{claim:config-sea-path} gives 
$$\bw(e)=\frac{1}{k}\Big(w(e_k)-\big(\sum_{i=1}^k w'(e_i)\big)\Big).$$
Moreover, since $w(e_k)>0$ if and only if $\sum_{i=1}^k w'(e_i)=0$, we get 
\begin{equation}\label{eq:bweight-positive}
\max(\bw(e),0)=\frac{w(e_k)}{k},
\end{equation}
and 
\begin{equation}\label{eq:bweight-negative}
\min(\bw(e),0)=-\frac{\sum_{i=1}^k w'(e_i)}{k}.
\end{equation}
Now if $e$ is an outer edge $e$, then the edges $e_1,\ldots,e_{k-1}$ are all 1-way of weight $k$ and $e_k$ is also 1-way (because $\Omk\in\cHp$), so that 
$$\max(\bw(e),0)=\bw(e)=w(e_k)/k,$$
and $e$ is oriented 1-way in $\Omk$. 

We will now prove that $\bOm$ is $\si$-weighted. 
Let $u$ be a vertex of $H$. We observe that Claim~\ref{claim:config-sea-path} (more precisely, the statement $w'(e_1)=0$ in this claim) implies that no sea-edge is oriented 1-way toward $u$. Thus, the weight $w(u)$ is equal to the sum of the weights of the edges of $H_k$ oriented toward $u$. Hence \eqref{eq:bweight-positive} gives
$$\bw(u)=\sum_{e\textrm{ oriented toward } u \textrm{ in }H}\max(\bw(e),0)=\sum_{e'\textrm{ oriented toward } u \textrm{ in }H_k}\frac{w(e')}{k}=\frac{w(u)}{k}=\frac{\sik(u)}{k}=\si(u),$$
as wanted.
Now let $f$ be a light inner face of $H$ of degree $\de$, and let $f'$ be the corresponding face in $H_k$. By \eqref{eq:bweight-negative}, the weight of $f$ in $\bOm$ is 
$$\bw(f)=\sum_{e\textrm{ incident to } f \textrm{ in }H}\min(\bw(e),0)=-\sum_{e_i\textrm{ incident to } f' \textrm{ in }H_k}\frac{w'(e_i)}{k}.$$
Hence, \eqref{eq:weight-sea-star} gives $\bw(f)=\si(f)-\de$ as wanted.
Now, let $f$ be a dark inner face of $H$ of degree $\de$, and let $f'$ be the corresponding face of $\Hk$. The weight of $f$ in $\bOm$ is 
\begin{eqnarray*}
\bw(f)&=&\sum_{e\textrm{ incident to } f}\bw(e)=\left(\sum_{e'\textrm{ incident to } f'}\frac{w(e')}{k}\right)-(k-1)\de= \frac{w(f')}{k}-(k-1)\de\\
&=&\frac{-\sik(f')-\deg(f')}{k}-(k-1)\de=-\si(f)-\de,
\end{eqnarray*}
as wanted. Thus $\bOm$ is $\si$-weighted.

Next we prove that $\bOm$ is in $\cHp$.
As noted above, the outer edges of $H$ are 1-way in $\bOm$ (hence they form a clockwise directed cycle), hence it remains to prove that $\bOm$ is minimal and accessible from outer vertices. 
For an edge $e=(u,v)$ of $H$ we consider the subgraph $G_e$ of $\Hk$ made of the path subdividing $e$ together with the sea-path associated with $e$. In the hyperorientation $\Omk$ of $G_e$ the sea-path cannot be used to go from $u$ to $v$ nor from $v$ to $u$ because of Claim~\ref{claim:config-sea-path} (more precisely, the statement $w'(e_1)=0$ in this claim). Moreover, the path subdividing $e$ is oriented from $u$ to $v$ in $\Omk$ if and only if $e$ is oriented 1-way from $u$ to $v$ in the hyperorientation $\bOm$; see Figure~\ref{fig:reduction-sea-path2}. Thus for any vertices $v_1,v_2$ of $H$, there is a directed path from $v_1$ to $v_2$ in the hyperorientation $\Omk$ of $\Hk$ if and only if there is a directed path from $v_1$ to $v_2$ in the hyperorientation $\bOm$ of $H$. Since $\Omk$ is in $\cHp$, we conclude that in the hyperorientation $\bOm$ of $H$ every vertex is accessible from every outer vertex. Moreover any simple directed cycle in the hyperorientation $\bOm$ of $H$ corresponds to a directed simple cycle in the hyperorientation $\Omk$ of $\Hk$. Hence the minimality of the hyperorientation $\Omk$ implies the minimality of $\bOm$. Thus $\bOm$ is in $\cHp$.

Lastly we prove that there does not exist a $\si$-weighted hyperorientation $\bOm'\in\cHp$ of $H$ distinct from $\bOm$. 
Suppose the contrary. By inverting the construction represented in Figure~\ref{fig:reduction-sea-path2} (using the fact that $\bw(e)\geq -k+1$ by our choice of $k$), one can associate with $\bOm'$ a hyperorientation $\Om'\neq \Omk$ of $\Hk$ satisfying the properties described in Claim~\ref{claim:config-sea-path}. 
It is then easy to see using the same relations as above that $\Om'$ is $\sik$-weighted. 
Moreover, by the properties highlighted in the previous paragraph, it is easily seen that $\Om'$ is minimal and accessible from every outer vertex. Thus we obtain a $\sik$-weighted hyperorientation $\Om'\neq \Omk$ in $\cHp$. This is impossible because this contradicts the uniqueness property of Proposition~\ref{prop:case-degd}.
\end{proof}

Claim~\ref{lem:exists-kde-orient} proves that if a charge function $\si$ fits a light-rooted hypermap $H$, then $H$ admits a unique $\si$-weighted hyperorientation in $\cHp$. This, together with Lemma~\ref{lem:girth-necessary}, completes the proof of Theorem~\ref{theo:shifted-orientation-light}.

\subsection{Proof of Theorems~\ref{theo:shifted-orientation-dark} and~\ref{theo:shifted-orientation-0}}\label{sec:endproof-outer-dark}
In this section we prove Theorems~\ref{theo:shifted-orientation-dark} and~\ref{theo:shifted-orientation-0} by a reduction to Theorem~\ref{theo:shifted-orientation-light}. 

We start with Theorem~\ref{theo:shifted-orientation-dark}. Let $H$ be a dark-rooted hypermap with a simple outer face, and let $\si$ be a charge function fitting $H$. 
We want to prove that $H$ admits a unique $\si$-weighted hyperorientation in $\cHm$. 
Let $H'$ be the light-rooted hypermap obtained from $H$ by adding a dark face of degree 2 along each of the outer edges of $H$, and changing the outer face color into light, as indicated in Figure~\ref{fig:reduction-outer-dark4}. Observe that the outer faces $f_0$ of $H$ and $f_0'$ of $H'$ have the same degree. 
We call \emph{outer digons} of $H'$ the added dark faces. We define a charge function $\si'$ of $H'$ by setting $\si'(f_0')=\deg(f_0')$, $\si'(f)=-3$ if $f$ is an outer digon, $\si'(v)=1$ if $v$ is an outer vertex, and $\si'(a)=\si(a)$ if $a$ is any inner vertex or inner face of $H$.

\fig{width=.9\linewidth}{reduction-outer-dark4}{(a) The hypermap $H'$ obtained from $H$ by adding a dark face of degree 2 along each outer edge. (b) The contour of a simply connected light region $R$ of $H'$ containing the outer face $f_0'$ but not all outer vertices. Here $\deg(f_0)=9$, $b=6$, $d=6$ and $k=3$}

\begin{claim}
The charge function $\si'$ fits $H'$.
\end{claim}
\begin{proof}
First observe that the charge $\si'(v)$ of any vertex $v$ is positive, and $\si'(f_0')=\deg(f_0')$. Moreover, 
$$\si'_\tot=\si_\tot-\si(f_0)+\si'(f_0')+\sum_{v\textrm{ outer vertex}}\si'(v)+\sum_{f\textrm{ outer digon}}\si'(f)=0$$
because $\si_\tot=0$, $-\si(f_0)=\si'(f_0')=\deg(f_0)$ and there are $\deg(f_0)$ outer vertices and outer digons.

We now prove that $H'$ satisfies the $\si'$-girth condition.
Let $R$ be a light region of $H'$. We want to prove $|\R|\geq \si'(R)$ with strict inequality if an outer edge is strictly contained in $R$. By Lemma~\ref{lem:simply-connected-sufficient} we can assume that $R$ is simply connected. 
If $f_0'\notin R$, then none of the outer digons is in $R$. Hence in this case $R$ is a light region of $H$, and $|\R|\geq \si(R)=\si'(R)$ as wanted. We now assume that $f_0'\in R$. First suppose that every outer vertex is strictly inside $R$. In this case, all the outer digons are in $R$, and we consider the light region $R$ of $H$ obtained from $R$ by replacing the outer face $f_0'$ and the outer digons by $f_0$. We get $|\R|>\si(R)=\si'(R)$ as wanted. 
Now assume that $f_0'\in R$ but $b>0$ outer vertices are incident to $\R$ (so that $\deg(f_0)-b$ outer-vertices are strictly inside $R$). Let $d$ be the number of outer digons in $R$. By deleting from $R$ the outer face $f_0'$ and the $d$ outer digons in $R$, one get a light region of $H$ which decomposes as a disjoint union of $k$ (non-empty) simply connected light regions $R_1,R_2,\ldots,R_k$; see Figure~\ref{fig:reduction-outer-dark4}(b). The number $k$ is determined by $k+(\deg(f_0)-d)=b$. Indeed the contour $D$ of the outer face of $f_0$ of $H$ decomposes into $b$ paths (joining consecutive vertices incident to $\R$) which are either edges of one of the $\deg(f_0)-d$ digons not in $R$, or part of the boundary of one of the light regions $R_1,R_2,\ldots,R_k$ (recall that $R$ is simply connected so that the light regions $R_1,\ldots, R_k$ corresponding to different paths of $D$ are distinct).
We have 
$$|\R|=\sum_{i=1}^k|\R_i|+\deg(f_0)-2d,$$
and
$$\si'(R)=\sum_{i=1}^k\si(R_i)+\si'(f_0')-3d+\deg(f_0)-b=\sum_{i=1}^k\si(R_i)+2\deg(f_0)-3d-b,$$
hence
$$|\R|-\si'(R)=\bigg(\sum_{i=1}^k|\R_i|-\si(R_i)\bigg)+b+d-\deg(f_0)=\bigg(\sum_{i=1}^k|\R_i|-\si(R_i)\bigg)+k\geq k.$$
Thus, $|\R|\geq \si'(R)$ and if one of the outer edges is strictly inside $R$, then $k\geq 1$ and $|\R|>\si'(R)$.
\end{proof}

Since $\si'$ fits $H'$, Theorem~\ref{theo:shifted-orientation-light} ensures that $H'$ has a unique $\si'$-weighted hyperorientation $\Om'$ in $\cHp$. Let $\Om$ be the hyperorientation of $H$ such that the weights and orientations of the inner edges of $H$ are the same as in $\Om'$, and the outer edges of $H$ form a counterclockwise directed cycle of 1-way edges of weight 1.

\begin{claim}\label{claim:reduction-works}
The hyperorientation $\Om$ is the unique $\si$-weighted hyperorientation of $H$ in $\cHm$. 
\end{claim}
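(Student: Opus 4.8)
The plan is to prove the claim by setting up a weight-preserving bijection between the $\si$-weighted hyperorientations of $H$ lying in $\cHm$ and the $\si'$-weighted hyperorientations of $H'$ lying in $\cHp$; since Theorem~\ref{theo:shifted-orientation-light} (just applied to $H'$) guarantees that the latter set is the single element $\Om'$, this will show at once that the former set is the singleton $\{\Om\}$. Both directions of the bijection amount to ``peeling off'' or ``gluing on'' the outer digons: to pass from $H'$ to $H$ one keeps the inner edges of $H$ unchanged and installs the counterclockwise unit-weight outer cycle (this is exactly the $\Om$ of the statement); to pass from $H$ to $H'$ one keeps the inner edges, makes each new outer edge $e_i'$ a $1$-way edge of weight $1$, and makes each digon edge $e_i$ a $0$-way edge of weight $0$.

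For existence I would start from $\Om'$ and check that $\Om$ is $\si$-weighted. The only conditions that are not immediate from the fact that $\Om$ and $\Om'$ agree on the inner edges of $H$ concern the outer vertices and the light faces $\ell_i$ meeting the boundary. Both are controlled by a single computation on $\Om'$: I claim that in $\Om'$ each digon edge $e_i$ has weight $w'(e_i)\ge 0$ and that no inner edge of $H$ is $1$-way into an outer vertex. To see this, sum the identity ``weight of $u_i$ equals $\si'(u_i)=1$'' over the $d$ outer vertices. Each outer edge $e_i'$ is $1$-way (as $\Om'\in\cHp$) and contributes its weight $w'(e_i')=1-w'(e_i)$ (the digon $D_i$ has total weight $-\si'(D_i)-2=1$); each $e_i$ contributes $\max(w'(e_i),0)$; and the inner edges contribute a nonnegative total $G$. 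Comparing with $\sum_i\si'(u_i)=d$ collapses to $\sum_i\max(0,-w'(e_i))+G=0$, which forces $G=0$ and $w'(e_i)\ge 0$ for every $i$. Granting this, each $e_i$ (being either $1$-way or $0$-way of weight $0$) contributes $0$ to $w(\ell_i)$, so $w(\ell_i)=\si(\ell_i)-\deg(\ell_i)$, and each outer vertex receives weight exactly $1$ from its incoming outer edge; the remaining weight conditions transfer verbatim.

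Next I would show $\Om\in\cHm$. Accessibility from the outer vertices follows by transporting a directed path of $\Om'$: although the outer cycle circulates clockwise in $\cHp$ but counterclockwise in $\cHm$, every outer vertex is still reachable from every other along the outer cycle, and the fact that (by $G=0$) no inner edge enters an outer vertex means the interior portion of a path never returns to the boundary, so it can be reused unchanged. For the ``unique counterclockwise circuit'' condition I would argue that any circuit of $\Om$ is either contained in the outer cycle or uses only inner edges: a circuit using an inner edge out of an outer vertex could never close up, again because $G=0$ prevents re-entering the boundary. A purely inner circuit that were counterclockwise would also be counterclockwise in $\Om'$ (the two outer faces occupy the same region of the sphere), contradicting minimality of $\Om'$; hence the outer contour is the only counterclockwise circuit.

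For uniqueness I would run the construction backwards: given any $\si$-weighted $\Om_1\in\cHm$, its outer edges are forced to be the counterclockwise unit-weight cycle, and by Remark~\ref{rk:edges-inward} no inner edge is $1$-way into an outer vertex. Gluing the digons as above produces a hyperorientation $\Om_1'$ of $H'$, and the same bookkeeping shows it is $\si'$-weighted; accessibility and minimality transfer exactly as before (the needed boundary fact is now free of charge, being Remark~\ref{rk:edges-inward} rather than the summation argument). Thus $\Om_1'\in\cHp$, so $\Om_1'=\Om'$ by Theorem~\ref{theo:shifted-orientation-light}, and reading off the inner edges gives $\Om_1=\Om$. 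The main obstacle is the boundary weight-count establishing $G=0$ and $w'(e_i)\ge 0$ in the existence direction, together with the careful transfer of accessibility and of the counterclockwise-circuit condition across the clockwise/counterclockwise flip between $\cHp$ and $\cHm$; once these are in place the rest is routine verification.
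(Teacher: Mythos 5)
Your proof is correct, and its overall architecture is the one the paper uses (and that the surrounding construction of $H'$, $\si'$, $\Om'$ essentially forces): transfer $\Om'$ to $\Om$ for existence, and glue the digons back on to reduce uniqueness to the uniqueness statement of Theorem~\ref{theo:shifted-orientation-light}. The one place where you genuinely diverge is the key boundary fact. The paper pins down the digon edge weights exactly: writing $e_i$ for the new outer edges and $e_i'$ for the original outer edges of $H$, it chains the local inequalities $w'(e_{i-1})+w'(e_i')\leq 1$ around the contour to force all $w'(e_i)$ equal, and then invokes minimality of $\Om'$ to rule out the $e_i'$ forming a counterclockwise circuit, concluding $w'(e_i)=1$ and $w'(e_i')=0$. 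You instead sum the vertex-weight identities over all outer vertices and obtain the weaker but sufficient conclusion that the original outer edges have non-negative weight and that no inner edge of $H$ is $1$-way into an outer vertex (your $G=0$); this is a purely arithmetic argument that bypasses minimality at this step, and it isolates exactly the fact ($G=0$) needed both for the outer-vertex weight condition and for the path-decomposition argument transferring accessibility and the counterclockwise-circuit condition. The paper's version yields the cleaner statement that every vertex and face of $H$ has the same weight in $\Om$ as in $\Om'$; yours trades that for a shorter computation. Both are complete, and your treatment of the uniqueness direction (using Remark~\ref{rk:edges-inward} in place of the summation) matches the paper's.
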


\begin{proof}
Let $w'(a)$ be the weight of a vertex, edge or face in $\Om'$. Let $D_1,\ldots,D_{\deg(f_0)}$ be the outer digons of $H'$ in clockwise order, and let $e_i,e_i'$ be the outer and inner edges incident to $D_i$ respectively. We will first prove that $w'(e_i)=1$ and $w'(e_i')=0$ for all $i\in\{1,\ldots,\deg(f_0)\}$.
First note that for all $i\in\{1,\ldots,\deg(f_0)\}$, the weight condition on the outer digon $D_i$ gives $w'(e_i)+w'(e_i')=w'(D_i)=-\si'(D_i)-2=1$. Moreover, since the weight of every outer vertex $u$ is $w'(u)=\si'(u)=1$, we get $w'(e_i)\leq 1$ and $w'(e_i')\leq 1$. Hence $w'(e_i)\geq 0$ and $w'(e_i')\geq 0$, and $w'(e_{i-1})+w'(e_{i}')\leq 1$ for all $i\in\{1,\ldots,\deg(f_0)\}$ with the convention that $e_{0}=e_{\deg(f_0)}$. Hence $w'(e_{i-1})\leq w'(e_i)$ for all $i\in\{1,\ldots,\deg(f_0)\}$. Thus $w'(e_{i-1})=w'(e_{i})$ and $w'(e_{i-1}')=w'(e_{i}')$ for all $i\in\{1,\ldots,\deg(f_0)\}$. 
Moreover, the hyperorientation $\Om'$ has no counterclockwise directed cycle (since $\Om'\in \cHp$), hence $w'(e_i')=0$ for all $i\in\{1,\ldots,\deg(f_0)\}$, and $w'(e_i)=1$. 

Since $w'(e_i)=1$ and $w'(e_i')=0$ for all $i\in\{1,\ldots,\deg(f_0)\}$, the weight of any vertex, or face of $H$ is the same in $\Om$ as in $\Om'$. Moreover, the weight of every outer vertex and outer edge of $H$ in $\Om$ is 1. Thus $\Om$ is $\si$-weighted. Moreover, because the hyperorientation $\Om'$ is minimal and accessible from the outer vertices, the hyperorientation $\Om$ is also minimal and accessible from the outer vertices. Thus $\Om$ is in $\cHm$. 

Lastly, suppose there is another $\si$-weighted hyperorientation $\wt{\Om}\neq \Om$ of $H$ in $\cHm$. We then consider the hyperorientation $\wt{\Om}'$ of $H'$ defined as follows: the weight of the inner edges of $H$ in $\wt{\Om}'$ are the same as in $\wt{\Om}'$, while the weight of the edges $e_i,e_i'$ of the outer digon in $\wt{\Om}'$ are $\wt{w}'(e_i)=1$ and $\wt{w}'(e_i')=0$ for all $i\in\{1,\ldots,\deg(f_0)\}$. It is easily seen that $\wt{\Om}'$ is a $\si$-weighted hyperorientation of $H'$ distinct from $\Om'$. Moreover $\wt{\Om}'$ is in $\cHp$ (it is minimal, accessible from the outer vertices and the outer face of $H'$ is a clockwise directed cycle). This contradicts the uniqueness of $\Om'$ given by Theorem~\ref{theo:shifted-orientation-light}.
\end{proof}

Claim~\ref{claim:reduction-works} ensures that any dark-rooted charged hypermap $(H,\si)$ satisfying the conditions of Theorem~\ref{theo:shifted-orientation-dark} admits a unique $\si$-weighted hyperorientation in $\cHm$. This together with Lemma~\ref{lem:girth-necessary} completes the proof of Theorem~\ref{theo:shifted-orientation-dark}.\\

We now prove Theorem~\ref{theo:shifted-orientation-0} by a reduction to Theorem~\ref{theo:shifted-orientation-dark}. Let $H$ be a vertex-rooted hypermap, and let $\si$ be a charge function fitting $H$. 
We want to prove that there exists a unique $\si$-weighted hyperorientation of $H$ in $\cH_0$.
Let $v_0$ be the root-vertex of $H$, and let $f_0$ be a light face incident to $v_0$. Let $H'$ be the dark-rooted hypermap (with outer degree $1$) obtained from $H$ by adding a loop edge $e_0$ incident to $v_0$ inside $f_0$ as indicated in Figure~\ref{fig:reduction-outer-0}. The face of degree 1 incident to $e_0$ thus created (which is dark) is taken as the outer face of $H'$, and is denoted by $f_1$. The light face of $H'$ incident to $e_0$ is denoted by $f_2$. We define a charge function $\si'$ of $H'$ by setting $\si'(f_1)=-1$, $\si'(f_2)=\si(f_0)+1$ and $\si'(a)=\si(a)$ for any other face or vertex of $H'$.

\fig{width=.5\linewidth}{reduction-outer-0}{The dark-rooted hypermap $H'$ obtained from $H$ by adding a loop edge $e_0$.}

\begin{claim}
The charge function $\si'$ fits the dark-rooted hypermap $H'$.
\end{claim}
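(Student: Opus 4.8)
The plan is to verify directly the four conditions in the definition of ``$\si'$ fits the dark-rooted hypermap $H'$'': the $\si'$-girth condition, together with the requirements that every inner vertex have positive charge, the unique outer vertex have charge $0$, the dark outer face have charge equal to minus its degree, and $\si'_\tot=0$. All but the girth condition are immediate. Since $f_1$ is a dark monogon attached at $v_0$, the vertex $v_0$ is the only outer vertex of $H'$, and every other vertex is inner with charge $\si'(v)=\si(v)>0$ (as $\si$ fits $H$). Moreover $\si'(v_0)=\si(v_0)=0$, and $\si'(f_1)=-1=-\deg(f_1)$, while the total charge is unchanged:
\[
\si'_\tot=\si_\tot-\si(f_0)+\si'(f_1)+\si'(f_2)=\si_\tot-\si(f_0)+(-1)+(\si(f_0)+1)=0.
\]

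The substance is the $\si'$-girth condition. Since $\si'_\tot=0$, Lemma~\ref{lem:simply-connected-sufficient} lets me restrict to simply connected light regions $R$ of $H'$. To each such $R$ I associate a light region $R_H$ of the vertex-rooted hypermap $H$ by deleting $f_1$ and replacing $f_2$ by $f_0$; one checks readily that $R_H$ is a light region of $H$ (the only new adjacency across the loop $e_0$ involves the light face $f_0$). The proof then reduces to comparing $|\partial R|$ with $|\partial R_H|$ and $\si'(R)$ with $\si(R_H)$, and invoking the $\si$-girth condition of $H$. I would split into cases according to the membership of $f_1,f_2$ in $R$, using that $f_1\in R$ forces $f_2\in R$ (a dark face of $R$ cannot lie on $\partial R$, and $f_1$ is adjacent only to $f_2$). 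When $f_1,f_2\notin R$, the region $R$ is literally a light region of $H$ with the same boundary and charge, and $v_0$ lies on $\partial f_1$ so is not strictly inside $R$. When $f_1,f_2\in R$, the edge $e_0$ is interior, so $|\partial R|=|\partial R_H|$ and $\si'(R)=\si(R_H)$, while $v_0$ is strictly inside $R$ exactly when it is strictly inside $R_H$; hence both the inequality and its strict form transfer verbatim from the $\si$-girth condition of $H$.

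The delicate case, and the one I expect to be the main obstacle, is $f_1\notin R$, $f_2\in R$. Here $e_0$ separates $f_2\in R$ from $f_1\notin R$, so $e_0\in\partial R$ and $|\partial R|=|\partial R_H|+1$; on the charge side $\si'(f_2)-\si(f_0)=1$, so that $\si'(R)=\si(R_H)+1$, and the two extra $1$'s cancel when reducing $|\partial R|\ge\si'(R)$ to $|\partial R_H|\ge\si(R_H)$. The subtle point is the vertex bookkeeping: in $H'$ the root $v_0$ lies on $\partial f_1$ and so is never strictly inside a region omitting $f_1$, whereas in $H$ it may be strictly inside $R_H$. The two vertex-charge sums nevertheless agree because $\si(v_0)=0$, so the replacement introduces no discrepancy; and since $v_0$ is never strictly inside $R$ in this case, only the non-strict inequality is needed for $H'$, which is furnished by the $\si$-girth condition of $H$ (a fortiori, even when that condition is strict because $v_0$ is strictly inside $R_H$). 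Assembling the three cases establishes the $\si'$-girth condition and completes the claim.
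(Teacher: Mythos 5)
Your proof is correct and follows essentially the same route as the paper: the same three-way case analysis on whether $f_1$ and $f_2$ belong to the light region, the same replacement of $\{f_1,f_2\}$ (or $f_2$ alone) by $f_0$ to produce a light region of $H$, and the same cancellation of the extra boundary edge $e_0$ against the charge shift $\si'(f_2)=\si(f_0)+1$. The only cosmetic differences are your (harmless) invocation of Lemma~\ref{lem:simply-connected-sufficient} and your more explicit bookkeeping of when $v_0$ is strictly inside a region, which the paper leaves implicit.
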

 
\begin{proof}
Since $\si$ fits $H$, the charge $\si'(v)$ of every inner vertex $v$ of $H'$ is positive. Moreover the charge of the outer vertex $v_0$ is $\si'(v_0)=\si(v_0)=0$, and the charge of the dark outer face $f_1$ is $\si'(f_1)=-1=-\deg(f_1)$. Furthermore, 
$$\si_\tot'=\si_\tot-\si(f_0)+\si'(f_1)+\si'(f_2)=0.$$ 
It remains to prove that $H'$ satisfies the $\si'$-girth condition. Let $R'$ be a light region of $H'$. First suppose that $f_2\notin R'$. In this case $f_1\notin R'$, hence $R'$ is a light region of $H$ and $|\R'|\geq\si(R')=\si'(R')$ as wanted. Next suppose that both $f_1$ and $f_2$ are in $R'$. In this case, we consider the light region $R$ of $H$ obtained from $R'$ by replacing $f_1$ and $f_2$ by $f_0$. Since $\R=\R'$, we get 
$|\R'|=|\R|\geq \si(R)=\si'(R')$ with strict inequality if $v_0$ is strictly inside $R'$. 
Lastly, suppose that $f_2\in R'$ and $f_1\notin R'$. We consider the light region $R$ of $H$ obtained from $R'$ by replacing $f_2$ by $f_0$. Note that $e_0\in \R'$, and $\R=\R'\setminus \{e_0\}$. Thus $|\R'|=|\R|+1\geq \si(R)+1=\si'(R')$, as wanted.
\end{proof}

Since $\si'$ fits $H'$, Theorem~\ref{theo:shifted-orientation-dark} implies that $H'$ has a unique $\si'$-weighted hyperorientation $\Om'$ in $\cHm$. Let $\Om$ be the the restriction to $H$ of the hyperorientation $\Om'$.

\begin{claim}\label{claim:reduction-works-0}
The hyperorientation $\Om$ is the unique $\si$-weighted hyperorientation of $H$ in $\cH_0$. 
\end{claim}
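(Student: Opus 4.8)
The plan is to mirror the reduction already carried out for the dark-rooted case in Claim~\ref{claim:reduction-works}; the only genuinely new difficulty is the bookkeeping around the loop $e_0$ and the root-vertex $v_0$. Write $w'(a)$ for the weight of a vertex, edge or face $a$ in $\Om'$. First I would record two facts about $\Om'$ at the loop. Since $\Om'\in\cHm$ and $e_0$ is the unique outer edge, $e_0$ is the $1$-way ccw-outer loop forming the contour of $f_1$, and by condition (iv') it has weight $1$; likewise the unique outer vertex $v_0$ has weight $w'(v_0)=\si'(v_0)+1=1$. Because the loop $e_0$ is ingoing at $v_0$ and contributes its weight $1$, the total weight of the \emph{inner} edges ingoing at $v_0$ is $0$. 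As every $1$-way edge has positive weight, this forces that no inner edge of $H'$ is oriented $1$-way toward $v_0$.

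Next I would verify that $\Om$, obtained by deleting $e_0$, is $\si$-weighted. The sign conditions on $1$-way and $0$-way edges are inherited, and the weights of all vertices and faces other than $v_0$ and $f_0$ are unchanged and already correct since $\si'$ agrees with $\si$ there. For $v_0$ the weight in $\Om$ is exactly the inner-edge contribution above, namely $0=\si(v_0)$. For the light face $f_0$, deleting $e_0$ merges $f_1$ and $f_2$; since $e_0$ is $1$-way it never contributed to the light-face weight of $f_2$, so $w(f_0)=w'(f_2)=\si'(f_2)-\deg(f_2)=(\si(f_0)+1)-(\deg(f_0)+1)=\si(f_0)-\deg(f_0)$, as required.

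Then I would check $\Om\in\cHz$. Accessibility from $v_0$ is immediate: every vertex is reached from $v_0$ in $\Om'$ by a \emph{simple} directed path, which cannot traverse the loop $e_0$ and so survives in $\Om$. For minimality I use that no edge is $1$-way into $v_0$, whence no directed circuit of $\Om$ passes through $v_0$. The hard part is reconciling the two notions of ``counterclockwise'': in $\cHm$ it refers to the side on which the outer face $f_1$ lies, in $\cHz$ to the side on which $v_0$ lies. The key geometric observation is that $f_1$ is incident to $v_0$ and drawn inside the loop, so $f_1$ and $v_0$ lie on the same side of any circuit $C$ avoiding $v_0$. Hence, for a simple circuit $C$ not through $v_0$ (and not using $e_0$), $C$ is counterclockwise for $\Om$ in the vertex-rooted sense iff $v_0$ is on its right iff $f_1$ is on its right iff $C$ is counterclockwise for $\Om'$ in the dark-rooted sense. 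Since the only counterclockwise circuit of $\Om'\in\cHm$ is $e_0$ itself, $\Om$ has no counterclockwise circuit, so it is minimal and lies in $\cHz$.

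Finally, for uniqueness I would run this correspondence backwards: given any $\si$-weighted $\wt{\Om}\in\cHz$, reattach $e_0$ oriented ccw-outer with weight $1$ to form $\wt{\Om}'$ on $H'$. The same computations show $\wt{\Om}'$ is $\si'$-weighted (the loop fixes $w'(f_1)=1=-\si'(f_1)$, lifts the weight of $v_0$ to $1$, and leaves $w'(f_2)$ equal to $w(f_0)$), while the circuit correspondence together with Remark~\ref{rk:edges-inward} (no edge $1$-way into $v_0$ for $\wt{\Om}$) gives $\wt{\Om}'\in\cHm$. If $\wt{\Om}\neq\Om$ then $\wt{\Om}'\neq\Om'$, contradicting the uniqueness from Theorem~\ref{theo:shifted-orientation-dark}. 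This completes the proof.
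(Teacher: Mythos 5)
Your proof is correct and follows essentially the same route as the paper's: verify that deleting the loop $e_0$ yields a $\si$-weighted hyperorientation in $\cHz$, then establish uniqueness by reattaching $e_0$ with weight $1$ and invoking the uniqueness statement of Theorem~\ref{theo:shifted-orientation-dark}. You simply supply more detail than the paper does on why minimality and accessibility transfer between $\Om'$ and $\Om$ (in particular the reconciliation of the face-rooted and vertex-rooted notions of counterclockwise, and the absence of circuits through $v_0$), which the paper leaves implicit.
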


\begin{proof}
By definition, the weight of $v_0$ and $e_0$ in $\Om'$ is 1. 
Hence the weight of $v_0$ in $\Om$ is 0 and the weight $w(f_0)$ of $f_0$ in $\Om$ is the same as the weight $w'(f_2)$ of $f_2$ in $\Om'$. 
Hence $w(f_0)=w'(f_2)=\si'(f_2)-\deg(f_2)=\si(f_0)-\deg(f_0)$. Hence the hyperorientation $\Om$ is $\si$-weighted. Moreover because the hyperorientation $\Om'$ is minimal and accessible from $v_0$, the hyperorientation $\Om$ is also minimal and accessible from $v_0$. Thus $\Om$ is in $\cH_0$.

Conversely, suppose that there is another $\si$-weighted hyperorientation $\wt{\Om}\neq \Om$ of $H$ in $\cH_0$. We then consider the hyperorientation $\wt{\Om}'$ of $H'$ defined as follows: the weight of $e_0$ is 1 and the weight of the other edges is as in $\wt{\Om}$. It is easily seen that $\wt{\Om}'$ is a $\si$-weighted hyperorientation of $H'$ distinct from $\Om'$. Moreover $\wt{\Om}'$ is in $\cHm$. This contradicts the uniqueness of $\Om'$ given by Theorem~\ref{theo:shifted-orientation-dark}.
\end{proof}

Claim~\ref{claim:reduction-works-0} shows that if a charge function $\si$ fits a vertex-rooted map $H$, then $H$ admits a unique $\si$-weighted hyperorientation in $\cH_0$. This together with Lemma~\ref{lem:girth-necessary} completes the proof of Theorem~\ref{theo:shifted-orientation-0}.

\vspace{.2cm}

\noindent{\bf Acknowledgments.} We thank the anonymous referees for their many useful comments and suggestions.

\bibliographystyle{plain}
\bibliography{biblio}

\end{document}